\documentclass[a4paper,10pt]{article}

\usepackage[T1]{fontenc}
\usepackage[english]{babel}
\usepackage[babel]{csquotes}%pour les guillemets
\MakeAutoQuote{«}{»}%idem
\usepackage{fullpage}
\usepackage{graphicx}
\usepackage{caption}
\usepackage{subcaption}
\usepackage{xcolor}
\usepackage{wrapfig}
\usepackage{tikz}
\usepackage{hyperref}
\usepackage{float}
\usepackage{amsthm}
\usepackage{amsmath}
\usepackage{amssymb}
\usepackage{amsfonts}
\usepackage{mathrsfs}
\usepackage{bbm}
\usepackage[
    backend=biber,
    style=numeric,
    giveninits=true,
    doi=false,
    url=false,
  maxbibnames=99
]{biblatex}
\renewbibmacro{in:}{}
\usepackage{indentfirst}
\fontsize{30pt}{18pt}
\hypersetup{colorlinks=true,linkcolor = black, anchorcolor = red, citecolor = blue,filecolor = red, urlcolor = blue}

\title{Long-time dynamics of partially dissipative hyperbolic systems with non-autonomous coefficients}
\author{Timothée Crin-Barat, Ling-Yun Shou and Qimeng Zhu}
\date{}
\newcommand{\A}{{\mathbb A}}
\newcommand{\B}{{\mathbb B}}
\newcommand{\C}{{\mathbb C}}

\newcommand{\dd}{\mathrm d}

\renewcommand{\L}{{\mathbb L}}

\newcommand{\N}{{\mathbb N}}

\newcommand{\R}{{\mathbb R}}
\newcommand{\SSS}{{\mathbb S}}

\newcommand{\Z}{{\mathbb Z}}

\newcommand{\sC}{{\mathcal C}}

\newcommand{\sH}{{\mathcal H}}
\newcommand{\sI}{{\mathcal I}}

\newcommand{\sL}{{\mathcal L}}

\newcommand{\sO}{{\mathcal O}}

\newtheorem{rem}{Remark}
\newtheorem{defi}{Definition}
\newtheorem{lem}{Lemma}
\newtheorem{prop}{Proposition}
\newtheorem{thm}{Theorem}

\def\ddj{\dot\Delta_j}

\def\dz1{\nabla Z_1}
\def\div{ \hbox{\rm div}\,  }

\newcommand{\e}{\exists}

\newcommand{\mc}[1]{\left\|#1\right\|_{L^2}}%mochang
\newcommand{\mcbe}[2]{\left\|#1\right\|_{\dot{\B}_{2,1}^{#2}}}%mochang

\begin{document}
\allowdisplaybreaks[4]
\def \with {\quad\!\hbox{with}\!\quad}
\def \andf {\quad\!\hbox{and}\!\quad}
\newcommand{\cR} {\mathcal{R}} 
\newcommand{\norme}[1]{\left\Vert #1\right\Vert}
\newcommand{\cA} {\mathcal{A}} 
\newcommand{\cB} {\mathcal{B}}    
\newcommand{\cC} {\mathcal{C}}     
\newcommand{\cD} {\mathcal{D}}   
\newcommand{\cF} {\mathcal{F}}     
\newcommand{\cH} {\mathcal{H}}     
\newcommand{\cI} {\mathcal{I}}     
\newcommand{\cJ} {\mathcal{J}}     
\newcommand{\cL} {\mathcal{L}}      
\newcommand{\cM} {\mathcal{M}}  
\newcommand{\cN} {\mathcal{N}}  
\newcommand{\cO} {\mathcal{O}}         
\newcommand{\cS} {\mathcal{S}}      
\newcommand{\cW} {\mathcal{W}}      
\newcommand{\cZ} {\mathcal{Z}}    
\def\dn{\delta\!n}
\def\du{\delta\!u}
\def\dv{\delta\!v}
\def\dD{\delta\!D}
\def\dZ_1{\delta\!Z_1}
\def\dV{\delta\!V}
\def\e{\varepsilon}
\def\d{\partial}
\def\l{\alpha}
\def\wh{\widehat}
\def\wt{\widetilde}
\def\pa{\partial}

\maketitle

\begin{abstract}
We study quasilinear symmetrizable partially dissipative hyperbolic systems with non-autonomous relaxation coefficients in $\mathbb{R}^d$ ($d\geq1$). The existence of global strong solutions is established in a critical regularity setting for systems satisfying the so-called Shizuta-Kawashima (SK) and entropy conditions. When the initial data are additionally bounded in a lower-regularity norm, we prove that the corresponding solutions converge to equilibrium at optimal algebraic decay rates. Furthermore, we show that the conservative part of the solution behaves asymptotically as the solution
of a non-autonomous parabolic equation. Our results apply to the compressible Euler system with the time-dependent damping coefficient $\frac{K}{(1+t)^{\alpha}}$ ($\alpha<1$, $K>0$ or $\alpha=1$, $K\gg 1$) in the velocity equation.

%   In particular, the presence of non-autonomous relaxation coefficients leads to coefficient-dependent behavior, and the weakening of the damping coefficient may result in
% faster decay to equilibrium.

The natural low/high-frequency splitting of the autonomous theory persists in the non-autonomous setting, but with a frequency-threshold that evolves in time. To handle this moving frequency structure, we introduce a new class of hybrid Besov spaces adapted to time-dependent thresholds and derive hypocoercive estimates in each frequency  regime. Our results reveal the qualitative and quantitative effects of general time-dependent relaxation coefficients on dissipation and large-time dynamics.
\end{abstract}

% \tableofcontents

\section{Introduction}
\subsection{Presentation of the system}
We are concerned with first-order $n$-component hyperbolic systems of the form:
\begin{equation}
\partial_{t} V+ \sum_{k=1}^d \partial_{x_{k}} F^{k}(V)=\frac{G(V)}{b(t)}, \label{system}
\end{equation}
where the unknown $V=V(t,x)$ depends on the time variable $t\in\R_+$ and the spatial variable $x\in\R^d\ (d\geqslant 1)$, the functions $F^k(V) (k=1,...,d)$ are smooth vector-valued functions defined on an open subset $\sO_V$ of $\R^n$, $G(V)$ is a vector-valued function defined on $\sO_V$, and $b(t)$ is a positive time-dependent function.  System \eqref{system} is supplemented with the initial condition 
\begin{align} 
V(0,x)=V_0(x).\label{d}
\end{align}

Systems of the form \eqref{system} constitute a class of hyperbolic balance laws with relaxation. The coefficient $1/b(t)$ may be interpreted as a time-dependent friction force exerted by the surrounding medium, which allows one to describe damping mechanisms whose strength varies in time, and leads to new difficulties in the description of the large-time dynamics.
Such coefficients arise in evolution equations posed on expanding backgrounds, that is, media whose spatial scale increases with time, where the expansion induces a friction-like term, commonly referred to in cosmology as \emph{Hubble friction} \cite{Ryden2017}.

In this paper, we focus on the existence and the asymptotic behavior of global-in-time strong solutions to the Cauchy problem when the initial data $V_0$ is close to some constant state $\bar{V}$ that satisfies $G(\bar{V})=0$. 

\medbreak
In the \textit{no-damping} scenario, i.e., when $G \equiv 0$, it is well known that systems of conservation laws, supplemented with initial data in Sobolev spaces $H^s$ with $s>d/2+1$, admit local-in-time strong solutions, see \cite{Benzoni-Gavage2006}. However, even for small initial data, solutions may develop singularities in finite time, see \cite{Majdalocal,Serre}. On the other hand, in the autonomous fully dissipative case, i.e.,  if $b\equiv1$ and all the eigenvalues of $DG(\bar{V})$ have negative real parts, then small perturbations of $\bar{V}$ give rise to global-in-time solutions that tend exponentially fast to $\bar{V}$; see Li \cite{LiTT}.

The situation of interest here, which arises in many physical models described by systems of the form \eqref{system}, involves only partial dissipation; namely, the term $G(V)$ acts only on a subset of the components of the solution. Typically, this occurs in gas dynamics, where the mass density and entropy equations are conservative, while the momentum equation can include dissipative effects (diffusion or friction). Here, we will assume that the linearization of $G$ around the equilibrium gives rise to a dissipative operator with dissipative components in some parts of the solution\footnote{We do not address the case where $G$ has dissipative nonlinear terms. The Fourier-based analysis that we employ is not well-suited to deal with this. We refer to \cite{MochizukiMotai,CBLSZ25} and the references therein for the analysis of systems with nonlinear damping terms.}. A representative example is the governing compressible Euler equations for barotropic gas with time-dependent damping:
\begin{equation}\label{eq:euler}
\left\{\begin{aligned}&\partial_t\rho +{\rm div}(\rho v)=0,\\
&\partial_t(\rho v)+{\rm div}(\rho v\otimes v) +\nabla P(\rho) = -\frac{K\rho v}{(1+t)^{\alpha}},
\end{aligned}\right.\end{equation}
where $\rho$ corresponds to the density of a fluid, $v$ denotes the velocity, and $P(\rho)$ represents the pressure. When $\alpha=0$, the system \eqref{eq:euler} is the classical damped compressible Euler system, which describes the motion of compressible fluids in porous media.

\subsection{Aims of the paper}

Numerous works have been dedicated to the study of partially dissipative hyperbolic systems. In the time-independent case, namely when $b\equiv 1$, the global-in-time existence of small strong solutions holds under the Shizuta-Kawashima condition. In that setting, the damping mechanism is time-independent, and the dissipation transferred from the damped components to the conservative ones can be described through a fixed frequency decomposition.

In the non-autonomous case, although the damping strength $1/b(t)$ may decay in time, the effective
parabolic dynamics involves the diffusion coefficient $b(t)$, which may grow
and hence lead to faster decay. A typical example is often assumed to take the form
\begin{align}\label{explicitb} b(t)=\dfrac{(1+t)^\alpha}{K}
\end{align}
for some constants $K>0$ and $\alpha\in \R$. As we will describe in more detail below, the corresponding behaviors are known to depend on the parameters $\alpha$ and $K$, as well as on the spatial dimension. When $\alpha<1$, global-in-time solutions near equilibrium can be constructed, while in the critical case $\alpha=1$, the behavior of the solution depends on the value of $K$, and global solutions exist when $K$ is large enough. In general, when $\alpha>1$, the damping mechanism becomes too weak to prevent the blow-up of solutions, cf. \cite{pan2}.

The purpose of the present paper is to investigate how general time-dependent coefficients $b(t)$ affect the dissipation structure and the large-time behavior of partially dissipative hyperbolic systems. More precisely, we establish a global well-posedness theory near equilibrium in a critical regularity framework and derive optimal decay estimates whose rates are explicitly determined by the behavior of $b(t)$. One feature is that, although the damping strength $\frac{1}{b(t)}$ becomes weaker as time evolves, the corresponding diffusive effect becomes stronger, leading to faster decay rates. We also highlight the related diffusion phenomenon: the conservative part of the solution behaves asymptotically like the solution of a parabolic equation whose diffusion coefficient is precisely given by $b(t)$.

\subsection{Literature on partially dissipative systems with constant damping coefficients}

Here, we recall some results for the autonomous case \(b\equiv 1\). Due to the partial nature of the dissipation, one of the fundamental issues is to identify structural conditions on the source term $G(V)$ that are strong enough to prevent finite-time breakdown of small smooth solutions. For constant damping coefficients, this question has been extensively investigated. A first important step was made by Chen, Levermore and Liu \cite{chen1994}, who introduced a dissipative entropy framework extending the classical entropy theory of Godunov \cite{Godunov2} and Friedrichs--Lax \cite{FL} for conservation laws. This condition, however, does not by itself yield a complete global existence theory.

The compressible Euler system with constant damping has been the subject of
extensive investigations in the literature. Hsiao and Liu~\cite{HsiaoL:1992CMP} first observed that solutions to the one-dimensional damped Euler equations asymptotically approach the self–similar profile of the corresponding nonlinear porous-medium equation, the so-called diffusion wave. 
Subsequently, Nishihara~\cite{Nishihara:1996JDE}, Nishihara et al.~\cite{NishiharaWY:2000JDE}, and Mei~\cite{Mei:2010SIAM} quantified the convergence rates toward such diffusion waves in various functional frameworks. For initial data that are small perturbations in Sobolev spaces $H^{s}(\mathbb{R}^{d})$ ($s\geq [\frac{d}{2}]+2$), the global well-posedness and asymptotics of classical solutions have been studied by Wang and Yang in \cite{Wang}, as well as Sideris, Thomases and Wang in \cite{Sideris}.

More generally, global existence theories for \eqref{system} with constant relaxation coefficients were further developed by combining entropy dissipation methods with the Shizuta-Kawashima condition \cite{SK}. Yong \cite{Yong} proved the global existence of classical solutions near a constant equilibrium $\bar V$ satisfying $G(\bar V)=0$, under an additional technical assumption on the entropy. A related one-dimensional result had previously been obtained by Hanouzet and Natalini \cite{HanouzetNatalini}. Later on, Kawashima and Yong \cite{KY} refined the entropy formulation and removed that extra technical assumption, thereby obtaining a global existence result in the classical Sobolev framework. The large-time behavior was then studied by Bianchini, Hanouzet and Natalini \cite{BHN}, who proved that smooth solutions converge to the equilibrium state $\bar V$ in $L^p$ with the rate $O(t^{-\frac d2(1-\frac1p)})$, for $p\in[\min\{d,2\},\infty]$, by means of Duhamel's formula and sharp Green kernel estimates for the linearized system. Kawashima and Yong \cite{KYDecay} employed a time-weighted energy method and established asymptotic decay estimates similar to those in \cite{BHN}.

More recently, Beauchard and Zuazua \cite{BZ} revisited the Shizuta-Kawashima condition from the viewpoint of hypocoercivity and proved its equivalence with the Kalman rank condition from control theory. Kawashima and Xu \cite{XK1,XK2,XK1D} extended the well-posedness theory to critical non-homogeneous Besov spaces $B^{d/2+1}_{2,1}$, a regularity framework in which the classical Sobolev theory of Kato \cite{Katolocal} and Majda \cite{Majdalocal} do not directly apply. Crin-Barat and Danchin \cite{c2,c1,c3} justified the global well-posedness in the hybrid Besov space $\B^{d/2,d/2+1}_{2,1}$ with different regularities in the low and high-frequency regimes. This approach allows us to formulate sharper smallness assumptions and to obtain a more precise description of the qualitative and quantitative properties of global solutions to general partially dissipative systems. Crin-Barat, Shou and Zuazua \cite{MR4951948} recently developed a ``physical space version" of the hyperbolic hypocoercivity approach introduced in \cite{BZ} and obtained time-decay estimates in one dimension. The authors in \cite{SXZ} considered different critical regularities on the conservative and dissipative components and provided sufficient and necessary conditions for achieving optimal upper and lower bounds of time decay rates.

\subsection{Hyperbolic systems with time-dependent damping coefficients}

%\subsubsection{General systems}
%In \cite{Wirth15}, the author investigates diffusion phenomena for partially dissipative hyperbolic systems with time-dependent coefficients under a uniform Kalman rank condition. The author proves that, under suitable assumptions, solutions behave asymptotically like those of an associated parabolic equation, combining a low-frequency WKB analysis with exponential stability estimates for high frequencies.

%\textcolor{red}{Are there other refs?}
% \subsubsection{The compressible Euler system}

\subsubsection{Link with the damped wave equation} System \eqref{eq:euler} can be rewritten as a nonlinear wave equation with time-dependent damping, which naturally connects our problem to the classical diffusion phenomenon for damped wave equations. Indeed, when a solution to \eqref{eq:euler} is viewed as a perturbation of a non-vacuum constant equilibrium $(\bar{\rho},0)$, the wave formulation of the linearized equation for the modified density perturbation $n:=\rho-\bar\rho$  (see \eqref{eulerre}) reads
\begin{equation}\label{linearwave}
\partial_t^2 n-P'(\bar{\rho})\Delta n+\frac{1}{b(t)}\partial_t n=0.
\end{equation}
Assuming that the damping coefficient satisfies \eqref{explicitb}, the
analysis of Wirth
\cite{Wirth:2004MMAS,Wirth:2006JDE,Wirth:2007JDE} shows that the
large-time behavior of \eqref{linearwave} changes according to the
strength of the damping. In the effective range $-1\leq\alpha<1$,
solutions exhibit a diffusion phenomenon and are asymptotically
described by a heat equation with a time-dependent diffusion
coefficient. In contrast, when $\alpha>1$, the damping becomes
non-effective, and the dynamics are asymptotically governed by the free
wave equation
\begin{equation*}
\partial_t^2 n-P'(\bar{\rho})\Delta n=0.
\end{equation*}

The case $\alpha=1$ is critical and corresponds to the scale-invariant damping coefficient $\frac{K}{1+t}$. In this regime, the linear
large-time behavior depends essentially on $K$, with $K=2$ marking a transition in the energy decay; see \cite{Wirth:2004MMAS}.  This
transition is consistent with the distinguished role of the condition $K>2$ in the global existence theory for the corresponding nonlinear
Euler equations, although the nonlinear threshold does not follow
simply from the time integrability of a linear decay rate. This
suggests that, at least heuristically, global small-data solutions to
\eqref{eq:euler} should be expected when $\alpha<1$, or in the
critical case $\alpha=1$ with $K>2$.

\subsubsection{Link with nonlinear damping}
Nonlinear damping laws arise naturally in stabilization problems and
feedback control, cf. \cite{ChitourLiuSontag1995,ChitourMarxPrieur2020,LiuChitourSontag1996}.
They may also be interpreted as time-dependent linear damping along the
trajectories of a solution. A simple connection is already visible for
the damped Burgers equation %Nonlinear damping can also be related to time-dependent linear damping along the trajectories of a solution. A simple connection is already visible for the damped Burgers equation
\begin{equation*}
\partial_t u+u\partial_xu+u|u|=0.
\end{equation*}
As long as the solution remains smooth, let \(X(t;x_0)\) denote the characteristic starting from \(x_0\), defined by
\begin{equation*}
\frac{\mathrm{d}}{\mathrm{d}t}X(t;x_0)
=
u\bigl(t,X(t;x_0)\bigr),
\qquad
X(0;x_0)=x_0,
\end{equation*}
and set
\begin{equation*}
U(t;x_0):=u\bigl(t,X(t;x_0)\bigr).
\end{equation*}
Then \(U\) satisfies
\begin{equation*}
\frac{\mathrm{d}}{\mathrm{d}t}U+|U|U=0,
\end{equation*}
which may be rewritten as the linear equation
\begin{equation*}
\frac{\mathrm{d}}{\mathrm{d}t}U+a_{x_0}(t)U=0,
\qquad
a_{x_0}(t):=|U(t;x_0)|.
\end{equation*}
Thus, along each characteristic, the nonlinear friction becomes linear damping with a time-dependent coefficient determined by the solution itself. In fact,
\begin{equation*}
U(t;x_0)
=
\frac{u_0(x_0)}
{1+t|u_0(x_0)|},
\qquad
a_{x_0}(t)
=
\frac{|u_0(x_0)|}
{1+t|u_0(x_0)|}.
\end{equation*}
Consequently, whenever \(u_0(x_0)\neq 0\), the effective damping coefficient behaves asymptotically as
\begin{equation*}
a_{x_0}(t)\sim \frac{1}{1+t}.
\end{equation*}
This toy model shows how a quadratic damping law naturally generates, along the flow, the critical time-dependent damping scale \((1+t)^{-1}\). %Related connections between nonlinear feedback laws and non-autonomous damping also arise in the control theory of nonlinearly damped wave equations; see, for instance, \cite{ChitourLiuSontag1995,LiuChitourSontag1996,ChitourMarxPrieur2020}.
For the compressible Euler system with nonlinear damping, a similar Lagrangian interpretation is formally available. However, the coupling with the density and pressure generates additional nonlinear terms that fall outside the framework developed in the present paper.

\subsubsection{Existing results for hyperbolic systems with time-dependent damping}

There has been a substantial body of work on the time-dependent damped Euler system \eqref{eq:euler}. In the one-dimensional case, Pan~\cite{pan1,pan2} proved that if $\alpha \in (0,1)$ with $K > 0$ or $\alpha = 1$ with $K > 2$, and if the initial data are small and smooth perturbations of a non-vacuum constant state, then the corresponding classical solution exists globally in time.
Chen \emph{et al.}~\cite{chen1} subsequently extended the global existence result to certain classes of large initial data.
When $\alpha > 1$ and $K > 0$ or $\alpha = 1, K \le 2$, the $C^1$ solution blows up in finite time; the blow-up mechanism was investigated by Sugiyama~\cite{su1}.
Convergence toward the diffusion wave profile was independently established by Cui \emph{et al.}~\cite{CuiYZZ} and Li \emph{et al.}~\cite{lht1,lht2}, where the asymptotic states at spatial infinity are distinct.
In the critical damping case, Geng \emph{et al.}~\cite{geng1} further proved convergence toward the asymptotic profile with an explicit rate depending on the physical parameter $K$.

For higher-dimensional cases, Hou and Yin~\cite{hou1} and Hou \emph{et al.}~\cite{hou2} showed that when $\alpha \in (0,1)$ with $K > 0$ or $\alpha = 1$ with $K > 3-d$, the time-dependent damped Euler system admits global smooth solutions, provided that the initial perturbation is small, curl-free, compactly supported and smooth around a non-vacuum equilibrium.
In contrast, when $\alpha > 1, K > 0$ or $\alpha = 1, K\le 3-d$, the solution blows up in finite time.
The decay rates for multidimensional solutions in the range $\alpha \in (0,1)$ were first obtained by Pan~\cite{pan3} and later refined by Ji and Mei~\cite{ji1,ji2}.
The $L^1$-weak convergence to the generalized Barenblatt self-similar solution was established by Geng \emph{et al.}~\cite{geng2}, while strong convergence in the vacuum regime toward the generalized Barenblatt profile was rigorously justified by Pan~\cite{pan4,pan5} in the one-dimensional and spherically symmetric three-dimensional settings. Liu, Sheng and Lai \cite{MR4770303} considered the global existence of small solutions in Sobolev spaces for a class of hyperbolic systems with the time-dependent damping coefficient of the form \eqref{explicitb} with $0<\alpha\leq 1$.

Recently, in \cite{CBPSZ}, the authors and Pan established the global well-posedness in hybrid critical Besov spaces and justified the global strong convergence of the relaxation limit toward a porous medium system with time-dependent diffusion coefficients for ill-prepared data.

% However, as far as we know, for general nonlinear hyperbolic systems, very few works address the influence of non-autonomous relaxation coefficients on the regularity, long-time behavior, and qualitative and quantitative properties of solutions.

To the best of our knowledge, no previous result has provided a systematic treatment of nonlinear hyperbolic systems with general non-autonomous relaxation coefficients.

\subsection{Outline of the paper}
The paper is organized as follows. In Section~\ref{sec:main}, we reformulate the system, introduce the structural assumptions, and state the main results. Section~\ref{sec:LP} recalls the Littlewood--Paley decomposition and the functional spaces used throughout the paper. In Section~\ref{sec:linear}, we derive the key estimates for the linearized system with time-dependent coefficients. Section~\ref{sec:global} is devoted to the proof of the global existence result. Section~\ref{sec:asymptotics} establishes the optimal decay estimates and the diffusion phenomenon. Finally, in Section~\ref{sect:7}, we apply the abstract theory to the compressible Euler system with time-dependent damping. In the appendix, we collect some useful tools in Besov spaces.

\section{Main results}\label{sec:main}

\subsection{Reformulation of the system and Shizuta-Kawashima condition}

Define
\begin{equation}\label{Mset}
\begin{aligned}
&\mathcal{M}:=\Big\{\psi\in\mathbb{R}^{n}~:~\langle\psi,G(V)\rangle=0\quad~\text{for any}~V\in \mathcal{O}_{V}\Big\}.
\end{aligned}
\end{equation}
Here, $\mathcal{M}$ is a subset of $\mathbb{R}^{n}$ with ${\rm{dim}}~\mathcal{M}=n_{1}$.
It follows from the definition of $\mathcal{M}$ that $G(V)\in \mathcal{M}^{\perp}$ (the orthogonal complement of $\mathcal{M}$) holds for any $V\in \mathcal{O}_{V}$.
Accordingly, one has the orthogonal decomposition $\mathbb{R}^{n}=\mathcal{M}\oplus\mathcal{M}^{\perp}$, and we write $V\in \mathbb{R}^{n}$
as
\[
V=\left(
    \begin{array}{c}
      V_{1} \\
      V_{2} \\
    \end{array}
  \right)
\]
such that $V\in\mathcal{M}$ holds if and only if $V_{2}=0$. Moreover,
we denote the set of equilibria for \eqref{system} by:
\begin{align}
&\mathcal{E}=\{V\in \mathcal{O}_{V}~:~G(V)=0\}.
\end{align}
Following \cite{KY,SK}, we define the notion of dissipative entropy for \eqref{system} to symmetrize the system.

\begin{defi}\label{defnentropy}
    Let $\eta=\eta(V)$ be a smooth function defined on a convex open set $\mathcal{O}_{V}\subset \mathbb{R}^{n}$. Then $\eta=\eta(V)$ is called an entropy for \eqref{system} if the following statements hold{\rm:}
    \begin{itemize}
    \item [$(\bullet)$]$\eta=\eta(V)$ is strictly convex in $\mathcal{O}_{V}$ in the sense that the Hessian $D^2_{V}\eta(V)$ is positive definite for $V\in \mathcal{O}_{V} ${\rm;}
    \item [$(\bullet)$]$D_{V}F^{i}(V)\big( D^2_{V}\eta(V)\big)^{-1}$ is  symmetric for $i=1,\dots,d$ and $V\in \mathcal{O}_{V} $;
    \item [$(\bullet)$] $V\in \mathcal{E}$ if and only if $\big( D_{V}\eta(V) \big)^{\top}\in \mathcal{M}${\rm;}
    \item [$(\bullet)$] For $V\in \mathcal{E}$, the matrix $D_{V}G(V)\big( D^2_{V}\eta(V)\big)^{-1}$ is symmetric and non-positive definite, and its null space coincides with $\mathcal{M}$.
    \end{itemize}
\end{defi}

In the rest of the manuscript, we assume that the system \eqref{system} admits an entropy $\eta=\eta(V)$ in the sense of Definition \ref{defnentropy}.
Introducing the unknown
\begin{equation}\label{w}
   U=U(V):=  (D_{V}\eta(V))^{\top}.
\end{equation}
Due to the convexity of $\eta(V)$, the mapping $U=U(V)$ is a diffeomorphism from $\mathcal{O}_{V}$ onto its range $\mathcal{O}_{U}$.
Let $V=V(U)$ be the inverse mapping, which is also a diffeomorphism from $\mathcal{O}_{U}$ onto its range $\mathcal{O}_{V}$.
Then \eqref{system} rewrites as
\begin{align}
\widetilde{A}^{0}(U)\partial_{t}U+\sum_{k=1}^{d}\widetilde{A}^{k}(U)\partial_{x_{k}}U=\frac{\widetilde{H}(U)}{b(t)}
\label{entropeq}
\end{align}
with
\[
\begin{aligned}
&\widetilde{A}^{0}(U)=D_{U}V(U),\\
&\widetilde{A}^{i}(U)=D_{U}F^{i}(V(U))=D_{V}F^{i}(V(U))D_{U}V(U),\\
&\widetilde{H}(U)=G(V(U)).
\end{aligned}
\]
Define
\begin{equation}\label{tildeL}
\begin{aligned}
&\widetilde{B}(U)=-D_{U}\widetilde{H}(U)=-(D_{V}G)(V(U))D_{U}V(U).
\end{aligned}
\end{equation}
We see that $\widetilde{H}(U)\in \mathcal{M}^{\perp}$ for any $U\in \mathcal{O}_{U}$.
By virtue of \eqref{w}, we have
\[
D_{U}V(U)=\big(D^2_{V}\eta(V(U))\big)^{-1}.
\]
Let $\bar{V}\in \mathcal{E}$ be an equilibrium state associated with $V$,
satisfying $G(\bar{V})=0$. Then there exists an equilibrium state
$\bar{U}$ associated with $U$.
From the definition above, we know that $\widetilde{H}(\bar{U})=G(\bar{V})=0$, and the source term $\widetilde{H}(U)$ in \eqref{entropeq} can be expressed by
\begin{equation}\label{tildeH}
\begin{aligned}
\widetilde{H}(U)=-\widetilde{B}(\bar{U})(U-\bar{U})+\mathbf{r}(U),
\end{aligned}
\end{equation}
where 
\begin{align}
\mathbf{r}(U):=   \widetilde{H}(U)-\widetilde{H}(\bar{U})-D_{U}\widetilde{H}(\bar{U})(U-\bar{U}),\label{rtildeU}
\end{align}
which satisfies $\mathbf{r}(U)\in \mathcal{M}^{\perp}$ for all $U\in \mathcal{O}_{U}$.

The entropy formulation provides a nonlinear symmetrization that is used to prevent loss of derivatives in the high-frequency analysis. However, when applied at low frequencies, the fully nonlinear entropy symmetrization tends to obscure structural features of the underlying conservation laws. Indeed, the nonlinear terms in the original system naturally appear in divergence form, which provides greater flexibility when using product laws.

% As a consequence, the nonlinear dynamics at low frequencies are asymptotically governed by the diffusive behavior of the linearized system.

Therefore, in order to maintain the natural divergence structure of the nonlinear terms, we symmetrize only the linearized system by freezing the entropy symmetrizer at the equilibrium state, while keeping the nonlinear terms in their original divergence form. Precisely, as in \cite{BHN,SXZ}, we can find a function $Z=Z(V)$ depending on $V$ smoothly and satisfying $Z(\bar{V})=0$, such that the system \eqref{entropeq} can be rewritten as the normal symmetric dissipative system
\begin{equation}\label{eq0}
\left\{
    \begin{aligned}
&\partial_{t}Z+\sum_{k=1}^{d}A^{k}\partial_{x_{k}}Z+\frac{BZ}{b(t)}=\frac{\mathbf{Q}(Z)}{b(t)}+\sum_{k=1}^{d}\partial_{x_{k}}\mathbf{N}^{k}(Z),\\
&Z(0,x)=Z_0(x).
\end{aligned}
\right.
\end{equation}
Here, $A^{k}$ ($k=1,\dots,d$) are symmetric constant matrices, and $B$ is the constant symmetric matrix given by
\begin{align}\label{partdissip}
B=\left(
\begin{matrix}
    0 & 0\\
    0 & L_2
\end{matrix}
\right)
\quad \text{and} \quad
    L_2 z \cdot z \geqslant \kappa |z|^2,\quad z\in \R^{n_2},
\end{align}
for some constant $\kappa>0$. In addition, the nonlinear terms $\mathbf{Q}$ and $\mathbf{N}^k$ are given by the \emph{quadratic} terms
\begin{equation}\label{R0Ri}
\left\{
    \begin{aligned}
    &\mathbf{Q}(Z):=  \mathbb{A}^0\Big(G(V)-G(\bar{V})-D_{V}G(\bar{V})(V-\bar{V})\Big),\\
    &\mathbf{N}^{i}(Z):=  \mathbb{A}^i \Big( F^i(V)-F^i(\bar{V})-D_{V}F^i(\bar{V})(V-\bar{V})\Big),\quad i=1,\dots,d,
\end{aligned}
\right.
\end{equation}
where $\mathbb{A}^i$ ($i=0,\dots,d$) are some constant matrices. The derivation of \eqref{eq0}--\eqref{R0Ri} can be found in \ref{AppendixNF}.

For the general system, additional structural assumptions are required
to guarantee sufficient decay of global solutions. In this regard, we
introduce the so-called Shizuta--Kawashima {\rm [SK]} condition.

\begin{defi}[{\rm (SK) condition}]
System \eqref{eq0} is said to satisfy the {\rm (SK)} condition if,
for every $\omega\in\mathbb S^{d-1}$, with $A_\omega:=\sum_{k=1}^d \omega_k A^k$,
the conditions
\[
B\phi=0
\quad\text{and}\quad
(\lambda {\rm Id} - A_\omega)\phi=0
\ \text{for some }\lambda\in\R
\]
imply $\phi=0_{\R^n}$.
\end{defi}

The {\rm (SK)} condition is closely related to the classical Kalman rank
condition in control theory, and characterizes how the dissipation induced by
the source term propagates through the hyperbolic coupling. 
We recall the following equivalence statement from \cite{BZ}.

\begin{prop}
\label{Propositionkalman}
Let $A^1,\dots,A^d$ and $B$ be real $n\times n$ matrices, and define $A_\omega:=\sum_{k=1}^d \omega_k A^k$ for $\omega\in\SSS^{d-1}$.
Then, for every $\omega\in\SSS^{d-1}$, the following assertions are equivalent:
\begin{enumerate}
\item The {\rm (SK)} condition holds;

\item The Kalman rank condition holds:
\[
\operatorname{rank}\,(B,\, BA_\omega,\, \dots,\, BA_\omega^{n-1}) = n.
\]

\item For any $\varepsilon_0,\dots,\varepsilon_{n-1}>0$,
the mapping
\[
y\longmapsto
|y|_{K}:=\left(\sum_{i=0}^{n-1}\varepsilon_i\,\big|B A_\omega^{\,i} y\big|^2\right)^{\frac12}
\]
defines a norm on $\R^n$. In particular, there exists a constant $C>1$, depending on $B$, $A^i_\omega$ and $\varepsilon_i$, such that $\frac{1}{C}|y|\leq |y|_{K}\leq C|y|$.
\end{enumerate}
\end{prop}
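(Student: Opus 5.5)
Since the third assertion states that $|\cdot|_K$ is a norm with equivalence constants, the proof amounts to identifying the common condition, for a fixed $\omega\in\SSS^{d-1}$, that all three assertions encode:
\[
\mathcal{K}_\omega:=\bigcap_{i=0}^{n-1}\ker\big(BA_\omega^{\,i}\big)=\{0\}.
\]
We show (2)$\Leftrightarrow$ this condition $\Leftrightarrow$(3), and then (1)$\Leftrightarrow$ this condition. The symmetry of $A_\omega$ and $B$ in the setting of \eqref{eq0} is used only in the last step.

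For (2): the matrix $(B,BA_\omega,\dots,BA_\omega^{n-1})$ has rank $n$ if and only if its transpose, the block column with rows $(BA_\omega^i)^\top$, is injective. With $B$ and $A_\omega$ symmetric, $(BA_\omega^i)^\top=A_\omega^iB$. Interpreting the Kalman condition as the standard observability rank condition, i.e. the stacked matrix with rows $BA_\omega^i$ having full column rank $n$, its injectivity is exactly $\mathcal{K}_\omega=\{0\}$. If one insists on the horizontal concatenation, then transposition and symmetry reduce it to the same statement.

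For (3): $|\cdot|_K$ is always a seminorm, since it is the Euclidean norm of the linear map $y\mapsto(\sqrt{\varepsilon_i}\,BA_\omega^iy)_i$. It is definite exactly when that map is injective, i.e. when $\mathcal{K}_\omega=\{0\}$, and the positivity of the $\varepsilon_i$ plays no further role. The two-sided bound then follows from the equivalence of norms in finite dimension, by compactness of the unit sphere. The upper bound is in any case trivial, with constant $\big(\sum_i\varepsilon_i|BA_\omega^i|^2\big)^{1/2}$.

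For (1): observe that $\mathcal{K}_\omega$ is the largest $A_\omega$-invariant subspace contained in $\ker B$. It lies in $\ker B$ because $i=0$ is included. It is invariant because, if $y\in\mathcal{K}_\omega$, then $BA_\omega^i(A_\omega y)=BA_\omega^{i+1}y$ vanishes for $i\le n-2$ by assumption and for $i=n-1$ by Cayley--Hamilton, since $A_\omega^n$ is a combination of lower powers. Conversely, any $A_\omega$-invariant subspace inside $\ker B$ is clearly contained in $\mathcal{K}_\omega$.

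\emph{(1)$\Rightarrow$(2).} Suppose $\mathcal{K}_\omega\neq\{0\}$. Since $A_\omega$ is real symmetric, its restriction to the invariant subspace $\mathcal{K}_\omega$ is still symmetric and so has a real eigenvector $\phi\neq0$ with $A_\omega\phi=\lambda\phi$, $\lambda\in\R$. As $B\phi=0$, this contradicts (SK). This is the step requiring care: in general one would only obtain a complex eigenvalue, so the reality of $\lambda$ relies on the symmetry of $A^k$. For general real matrices one would state (SK) with $\lambda\in\C$ and $\phi\in\C^n$, and the same argument works after complexifying.

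\emph{(2)$\Rightarrow$(1).} If $B\phi=0$ and $A_\omega\phi=\lambda\phi$, then $BA_\omega^i\phi=\lambda^iB\phi=0$ for every $i$, so $\phi\in\mathcal{K}_\omega=\{0\}$, which is (SK).

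Combining the three steps gives the claimed equivalence for each $\omega$.
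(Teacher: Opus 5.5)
The paper gives no proof of this proposition; it only quotes it from Beauchard--Zuazua \cite{BZ}, so there is no in-paper argument to compare with. Your route is correct and self-contained. You reduce all three assertions to $\mathcal{K}_\omega=\bigcap_{i=0}^{n-1}\ker(BA_\omega^{i})=\{0\}$ and identify $\mathcal{K}_\omega$ as the largest $A_\omega$-invariant subspace of $\ker B$, which is a Hautus-type argument. This works under the stacked (observability) reading of (2). You are also right that, with $\lambda\in\R$ in (SK), the equivalence genuinely needs the symmetry of $A_\omega$ available in \eqref{eq0}, or else a complexified (SK). For a counterexample without symmetry, take $d=1$, $B=0$ and $A^1$ the planar rotation generator. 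Then (SK) with real $\lambda$ holds vacuously, while every Kalman matrix vanishes and $|\cdot|_K\equiv0$.

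One claim in your treatment of (2) is false: the horizontal concatenation does \emph{not} reduce to the same statement. Indeed $(B,BA_\omega,\dots,BA_\omega^{n-1})=B\,(I_n,A_\omega,\dots,A_\omega^{n-1})$, so its rank equals $\operatorname{rank}B$. Equivalently, its transpose is the stack of the blocks $A_\omega^{i}B$, whose common kernel is $\bigcap_i\ker(A_\omega^{i}B)=\ker B$, not $\mathcal{K}_\omega$. Read literally, (2) therefore says that $B$ is invertible, which fails for every genuinely partially dissipative system, where $B=\mathrm{diag}(0,L_2)$ with $n_1\geq1$.

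The fix is to state that (2) must be read as follows: the stacked matrix with blocks $BA_\omega^{i}$ has rank $n$. Under symmetry this is the same as $\operatorname{rank}(B,A_\omega B,\dots,A_\omega^{n-1}B)=n$, since the transpose of that horizontal matrix is exactly the stacked one. With this correction in place of your ``if one insists on the horizontal concatenation'' sentence, your proof is complete.
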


Based on the partially dissipative nature of the system \eqref{partdissip}, we define
\[
Z=\begin{pmatrix}
Z_1\\Z_2
\end{pmatrix},\quad
BZ=\begin{pmatrix}
0\\L_2Z_2
\end{pmatrix},\quad
A^k=\begin{pmatrix}
A^k_{1,1}&A^k_{1,2}\\A^k_{2,1}&A^k_{2,2}
\end{pmatrix},
\]
\[
\mathbf{Q}(Z)=\begin{pmatrix}
0 \\ Q(Z)
\end{pmatrix}
\quad{\rm and}\quad
\mathbf{N}^k(Z)=
\begin{pmatrix}
N_1^k(Z) \\ N_2^k(Z)
\end{pmatrix}.
\]
The Cauchy problem \eqref{eq0} takes the following form:
\begin{equation}\label{sys}
\left\{
\begin{aligned}
&\partial_t Z_1+\sum_{k=1}^d\Big(A^k_{1,1}\partial_k Z_1+A^k_{1,2}\partial_k Z_2\Big)
=\sum_{k=1}^d\partial_k N^k_1(Z), \\
&\partial_t Z_2+\sum_{k=1}^d\Big(A^k_{2,1}\partial_k Z_1+A^k_{2,2}\partial_k Z_2\Big)
+\frac{L_2 Z_2}{b(t)}
=\sum_{k=1}^d\partial_k N^k_2(Z) +\frac{Q(Z)}{b(t)},\\
&(Z_1,Z_2)(0)=(Z_{1,0},Z_{2,0}).
\end{aligned}
\right.
\end{equation}
In addition to the entropy and (SK) condition, to carry out our computations, we impose the following structural assumptions for all $Z_1$ sufficiently close to $0$:
\begin{align}
A_{1,1}^k=0,\quad Q(Z_1,0)=0\quad \text{and}\quad  N_1^k(Z_1,0)=0,\qquad k=1,\dots,d.
\label{conditionAVbar}
\end{align}

%Z\mapsto A^k_{1,1}(\Bar{V}+Z)\quad{\rm is \ linear\ with\ respect\ to\ } Z_2$
% \item[(4)] %$Q(Z_1,0)=0$ for $Z_1$ close to 0, and thus $Q(Z_1,Z_2)$ is at least linear with respect to $Z_2$.
\subsection{Main results}

We begin by stating our global-in-time well-posedness result.  For $t>0$, define the energy norm
\begin{equation}\label{X}
\begin{aligned}
\|Z\|_{\mathcal{E}_t}:={}&
\|Z\|_{\widetilde{L}_t^\infty(\dot{\B}_{2,1}^{\frac d2})}
+\|b(\tau)Z\|_{\widetilde{L}_t^\infty(\dot{\B}_{2,1}^{\frac d2+1})}\\
&
+\|b(\tau)^{\frac12}Z\|_{\widetilde{L}_t^2(\dot{\B}_{2,1}^{\frac d2+1})}+\|b(\tau)Z_1\|^\ell_{L_t^1(\dot{\B}_{2,1}^{\frac d2+2})}+\|Z\|^h_{L_t^1(\dot{\B}_{2,1}^{\frac d2+1})}\\
&+\|Z_2\|_{L_t^1(\dot{\B}_{2,1}^{\frac d2+1})}+\|b(\tau)^{-\frac12}Z_2\|_{\widetilde{L}_t^2(\dot{\B}_{2,1}^{\frac d2})}+\|\partial_t Z\|_{L_t^1(\dot{\B}_{2,1}^{\frac d2})}+\|b(\tau)^{-1}W\|_{L_t^1(\dot{\B}_{2,1}^{\frac d2})}.
%+\|b(\tau)^{-\frac12}Z_2\|^\ell_{\widetilde{L}_t^2(\dot{\B}_{2,1}^{\frac d2})}.
\end{aligned}
\end{equation}

\begin{thm}\label{thm0}
Let $d\geq 1$. Assume that \eqref{partdissip}, \eqref{conditionAVbar}, and the
{\rm (SK)} condition hold.
Moreover, suppose that the $\sC^\infty$ time-dependent coefficient $b(t)$ satisfies one of the following assumptions:
\begin{itemize}
\item \textbf{Case 1 (Constant-damping):} $b(t)\equiv b(0)>0$;
\item \textbf{Case 2 (Over-damping):} $b(t)>0$ and $b'(t)< 0$;
\item \textbf{Case 3 (Under-damping):} $b(t)>0$, $b'(t)>0$, and $\limsup\limits_{t\to\infty} b'(t)\leq c^*$
for some sufficiently small $c^*>0$ depending only on $d$, $\bar{V}$ and the matrices $A^k$ and $B$.
\end{itemize}
Then, there exists a constant $\beta>0$ such that for any initial data
$Z_0\in \dot{\B}_{2,1}^{\frac{d}{2}}\cap \dot{\B}_{2,1}^{\frac{d}{2}+1}$ satisfying
\[
\|Z_0\|_{\dot{\B}_{2,1}^{\frac{d}{2}}\cap \dot{\B}_{2,1}^{\frac{d}{2}+1}}
\leq \beta,
\]
the system \eqref{eq0} admits a unique global solution $Z$ such that
\begin{align*}
Z 
&\in \mathcal{C}_b(\mathbb{R}_+;\dot{\B}_{2,1}^{\frac{d}{2}}), 
& b(t)Z 
&\in \mathcal{C}_b(\mathbb{R}_+;\dot{\B}_{2,1}^{\frac{d}{2}+1}), 
& \sqrt{b(t)}\, Z 
&\in L^2(\mathbb{R}_+;\dot{\B}_{2,1}^{\frac{d}{2}+1}),\\
b(t)Z_1^\ell 
&\in L^1(\mathbb{R}_+;\dot{\B}_{2,1}^{\frac{d}{2}+2}), 
& Z_2 
&\in L^1(\mathbb{R}_+;\dot{\B}_{2,1}^{\frac{d}{2}+1}), 
& Z^h 
&\in L^1(\mathbb{R}_+;\dot{\B}_{2,1}^{\frac{d}{2}+1}),\\
\frac{Z_2}{\sqrt{b(t)}} 
&\in L^2(\mathbb{R}_+;\dot{\B}_{2,1}^{\frac{d}{2}}), 
& \partial_t Z 
&\in L^1(\mathbb{R}_+;\dot{\B}_{2,1}^{\frac{d}{2}}), 
& \frac{W}{b(t)} 
&\in L^1(\mathbb{R}_+;\dot{\B}_{2,1}^{\frac{d}{2}})
\end{align*}
where the damped mode $W$ is defined in \eqref{dampdef}.
Moreover, there exists a constant $C>0$ such that, for all $t\ge0$,
\begin{align}
\|Z\|_{\mathcal{E}_t}\le C \|Z_0\|_{\dot{\B}_{2,1}^{\frac{d}{2}}\cap \dot{\B}_{2,1}^{\frac{d}{2}+1}}.\label{Xbound}
\end{align}
\end{thm}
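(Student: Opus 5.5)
The proof follows the standard scheme of the critical hybrid Besov theory of Crin-Barat and Danchin, adapted to a time-dependent frequency threshold. We first fix the moving splitting. Heuristically the linearized symbol $i\xi\cdot A + B/b(t)$ changes behaviour at $|\xi|\sim 1/b(t)$. We therefore define low frequencies as $2^j\lesssim J_t:=\log_2(\varepsilon_0/b(t))$ and high frequencies as the complement. The norms $\|\cdot\|^\ell,\|\cdot\|^h$ in \eqref{X} are understood with respect to this threshold.

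Local existence for the symmetrizable system in $\dot{\B}^{\frac d2}_{2,1}\cap\dot{\B}^{\frac d2+1}_{2,1}$ is classical, via the entropy symmetrization \eqref{entropeq}. Global existence and uniqueness then follow by a continuity argument once the a priori bound \eqref{Xbound} is proved under the bootstrap assumption $\|Z\|_{\mathcal{E}_t}\le 2C\beta$. The whole task thus reduces to closing an estimate of the form $\|Z\|_{\mathcal{E}_t}\le C\|Z_0\|+C\|Z\|_{\mathcal{E}_t}^2$.

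\textbf{Linear estimates.} We localize \eqref{sys} with $\ddj$ and build, on each dyadic block, a Lyapunov functional of Beauchard--Zuazua type. It combines the energy $\|\ddj Z\|_{L^2}^2$ with small multiples of cross terms $\varepsilon_i\,b(t)^{\gamma_i}2^{-j}\,\mathrm{Re}\langle BA_\omega^{i-1}\ddj Z, BA_\omega^{i}\ddj Z\rangle$, the weights in $b$ being chosen block-by-block to match the regime. Proposition~\ref{Propositionkalman} (SK/Kalman) gives coercivity $\cL_j\sim\|\ddj Z\|_{L^2}^2$ and a dissipation rate of order $\min(b(t)2^{2j},1/b(t))$.

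Differentiating the time-dependent weights produces errors proportional to $|b'(t)|\,\cL_j$ times bounded factors. In Case 1 there are none. In Case 2 ($b'<0$) they can be arranged with a favourable sign. In Case 3 they are absorbed into the dissipation precisely when $b'\le c^*$ small, since the dissipation near the threshold is of order $1/b$ and the error of order $b'/b$. This is where the smallness of $c^*$ enters.

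In low frequencies we also introduce the damped mode $W:=Z_2+b(t)L_2^{-1}\sum_k A^k_{2,1}\partial_kZ_1$ from \eqref{dampdef}, which satisfies a strongly damped equation. Substituting it into the $Z_1$ equation gives the parabolic equation $\partial_tZ_1-b(t)\sum A^k_{1,2}L_2^{-1}A^l_{2,1}\partial_k\partial_lZ_1=\cdots$, whose ellipticity follows from (SK) together with $A^k_{1,1}=0$. Maximal regularity for this time-dependent heat operator yields the terms $\|b Z_1\|^\ell_{L^1(\dot{\B}^{\frac d2+2})}$, $\|b^{-1}W\|_{L^1(\dot{\B}^{\frac d2})}$ and $\|b^{-1/2}Z_2\|_{\widetilde L^2}$.

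\textbf{Time-varying threshold.} Because $J_t$ moves, blocks cross from high to low frequencies. We handle this by estimating each block with a weight that interpolates continuously between the two regimes, $\min(1,b(t)2^j)$ for the $\dot{\B}^{\frac d2+1}$ versus $\dot{\B}^{\frac d2}$ scaling. The weighted norms $b\,Z\in\widetilde L^\infty(\dot{\B}^{\frac d2+1})$ are designed exactly for this, and monotonicity of $b$ controls the boundary terms. The bound on $\partial_tZ$ follows from the equation.

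\textbf{Nonlinear terms.} The quadratic terms are bounded by the usual product and commutator laws in the Appendix. For $\partial_kN^k(Z)$ we use its divergence form in low frequencies and the entropy symmetrizer in high frequencies, which avoids losing a derivative. The structure \eqref{conditionAVbar} ensures that $Q$ and $N_1$ vanish at $Z_2=0$, so each product contains a $Z_2$ factor. That factor is integrable in time through $\|Z_2\|_{L^1(\dot{\B}^{\frac d2+1})}$ or $\|b^{-1/2}Z_2\|_{L^2}$, and the factors of $b$ balance between the pieces. Uniqueness follows from a stability estimate at one regularity level lower.

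\textbf{Main obstacle.} I expect the hardest step to be the time-dependent threshold: controlling the $b'$ commutator terms in the Lyapunov functionals, uniformly across the moving frequency boundary. This is also what fixes the precise condition on $c^*$.
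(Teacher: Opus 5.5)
Your architecture is the paper's: a moving threshold $J_t\sim\log_2(1/b(t))$; the damped mode plus a time-dependent parabolic equation for $Z_1$ at low frequencies; a Beauchard--Zuazua functional on the entropy variable at high frequencies, weighted by $b$ to give $bZ\in\widetilde L^\infty_t(\dot\B^{\frac d2+1}_{2,1})$; patching each block at its crossing time, where $b(t_j)2^j=2^{-k_0}$ makes the two scalings agree; a bootstrap; and uniqueness from a $\dot\B^{\frac d2}_{2,1}$ stability estimate.

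The gap is in the one genuinely non-autonomous point, the $b'$ terms. You propose to dispose of them pointwise in time: favourable sign in Case 2, and absorption when $b'\le c^*$ in Case 3. This fails for the terms produced by differentiating the damped mode.
\begin{itemize}
\item The linear term $b'(t)L_2^{-1}\sum_kA^k_{2,1}\partial_kZ_1$ in \eqref{l1} is a forcing driven by $Z_1$, not a multiple of $W$, so it has no sign. Young's inequality absorbs it into $\frac{\kappa}{b}\|W_j\|^2_{L^2}$ and $b2^{2j}\|Z_{1,j}\|^2_{L^2}$ only if $|b'|$ is uniformly small. Case 2 does not assume this at all, and Case 3 assumes it only asymptotically.
\item The nonlinear term $b'(t)\sum_k\partial_kN^k_2(Z)$ in $f_1$ (see \eqref{h1h2}) must lie in $L^1_t(\dot\B^{\frac d2}_{2,1})$. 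Here $N_2^k$ may be quadratic in $Z_1$, and $\mathcal E_t$ does not control $Z_1^\ell$ in $L^1_t(\dot\B^{\frac d2+1}_{2,1})$. So even with $|b'|$ uniformly small, the available norms only give a bound growing like $(\int_0^t b^{-1}\,d\tau)^{1/2}$, which is $(\log t)^{1/2}$ for $b=(1+t)/K$.
\end{itemize}

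The missing idea is the time-integrated bound of Lemma \ref{b'lem} and \eqref{b'argumentlow}. On $I^\ell_{j,t}$ one has $b(\tau)\le 2^{-j-k_0}$, and this set is an interval because $b$ is monotone. Hence $\int_{I^\ell_{j,t}}|b'|\,d\tau\le 2^{-j-k_0}$ and
\[
\int_{I^\ell_{j,t}}|b'(\tau)|\,2^{j}\|\ddj f(\tau)\|_{L^2}\,d\tau\le 2^{-k_0}\sup_{\tau\in I^\ell_{j,t}}\|\ddj f(\tau)\|_{L^2}.
\]
That is, $\|b'f\|^\ell_{L^1_t(\dot\B^{s+1}_{2,1})}\le 2^{-k_0}\|f\|^\ell_{\widetilde L^\infty_t(\dot\B^{s}_{2,1})}$ for every monotone $b$, whatever the size of $b'$. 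This absorbs the linear term once $k_0$ is large, and bounds the nonlinear one by $\|Z\|^2_{\widetilde L^\infty_t(\dot\B^{\frac d2}_{2,1})}$. Symmetrically, $\int_{I^h_{j,t}}|(1/b)'|\,d\tau\le 2^{j+k_0}$ handles the derivative of the corrector weight $1/(b\,2^j)$ at high frequencies.

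With this in place, the smallness of $c^*$ enters in exactly one place: the $b$-weighted high-frequency estimate $\frac{d}{d\tau}(b\mathcal L_j)+(c-b')\mathcal L_j\le\cdots$. Your heuristic ``dissipation $1/b$ versus error $b'/b$'' describes this correctly. Since only $\limsup b'$ is assumed small, the paper uses the shifted weight $b(\tau+t^*)\simeq b(\tau)$ with $b'(\tau+t^*)\le 2c^*$, see \eqref{b'condi}.
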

% \begin{rem}
% We observe that the condition over $b$ can be concluded in $b>0$ and $\limsup_{t\rightarrow \infty}b'(t)\leqslant c^*$
% \end{rem}

\begin{rem}\normalfont
% Our result seems to be the first one dealing with general non-autonomous
% coefficients $b(t)$ for partially dissipative hyperbolic systems.
The condition imposed on $b$ will be discussed in detail in Section \ref{condb}. Here are some examples of time-dependent coefficients satisfying the conditions of Theorem \ref{thm0}:
\begin{itemize}
\item \emph{Polynomial damping:} Let
\[
b(t)=\frac{(1+t)^\alpha}{K},\qquad K>0,\quad  \alpha\leq 1.
\]
When $\alpha=0$, our result recovers that of Crin-Barat and Danchin \cite{c2} in the same regularity setting, while allowing for more general $F^k(V)$ and $G(V)$ in \eqref{system}. If $\alpha<0$, then $b'(t)<0$, and the damping coefficient
$1/b(t)$ increases in time, which corresponds to the over-damping regime. If $0<\alpha<1$, then $b'(t)>0$, so the damping
coefficient $1/b(t)$ decreases in time. Finally, in the critical case $\alpha=1$, the condition $\limsup\limits_{t\to\infty}b'(t)\leq c^*$ is satisfied, provided
that $K\geq 1/c^*$. For related classifications, we refer to \cite{ji1,ji2} and the references therein.

\item \emph{Logarithmically critical damping:} Let
\[
b(t)=\frac{(1+t)(\log(e+t))^{-\theta}}{K},
\qquad K>0,\quad 0<\theta\leq 1.
\]
Then $b(t)$ is increasing for large time, while $b'(t)\rightarrow 0$ as $t\rightarrow \infty$. In this case, one does not need $K$ to be sufficiently large.

\item \emph{Logarithmic damping:}
Consider the weak damping coefficient described by
\[
b(t)=\frac{(\log(e+t))^\theta}{K},
\qquad K>0,\quad \theta>0.
\]
In this choice, Case 3 is well satisfied.

\item \emph{Sub-exponential damping:}
The over-damping regime also allows coefficients such as
\[
b(t)=\frac{1}{K}e^{-(1+t)^\theta},
\qquad K>0,\quad 0<\theta<1.
\]
Then $b'(t)<0$, and the damping coefficient $\frac1{b(t)}=K e^{(1+t)^\theta}$ grows sub-exponentially in time.
\end{itemize}
\end{rem}

Next, we consider a damping coefficient $b(t)$ which behaves asymptotically as $(1+t)^{\alpha}$. We establish optimal decay estimates and provide a quantitative description of how the decay rates depend on the parameter $\alpha$ and the regularity considered.

\begin{thm}\label{thm2}
Let $Z$ be the global solution to the system \eqref{eq0} with initial data $Z_0$ obtained in Theorem~\ref{thm0}.
Suppose, in addition, that there exist constants $t_b\gg 1$ and $0<c_b\leq C_b$ such that, for all $t\geq t_b$, \begin{align}\label{mu:infer} \frac{1}{C_b} \leq \frac{b(t)}{(1+t)^{\alpha}} \leq \frac{1}{c_b}. \end{align}
\iffalse\begin{align}
0<\lim_{t\rightarrow \infty} \frac{b(t)}{(1+t)^{\alpha}}<\infty,\qquad -1\leq \alpha \leq 1,\label{b}
\end{align}\fi
and
\begin{align}
Z_0\in \dot{\B}^{\sigma_1}_{2,\infty},
\qquad
-\frac{d}{2}\leq \sigma_1<\frac{d}{2}.\label{21}
\end{align}
Then, there exists a uniform constant $C>0$ such that the solution $Z$ satisfies the following decay estimates:
\begin{itemize}
\item \textbf{Case 1: $-1<\alpha\leq 1$.}
For any $\sigma_1<\sigma\leq \frac{d}{2}$, the solution $Z$ satisfies the algebraic
 decay estimate
\begin{align}
\|Z(t)\|_{\dot{\B}^{\sigma}_{2,1}}
&\leq C(1+t)^{-\frac{1+\alpha}{2}(\sigma-\sigma_1)}.\label{decay1}
% Furthermore, we have the following improved stability properties: For all $-\frac{d}{2}<\sigma_1<\frac{d}{2}$ when $0\leq \alpha<1$ and  $-\frac{d}{2}<\sigma_1<\frac{d}{2}-\frac{2\alpha}{1+\alpha}$ when $-1<\alpha<0$, it holds for $\sigma_1<\sigma\leq \frac{d}{2}-1$ that
\end{align}
When $\sigma_1<\frac{d}{2}-1$, for any $\sigma_1<\sigma'\leq \frac{d}{2}-1$, $Z_2$ has the improved decay estimate
\begin{align}
\|Z_2(t)\|_{\dot{\B}^{\sigma'}_{2,1}}&\leq C(1+t)^{-\frac{1+\alpha}{2}(\sigma'-\sigma_1)-\frac{1-\alpha}{2}}.\label{decayimprove1}
\end{align}

\item \textbf{Case 2: $\alpha=-1$.}
For any $\sigma_1<\sigma\leq \frac{d}{2}$, the following logarithmic decay estimate holds:
\begin{align}
\|Z(t)\|_{\dot{\B}^{\sigma}_{2,1}}
&\leq C (\log(e+t))^{-\frac{1}{2}(\sigma-\sigma_1)}.\label{decay2}
\end{align}
When $\sigma_1<\frac{d}{2}-1$, for any $\sigma_1<\sigma'\leq \frac{d}{2}-1$, it holds
\begin{align}
\|Z_2(t)\|_{\dot{\B}^{\sigma'}_{2,1}}&\leq C (\log(e+t))^{-\frac12(\sigma'-\sigma_1)-\frac{1}{2}}(1+t)^{-1}.\label{decay2improve}
\end{align}
\end{itemize}
%Here $C>0$ denotes a constant independent of time.
\end{thm}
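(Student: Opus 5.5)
We follow the now-standard scheme of Guo--Wang/Xin--Xu type decay arguments, adapted to the time-dependent frequency threshold. The first step is to propagate the negative regularity: we show that $\|Z(t)\|_{\dot{\B}^{\sigma_1}_{2,\infty}}$ stays bounded by $C(\|Z_0\|_{\dot{\B}^{\sigma_1}_{2,\infty}}+\|Z_0\|_{\dot{\B}^{d/2}_{2,1}\cap\dot{\B}^{d/2+1}_{2,1}})$ for all $t$. To do this we apply the low-frequency hypocoercive estimates of Section~\ref{sec:linear} blockwise in $\dot{\B}^{\sigma_1}_{2,\infty}$ (sup over $j$ instead of sum). The nonlinear terms $\partial_k N^k(Z)$ and $Q(Z)/b$ are treated with the product laws $\dot{\B}^{d/2}_{2,1}\times\dot{\B}^{\sigma_1}_{2,\infty}\to\dot{\B}^{\sigma_1}_{2,\infty}$, valid for $-d/2<\sigma_1\le d/2$, and with $\dot{\B}^{d/2}_{2,1}\times\dot{\B}^{-d/2}_{2,1}\to\dot{\B}^{-d/2}_{2,\infty}$ for the endpoint. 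Using \eqref{conditionAVbar}, every quadratic term contains at least one factor $Z_2$ or $\nabla Z$. Its time integral is therefore controlled by $\|Z\|_{\mathcal{E}_t}$, through $\|Z_2\|_{L^1(\dot{\B}^{d/2+1})}$, $\|b^{-1/2}Z_2\|_{L^2}$ and $\|b^{1/2}Z\|_{L^2(\dot{\B}^{d/2+1})}$, which Theorem~\ref{thm0} bounds. A Gronwall argument then closes this step.

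The second step is a Lyapunov inequality. From the proof of Theorem~\ref{thm0} we extract a functional $\mathcal{L}(t)\simeq\|Z\|^\ell_{\dot{\B}^{\sigma}_{2,1}}+\|bZ\|^h_{\dot{\B}^{d/2+1}_{2,1}}$-type quantity; for $\sigma=d/2$ we take the full energy norm at time $t$. It satisfies
\[
\frac{d}{dt}\mathcal{L}+c\,\mathcal{D}\le0,\qquad \mathcal{D}\gtrsim b(t)\|Z\|^\ell_{\dot{\B}^{\sigma+2}_{2,1}}+\|Z\|^h_{\dot{\B}^{\sigma+1}_{2,1}}\cdot(\text{weights}),
\]
where the low/high splitting is at the moving threshold $2^{j}\sim b(t)^{-1}$. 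The key interpolation is then the following: in low frequencies, $\|Z\|^\ell_{\dot{\B}^{\sigma}}\lesssim(\|Z\|_{\dot{\B}^{\sigma_1}_{2,\infty}})^{\theta}(\|Z\|^\ell_{\dot{\B}^{\sigma+2}})^{1-\theta}$ with $\theta=2/(\sigma+2-\sigma_1)$. The high-frequency part is damped at rate at least $\min(1/b, c)$, which is stronger than the low-frequency rate for large $t$ since $b\lesssim(1+t)$. Combining these gives
\[
\frac{d}{dt}\mathcal{L}+c\,b(t)\,\mathcal{L}^{1+\frac{2}{\sigma-\sigma_1}}\,C_0^{-\frac{2}{\sigma-\sigma_1}}\le0 .
\]
Integrating this ODE with \eqref{mu:infer} yields $\mathcal{L}(t)\lesssim\big(\int_0^t b\big)^{-(\sigma-\sigma_1)/2}$. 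This quantity equals $(1+t)^{-\frac{1+\alpha}{2}(\sigma-\sigma_1)}$ for $-1<\alpha\le1$ and $(\log(e+t))^{-\frac12(\sigma-\sigma_1)}$ for $\alpha=-1$, which gives \eqref{decay1} and \eqref{decay2}. The range $\sigma\le d/2$ is covered by first proving the case $\sigma=d/2$ and then interpolating between $\dot{\B}^{\sigma_1}_{2,\infty}$ and $\dot{\B}^{d/2}_{2,1}$.

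For the improved estimates on $Z_2$ we use the damped-mode structure. We write $Z_2=b(t)L_2^{-1}(-\sum_kA^k_{2,1}\partial_kZ_1+\dots)+b\,W$-type corrections, where the damped mode $W$ of \eqref{dampdef} satisfies $\partial_tW+W/b=$ (terms of size $\nabla Z$ and nonlinearities). A Duhamel formula with kernel $\exp(-\int_s^t1/b)$ then gives $\|Z_2\|_{\dot{\B}^{\sigma'}}\lesssim b(t)\|Z_1\|_{\dot{\B}^{\sigma'+1}}+\text{(faster terms)}$. Since $\sigma'+1\le d/2$, we may insert \eqref{decay1} with $\sigma=\sigma'+1$. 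This yields $(1+t)^{\alpha}(1+t)^{-\frac{1+\alpha}{2}(\sigma'+1-\sigma_1)}=(1+t)^{-\frac{1+\alpha}{2}(\sigma'-\sigma_1)-\frac{1-\alpha}{2}}$, and the analogous computation gives the logarithmic bound when $\alpha=-1$. The Duhamel remainder integrals are handled by splitting $[0,t]$ into $[0,t/2]$ and $[t/2,t]$.

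The main obstacle is the second step. The frequency threshold moves with $t$, so differentiating $\mathcal{L}$ in time produces terms involving $b'$ from the cut-off, namely modes crossing the threshold. These must be absorbed either by the smallness $\limsup b'\le c^*$ or by the sign of $b'$, exactly as in the existence proof. Nonlinear terms must also be bounded by $\mathcal{E}_t$-small multiples of $\mathcal{D}$. We first try to reuse the hybrid-space estimates from Section~\ref{sec:global} verbatim, replacing $\dot{\B}^{d/2}$ by $\dot{\B}^{\sigma}$ only at low frequencies.
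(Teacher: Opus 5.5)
\textbf{Verdict.} Your first and third steps are broadly in line with the paper. The second step, which carries the whole decay argument, has a genuine gap, and there are two smaller problems elsewhere.

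\textbf{The gap in the second step.} The estimates behind Theorem~\ref{thm0} do not give a pointwise-in-time inequality $\frac{d}{dt}\mathcal L+c\mathcal D\le0$ for a functional cut at the moving threshold $J_t$. They are integrated in time, block by block, on $I^\ell_{j,t}$ and $I^h_{j,t}$, and glued at the transition time $t_j$.

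\emph{Threshold crossing is a jump, not a $b'$-term.} By \eqref{relationtj}, block $j$ switches between weight $2^{j\frac d2}$ on $(Z_{1,j},W_j)$ and weight $b(t_j)2^{j(\frac d2+1)}=2^{-k_0}2^{j\frac d2}$ on $U_j$. Neither smallness nor the sign of $b'$ can absorb this.
\begin{itemize}
\item In the over-damped case, blocks move from high to low, so the jump is upward by a factor $\sim2^{k_0}$.
\item The paper controls the sum of these transition values only through $\|bZ\|^h_{\widetilde L^\infty_t(\dot{\B}^{\frac d2+1}_{2,1})}$, a time-integrated quantity.
\end{itemize}

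\emph{The $b'$-terms and nonlinear terms are not controlled by dissipation.} This applies to the $b'$-terms in $l_1$ and $f_1$ (for instance $b'\nabla N_2(Z)$), and to $Q(Z)$ and $b\nabla N_2(Z)$ hidden inside $W$. The paper bounds them via $\int_{I^\ell_{j,t}}|b'|\,d\tau\le b(t_j)$ and $\widetilde L^\infty_t$ norms. It states explicitly that the differential energy method does not control them directly.

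\emph{Merging low and high parts into one ODE is not justified.} Even granting separate low and high inequalities, $\frac{d}{dt}\mathcal L+c\,b\,\mathcal L^{1+\frac2{\sigma-\sigma_1}}\le0$ does not follow.
\begin{itemize}
\item The high part only dissipates like $E^h/b$, with $E^h=\|bZ\|^h_{\dot{\B}^{\frac d2+1}_{2,1}}$.
\item $E^h/b\gtrsim b\,(E^h)^{1+\frac2{\sigma-\sigma_1}}$ requires $E^h\lesssim b^{-(\sigma-\sigma_1)}$, which is not known a priori when $b\to\infty$.
\item At $\alpha=1$ the high-frequency rate $\sim K/(1+t)$ is of the same order as the target rate. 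Your claim that it is stronger ``since $b\lesssim1+t$'' fails there; you need $K$ large relative to the decay exponent.
\end{itemize}

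\textbf{The missing idea: time-weighted integrated estimates.}
\begin{itemize}
\item Multiply the $(Z_1,W)$ equations and the $b(\cdot+t^*)$-weighted high-frequency Lyapunov inequality by $S(t)$ with $S(0)=0$. Take $S=t^M$, or $S=(\log(1+t))^M$ when $\alpha=-1$.
\item Rerun the same blockwise integrated estimates. Crossings, $b'$-terms and nonlinearities are then handled exactly as in Theorem~\ref{thm0}, giving $\|SZ\|_{\mathcal E_t}\lesssim\int_0^tS'(\tau)(E^\ell+E^h)\,d\tau$.
\item For the low part, interpolate $E^\ell$ between $\dot{\B}^{\sigma_1}_{2,\infty}$ and $b\,\dot{\B}^{\frac d2+2}_{2,1}$, then use Hölder--Young against $\int\tau^{M+\alpha}(E^\ell)^{\frac1{1-\theta}}$.
\item For the high part, apply Hölder to $\int\tau^{M-1}E^h$ between $\int\tau^{M-\alpha}E^h$ and $t\sup E$. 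This leaves a harmless $t^{1-M}$.
\item At $\alpha=1$, the high-frequency dissipation constant must exceed $M$.
\end{itemize}

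\textbf{Smaller points.}
\begin{itemize}
\item \emph{Step 1 cannot skip high frequencies when $b$ increases.} The high range extends down to $2^{J_t}\sim b(t)^{-1}\to0$. Hence $\|Z\|^h_{\dot{\B}^{\sigma_1+1}_{2,\infty}}$ is not controlled by $\|Z\|_{\mathcal E_t}$: you lose $b^{\frac d2-\sigma_1}$. Yet this norm enters the low-frequency bounds, e.g.\ for $\|bN_2\|^\ell_{L^1_t(\dot{\B}^{\sigma_1+2}_{2,\infty})}$. You need a $b$-weighted high-frequency estimate in $\dot{\B}^{\sigma_1+1}_{2,\infty}$, with transition data at $t_j$.
\item \emph{Step 3 is sound in spirit, but the decomposition is misstated.} The correct identity is $Z_2=W-L_2^{-1}\big(b\sum_kA^k_{2,1}\partial_kZ_1+\cdots\big)$, not ``$+bW$''.
\item \emph{The paper's route for Step 3 is simpler.} It applies Duhamel directly to the $Z_2$ equation (damping $\kappa/b$, source $\nabla Z$). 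It uses the kernel bound $\int_{t_b}^te^{-\kappa\int_\tau^t ds/b(s)}(1+\tau)^{-\gamma}\,d\tau\lesssim(1+t)^{\alpha-\gamma}$ and a bootstrap for $Q(Z)/b$, without passing through $W$.
\end{itemize}
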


%\subsection{The Shizuta-Kawashima and Kalman rank condition}

Finally, we show that the solutions of the partially dissipative system \eqref{eq0}
behave asymptotically like those of a linear diffusion equation with a time-dependent coefficient:
\begin{align}
\partial_t\mathcal{N}-b(t)\sum_{k=1}^d\sum_{l=1}^dA^k_{1,2}L_2^{-1}A^l_{2,1}\partial_k\partial_l \mathcal{N}=0,\quad \mathcal{N}|_{t=0}=Z_{1,0}. \label{ZL}
\end{align}

\begin{thm}\label{thm3}
Let $Z$ be the global solution to the system \eqref{eq0} with initial data $Z_0$ obtained in Theorem~\ref{thm0}. Suppose additionally that \eqref{21} and \eqref{mu:infer} hold with $-1\leq \alpha< 1$ and that the structural assumptions
\begin{align}\label{structural2}
N_{1}^k(Z)=0,\quad N_2^k(Z_1,0)=0,\quad k=1,\dots,d,\quad Q(Z)=0
\end{align}
hold. Then, for all $\sigma_1<\sigma\leq \frac{d}{2}-1$, we have
\begin{align}
\|(Z_{1}-\mathcal{N})(t)\|_{\dot{\B}^{\sigma}_{2,1}}
&\leq C
\begin{cases}
(1+t)^{-\frac{1+\alpha}{2}(\sigma-\sigma_1)
-\frac{1-\alpha}{2}},
& 0<\alpha <1,\\[0.5em]
(1+t)^{-\frac{1+\alpha}{2}(\sigma-\sigma_1)-\frac{1+\alpha}{2}},
& -1<\alpha \leq 0,\\[0.5em]
(\log(e+t))^{-\frac12(\sigma-\sigma_1+1)},
& \alpha=-1.
\end{cases}
\label{decayimprove2}
\end{align}
where the profile $\mathcal{N}$ is the solution to \eqref{ZL}. 
\end{thm}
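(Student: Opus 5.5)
The plan is to compare $Z_1$ with $\mathcal N$ through a damped-mode reformulation. Let $\mathcal D:=\sum_{k,l}A^k_{1,2}L_2^{-1}A^l_{2,1}\partial_k\partial_l$. Under \eqref{structural2} the $Z_1$-equation reads $\partial_tZ_1+\sum_kA^k_{1,2}\partial_kZ_2=0$. Define the damped mode
\[
W:=Z_2+b(t)L_2^{-1}\sum_l A^l_{2,1}\partial_lZ_1 ,
\]
which we take to be the mode introduced in \eqref{dampdef}. Substituting $Z_2=W-b(t)L_2^{-1}\sum_lA^l_{2,1}\partial_lZ_1$ gives
\[
\partial_t Z_1-b(t)\mathcal D Z_1=-\sum_kA^k_{1,2}\partial_kW .
\]
Hence $\delta:=Z_1-\mathcal N$ satisfies $\partial_t\delta-b(t)\mathcal D\delta=-\sum_kA^k_{1,2}\partial_kW$ with $\delta(0)=0$. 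By the (SK) condition, $-\mathcal D$ is elliptic on the relevant components; with $A_{1,1}=0$, the Kalman condition gives that $\xi\mapsto \xi^\top A_{1,2}L_2^{-1}A_{2,1}\xi$ is positive definite. Set $\Lambda(t):=\int_0^tb$. Localizing with $\ddj$ gives the Duhamel bound
\[
\|\ddj\delta(t)\|_{L^2}\lesssim \int_0^t e^{-c2^{2j}(\Lambda(t)-\Lambda(\tau))}2^j\|\ddj W(\tau)\|_{L^2}\,d\tau .
\]

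Next I would derive decay for $W$ from its own equation. Differentiating,
\[
\partial_tW+\frac{L_2W}{b}=-\sum_kA^k_{2,1}\partial_k Z_1-\sum_k A^k_{2,2}\partial_kZ_2+\partial N_2+b'L_2^{-1}A_{2,1}\partial Z_1+bL_2^{-1}A_{2,1}\partial\partial_tZ_1 .
\]
The $A_{2,1}\partial Z_1$ terms cancel by construction. What remains is of order $\nabla Z_2$, $b'\nabla Z_1$, $b\nabla^2Z_2$ and $\nabla N_2$, and these are dampened at rate $1/b$. Thus $\|W\|_{\dot\B^{s}_{2,1}}\lesssim b(t)\big(\|\nabla Z_2\|_{\dot\B^{s}_{2,1}}+|b'|\,\|\nabla Z_1\|_{\dot\B^{s}_{2,1}}+\dots\big)$ up to exponentially decaying memory, with $|b'|\sim(1+t)^{\alpha-1}$. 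Using \eqref{decay1} and the improved bound \eqref{decayimprove1} for $Z_2$ (at regularity $s+1\le d/2$), together with $N_2(Z_1,0)=0$, which makes $N_2$ at least bilinear in $(Z_1,Z_2)$, I expect
\[
\|W(\tau)\|_{\dot\B^{s}_{2,1}}\lesssim (1+\tau)^{-\frac{1+\alpha}{2}(s+1-\sigma_1)-\min(\frac{1-\alpha}{2}-\alpha,\,1-\alpha)+\alpha}
\]
in low frequencies. The term $b'\nabla Z_1$ gives rate $(1+\tau)^{\alpha-1+\alpha}(1+\tau)^{-\frac{1+\alpha}{2}(s+1-\sigma_1)}$, while the $Z_2$ term gives $(1+\tau)^{\alpha-\frac{1+\alpha}{2}(s+2-\sigma_1)-\frac{1-\alpha}{2}}$. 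Comparing these two exponents produces the dichotomy at $\alpha=0$ in \eqref{decayimprove2}: the $b'$ term dominates when $\alpha\le0$ and the $Z_2$ term when $\alpha>0$. High frequencies decay exponentially in $\Lambda$ by Theorem~\ref{thm0} and are harmless.

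The final step is the time-convolution estimate. I split $[0,t]$ into $[0,t/2]$ and $[t/2,t]$. On $[0,t/2]$, heat smoothing with $\Lambda(t)-\Lambda(\tau)\sim(1+t)^{1+\alpha}$ trades $2^{j(\sigma-\sigma_1')}$ for a power of $(1+t)^{-(1+\alpha)/2}$, and I measure $W$ in a negative/low norm near $\sigma_1$ (from \eqref{21}, propagated). On $[t/2,t]$, I use $\int 2^{2j}e^{-c2^{2j}(\Lambda(t)-\Lambda(\tau))}b\,d\tau\lesssim1$ to absorb one derivative and a factor $1/b$, then sum over $j$ in $\ell^1$. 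The case $\alpha=-1$ is identical with $\Lambda\sim\log(e+t)$, yielding logarithmic rates.

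The main obstacle will be this last step, namely obtaining a decay rate for $W$ sharp enough to produce exactly the exponents in \eqref{decayimprove2}. In particular, the $b\nabla\partial_tZ_1$ term and the nonlinearity must be controlled in low-regularity Besov norms with exponent bookkeeping uniform in $\alpha$. If that bound fails, I would first try to close the $W$-estimate with a time-weighted energy argument using the weight $(1+t)^{\gamma}$ with $\gamma$ tuned to the target rate.
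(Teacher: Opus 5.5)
Your reduction is sound as far as it goes. Note that your $W$ is not the mode in \eqref{dampdef}: it omits $bL_2^{-1}\big(\sum_kA^k_{2,2}\partial_kZ_2-\sum_k\partial_kN^k_2\big)$, so any paper bound on $W/b$ must be transferred, which the $\mathcal Y_t$-bounds on $Z_2$ allow. With your $W$, $\delta=Z_1-\mathcal N$ solves a parabolic equation with source $-\sum_kA^k_{1,2}\partial_kW$, and the quasi-static bound $\|W\|\lesssim b\,\|F\|$ via \eqref{kernel-algebraic-refined} is legitimate.

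The gap is the step meant to produce the exponents, which is the whole content of the theorem.

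\emph{The comparison is reversed.} Your $b'$-exponent minus your $Z_2$-exponent equals exactly $\alpha$, so by your own numbers the $b'$ term is the slower one for $\alpha>0$, not for $\alpha\le0$.

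\emph{With correct bookkeeping there is no competition at all.}
\begin{itemize}
\item The forcing $A_{2,2}\nabla Z_2$, estimated with \eqref{decayimprove1}, decays slower than $b'\nabla Z_1$ by a factor $(1+t)^{\frac{1-\alpha}{2}}$ for every $\alpha<1$.
\item The term $b\nabla\partial_tZ_1\sim b\nabla^2Z_2$ has the same size when estimated with \eqref{decay1} alone. You must do it that way, since \eqref{decayimprove1} holds only up to regularity $\frac d2-1$, not $\frac d2$ as you state.
\item Hence $\|W(t)\|_{\dot{\B}^{s}_{2,1}}\lesssim(1+t)^{-\frac{1+\alpha}{2}(s-\sigma_1)-(1-\alpha)}$ for $\sigma_1-1<s\le\frac d2-2$, uniformly in $\alpha$.
\item Feed this into your $[t/2,t]$ piece, using $\int_{t/2}^t2^je^{-c2^{2j}\int_\tau^tb}\,d\tau\lesssim2^{-j}/b(t)$ with $W$ measured in $\dot{\B}^{\sigma-1}_{2,1}$. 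This gives $(1+t)^{-\frac{1+\alpha}{2}(\sigma-\sigma_1)-\frac{1-\alpha}{2}}$ for all $\alpha$, which is strictly better than \eqref{decayimprove2} when $\alpha<0$.
\end{itemize}
So the $W$-equation cannot explain the transition at $\alpha=0$.

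\emph{The transition comes from the piece you left unquantified,} namely $\int_0^{t/2}$. There heat smoothing gives
\[
(1+t)^{-\frac{1+\alpha}{2}(\sigma+1-\sigma_1)}\int_0^{t/2}\|W\|_{\dot{\B}^{\sigma_1}_{2,\infty}}\,d\tau .
\]
The integral must be controlled by the time-integrated bound $\|b^{-1}W\|_{L^1_t(\dot{\B}^{\sigma_1}_{2,\infty})}\lesssim1$ from Proposition~\ref{prop:lowre}, not by pointwise information. A propagated $\sup_\tau$ bound costs a full power of $t$. Pointwise decay of $W$ at the $\sigma_1$ level is at best $(1+\tau)^{-(1-\alpha)}$, which is not integrable at $\alpha=0$ and costs a logarithm. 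The $L^1_t$ bound yields $(1+t)^{-\frac{1+\alpha}{2}(\sigma+1-\sigma_1)}\sup_{[0,t]}b$. The memory is bounded for $\alpha\le0$ and grows like $b(t)$ for $\alpha>0$, and the larger of the two pieces is exactly \eqref{decayimprove2}.

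This is the Duhamel counterpart of the paper's argument, which takes a different route. The paper runs the $S_M$-weighted energy estimate on $\Lambda_\alpha\delta Z_1$ (with $\Lambda_\alpha=1$ or $b^{-1}$) and bounds the divergence-form source by the weighted $L^1_t$ estimates \eqref{weight:ee}. It then interpolates against a uniform $\dot{\B}^{\sigma_1-1}_{2,\infty}$ bound on $\Lambda_\alpha\delta Z_1$, so it never needs pointwise decay of $W$. Your route, once repaired, gives a transparent picture (early-time memory versus late-time damped-mode decay) but relies on \eqref{decayimprove1} and the kernel estimates.

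Two further repairs are needed in your version.
\begin{itemize}
\item The homogeneous part $e^{-\int L_2/b}\,W(t_b)$ must be handled separately, since $W(t_b)$ need not lie in $\dot{\B}^{\sigma-1}_{2,1}$ when $\sigma-1<\sigma_1$.
\item The case $\alpha=-1$ is not identical. There $\int_{t/2}^tb\sim\log2$, so the midpoint split gives no smoothing; split instead at, e.g., $\tau=\sqrt t$.
\end{itemize}
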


\begin{rem}\normalfont
The decay estimates for the solution $Z$ in
\eqref{decay1} and \eqref{decay2} are optimal under the assumption
\eqref{21}, as they agree with the decay rates of the diffusion profile
$\mathcal N$. Indeed, Theorem~\ref{thm3} shows that the conservative
component of the solution to the non-autonomous partially dissipative
hyperbolic system \eqref{system} is asymptotically equivalent to the
diffusion profile $\mathcal{N}$, the solution to \eqref{ZL}.
Moreover, the improved decay of the dissipative component $Z_2$ may be understood from the asymptotic of Darcy's law:
\[
Z_2\simeq 
-b(t)L_2^{-1}\sum_{k=1}^d A_{2,1}^k\partial_k Z_1\quad \text{as}\quad t\rightarrow \infty.
\]
\end{rem}

\begin{rem}\normalfont
Theorems \ref{thm0}, \ref{thm2} and \ref{thm3} can be applied to the compressible Euler system with time-dependent damping \eqref{eq:euler}. 
In particular, when $\sigma_1=-\frac{d}{2}$, the embedding
$L^1\hookrightarrow \dot{\B}^{-\frac{d}{2}}_{2,\infty}$ implies that the decay rates we obtained for $\rho-\bar{\rho}$ and $u$ are consistent with those obtained in \cite{ji1,ji2}, while the additional smallness assumption in $L^1$ is no longer required. 
Moreover, as far as we know, the stability of the diffusion profile had previously been established only in one space dimension; our result provides its first higher-dimensional counterpart. 
The reader can refer to Section~\ref{sect:7} for more details.
\end{rem}

\begin{rem}\normalfont
The decay rates that we obtained in Theorem~\ref{thm3} follow from the divergence structure
of the error equation \eqref{errorZ}, which contains one additional spatial
derivative. Since the diffusion multiplier satisfies
\[
|\xi|e^{-c|\xi|^2\int_0^t b(\tau)\,d\tau}
\lesssim
\left(\int_0^t b(\tau)\,d\tau\right)^{-\frac12},
\]
this yields the additional decay $(1+t)^{-\frac{1+\alpha}{2}}$ for
$-1<\alpha\leq0$, and $(\log(e+t))^{-1/2}$ for $\alpha=-1$.
When $0<\alpha\leq1$, however, the source term in \eqref{errorZ}
contains $W$, whereas the weighted estimate \eqref{weight:ee} controls $W/b(t)$. Estimating the original error therefore costs one factor $b(t)$, and the gain becomes $(1+t)^{-\frac{1-\alpha}{2}}$. In particular, no strict
decay improvement occurs when $\alpha=1$.
\end{rem}
%\textcolor{blue}{T. So we cannot say that $\mathcal{N}$ is a suitable diffusion profile if there are no gain of decay? (For $\alpha=1$)} {\red{Yes, so we just consider $\alpha<1$?}}

\begin{rem}\normalfont
The structural conditions \eqref{conditionAVbar} and \eqref{structural2} are technical assumptions that ensure the relevant nonlinear terms are not quadratic in the slow component $Z_1$. With suitable adjustments to the regularity assumptions and decay rates, it is possible to relax these technical restrictions; see \cite{c2,SXZ} for the case of constant relaxation coefficients.
\end{rem}

\begin{rem}\normalfont
In the present setting, only the dissipation coefficient depends on time, whereas the hyperbolic matrices and the dissipative subspace remain fixed. The classical Shizuta–Kawashima condition therefore provides a natural and convenient structural framework. It would be interesting to extend the argument to systems
with time-dependent hyperbolic matrices, %A natural extension of the present analysis would be to allow the hyperbolic matrices themselves to vary in time, 
replacing the classical SK condition with an appropriate non-autonomous analogue based on a differential rank condition; see \cite[Theorem~1.18 and Proposition~1.19]{Coron}.
\end{rem}

\subsection{Strategy of proof}

% \subsubsection{Difficulties.}

\subsubsection{An inspiration from spectral analysis}

To highlight the impact of the time-dependent damping coefficient $b(t)$ on the dynamics of general systems, we perform a spectral analysis on a simplified model, namely the linearization of the compressible Euler equations around the equilibrium state $(\bar{\rho},0)$ with $\bar{\rho}>0$. For the sake of simplicity, let $\bar{\rho}=P'(\bar{\rho})=1$ and define $n=\rho-1$. Then, the linearized compressible Euler equations with time-depending damping read:
\begin{equation}\label{euler-lin}
\left\{
    \begin{aligned}
    &\partial_{t}n+\div m=0,\\
    &\partial_{t}m+
      \nabla n+\frac{m}{b(t)}=0.
    \end{aligned}
\right.
\end{equation}
Using the Hodge decomposition, we introduce the compressible component
\[
\mathfrak m:=\Lambda^{-1}\div m,
\qquad
\omega:=\Lambda^{-1}\nabla\times m,
\]
where
\[
\Lambda^{\sigma}:=\mathcal{F}^{-1}\big(|\xi|^{\sigma}\mathcal{F}(\cdot)\big).
\]
The linearized system then reduces to
\begin{equation}\label{lin-hodge}
\partial_{t}
\binom{n}{\mathfrak m}
=
\mathbb{A}(t)
\binom{n}{\mathfrak m},
\qquad
\mathbb{A}(t):=
\begin{pmatrix}
0 & -\Lambda\\
\Lambda & -\dfrac{1}{b(t)}
\end{pmatrix},
\qquad
\partial_{t}\omega+\frac{\omega}{b(t)}=0.
\end{equation}
For each fixed $t>0$, the Fourier symbol $\widehat{\mathbb{A}}(\xi,t)$ admits
two eigenvalues
\begin{equation}\label{eigs}
\lambda_{\pm}(\xi,t)
=
-\frac{1}{2b(t)}
\pm
\frac{1}{2}\sqrt{\frac{1}{b^{2}(t)}-4|\xi|^{2}}.
\end{equation}
As a consequence,
\begin{itemize}
\item in the low-frequency regime $|\xi|\ll \frac{1}{b(t)}$, the eigenvalues
are real and satisfy
\[
\lambda_{+}(\xi,t)\sim -b(t)|\xi|^{2},
\qquad
\lambda_{-}(\xi,t)\sim -\frac{1}{b(t)};
\]
\item in the high-frequency regime $|\xi|\gg \frac{1}{b(t)}$, the eigenvalues
form a complex conjugate pair and satisfy
\[
\lambda_{\pm}(\xi,t)\sim -\frac{1}{2b(t)}\pm {\rm i}\,|\xi|.
\]
\end{itemize}
This spectral analysis suggests that one should choose a frequency threshold
$J_{t}\sim \log_{2}\frac{1}{b(t)}$, which allows us to decompose the frequency
space into the low-frequency region $|\xi|\lesssim \frac{1}{b(t)}$ and the
high-frequency region $|\xi|\gtrsim \frac{1}{b(t)}$, thereby capturing the
distinct qualitative behaviors of solutions to the system \eqref{euler-lin}.

Under the Shizuta-Kawashima (SK) condition, the dissipative coupling structure ensures that the qualitative spectral behavior of the general system is expected to be similar to that of the linearized compressible Euler equations with time-dependent damping, providing a reliable guide for understanding the dynamics of general systems.

\subsubsection{Damped mode}
The above spectral analysis reveals that, at low frequencies, two distinct regimes emerge: a parabolic behavior and a hyperbolic one. In order to capture the damped component (corresponding to the eigenvalue $\lambda_-$), we introduce the following damped mode:
\begin{align}
    \label{dampdef}
    W\triangleq Z_2+L_2^{-1}\left(b(t)\sum_{k=1}^d(A_{2,1}^k\partial_k Z_1+A_{2,2}^k\partial_k Z_2)-b(t)\sum_{k=1}^d\partial_k N^k_2(Z) -Q(Z)\right).
\end{align}
Then $\partial_t Z_2=-\frac{L_2 W}{b(t)}$. Moreover, $W$ satisfies
 \begin{align}
    \partial_t W+\frac{L_2 W}{b(t)}=l_1+f_1,
    \label{weq}
\end{align}
where the linear remainder reads
\begin{equation}\label{l1}
 \begin{aligned}
 l_1&=L_2^{-1}\left(b(t) \sum_{k=1}^d (A_{2,1}^k\partial_k \partial_t Z_1-b(t)^{-1}A_{2,2}^k\partial_k L_2 W)+b'(t) \sum_{k=1}^d (A_{2,1}^k\partial_k Z_1+A_{2,2}^k\partial_k Z_2)\right),
 \end{aligned}
  \end{equation}
 and the nonlinear term
 \begin{equation}
\begin{aligned}\label{h1h2}
 f_1&=-L_2^{-1}\left(\partial_t \big( b(t)\sum_{k=1}^d\partial_k N^k_2(Z)\big)+\partial_tQ(Z)\right).
    % f_2&=b(t) L_2^{-1}\sum_{k=1}^d \partial_t(\widetilde{A}_{2,1}^k(Z)\partial_k Z_1+\widetilde{A}_{2,2}^k(Z)\partial_k Z_2),\\
    % h_2&=b'(t) L_2^{-1}\sum_{k=1}^d (\widetilde{A}_{2,1}^k(Z)\partial_k Z_1+\widetilde{A}_{2,2}^k(Z)\partial_k Z_2),\\
    % h_3&=\partial_t Q(Z),
 \end{aligned}
 \end{equation}
Due to the special structure of the equation satisfied by $W$, it enjoys faster decay rates than both components $Z_1$ and $Z_2$.

\vspace{2mm}

\noindent
\subsubsection{Leading diffusion profile}

In order to describe the time-dependent diffusion profile, it is convenient to work directly with the evolution equations for $Z_1$, formulated in terms of the damped mode $W$. %Let us denote for all $k\in\{1,...,d\}$ and $p,m\in\{1,2\}$
 % \begin{align*}
      %A_{p,m}^k=A_{p,m}^k(\Bar{V})\quad{\rm and}\quad \widetilde{A}_{p,m}^k(Z)=A_{p,m}^k(\Bar{V}+Z)-A_{p,m}^k(\Bar{V}).
  %\end{align*}
 % In particular, by the structural condition before, we may write $\widetilde{A}_{1,1}^k(Z)$ as $\widetilde{A}_{1,1}^k(Z_2)$.
Substituting 
\begin{align}\label{Z2}
Z_2=W-L_2^{-1}\left(b(t)\sum_{k=1}^d(A_{2,1}^k\partial_k Z_1+A_{2,2}^k\partial_k Z_2)-b(t)\sum_{k=1}^d\partial_k N^k_2(Z) -Q(Z)\right)
\end{align}
into $\eqref{sys}_1$, the equation of $Z_1$ rewrites as
\begin{align}
    \label{eqz1}
    \partial_tZ_1-b(t)\sum_{k=1}^d\sum_{l=1}^dA^k_{1,2}L_2^{-1}A^l_{2,1}\partial_k\partial_l Z_1=l_2+f_2,
\end{align}
with the linear remainder
\begin{align}\label{l2}
l_2=-\sum_{k=1}^dA^k_{1,2}\partial_k W+b(t)\sum_{k=1}^d\sum_{l=1}^dA^k_{1,2}L^{-1}_2A^l_{2,2}\partial_k\partial_l Z_2
\end{align}
and the nonlinear term
\begin{equation}\label{f1f5}
\begin{aligned}
    f_2= \sum_{k=1}^d\partial_k N^k_1(Z)-\left(b(t)\sum_{k=1}^d\sum_{l=1}^d A^k_{1,2}L_2^{-1}\partial_k\partial_l N^l_2(Z)+\sum_{k=1}^d A^k_{1,2}L_2^{-1} \partial_k Q(Z)\right).
%&f_1=b(t)\sum_{k=1}^d\sum_{l=1}^d\left(A^k_{1,2}(V)\partial_k(L^{-1}_2A^l_{2,2}(V)\partial_l Z_2)-A^k_{1,2}(\bar{V})L^{-1}_2A^l_{2,2}(\bar{V})\partial_k\partial_l Z_2\right),\\
   % &f_2=-\sum_{k=1}^d \widetilde{A}^k_{1,2}(Z)\partial_k W,\\
   % &f_3=b(t)\sum_{k=1}^d\sum_{l=1}^dA^k_{1,2}(V)\partial_k(L^{-1}_2\widetilde{A}^l_{2,1}(Z))\partial_l Z_1,\\
    %&f_4=b(t)\sum_{k=1}^d\sum_{l=1}^d\widetilde{A}^k_{1,2}(Z)L^{-1}_2A^l_{2,1}\partial_k\partial_l Z_1,\\
   % &f_5=-\sum_{k=1}^d(\widetilde{A}^k_{1,1}(Z)\partial_k Z_1-A_{1,2}^k\partial_k Q(Z)).
\end{aligned}
\end{equation}
%Here we used 
%\begin{align}
%Z_2=\mathbf{L}\Big(W-b(t)L_2^{-1}\sum_{k=1}^d A_{2,1}^k\partial_k Z_1+b(t)\sum_{k=1}^d\partial_k N^k_2(Z) +Q(Z) \Big),\label{Z2}
%\end{align}
%where the operator $\mathbf{L}$ is defined by the Fourier multiplier with the symbol  
%\begin{align}\label{L2}
%\Big(\mathbf{I}_n+b(t) L_2^{-1} \sum_{k=1}^dA_{2,2}^k {\rm i} \xi_k \Big)^{-1},
%\end{align}
%which is bounded in $L^p$ ($1\leq p\leq \infty$) restricted in the annulus $2^j\mathcal{C}$ with $2^j<<\frac{1}{b(t)}$. 
Note that a second-order operator appears in the equation for $Z_1$. In order to ensure that $Z_1$ exhibits the parabolic behavior predicted by the spectral analysis, we will make use of the facts:
\begin{align}
    \forall k\in\{1,...,d\},\quad A^k_{1,1}=0\quad{\rm and}\quad-\sum_{k=1}^d\sum_{l=1}^d  A^k_{1,2}L_2^{-1}A^l_{2,1}\partial_k\partial_l\quad{\rm is\ strongly\ elliptic.}
    \label{ellip}
\end{align}
In fact, if the first condition of \eqref{ellip} is satisfied, then the strong ellipticity property in \eqref{ellip} is equivalent to the (SK) condition (cf. \cite{BHN,c3,Yong}).

With this reformulated system in terms of $Z_1$ and $W$, we recover a structure consistent with the spectral analysis at low frequencies. In the high-frequency regime, we perform hypocoercive estimates to make the dissipative effects on $Z_1$ explicit.

The strategy is then to derive a priori estimates based on the linear structure and to close the argument via a standard bootstrap procedure.

{\subsubsection{Conditions on the damping coefficient \texorpdfstring{$b$}{b}}
\label{condb}

We explain how the additional growth condition on $b$ arises from the
nonlinear estimates. The linear analysis accommodates a broader class
of coefficients under the monotonicity assumptions described in
Section~\ref{sec:linear}. In contrast, closing the nonlinear
high-frequency estimates requires quantitative control of the growth
of $b$.

At high frequencies, the hypocoercive energy estimate takes the
schematic form
\[
\frac{\dd}{\dd t}X+\frac{\kappa_{\rm hf}}{b(t)}X
\leq C X^2,
\]
where $X$ is a suitable high-frequency energy norm and
$\kappa_{\rm hf}>0$ depends on the matrices $A^k$ and $B$ and on the
choice of the hypocoercive Lyapunov functional. To absorb the nonlinear
term into the dissipation, it suffices to keep $b(t)X(t)$ sufficiently
small. Introducing $Y(t):=b(t)X(t)$, we obtain
\[
\frac{\dd}{\dd t}Y
+\frac{\kappa_{\rm hf}-b'(t)}{b(t)}Y
\leq \frac{C}{b(t)}Y^2.
\]
This naturally leads to the assumption $
\limsup_{t\to\infty}b'(t)<\kappa_{\rm hf}$. Indeed, under this condition, there exist $T_0\geq0$ and $\delta>0$
such that $\kappa_{\rm hf}-b'(t)\geq\delta$ for all $t\geq T_0$. 
Consequently, it holds that
\[
\frac{\dd}{\dd t}Y+\frac{\delta}{b(t)}Y
\leq\frac{C}{b(t)}Y^2,
\qquad t\geq T_0.
\]
If $Y(T_0)<\delta/(2C)$, a standard bootstrap argument yields
\[
Y(t)\leq Y(T_0)
\qquad\text{for all }t\geq T_0,
\]
and hence
\[
X(t)\leq \frac{b(T_0)X(T_0)}{b(t)}.
\]
The required smallness at time $T_0$ follows from the finite-time
estimates on $[0,T_0]$, provided the initial perturbation is sufficiently
small. The smallness threshold may therefore depend on the behavior
of $b$ on this initial interval.

The same condition can be understood by comparing the linear decay
factor
\[
\exp\left(-\kappa_{\rm hf}\int_0^t\frac{\dd\tau}{b(\tau)}\right)
\]
with the weight $1/b(t)$. Indeed, we have
\[
\frac{\dd}{\dd t}
\left[
b(t)\exp\left(
-\kappa_{\rm hf}\int_0^t\frac{\dd\tau}{b(\tau)}
\right)
\right]
=
\bigl(b'(t)-\kappa_{\rm hf}\bigr)
\exp\left(
-\kappa_{\rm hf}\int_0^t\frac{\dd\tau}{b(\tau)}
\right).
\]
Thus, the strict gap between $b'$ and $\kappa_{\rm hf}$ provides the
margin needed to propagate the weighted smallness in the nonlinear
problem.

For the Euler system with damping coefficient
$\mu(1+t)^{-\lambda}$, our convention gives
\[
\frac{1}{b(t)}=\frac{\mu}{(1+t)^\lambda},
\qquad
b(t)=\frac{(1+t)^\lambda}{\mu},
\qquad
b'(t)=\frac{\lambda}{\mu}(1+t)^{\lambda-1}.
\]
If $\lambda<1$, then $b'(t)\to0$, so the above condition is satisfied
for every $\mu>0$. In the critical case $\lambda=1$, it becomes
\[
\frac{1}{\mu}<\kappa_{\rm hf},
\qquad\text{or equivalently}\qquad
\mu>\frac{1}{\kappa_{\rm hf}}.
\]
The abstract hypocoercive argument therefore yields a sufficient
lower bound on the damping strength in the critical case. This bound
depends on the system and the chosen energy functional and does not need to
be optimal.}
\section{Littlewood-Paley decomposition and functional spaces}\label{sec:LP}

% Before proving the main results, we introduce a few notations.

We fix a homogeneous Littlewood--Paley decomposition
$(\dot{\Delta}_j)_{j\in \mathbb{Z}}$ by setting
\begin{align*}
    \dot{\Delta}_j \triangleq \varphi(2^{-j}D)
    \quad {\rm with}\quad
    \varphi(\xi)=\chi(\xi/2)-\chi(\xi),
\end{align*}
where $\chi$ stands for a smooth function taking values in $[0,1]$, supported
in the open ball $B(0,4/3)$ and such that $\chi\equiv 1$ on the closed ball
$\overline{B}(0,3/4)$. Furthermore, we set
\begin{align*}
    \dot{S}_j \triangleq \chi(2^{-j}D)
    \quad {\rm for\ all}\quad j\in \mathbb{Z},
\end{align*}
and define $\mathcal{S}'_h$ to be the set of tempered distributions $z$ such that
\begin{align*}
    \lim_{j\to -\infty}\|\dot{S}_j z\|_{L^\infty}=0.
\end{align*}
Following \cite{Bahouri2011}, we introduce the homogeneous Besov semi-norms:
\begin{align*}
    \|z\|_{\dot{\B}_{p,r}^s}
    \triangleq
    \big\|2^{js}\|\dot{\Delta}_j z\|_{L^p(\mathbb{R}^d)}\big\|_{\ell^r(\mathbb{Z})},
\end{align*}
and define the homogeneous Besov spaces $\dot{\B}_{p,r}^s$ (for any $s\in\mathbb{R}$
and $(p,r)\in[1,\infty]^2$) as the subset of those $z$ in $\mathcal{S}_h'$ such that
$\|z\|_{\dot{\B}_{p,r}^s}$ is finite.

% Similarly, for the nonhomogeneous Besov spaces, we first introduce the nonhomogeneous dyadic blocks $\Delta_j$:
% \begin{align*}
%     \Delta_j\triangleq \dot{\Delta}_j\quad{\rm if}\quad j\geqslant 0,\qquad
%     \Delta_{-1}=\dot{S}_0,\qquad
%     \Delta_j=0\quad{\rm if}\quad j\leqslant -2,
% \end{align*}
% and the nonhomogeneous Besov semi-norms:
% \begin{align*}
%     \|z\|_{{\B}_{p,r}^s}
%     \triangleq
%     \big\|2^{js}\|{\Delta}_j z\|_{L^p(\mathbb{R}^d)}\big\|_{\ell^r(\mathbb{Z})}.
% \end{align*}
% Then the definition of nonhomogeneous Besov spaces follows. 

%\subsection{Functional framework}

According to the calculations above, it is most suitable to separate low and high frequencies in a time-dependent manner.
Precisely, for $t\geq 0$, we set the thresholds
\begin{align}
    &J_{t}:=\left\lfloor\log_{2}\frac{1}{b(t)}\right\rfloor-k_{0}\quad\text{and}\quad
J_0:= \left\lfloor\log_2 \frac{1}{b(0)}\right\rfloor-k_0,
    \label{J}
\end{align}
for some fixed integer $k_{0}\in \mathbb{Z}$. If $b(t)\equiv b_0$, then $J_t\equiv J_0$, and the above decomposition
reduces to the usual fixed low--high frequency splitting. In this case,
no transition time $t_j$ is needed. The quantities $t_j$ introduced
below are used only when $b$ is strictly monotone.

\begin{rem}
    As we shall see, the definition of $k_0$ depends only on the matrices $A^k$ and $B$.
    In particular, it is independent of $b(t)$. Hence we may impose conditions on $b(t)$
    depending on $k_0$ without any circular dependence.
\end{rem}

In the case $b'(t)\ne 0$, the frequency threshold $J_t$ depends on time, which introduces substantial technical difficulties compared with the constant-damping case. To quantify the interplay between time and frequency, we define the time threshold $t_j$ as follows:
\begin{equation*}
\text{if} \quad  b'(t)>0,\quad t_j=\left\{
\begin{aligned}
& b^{-1}(2^{-(j+k_0)}),\quad  j\leq J_0,\\
& 0, \qquad\qquad\qquad\ \  j> J_0,
\end{aligned}
\right.
\end{equation*}
and
\begin{equation*}
\text{if} \quad b'(t)< 0,\quad  t_j=\left\{
\begin{aligned}
& b^{-1}(2^{-(j+k_0)}), \quad  j\geq J_0+1,\\
& 0, \qquad\qquad\qquad\ \  j< J_0.
\end{aligned}
\right.
\end{equation*}
Whenever $t_j>0$, we have
\begin{equation}\label{relationtj}
2^{j}= (2^{k_0} b(t_j))^{-1}.
\end{equation}

We define the Besov semi-norms associated with a general threshold $J\in \mathbb{Z}$ by
\begin{align*}
    \|u\|_{\dot{\B}_{p,r}^s}^{\ell,J}
    :=\big\|\{2^{js}\|\dot{\Delta}_j u\|_{L^p}\}_{j\leqslant J}\big\|_{\ell^r},
    \qquad
    \|u\|_{\dot{\B}_{p,r}^s}^{h,J}
    :=\big\|\{2^{js}\|\dot{\Delta}_j u\|_{L^p}\}_{j\geqslant J+1}\big\|_{\ell^r}.
\end{align*}

Then, for fixed $j$ and $t$, we define the time intervals
\begin{align*}
    I_{j,t}^\ell:=\{0\leq\tau\leq t\mid j\leqslant J_\tau\},
    \qquad
    I_{j,t}^h:=\{0\leq\tau\leq t\mid j\geqslant J_\tau+1\}.
\end{align*}
For simplicity, we shall denote $I_{j}^\ell:=I_{j,\infty}^\ell$ and  $I_{j}^h:=I_{j,\infty}^h$. 

\begin{rem}\label{remark11}
Assume that $b$ is monotone. Then, for any $j\in\mathbb{Z}$ and $t\ge0$, whenever there exists $t_j\ge0$ such that $2^{-(j+k_0)}=b(t_j)$, 
the sets $I_{j,t}^\ell$ and $I_{j,t}^h$ are intervals. More precisely,
\begin{itemize}
\item If $b'(t)>0$, then
\[
I_{j,t}^\ell=[0,\min\{t,t_j\}],\qquad
I_{j,t}^h=
\begin{cases}
[t_j,t], & t\ge t_j,\\
\emptyset, & t<t_j;
\end{cases}
\]
\item If $b'(t)<0$, then
\[
I_{j,t}^\ell=
\begin{cases}
[t_j,t], & t\ge t_j,\\
\emptyset, & t<t_j.
\end{cases}, \qquad I_{j,t}^h=[0,\min\{t,t_j\}].
\]
\end{itemize}
\end{rem}

\begin{figure}[!ht]
    \centering
    \begin{minipage}[t]{0.48\textwidth}
        \centering
        \begin{tikzpicture}[xscale=0.85,yscale=0.85, thick]
            % axes
            \draw[-latex] (-0.2,0) -- (6.9,0);
            \draw[-latex] (0,-3.4) -- (0,3.4);

            % axis labels
            \node[left] at (0,3.2) {$j$};
            \node[below left] at (0,0) {$0$};
            \node[below] at (6.55,0) {$\infty$};

            % maximal time t
            \def\tmax{5.0}
            \pgfmathsetmacro{\Jt}{2.4-1.9*ln(1+\tmax)}

            % low-frequency shaded region, only up to t
            \fill[gray!12,opacity=0.32]
                plot[smooth,domain=0.03:\tmax,samples=120] (\x,{2.4-1.9*ln(1+\x)})
                -- (\tmax,-3.1) -- (0.03,-3.1) -- cycle;

            % threshold curve: ends at (t,J_t)
            \draw[line width=1.15pt]
                plot[smooth,domain=0:\tmax,samples=120] (\x,{2.4-1.9*ln(1+\x)});

            % dashed guides
            \draw[dashed] (\tmax,-3.1) -- (\tmax,3.1);
            \draw[dashed] (0,\Jt) -- (\tmax,\Jt);
            \fill (\tmax,\Jt) circle (1.5pt);

            % labels
            \node[below right] at (\tmax,0) {$t$};
            \node[left] at (0,\Jt) {$J_t$};

            % frequency-region labels
            \node at (2.3,-1.35) {Low frequencies};
            \node at (2.1,2.1) {High frequencies};
        \end{tikzpicture}

        \vspace{0.3em}
        \small (a) Underdamped case $b'(t)>0$
    \end{minipage}
    \hfill
    \begin{minipage}[t]{0.48\textwidth}
        \centering
        \begin{tikzpicture}[xscale=0.85,yscale=0.85, thick]
            % axes
            \draw[-latex] (-0.2,0) -- (6.9,0);
            \draw[-latex] (0,-3.4) -- (0,3.4);

            % axis labels
            \node[left] at (0,3.2) {$j$};
            \node[below left] at (0,0) {$0$};
            \node[below] at (6.55,0) {$\infty$};

            % maximal time t
            \def\tmax{5.0}
            \pgfmathsetmacro{\Jt}{-2.2+1.45*ln(1+\tmax)}

            % low-frequency shaded region, only up to t
            \fill[gray!12,opacity=0.32]
                plot[smooth,domain=0.03:\tmax,samples=120] (\x,{-2.2+1.45*ln(1+\x)})
                -- (\tmax,-3.1) -- (0.03,-3.1) -- cycle;

            % threshold curve: ends at (t,J_t)
            \draw[line width=1.15pt]
                plot[smooth,domain=0:\tmax,samples=120] (\x,{-2.2+1.45*ln(1+\x)});

            % dashed guides
            \draw[dashed] (\tmax,-3.1) -- (\tmax,3.1);
            \draw[dashed] (0,\Jt) -- (\tmax,\Jt);
            \fill (\tmax,\Jt) circle (1.5pt);

            % labels
            \node[below right] at (\tmax,0) {$t$};
            \node[left] at (0,\Jt) {$J_t$};

            % frequency-region labels
            \node at (2.2,-2.1) {Low frequencies};
            \node at (2.2,1.8) {High frequencies};
        \end{tikzpicture}

        \vspace{0.3em}
        \small (b) Overdamped case $b'(t)<0$
    \end{minipage}
    \caption{Frequency-time partition}
    \label{fig:frequency-splitting}
\end{figure}

For $\varrho\geqslant1$, we adapt the Chemin-Lerner norms to more general spaces involving time-dependent coefficients:
\begin{align*}
\|u\|_{\widetilde{L}^{\varrho}_{t}(\dot{\B}^{s}_{p,1})}^{\ell}
&:=\sum_{j\in\Z}2^{js}
\Big(\int_{I_{j,t}^\ell}\|\ddj u(\tau)\|^\varrho_{L^p}\,d\tau\Big)^{\frac{1}{\varrho}},\quad \|u\|_{\widetilde{L}^{\varrho}_{t}(\dot{\B}^{s}_{p,1})}^{h}:=\sum_{j\in\Z}2^{js}
\Big(\int_{I_{j,t}^h}\|\ddj u(\tau)\|^\varrho_{L^p}\,d\tau\Big)^{\frac{1}{\varrho}}.
\end{align*}
where, for $\varrho=+\infty$, the usual convention (involving the essential supremum $\sup_{[a,b]}f(\tau)$) is adopted. When $b$ is constant, then the above definition reduces to the classical Chemin-Lerner norms
\begin{align*}
    &\:\|u\|_{\widetilde{L}^{\varrho}_{t}(\dot{\B}^{s}_{p,1})}^{\ell}=\sum_{\substack{j\in\Z\\ j\leq J_0}}2^{js}\Big(\int^t_0\|\ddj u(\tau)\|^\varrho_{L^p}d\tau\Big)^\frac{1}{\varrho},\quad \|u\|_{\widetilde{L}^{\varrho}_{t}(\dot{\B}^{s}_{p,1})}^{h}=\sum_{\substack{j\in\Z\\ j\geq J_0+1}}2^{js}\Big(\int_0^t\|\ddj u(\tau)\|^\varrho_{L^p}d\tau\Big)^\frac{1}{\varrho}.
\end{align*}
 By Fubini's theorem, we observe that, for $\varrho=1$,
\begin{align*}
 \text{For } b'(t)>0,&\quad   \|u\|_{\widetilde{L}^{1}_{t}(\dot{\B}^{s}_{p,1})}^{\ell}=\sum_{\substack{j\in\Z\\ t_j>0}}2^{js}\int_{I_{j,t}^\ell}\|\ddj u(\tau)\|_{L^p}d\tau=\int_0^t\|u(\tau)\|_{\dot{\B}^{s}_{p,1}}^{\ell}d\tau,\\
 & \quad   \|u\|_{\widetilde{L}^{1}_{t}(\dot{\B}^{s}_{p,1})}^{h}=\sum_{\substack{j\in\Z\\t_j<t}}2^{js}\int_{I_{j,t}^h}\|\ddj u(\tau)\|_{L^p}d\tau=\int_{0}^t\|u(\tau)\|_{\dot{\B}^{s}_{p,1}}^{h}d\tau,
 % \\
 % \text{for } \alpha=0,&\quad   \|u\|_{\widetilde{L}^{1}_{t}(\dot{\B}^{s}_{p,1})}^{\ell}=\sum_{j\leq J_0}2^{js}\int^t_0\|\ddj u(\tau)\|_{L^p}d\tau=\int_0^t\|u(\tau)\|_{\dot{\B}^{s}_{p,1}}^{\ell,J_0}d\tau,\\
 % & \quad   \|u\|_{\widetilde{L}^{1}_{t}(\dot{\B}^{s}_{p,1})}^{h}=\sum_{j\geq J_0+1}2^{js}\int^t_0\|\ddj u(\tau)\|_{L^p}d\tau=\int_{0}^t\|u(\tau)\|_{\dot{\B}^{s}_{p,1}}^{h,J_0}d\tau,
 \\
  \text{for } b'(t)<0,&\quad   \|u\|_{\widetilde{L}^{1}_{t}(\dot{\B}^{s}_{p,1})}^{\ell}=\sum_{\substack{j\in\Z\\ t_j<t}}2^{js}\int_{I_{j,t}^\ell}\|\ddj u(\tau)\|_{L^p}d\tau=\int_{0}^t\|u(\tau)\|_{\dot{\B}^{s}_{p,1}}^{\ell}d\tau,\\
  &\quad   \|u\|_{\widetilde{L}^{1}_{t}(\dot{\B}^{s}_{p,1})}^{h}=\sum_{\substack{j\in\Z\\ t_j>0}}2^{js}\int_{I_{j,t}^h}\|\ddj u(\tau)\|_{L^p}d\tau=\int_0^t\|u(\tau)\|_{\dot{\B}^{s}_{p,1}}^{h}d\tau.
\end{align*}

Our analysis is based on a refined frequency decomposition and a careful low/high-frequency analysis within the Littlewood–Paley framework. More precisely, the low-frequency part is estimated in the critical space $\dot{\B}^{d/2}_{2,1}$, while the high-frequency part is treated in $\dot{\B}^{d/2+1}_{2,1}$. For each dyadic block, the energy estimates must be performed separately on the time intervals $I_{j,t}^\ell$ and $I_{j,t}^h$. Moreover, once the analysis is split at the transition time $t_j$, the estimates on these two intervals have to be patched together rather than treated independently. As a consequence, the low- and high-frequency analyses are no longer decoupled (see Figures \ref{fig:frequency-splitting} and \ref{fig:time-splitting}).
\begin{figure}[!ht]
    \centering

    \begin{minipage}{0.95\textwidth}
        \centering
        \begin{tikzpicture}[xscale=0.48,yscale=0.48, thick]
            % axis
            \draw[-latex] (-1,0) -- (21,0);
            \draw (21,0) node[below] {$t$};

            % ticks
            \draw (-1,0) node {$|$};
            \draw (10,0) node {$|$};

            \draw (-1,-0.25) node[below] {$0$};
            \draw (10,-0.25) node[below] {$t_j$};

            % labels
            \draw (4.5,0.0) node[above] {Low frequencies};
            \draw (4.5,-0.75) node {$\dot{\B}^{\frac{d}{2}}_{2,1}$};

            \draw (15.5,0.0) node[above] {High frequencies};
            \draw (15.5,-0.75) node {$\dot{\B}^{\frac{d}{2}+1}_{2,1}$};
        \end{tikzpicture}

        \vspace{0.3em}
        \small (a) Underdamped case $b'(t)>0$
    \end{minipage}

    \vspace{0.8em}

    \begin{minipage}{0.95\textwidth}
        \centering
        \begin{tikzpicture}[xscale=0.48,yscale=0.48, thick]
            % axis
            \draw[-latex] (-1,0) -- (21,0);
            \draw (21,0) node[below] {$t$};

            % ticks
            \draw (-1,0) node {$|$};
            \draw (10,0) node {$|$};

            \draw (-1,-0.25) node[below] {$0$};
            \draw (10,-0.25) node[below] {$t_j$};

            % labels
            \draw (4.5,0.0) node[above] {High frequencies};
            \draw (4.5,-0.75) node {$\dot{\B}^{\frac{d}{2}+1}_{2,1}$};

            \draw (15.5,0.0) node[above] {Low frequencies};
            \draw (15.5,-0.75) node {$\dot{\B}^{\frac{d}{2}}_{2,1}$};
        \end{tikzpicture}

        \vspace{0.3em}
        \small (b) Overdamped case $b'(t)<0$
    \end{minipage}

    \caption{Frequency-dependent time splitting}
    \label{fig:time-splitting}
\end{figure}

%\begin{rem}
 %   By the definitions above, we observe that the roles of increasing and decreasing $b$ are reversed.
  %  Hence, for most arguments, the proofs are essentially symmetric. In what follows, we shall always suppose that $b$ is increasing.
%\end{rem}

By the definition of the time threshold and Bernstein's inequality, we have the following lemma which will be used repeatedly.

\begin{lem}\label{lemma51}
For any function $f=f(x)$, $s\geqslant0$, and $r\in\mathbb{R}$, we have
\begin{align}
     &\|b^{r}(\tau)f\|_{\dot{\B}^{s}_{2,1}}^\ell
    \lesssim 2^{-k_0}\|b^{r-1}(\tau)f\|_{\dot{\B}^{s-1}_{2,\infty}}^{\ell}
    \lesssim 2^{-k_0}\|b^{r-1}(\tau)f\|_{\dot{\B}^{s-1}_{2,1}}^{\ell},
    \label{timelow}\\
    &\|b^{r}(\tau)f\|_{\dot{\B}^{s}_{2,1}}^h
    \lesssim 2^{k_0}\|b^{1+r}(\tau)f\|_{\dot{\B}^{s+1}_{2,1}}^{h}.
    \label{timehigh}
\end{align}
\end{lem}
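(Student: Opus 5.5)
The plan is to argue blockwise, since both norms are sums (or suprema) over dyadic indices $j$ of $2^{js}\|\ddj f\|_{L^2}$ multiplied by the scalar weight $b^r(\tau)$. The key input is the definition \eqref{J} of the threshold $J_\tau=\lfloor\log_2\frac1{b(\tau)}\rfloor-k_0$. It gives
\[
2^{J_\tau}\le 2^{-k_0}\,b(\tau)^{-1}
\qquad\text{and}\qquad
2^{J_\tau+1}> 2^{-k_0}\,b(\tau)^{-1}.
\]
Here $\tau$ is fixed and the superscripts $\ell,h$ refer to the threshold $J_\tau$. So the statement is a pointwise-in-time inequality, and no Bernstein estimate beyond the trivial identity $2^{js}=2^{j}\,2^{j(s-1)}$ is needed.

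\textbf{Low frequencies.} For $j\le J_\tau$ I would write
\[
2^{js}b^r(\tau)\|\ddj f\|_{L^2}=2^{j}b(\tau)\cdot 2^{j(s-1)}b^{r-1}(\tau)\|\ddj f\|_{L^2}.
\]
Then $2^jb(\tau)\le 2^{j-J_\tau}\,2^{J_\tau}b(\tau)\le 2^{-k_0}2^{j-J_\tau}$. Summing over $j\le J_\tau$ and bounding each factor $2^{j(s-1)}b^{r-1}\|\ddj f\|_{L^2}$ by its supremum over $j\le J_\tau$ gives the geometric series $\sum_{j\le J_\tau}2^{j-J_\tau}\le 2$. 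This yields the $\dot{\B}^{s-1}_{2,\infty}$ bound in \eqref{timelow}. The second inequality in \eqref{timelow} is just $\ell^1\hookrightarrow\ell^\infty$ restricted to $j\le J_\tau$.

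\textbf{High frequencies.} For $j\ge J_\tau+1$ I would write
\[
2^{js}b^r\|\ddj f\|_{L^2}=\big(2^{j}b(\tau)\big)^{-1}\,2^{j(s+1)}b^{1+r}\|\ddj f\|_{L^2}.
\]
Using $2^jb(\tau)\ge 2^{J_\tau+1}b(\tau)>2^{-k_0}$, the prefactor is at most $2^{k_0}$. Summing over $j\ge J_\tau+1$ gives \eqref{timehigh} directly, with no geometric summation needed.

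\textbf{Main obstacle.} The only delicate point is bookkeeping: one must check that the floor in \eqref{J} produces exactly the stated powers of $2^{\pm k_0}$ up to an absolute constant, which the two displayed inequalities above handle. One should also note that the hypothesis $s\ge0$ plays no role in this argument, and that the weight $b^r(\tau)$ is constant in $x$, so it commutes with $\ddj$.
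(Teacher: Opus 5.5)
Your proof is correct and follows essentially the same blockwise argument as the paper: $2^{j}b(\tau)\le 2^{-k_0}$ for $j\le J_\tau$, with the geometric sum $\sum_{j\le J_\tau}2^{j}\lesssim 2^{J_\tau}$ giving the $\dot{\B}^{s-1}_{2,\infty}$ bound, and $2^{j}b(\tau)> 2^{-k_0}$ for $j\ge J_\tau+1$. The only cosmetic difference is that the paper also proves the $\dot{\B}^{s-1}_{2,1}$ bound directly block by block (constant exactly $2^{-k_0}$), whereas you deduce it from $\ell^1\hookrightarrow\ell^\infty$; this changes nothing.
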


\begin{proof}
We prove the first inequality in \eqref{timelow}; the proof of \eqref{timehigh} is similar.
By the definition of the threshold, for $j\leqslant J_\tau$ we have
\[
2^j \leqslant \frac{2^{-k_0}}{b(\tau)}.
\]
Hence, by Bernstein's inequality,
\[
\forall j\leqslant J_\tau,\qquad
\|b^{r}(\tau)\dot{\Delta}_j f\|_{L^2}
= b^r(\tau)\|\dot{\Delta}_j f\|_{L^2}
\leqslant 2^{-k_0}2^{-j}\, b^{r-1}(\tau)\|\dot{\Delta}_j f\|_{L^2}.
\]
Multiplying by $2^{js}$ and summing over $j\leqslant J_\tau$, we get
\[
\|b^r(\tau)f\|_{\dot{\B}^{s}_{2,1}}^\ell
\leqslant 2^{-k_0}\sum_{j\leqslant J_\tau}2^{j(s-1)}\|b^{r-1}(\tau)\dot{\Delta}_j f\|_{L^2}
\leqslant 2^{-k_0}\|b^{r-1}(\tau)f\|_{\dot{\B}^{s-1}_{2,1}}^\ell.
\]
For the bound with $\dot{\B}^{s-1}_{2,\infty}$, we observe that 
\[
\|b^r(\tau)f\|_{\dot{\B}^{s}_{2,1}}^\ell
= b^r(\tau)\sum_{j\leqslant J_\tau}2^{js}\|\dot{\Delta}_j f\|_{L^2}
\leqslant b^r(\tau)\sum_{j\leqslant J_\tau}2^{j}\|f\|_{\dot{\B}^{s-1}_{2,\infty}}
\leqslant 2^{-k_0}\|b^{r-1}(\tau)f\|_{\dot{\B}^{s-1}_{2,\infty}}^\ell,
\]
since $\sum_{j\leqslant J_\tau}2^{j}\lesssim 2^{J_\tau}$. For $j\ge J_\tau+1$, we have $2^j\geq \frac{2^{-k_0}}{b(\tau)}$. Thus $b^\gamma(\tau)\|\dot\Delta_j f\|_{L^2}
\leq 2^{k_0}2^j b^{1+\gamma}(\tau)\|\dot\Delta_j f\|_{L^2}$ and summing over $j\ge J_\tau+1$ gives \eqref{timehigh}.
\end{proof}

Similarly, we obtain the time-dependent versions:
\begin{align}
\|b^{r}(\tau)f\|_{\widetilde{L}_T^\rho(\dot{\B}^{s}_{2,1})}^\ell
\leqslant 2^{-k_0}\|b^{r-1}(\tau)f\|_{\widetilde{L}_T^\rho(\dot{\B}^{s-1}_{2,1})}^{\ell},
\qquad
\|b^{r}(\tau)f\|_{\widetilde{L}_T^\rho(\dot{\B}^{s}_{2,1})}^h
\leqslant 2^{k_0}\|b^{1+r}(\tau)f\|_{\widetilde{L}_T^\rho(\dot{\B}^{s+1}_{2,1})}^{h}.
\end{align}

Then we introduce a lemma which will be used repeatedly.

\begin{lem}\label{b'lem}
Assume that $b$ is monotone.
For any $s\in\mathbb{R}$ and any function $f\in \widetilde{L}^\infty_T(\dot\B_{p,1}^s)$, we have
\begin{align*}
\|b'(\tau) f\|^\ell_{\widetilde{L}^1_T(\dot\B_{p,1}^{s+1})}
\leqslant 2^{-k_0}\|f\|^\ell_{\widetilde{L}^\infty_T(\dot\B_{p,1}^s)},
\qquad
\left\|\left(\frac{1}{b(\tau)}\right)' f\right\|^h_{\widetilde{L}^1_T(\dot\B_{p,1}^{s-1})}
\leqslant 2^{k_0}\|f\|^h_{\widetilde{L}^\infty_T(\dot\B_{p,1}^s)}.
\end{align*}
\end{lem}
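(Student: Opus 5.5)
We argue blockwise and use Fubini in time. Since $b$ is monotone, Remark~\ref{remark11} tells us that for each $j$ the sets $I^\ell_{j,T}$ and $I^h_{j,T}$ are intervals, with one endpoint at $t_j$ (or at $0$, or empty). The key observation is that on $I^\ell_{j,T}$ the value $b(\tau)$ ranges between $b(t_j)$ and its value at the other endpoint. Moreover, $b(t_j)=2^{-(j+k_0)}$ by \eqref{relationtj}. Hence $\int_{I^\ell_{j,T}}|b'(\tau)|\,d\tau$ is a total variation of a monotone function, and we can control it explicitly.

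\textbf{Low frequencies.} We write
\[
\|b' f\|^\ell_{\widetilde L^1_T(\dot\B^{s+1}_{p,1})}=\sum_j 2^{j(s+1)}\int_{I^\ell_{j,T}}|b'(\tau)|\,\|\ddj f(\tau)\|_{L^p}\,d\tau
\le \sum_j 2^{j(s+1)}\Big(\sup_{I^\ell_{j,T}}\|\ddj f\|_{L^p}\Big)\int_{I^\ell_{j,T}}|b'(\tau)|\,d\tau .
\]
Since $b$ is monotone, the last integral equals $|b(\text{endpoint}_1)-b(\text{endpoint}_2)|$.

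On $I^\ell_{j,T}$ we have $j\le J_\tau$, so $b(\tau)\le 2^{-(j+k_0)}$. Both endpoint values of $b$ therefore lie in $(0,2^{-(j+k_0)}]$, and their difference is at most $2^{-j-k_0}$. This holds in either monotone case: for $b'>0$ the interval is $[0,\min\{t_j,T\}]$, and for $b'<0$ it is $[t_j,T]$. The factor $2^{-j}$ absorbs one power of $2^{j}$, which leaves exactly $2^{-k_0}\sum_j 2^{js}\sup_{I^\ell_{j,T}}\|\ddj f\|_{L^p}=2^{-k_0}\|f\|^\ell_{\widetilde L^\infty_T(\dot\B^s_{p,1})}$.

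\textbf{High frequencies.} The argument is identical with $1/b$ in place of $b$. On $I^h_{j,T}$ we have $j\ge J_\tau+1$, and up to the floor convention in \eqref{J} this gives $1/b(\tau)\le 2^{j+k_0}$. Since $1/b$ is monotone and positive, $\int_{I^h_{j,T}}|(1/b)'|\,d\tau\le 2^{j+k_0}$. Multiplying by $2^{j(s-1)}$ and summing then gives the bound $2^{k_0}\|f\|^h_{\widetilde L^\infty_T(\dot\B^s_{p,1})}$.

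\textbf{Main obstacle.} The only delicate point is the bookkeeping at the threshold. We must check that the floor in the definition of $J_\tau$ gives $b(\tau)\le 2^{-(j+k_0)}$ on low intervals and $1/b(\tau)\le 2^{j+k_0}$ on high intervals, possibly up to a harmless factor $2$ absorbed in $\lesssim$. We must also handle the degenerate cases: empty intervals, $t_j=0$, and the constant case, where $b'\equiv0$ and the estimate is trivial. Positivity of $b$ is what makes the bound by a single endpoint value valid.
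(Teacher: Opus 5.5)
Your proof is correct and follows the paper's argument. Both pull out $\sup_{I_{j,T}}\|\dot\Delta_j f\|_{L^p}$, bound $\int_{I_{j,T}}|b'|$ (resp.\ $\int_{I_{j,T}}|(1/b)'|$) by the total variation of a positive monotone function, and convert it into $2^{-j-k_0}$ (resp.\ $2^{j+k_0}$). The one cosmetic difference is that you read off $b(\tau)\le 2^{-(j+k_0)}$ on $I^\ell_{j,T}$ and $1/b(\tau)<2^{j+k_0}$ on $I^h_{j,T}$ directly from the floor in \eqref{J}, whereas the paper goes through $b(t_j)$ and \eqref{relationtj}. Your version holds exactly, with no factor $2$, and it covers the degenerate cases ($t_j=0$, or $t_j$ undefined) automatically.
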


\begin{proof}
We have
\begin{equation*}
\begin{aligned}
\int_{0}^{t}\|b'(\tau) f(\tau)\|_{\dot{\B}^{s+1}_{p,1}}^{\ell}\,d\tau
&=\int_{0}^{t}|b'(\tau)|\sum_{j\leqslant J_\tau}2^{j(s+1)} \|\dot{\Delta}_j f(\tau)\|_{L^p}\,d\tau\\
&=\sum_{j\in\mathbb{Z}}2^{j(s+1)} \int_{I_{j,t}^{\ell}}|b'(\tau)|\|\dot{\Delta}_j f(\tau)\|_{L^p}\,d\tau\\
&\leqslant\sum_{j\in\mathbb{Z}}2^{j(s+1)}
\sup_{\tau\in I_{j,t}^{\ell}}  \|\dot{\Delta}_j f(\tau)\|_{L^p}
\int_{I_{j,t}^\ell}|b'(\tau)|\,d\tau.
\end{aligned}
\end{equation*}
It remains to estimate the integral.
Since $b$ is monotone and, by the definition of $I_{j,t}^{\ell}$, for the relevant intervals we have
\[
\forall \tau\in I_{j,t}^\ell,\qquad 0<b(\tau)\leqslant b(t_j),
\]
it follows that
\[
\int_{I_{j,t}^\ell}|b'(\tau)|\,d\tau\leqslant b(t_j).
\]
Therefore, using \eqref{relationtj}, we obtain
\begin{align*}
\int_{0}^{t}\|b'(\tau) f(\tau)\|_{\dot{\B}^{s+1}_{p,1}}^{\ell}\,d\tau
&\leqslant\sum_{j\in \mathbb{Z}}2^{js} \big(2^j b(t_j)\big)\,
\sup_{\tau\in I_{j,t}^\ell}\|\dot{\Delta}_j f(\tau)\|_{L^p}\\
&=2^{-k_0}\sum_{j\in \mathbb{Z}}2^{js}\sup_{\tau\in I_{j,t}^\ell}\|\dot{\Delta}_j f(\tau)\|_{L^p}
=2^{-k_0}\| f\|_{\widetilde{L}^\infty_{t}(\dot{\B}^{s}_{p,1})}^{\ell}.
\end{align*}

For the high-frequency part, similarly,
\begin{align*}
\int_{0}^{t}\left\|\left(\frac{1}{b(\tau)}\right)' f(\tau)\right\|_{\dot{\B}^{s-1}_{p,1}}^{h}\,d\tau
&=\int_{0}^{t}\left|\left(\frac{1}{b(\tau)}\right)'\right|
\sum_{j\geqslant J_\tau+1}2^{j(s-1)} \|\dot{\Delta}_j f(\tau)\|_{L^p}\,d\tau\\
&=\sum_{j\in\mathbb{Z}}2^{j(s-1)} \int_{I_{j,t}^{h}}
\left|\left(\frac{1}{b(\tau)}\right)'\right|\|\dot{\Delta}_j f(\tau)\|_{L^p}\,d\tau\\
&\leqslant \sum_{j\in \mathbb{Z}}2^{j(s-1)}
\sup_{\tau\in I_{j,t}^h}\|\dot{\Delta}_j f(\tau)\|_{L^p}
\int_{I_{j,t}^h}\left|\left(\frac{1}{b(\tau)}\right)'\right|\,d\tau.
\end{align*}
Since $b$ is monotone and, by definition of $I_{j,t}^h$, we have $b(\tau)\ge b(t_j)$ for $\tau\in I_{j,t}^h$,
it follows that
\[
\int_{I_{j,t}^h}\left|\left(\frac{1}{b(\tau)}\right)'\right|\,d\tau
\le \frac{1}{b(t_j)}.
\]
Hence, using again \eqref{relationtj},
\begin{align*}
\int_{0}^{t}\left\|\left(\frac{1}{b(\tau)}\right)' f(\tau)\right\|_{\dot{\B}^{s-1}_{p,1}}^{h}\,d\tau
&\leqslant \sum_{j\in \mathbb{Z}}2^{js}\big(2^{-j}b(t_j)^{-1}\big)
\sup_{\tau\in I_{j,t}^h}\|\dot{\Delta}_j f(\tau)\|_{L^p}\\
&=2^{k_0}\sum_{j\in \mathbb{Z}}2^{js}\sup_{\tau\in I_{j,t}^h}\|\dot{\Delta}_j f(\tau)\|_{L^p}
=2^{k_0}\| f\|_{\widetilde{L}^\infty_{t}(\dot{\B}^{s}_{p,1})}^{h}.
\end{align*}
This completes the proof.
\end{proof}

\section{Analysis of the linear system}\label{sec:linear}

In this section, we perform the linear analysis in the case where $b(t)>0$ is monotone.
The linearized system reads
\begin{equation}
     \left\{
     \begin{aligned}
        & \partial_t Z_1+\sum_{k=1}^d A^k_{1,2}\partial_k Z_2=0, \\
        & \partial_t Z_2+\sum_{k=1}^d\Big(A^k_{2,1}\partial_k Z_1
         +A^k_{2,2}\partial_k Z_2\Big)
         +\frac{L_2 Z_2}{b(t)}=0.
     \end{aligned}
     \right.
     \label{syslin}
\end{equation}

\begin{prop}
Let $s\in\mathbb{R}$. For the linear system \eqref{syslin}, the following estimate holds:
\begin{equation}
\begin{aligned}\label{eqlqlin}
&\|Z\|_{\widetilde{L}_t^\infty(\dot{\B}^{s}_{2,1})}
+\|b(\tau) Z_1\|_{L_t^1(\dot{\B}^{s+2}_{2,1})}^\ell +\|Z_2\|_{L_t^1(\dot{\B}^{s+1}_{2,1})}^\ell+
\left\|\frac{W}{b(\tau)}\right\|_{L_t^1(\dot{\B}^{s}_{2,1})}^\ell
+\Big\|\frac{Z}{b(\tau)}\Big\|_{L^1_t(\dot\B_{2,1}^{s})}^h\lesssim\|Z_0\|_{\dot{\B}^s_{2,1}}.
\end{aligned}
\end{equation}

Furthermore, if $b$ satisfies the condition in Theorem~\ref{thm0}, then we have

\begin{align}
\|b(\tau)Z\|_{\widetilde{L}^\infty_t(\dot\B_{2,1}^{s+1})}^h
+\|Z\|_{L^1_t(\dot\B_{2,1}^{s+1})}^h\lesssim\|Z_0\|_{\dot{\B}^{s}_{2,1}}+\|Z_0\|_{\dot{\B}^{s+1}_{2,1}}^h.\label{estnlhigh}
\end{align}

\end{prop}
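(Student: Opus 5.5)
We work block by block. Apply $\dot\Delta_j$ to \eqref{syslin} and set $Z_j:=\dot\Delta_j Z$. For each fixed $j$, the time axis splits at $t_j$ into the low-frequency interval $I^\ell_{j,t}$ and the high-frequency interval $I^h_{j,t}$ (Remark~\ref{remark11}). On each interval we derive a differential inequality for a frequency-localized Lyapunov functional $\mathcal{L}_j(\tau)\simeq\|Z_j(\tau)\|_{L^2}^2$. The equivalence constant must be uniform in $j$ and $\tau$, so that the two pieces can be patched at $\tau=t_j$: the final value of $\mathcal{L}_j$ on the first interval bounds its initial value on the second. We then take square roots, multiply by $2^{js}$, and sum over $j$. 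Fubini (as recorded after the definition of the Chemin--Lerner norms) turns the block-wise time integrals into the $L^1_t$ norms in \eqref{eqlqlin}.

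\textbf{High frequencies} ($j\ge J_\tau+1$, i.e.\ $2^jb(\tau)\gtrsim 2^{-k_0}$). We use the Beauchard--Zuazua hypocoercive functional
\[
\mathcal{L}_j=\|Z_j\|^2_{L^2}+\sum_{i=1}^{n-1}\varepsilon_i\,\frac{1}{b(\tau)2^{j}}\,\mathrm{Re}\big(BA_\omega^{i-1}\widehat Z_j\,|\,BA_\omega^{i}\widehat Z_j\big)
\]
written in Fourier variables. The weight is chosen so that its time derivative produces $(1/b)'$ terms. By Proposition~\ref{Propositionkalman}, for $k_0$ large and $\varepsilon_i$ small we get
\[
\tfrac{d}{d\tau}\mathcal{L}_j+\tfrac{c}{b(\tau)}\mathcal{L}_j\le C\,\big|(1/b)'\big|\,2^{-j}\|Z_j\|^2_{L^2}.
\]
Integrating over $I^h_{j,t}$ and using the $(1/b)'$ bound of Lemma~\ref{b'lem} (with $s$ shifted by one) absorbs the right-hand side, since it is multiplied by $2^{k_0}\cdot$small. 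This yields $\|Z_j\|_{L^\infty}+\int\|Z_j\|/b\lesssim\|Z_j(\text{start})\|$.

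\textbf{Low frequencies} ($2^jb\lesssim 2^{-k_0}$). We follow the damped-mode structure. For $Z_1$ we use \eqref{eqz1} with $f_2=0$: strong ellipticity \eqref{ellip} gives the parabolic dissipation $b(\tau)2^{2j}\|Z_{1,j}\|$. For $W$ we use \eqref{weq} with $f_1=0$, which gives damping $\|W_j\|/b$. The remainders are handled as follows.
\begin{itemize}
\item The term $l_2$ contributes $2^j\|W_j\|+b2^{2j}\|Z_{2,j}\|$. It is absorbed via $2^jb\le 2^{-k_0}$ (Lemma~\ref{lemma51}).
\item The term $l_1$ contains $b\,2^j\partial_tZ_1$ and $2^jW$, both small relative to $W/b$ by the same lemma. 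It also contains $b'\,2^j Z$, which is handled by the $b'$ estimate of Lemma~\ref{b'lem}: we bound $\|b'Z\|^\ell_{L^1(\dot\B^{s+1})}\lesssim 2^{-k_0}\|Z\|_{\widetilde L^\infty(\dot\B^s)}$, which the left side absorbs for $k_0$ large.
\end{itemize}
Alternatively, in the low-frequency region we can run an energy functional $\|Z_j\|^2+\eta\, b\,(\text{cross term } Z_1\cdot A_{1,2}\xi Z_2)$. Either way, the $Z_2$ bound follows from $Z_2=W-b L_2^{-1}\sum_k A_{2,1}^k\partial_kZ_1-\dots$, which gives $\|Z_2\|^\ell_{L^1\dot\B^{s+1}}\lesssim\|W/b\|^\ell_{L^1\dot\B^s}+\|bZ_1\|^\ell_{L^1\dot\B^{s+2}}$. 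One extra point: $W$ is not controlled by the initial data alone, because at the transition time its value is controlled only by $\|Z_j\|$ there. This is fine because $W_j$ is bounded by $\|Z_j\|$ at low frequencies (Bernstein plus $2^jb\lesssim1$).

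\textbf{Estimate \eqref{estnlhigh}.} We multiply the high-frequency inequality by the weight $b^2$. Setting $Y_j=b^2\mathcal{L}_j$, we get
\[
\tfrac{d}{d\tau}Y_j+\tfrac{c-2b'}{b}Y_j\lesssim\dots,
\]
which is exactly where the condition $\limsup b'\le c^*$ enters, as in Section~\ref{condb}. In Case 2 ($b'<0$) the sign is favorable; in Case 3 we first handle a finite interval $[0,T_0]$ crudely and then use the gap. The initial value on $I^h_{j,t}$ is either $b(0)Z_{0,j}$ or, after a transition, $b(t_j)\|Z_j(t_j)\|\lesssim 2^{-j}\|Z_j(t_j)\|$, which is controlled by the $\widetilde L^\infty\dot\B^s$ bound already obtained. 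Multiplying by $2^{j(s+1)}$ and summing gives the $\dot\B^s$ plus $\dot\B^{s+1,h}$ right-hand side; the $L^1(\dot\B^{s+1})^h$ term follows from the dissipation $Y_j/b\cdot 2^j\gtrsim\|Z_j\|2^{j}$ after using $2^jb\gtrsim1$.

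\textbf{Main obstacle.} The hardest step is the patching at $t_j$ with uniform constants. We must make sure the $b'$ and $(1/b)'$ error terms generated by the time-dependent weights are absorbed only through the $2^{\mp k_0}$ gains of Lemmas~\ref{lemma51} and~\ref{b'lem}, with $k_0$ fixed depending only on $A^k,B$.
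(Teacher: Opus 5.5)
Your proposal is correct and follows the paper's proof essentially step by step. The low-frequency part works block-wise on $I^\ell_{j,t}$ through the damped mode $W$ and the parabolic $Z_1$-equation, with remainders absorbed via $2^jb\le 2^{-k_0}$ and the $b'$ bound of Lemma~\ref{b'lem}. The high-frequency part uses the Beauchard--Zuazua functional on $I^h_{j,t}$, with the $(1/b)'$ error absorbed by Lemma~\ref{b'lem} and a small corrector prefactor, and the two pieces are patched at $t_j$. For \eqref{estnlhigh} you use a $b$-weighted version; your $b^2\mathcal{L}_j$ with a separate crude treatment of $[0,T_0]$ is equivalent to the paper's $b(\tau+t^*)\mathcal{L}_j$ combined with Lemma~\ref{lem2}.

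Drop the aside about a cross-term functional in low frequencies ("either way" is not right). By itself it gives only $b\,2^{2j}$-rate dissipation for $Z$ and an $L^2_t$ bound on $b^{-1/2}Z_2$. It does not give the $L^1_t$ bounds on $W/b$ and on $Z_2$ in $\dot\B^{s+1}_{2,1}$, so the damped-mode route is genuinely needed.
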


\begin{rem}
Compared with the nonlinear a priori estimates established in Theorem~\ref{thm0},
the present linear analysis allows us to handle a more general class of damping coefficients,
namely $b(t)>0$ with $b$ monotone.
In particular, we may take $b(t)=\frac{(1+t)^{\alpha}}{K}$ for all $\alpha\in \mathbb{R}$ and  $K>0$.

However, for the nonlinear system, \eqref{estnlhigh} is required to control the nonlinear terms and to close the uniform
$L^\infty$ estimates.
This, in turn, imposes an extra condition $\limsup_{t\to\infty} b'(t)\ll 1$.
\end{rem}

In this linear setting, we introduce the damped variable
\begin{align}
\label{dampdeflin}
W
:=Z_2+L_2^{-1}b(t)\sum_{k=1}^d\Big(A^k_{2,1}\partial_k Z_1
+A^k_{2,2}\partial_k Z_2\Big).
\end{align}
Using the second equation of \eqref{syslin}, this definition is equivalent to $
W=-L_2^{-1}b(t)\partial_t Z_2$. Then, the system satisfied by $(W,Z_1)$ can be decoupled into a damped equation and a diffusion equation, both with time-dependent coefficients:
\begin{equation}
     \left\{
     \begin{aligned}
        & \partial_t W+\frac{L_2 W}{b(t)}=l_1, \\
       &  \partial_t Z_1
         -b(t)\sum_{k,l=1}^d A^k_{1,2}L_2^{-1}A^l_{2,1}\partial_k\partial_l Z_1
         =l_2,
     \end{aligned}
     \right.
     \label{syslin'}
\end{equation}
with the linear remainder terms
 \begin{equation*}
 \begin{aligned}
l_1&=-b(t)L_2^{-1}\sum_{k=1}^d A_{2,1}^k\partial_k
\Big(\sum_{l=1}^d A_{1,2}^l\partial_l Z_2\Big)
-L_2^{-1}\sum_{k=1}^d A_{2,2}^k\partial_k L_2 W 
+b'(t)L_2^{-1}\sum_{k=1}^d
\left(A_{2,1}^k\partial_k Z_1+A_{2,2}^k\partial_k Z_2\right),\\
 l_2&=-\sum_{k=1}^dA^k_{1,2}\partial_k W+b(t)\sum_{k=1}^d\sum_{l=1}^dA^k_{1,2}L^{-1}_2A^l_{2,2}\partial_k\partial_l Z_2.
 \end{aligned}
  \end{equation*}
  
  \iffalse
We first record estimates for the time derivatives.

\begin{lem}\label{timeestlin}
For all $\sigma\in(-\frac d2,\frac d2]$, we have
\begin{align*}
\|\partial_t Z_1\|_{\dot{\B}_{2,1}^\sigma}
&\lesssim \|\nabla Z_2\|_{\dot{\B}_{2,1}^\sigma},\quad \|\partial_t Z_2\|_{\dot{\B}_{2,1}^\sigma}
\lesssim \left\|\frac{W}{b(t)}\right\|_{\dot{\B}_{2,1}^\sigma}.
\end{align*}
\end{lem}

\begin{proof}
The first estimate follows directly from the first equation of \eqref{syslin}:
\[
\partial_t Z_1=-\sum_{k=1}^d A^k_{1,2}\partial_k Z_2.
\]
The second estimate is an immediate consequence of
\[
\partial_t Z_2=-\frac{L_2 W}{b(t)},
\]
which follows from \eqref{dampdeflin}.
\end{proof}
\fi

%For simplicity, we restrict ourselves to the case where $b$ is increasing; the decreasing case can be treated in a similar way.

\subsection{Low frequencies: the under-damped case}\label{sect:linear:low}
We shall first deal with the under-damped case, namely $b'(t)>0$. We first exploit sharp dissipation estimates in the low-frequency regime. To this end, we refine the decoupling energy argument introduced in \cite{c3} to handle a general time-dependent damping coefficient $b(t)$. The main difficulty arises from the intrinsic coupling between time, frequencies and the coefficient in the system. To overcome this difficulty, we introduce a time-dependent frequency threshold and perform the energy estimates on dynamically evolving low frequencies $b(t)2^j \lesssim 1$, while absorbing the higher-order terms through a careful treatment of the time integrations.

%to capture the maximal regularity gain of solutions for partiallydissipative hyperbolic systems with constant relaxation.Here, we adapt this argument to the case of time-dependent damping.

Fix $T>0$. Since $b$ is increasing, we have $I_{j,t}^\ell=[0,\min\{t,t_j\}]$ whenever $t_j>0$. Let us write $(W_{j},Z_{1,j},Z_{2,j})=\dot{\Delta}_j (W,Z_1,Z_2)$ and $(l_{1,j},l_{2,j})=\dot{\Delta}_j(l_1,l_2)$. 
Applying $\dot{\Delta}_j$ to the first equation of \eqref{syslin'} and taking
the $L^2$ inner product with $W_j$ yields, for $\tau\in I_{j,t}^\ell$,
\begin{align}
\frac{1}{2}\frac{d}{d\tau}\|W_j(\tau)\|_{L^2}^2
+\frac{\kappa}{b(\tau)}\|W_j(\tau)\|_{L^2}^2
\le \|l_{1,j}(\tau)\|_{L^2}\|W_j(\tau)\|_{L^2}.
\end{align}
Similarly, from the second equation of \eqref{syslin'} and the strong ellipticity of the operator
\(
-\sum_{k,l=1}^d A^k_{1,2}L_2^{-1}A^l_{2,1}\partial_k\partial_l,
\)
due to the {\rm (SK)} condition, we deduce that, for all $\tau\in I_{j,t}^\ell$,
 %, for some harmless constant $C>0$,
\begin{align*}
\frac{1}{2}\frac{d}{d\tau}\|Z_{1,j}(\tau)\|_{L^2}^2
+\kappa_0 b(\tau)2^{2j}\|Z_{1,j}(\tau)\|_{L^2}^2
\le \|l_{2,j}(\tau)\|_{L^2}\|Z_{1,j}(\tau)\|_{L^2}.
\end{align*}
Applying Lemma~\ref{lem2} and noting $$
\partial_t Z_1=l_2+b(t)\sum_{k,l=1}^d A^k_{1,2}L_2^{-1}A^l_{2,1}\partial_k\partial_l Z_1,\quad \partial_t Z_2=-\frac{L_2 W}{b(t)},
$$
we infer that, for some uniform constants $\kappa_1, \kappa_2>0$,
\begin{equation}
\begin{aligned}\label{west:lin1}
&\quad\sup_{\tau\in  I_{j,t}^\ell}\|W_j(\tau)\|_{L^2}
+\kappa_1\int_{I_{j,t}^\ell}\frac{1}{b(\tau)}\|W_j(\tau)\|_{L^2}\,d\tau
\le \|W_j(0)\|_{L^2}
+C\int_{I_{j,t}^\ell}
\big\|l_{1,j}(\tau)\big\|_{L^2}\,d\tau,
\end{aligned}
\end{equation}
and
\begin{equation}
\begin{aligned}\label{est:Z1lin1}
&\quad \sup_{\tau\in  I_{j,t}^\ell} \|Z_{1,j}(\tau)\|_{L^2}
+\kappa_2\int_{I_{j,t}^\ell}2^{2j}\|b(\tau)Z_{1,j}(\tau)\|_{L^2}\,d\tau\le \|Z_{1,j}(0)\|_{L^2}
+C\int_{I_{j,t}^\ell}\big\|l_{2,j}(\tau)\big\|_{L^2}\,d\tau.
\end{aligned}
\end{equation}
Here and in what follows, the constant $C$ stands for a harmless universal constant
that may change from line to line, and it depends only on the matrices $A^k$ and $L_2$
(in particular, it is independent of $b(t)$, $j$ and time).

We will verify that the higher-order linear terms on the right-hand sides of \eqref{west:lin1}--\eqref{est:Z1lin1} can be absorbed by the dissipation terms on the left-hand side. In fact, using the explicit form of $l_1, l_2$ and Bernstein's inequality, we have
\begin{align*}
\|l_{1,j}\|_{L^2} &\lesssim  2^{2j} \|b(\tau) Z_{2,j}(\tau)\|_{L^2}+ 2^j \|W_j(\tau)\|_{L^2}+|b'(\tau)|2^j \|Z_j(\tau)\|_{L^2},\\
\|l_{2,j}\|_{L^2} &\lesssim 2^j\|W_j(\tau)\|_{L^2}
+2^{2j}\|b(\tau)Z_{2,j}(\tau)\|_{L^2},
\end{align*}
where we have used the expression of $\partial_t Z_{1,j}$ to derive the estimate of $l_{1,j}$.
In the low-frequency regime, namely for all $\tau\in I_{j,t}^\ell$ such that $
2^j b(\tau)\le 2^{-k_0}$,
the higher-order terms on the right-hand sides of
\eqref{west:lin1}--\eqref{est:Z1lin1} can be controlled.
Indeed, by the definition of $W$ and Bernstein's inequality, there exists a constant
$C>0$ such that
\begin{equation}\label{Z2Wpre}
\|Z_{2,j}\|_{L^2}
\le C\big(\|W_j\|_{L^2}+2^j b(\tau)\|(Z_{1,j},Z_{2,j})\|_{L^2}\big).
\end{equation}
 We now fix
\begin{equation}\label{k0choice}
k_0:=\left\lceil \log_2\!\left(\frac{4C}{\min\{1,\kappa,\kappa_0\}}\right)\right\rceil.
\end{equation}
This implies $C\,2^{-k_0}\le \tfrac14\min\{1,\kappa,\kappa_0\}$.
Therefore, the last term on the right-hand side of \eqref{Z2Wpre} can be absorbed,
and we obtain, for all $\tau\in I_{j,t}^\ell$ in the low-frequency regime,
\begin{equation}\label{Z2Wabs}
\|Z_{2,j}(\tau)\|_{L^2}
\le C\big(\|W_j\|_{L^2}+2^j b(\tau)\|Z_{1,j}\|_{L^2}\big)\leqslant C\big(\|W_j\|_{L^2}+2^{-k_0}\|Z_{1,j}\|_{L^2}\big).
\end{equation}
Using arguments similar to those in Lemma~\ref{b'lem}, we have
\begin{equation}
\begin{aligned}
\label{b'argumentlow}
\int_{I_{j,t}^\ell}|b'(\tau)|2^j \|Z_j(\tau)\|_{L^2}
\leqslant &\int_{I_{j,t}^\ell}|b'(\tau)|\,d\tau\, 2^j \|Z_{j}\|_{L^{\infty}(I_{j,t}^\ell;L^2)}\\
\lesssim &2^{-k_0} \|(Z_{1,j},W_j)\|_{L^{\infty}(I_{j,t}^\ell;L^2)}.
\end{aligned}
\end{equation}
Consequently, due to the choice \eqref{k0choice}
we have 
\begin{equation}\label{l:es}
\begin{aligned}
&\quad C\int_{I_{j,t}^\ell}
\left(\|l_{1,j}(\tau)\|_{L^2}+\|l_{2,j}(\tau)\|_{L^2}\right)\,d\tau\\
&\leq C 2^{-k_0}\int_{I_{j,t}^\ell}\Big(2^{2j}\|b(\tau) Z_{1,j}(\tau)\|_{L^2}+\frac{1}{b(\tau)} \|W_j(\tau)\|_{L^2}\Big)\,d\tau +C2^{-k_0} \|(Z_{1,j},W_j)\|_{L^{\infty}(I_{j,t}^\ell;L^2)},
\end{aligned}
\end{equation}
where we used \eqref{Z2Wabs}.  
Thus, substituting \eqref{l:es} into   \eqref{west:lin1}--\eqref{est:Z1lin1}  and  using the smallness of $2^{-k_0}$, one can absorb the higher-order linear terms. Moreover, combining the estimates of $Z_{1,j}$ and $W_{j}$ and using \eqref{Z2Wpre}, we can recover the desired bound for $Z_{2,j}$, while the initial datum $W_j(0)$ can be controlled in terms of $Z_j(0)$. Precisely, we arrive at
\begin{equation}
\begin{aligned}\label{eqlqloclin}
&\sup_{\tau\in I_{j,t}^\ell}\|Z_j(\tau)\|_{L^2}
+\int_{I_{j,t}^\ell}
\left\|\left(\frac{W_j}{b(\tau)},2^{2j}b(\tau)Z_{1,j},2^jZ_{2,j}\right)\right\|_{L^2}
\,d\tau
\lesssim \|Z_j(0)\|_{L^2}.
\end{aligned}
\end{equation}
Multiplying by $2^{js}$ and summing over the low-frequency dyadic blocks, we obtain the low-frequency part of \eqref{eqlqlin}.

\begin{rem}
The choice of $k_0$ depends only on the universal constants appearing in the dissipation estimates,
and therefore is independent of the explicit form of $b(t)$.
As a consequence, we may assume without loss of generality that $b$ satisfies
the smallness condition associated with $k_0$.
\end{rem}

\begin{rem}
The above result can be extended to the case where $b'(t)$
changes sign only a finite number of times.
In that situation, the time interval can be decomposed into finitely many
subintervals on which $b'(t)$ has a fixed sign, and the analysis may be
performed on each subinterval separately.
As the proof is similar, we omit the details for the sake of
clarity.
\end{rem}

\subsection{High frequencies: the under-damped case}

%We shall construct a Lyapunov functional for the entropy variable $U=U(V)$
%\eqref{entropeq}--\eqref{rtildeU},
%$\widetilde{U}:=U-\bar{U}$ satisfies the system
%\begin{align}
%\widetilde{A}^{0}(U)\partial_{t}\widetilde{U}
%+\sum_{k=1}^{d}\widetilde{A}^{k}(U)\partial_{x_{k}}\widetilde{U}
%+\frac{\widetilde{B}\widetilde{U}}{b(t)}
%=\frac{\mathbf{r}(U)}{b(t)},
%\label{entropeq:1}
%\end{align}
%with initial datum
%\begin{align}\label{entropeq:2}
%\widetilde{U}(0,x)=\widetilde{U}_0(x):=U(V_0)-\bar{U}.
%\end{align}
%That is, by combining the corrector with the classical {\rm (SK)} condition,
%we aim to capture the full dissipation mechanism and establish
%uniform bounds for $U-\bar{U}$, in the spirit of hypocoercivity.
%Since $U=U(V)$ depends smoothly on $V$ and the entropy $\eta(V)$ is convex,
%the desired estimates for $V$ can be recovered from those for $U$.

%\medskip

We follow the same strategy as in the time-independent case
developed by Beauchard and Zuazua \cite{BZ}.
We consider the linearized system associated with \eqref{entropeq}. Writing the linearized system in the Fourier variables yields
\begin{align*}
\bar{A}^{0}\partial_{t}z+{\rm i}\sum_{k=1}^{d}\bar{A}^{k}\xi_k z
+\frac{\bar{B}z}{b(t)}=0,
\end{align*}
where $\bar{A}^k=\widetilde{A}^k(\bar{U})$ for $k=0,\dots,d$, $\bar{B}=\widetilde{B}(\bar{U})$,
$z=\widehat{U-\bar{U}}$, and $\xi$ denotes the Fourier variable.
Let $\xi=r\omega$ with $\omega\in\SSS^{d-1}$ and $r=|\xi|$.
The system can then be rewritten as
\begin{align}
\bar{A}^{0}\partial_t z+r\bar{A}_\omega z+\frac{\bar{B} z}{b(t)}=0,
\qquad
\bar{A}_\omega:= {\rm i} \sum_{k=1}^d \bar{A}^k\omega_k.
\label{eqlin}
\end{align}
It is clear that $\bar{A}_\omega$ is skew-Hermitian.
Let 
$$
A:=(\bar{A}^0)^{-1}\bar{A}_\omega,\quad B:=(\bar{A}^0)^{-1}\bar{B}.
$$
Note that $A$ depends on $\omega$, but we drop the index for simplicity.

We now introduce the corrector $\mathcal{I}$.
Fix $n-1$ small positive parameters $\varepsilon_1,\dots,\varepsilon_{n-1}$ and define
\begin{align}
\mathcal{I}
:=\Re\sum_{k=1}^{n-1}\varepsilon_k(BA^{k-1}z\cdot BA^k z),
\label{corrector}
\end{align}
where $(\,\cdot\,)$ denotes the Hermitian inner product in $\C^n$.
Taking the time derivative of $\mathcal{I}$ and using \eqref{eqlin} yields
\begin{align}
\frac{d}{dt}\mathcal{I}
+\sum_{k=1}^{n-1}\varepsilon_k r|BA^kz|^2
=&-\Re\sum_{k=1}^{n-1}\varepsilon_k r (BA^{k-1}z\cdot BA^{k+1}z) \notag\\
&-\frac{1}{b(t)}\Im\Big(\sum_{k=1}^{n-1}\varepsilon_k(
BA^{k-1}Bz\cdot BA^kz
+BA^{k-1}z\cdot BA^kBz)\Big).
\label{eqcorrec}
\end{align}

Choosing the parameters $\varepsilon_k$ appropriately
(details are given in Appendix~\ref{sec:appendixBZ}), we obtain
\begin{align}
\frac{d}{dt}\sI
+\frac{r}{2}\sum_{k=1}^{n-1}\varepsilon_k|BA^{k}z|^2
\leqslant
\Big(\frac{1}{4b^2(t)r}+\frac{r}{4}+\frac{1}{4b(t)}\Big)|Bz|^2.
\label{xianxing}
\end{align}
Then, using the structural assumption on $\bar{B}$ together with
a classical energy estimate, we obtain
\begin{gather}
\frac{1}{2}\frac{d}{dt}( \bar{A}^0 z\cdot z)
+\frac{ c'|B z|^2}{b(t)}\leqslant 0.
\label{eneeq}
\end{gather}

We now define the Lyapunov functional
\[
L_{r,\omega}(z,t)
:= \left( \bar{A}^0 z\cdot z\right)
+\frac{1}{2^{2k_0}b(t)r}
\sI,
\]
which is well-defined in the high-frequency regime $b(t)r\geqslant 2^{-k_0}$.
Combining \eqref{xianxing} and \eqref{eneeq}, we obtain
\begin{multline*}
\frac{d}{dt}L_{r,\omega}(z,t)
+\frac{2^{-2k_0-1}}{b(t)}\sum_{k=1}^{n-1}\varepsilon_k|B A^k z|^2
+\frac{c'|Bz|^2}{b(t)}\\
\leqslant
2^{-2k_0}\Big(\frac{1}{4b^3(t)r^2}
+\frac{1}{4b(t)}
+\frac{1}{4b^2(t)r}\Big)|Bz|^2
+\frac1{2^{2k_0}r}\Big(\frac{1}{b(t)}\Big)'
\sI.
\end{multline*}
Since $A$ and $B$ are bounded, we may assume without loss of generality that
\begin{align}
\sum_{k=1}^{n-1}\varepsilon_k
\big|\Re(BA^{k-1} z\cdot BA^k z)\big|
<\frac{|z|^2}{8},
\qquad \forall z\in\C^n.
\label{bdd}
\end{align}
In the high-frequency regime, this implies that
$L_{r,\omega}(z,t)$ is equivalent to $|z|^2$.

Using repeatedly the condition $b(t)r\geqslant 2^{-k_0}$ and \eqref{bdd},
we deduce
\begin{align*}
\frac{d}{dt}L_{r,\omega}(z,t)
+\frac{2^{-k_0-1}}{b(t)}\sum_{k=0}^{n-1}\varepsilon_k|BA^k z|^2
\leqslant
\frac1{2^{2k_0+3}r}\Big(\frac{1}{b(t)}\Big)'|z|^2,
\end{align*}
where we have chosen $\varepsilon_0=c'$.

If the system \eqref{eq0} satisfies the {\rm (SK)} condition, then by
Proposition~\ref{Propositionkalman} and the compactness of $\SSS^{d-1}$,
there exists $\widetilde{c}>0$ such that
\begin{align}
\sum_{i=0}^{n-1}\varepsilon_i|BA^i y|^2
\geqslant \widetilde{c}|y|^2,
\qquad \forall y\in\C^n.
\label{xiajie}
\end{align}
Hence,
\[
\frac{d}{dt}L_{r,\omega}(z,t)
+\frac{2^{-k_0-1}\widetilde{c}}{b(t)}|z|^2
\leqslant
\frac1{2^{2k_0+3}r}\Big(\frac{1}{b(t)}\Big)'|z|^2.
\]

\medskip

In the spirit of the above analysis, we define
\begin{align*}
\mathcal{L}_{j}(t)
:=\langle\bar{A}^0\widehat{U}_j,\widehat{U}_j\rangle_{L^2}
+\frac{\delta}{2^{2k_0}b(t)}
\sum_{k=1}^{n-1}\varepsilon_k
\Re\!\int_{\R^d}
(BA^{k-1}\widehat{U}_j(\xi))
\cdot(BA^{k}\widehat{U}_j(\xi))
|\xi|^{-1}\,d\xi.
\end{align*}
Using similar arguments, we obtain for all $\tau\in I_{j,t}^h$,
\begin{align*}
\frac{d}{d\tau}\mathcal{L}_{j}(\tau)
+\frac{2^{-k_0-2}\widetilde{c}\delta}{b(\tau)}
\|\widehat{U}_j(\tau)\|_{L^2}^2
\leqslant
\frac{\delta}{2^{2k_0+2+j}}
\Big(\frac{1}{b(\tau)}\Big)'
\|\widehat{U}_j(\tau)\|_{L^2}^2.
\end{align*}

Applying Lemma~\ref{lem2} and Plancherel's theorem yields
\begin{align*}
\sqrt{\mathcal{L}_{j}(t)}
+\int_{t_j}^t
\frac{2^{-k_0-2}\widetilde{c}\delta}{b(\tau)}
\|U_j\|_{L^2}\,d\tau
\lesssim
\sqrt{\mathcal{L}_{j}(t_j)}
+\int_{I_{j,t}^h}
\frac{\delta}{2^{2k_0+2+j}}
\left|\Big(\frac{1}{b(\tau)}\Big)'\right|
\|U_j\|_{L^2}\,d\tau.
\end{align*}
For the right-hand side, arguing as in Lemma~\ref{b'lem}, we obtain
\begin{align}
\int_{I_{j,t}^h}
\frac{\delta}{2^{2k_0+2+j}}
\Big(\frac{1}{b(\tau)}\Big)'
\|U_j\|_{L^2}\,d\tau
\leqslant
\frac{\delta}{2^{k_0+2}}
\sup_{I_{j,t}^h}\|U_j\|_{L^2}.
\label{b'argumenthigh}
\end{align}
Since we are in the linear framework, the transformation from $U$ to $Z$ is indeed a linear function. Then using the fact that $\delta$ is small, we arrive at
\begin{align*}
\sup_{I_{j,t}^h}\|U_j(\tau)\|_{L^2}
+\int_{I_{j,t}^h}
\frac{2^{-k_0-2}\widetilde{c}\delta}{b(\tau)}
\|U_j\|_{L^2}\,d\tau
\lesssim \sqrt{\sL_{j}(t_j)} \lesssim\|U_j(t_j)\|_{L^2}\lesssim\|Z_j(t_j)\|_{L^2}.
\end{align*}
As in the low-frequency case, multiplying by $2^{js}$, summing over all dyadic blocks and using again the linear relation between $Z$ and $U$ yields
\begin{align*}
\|Z\|_{\widetilde{L}^\infty_t(\dot\B_{2,1}^{s})}^h
+\Big\|\frac{Z}{b(\tau)}\Big\|_{L^1_t(\dot\B_{2,1}^{s})}^h
&\lesssim
\sum_{j\in\Z,\;0<t_j\le t}2^{js}\|Z_j(t_j)\|_{L^2}
+\sum_{j\in\Z,\; t_j=0}2^{js}\|Z_j(0)\|_{L^2}\\
&\lesssim
\|Z\|_{\widetilde{L}^\infty_t(\dot\B_{2,1}^{s})}^\ell+\|Z_0\|_{\dot{\B}^s_{2,1}}^h\lesssim\|Z_0\|_{\dot{\B}^s_{2,1}}.
\end{align*}

For the proof of \eqref{estnlhigh}, by the additional condition on $b$, there exists $t^*>0$ such that  
\begin{align}
    \forall t>t^*,\quad b'(t)<2c^*.\label{b'condi}
\end{align}
Then, for all $\tau\in I_{j,t}^h$, we have
\begin{align*}
\frac{d}{d\tau}\big(b(\tau+t^*)\mathcal{L}_{j}(\tau)\big)
+c'\mathcal{L}_{j}(\tau)-b'(\tau+t^*)\mathcal{L}_{j}(\tau)
\leqslant 0.
\end{align*}
Hence, if $c^*$ is small enough, then
\[
c'-b'(\tau+t^*)\geqslant c'-2c^*\geqslant \frac{c'}{2}.
\]
Therefore, 
\begin{align*}
\sup_{I_{j,t}^h}\|b(\tau+t^*)U_j(\tau)\|_{L^2}
+\int_{I_{j,t}^h}
\|U_j\|_{L^2}\,d\tau
\lesssim\|b(t_j+t^*)U_j(t_j)\|_{L^2}\lesssim\|b(t_j)Z_j(t_j)\|_{L^2},
\end{align*}
where we used, for  $\bar M:=\sup_{t\ge0} b'(t)<\infty$,
\[
\forall t>0,\quad
\frac{b(t+t^*)}{b(t)}
\leqslant \frac{b(t)+\bar Mt^*}{b(t)}
\leqslant 1+\frac{\bar Mt^*}{b(0)}
\leqslant C.
\]
Finally, multiplying by $2^{j(s+1)}$ and summing up yields
\begin{align*}
\|b(\tau)Z\|_{\widetilde{L}^\infty_t(\dot\B_{2,1}^{s+1})}^h
+\Big\|Z\Big\|_{L^1_t(\dot\B_{2,1}^{s+1})}^h
&\lesssim
\sum_{j\in\Z,\;0<t_j\le t}2^{j(s+1)}\|b(t_j)Z_j(t_j)\|_{L^2}
+\sum_{j\in\Z,\; t_j=0}2^{j(s+1)}\|b(0)Z_j(0)\|_{L^2}\\
&\lesssim
\|b(\tau)Z\|_{\widetilde{L}^\infty_t(\dot\B_{2,1}^{s+1})}^\ell
+\|Z_0\|_{\dot{\B}^{s+1}_{2,1}}^h\\
&\lesssim
\|Z\|_{\widetilde{L}^\infty_t(\dot\B_{2,1}^{s})}^\ell
+\|Z_0\|_{\dot{\B}^{s+1}_{2,1}}^h\\
&\lesssim
\|Z_0\|_{\dot{\B}^{s}_{2,1}}
+\|Z_0\|_{\dot{\B}^{s+1}_{2,1}}^h.
\end{align*}

\subsection{The over-damped case in low and high frequencies}
In this case, we have $I_{j,t}^\ell=[t_j,\infty)\cap [0,t]$ and $I_{j,t}^h=[0,t_j)\cap [0,t]$. Therefore, the main difference from the under-damped case lies in the time at which each dyadic block enters the low- or high-frequency regime, and hence in the way the initial data are used. In particular, we notice that \eqref{b'argumentlow} and \eqref{b'argumenthigh} are still valid as in Lemma \ref{b'lem}. For the high frequency part, we have 
\begin{align*}
\sup_{I_{j,t}^h}\|U_j(\tau)\|_{L^2}
+\int_{I_{j,t}^h}
\frac{\|U_j\|_{L^2}}{b(
\tau)}\,d\tau
\lesssim\|U_j(0)\|_{L^2},
\end{align*}
which leads to
\begin{align*}
\|Z\|_{\widetilde{L}^\infty_t(\dot\B_{2,1}^{s})}^h
+\Big\|\frac{Z}{b(\tau)}\Big\|_{L^1_t(\dot\B_{2,1}^{s})}^h
&\lesssim
\sum_{j\in\Z,\;t_j>0}2^{js}\|Z_j(0)\|_{L^2}
\lesssim\|Z_0\|_{\dot{\B}^s_{2,1}}^h.\end{align*}
Since $b'(\tau)<0$, we directly have, for all $\tau\in I_{j,t}^h$,
\begin{align*}
\frac{d}{d\tau}(b(\tau)\mathcal{L}_{j}(\tau))
+c'\mathcal{L}_{j}(\tau)\leqslant\frac{d}{d\tau}(b(\tau)\mathcal{L}_{j}(\tau))
+c'\mathcal{L}_{j}(\tau)-b'(\tau)\mathcal{L}_{j}(\tau)
\leqslant
0,
\end{align*}
and then immediately 
\begin{align*}
\|b(\tau)Z\|_{\widetilde{L}^\infty_t(\dot\B_{2,1}^{s+1})}^h
+\Big\|Z\Big\|_{L^1_t(\dot\B_{2,1}^{s+1})}^h
\lesssim\|Z_0\|_{\dot{\B}^{s+1}_{2,1}}^h.
\end{align*}
For the low frequency part, we obtain 
\begin{equation*}
\begin{aligned}
&\sup_{\tau\in I_{j,t}^\ell}\|Z_j(\tau)\|_{L^2}
+\int_{I_{j,t}^\ell}
\left\|\left(\frac{W_j}{b(\tau)},2^{2j}b(\tau)Z_{1,j},2^jZ_{2,j}\right)\right\|_{L^2}
\,d\tau
\lesssim \|Z_j(t_j)\|_{L^2},
\end{aligned}
\end{equation*}
and similarly 
\begin{equation*}
\begin{aligned}
\|Z\|_{\widetilde{L}_t^\infty(\dot{\B}^{s}_{2,1})}
^\ell+
\left\|(b(\tau)\nabla^2 Z_1,\nabla Z_2,\frac{W}{b(\tau)})\right\|_{L_t^1(\dot{\B}^{s}_{2,1})}^\ell
&\lesssim
\sum_{j\in\Z,\;0<t_j\le t}2^{js}\|Z_j(t_j)\|_{L^2}
+\sum_{j\in\Z,\; t_j=0}2^{js}\|Z_j(0)\|_{L^2}\\
&\lesssim
\|Z\|_{\widetilde{L}^\infty_t(\dot\B_{2,1}^{s})}^h+\|Z_0\|_{\dot{\B}^s_{2,1}}^\ell\lesssim\|Z_0\|_{\dot{\B}^s_{2,1}}.
\end{aligned}
\end{equation*}
Consequently, combining the low- and high-frequency estimates, we obtain both \eqref{eqlqlin} and \eqref{estnlhigh}.

\section{Proof of Theorem \ref{thm0}}\label{sec:global}

In this section, we provide a detailed proof of Theorem \ref{thm0} concerning the global existence of the nonlinear system. Throughout this section, we assume that there exists a smooth solution
$Z$ to system \eqref{eq0} on $[0,T)\times\mathbb{R}^d$  satisfying the {\emph{a priori}} assumption
\begin{align}
\label{small}
\sup_{t\in[0,T)}
\|Z\|_{\mathcal{E}_t}\le \varepsilon_0,
\end{align}
Here $\|Z\|_{\mathcal{E}_t}$ is defined in \eqref{X}, and $\varepsilon_0\in(0,1)$ is a small constant to be chosen later.

Owing to the embedding
$\dot{\B}_{2,1}^{d/2}\hookrightarrow L^\infty$,
this assumption implies a uniform $L^\infty$ bound:
\begin{align}\label{Linfty}
\sup_{t\in[0,T)}\|Z(t)\|_{L^\infty}
\lesssim \varepsilon_0 \lesssim 1,
\end{align}
which will play a crucial role in controlling the nonlinear terms.

\medskip

\medskip

We now state the main a priori estimate for the nonlinear system.

\begin{prop}\label{apriori}
Let $Z=(Z_1,Z_2)$ be a solution to the Cauchy problem \eqref{eq0}
defined on $[0,T)\times\mathbb{R}^d$.
There exists a constant $\varepsilon_0>0$ such that if \eqref{small} holds,
then for all $t\in[0,T)$,
\begin{align}\label{aprioriestimate}
\|Z\|_{\mathcal{E}_t}
\le
C_0\bigl(1+\|Z_0\|_{\dot{\B}_{2,1}^{\frac{d}{2}}}\bigr)
\|Z_0\|_{\dot{\B}_{2,1}^{\frac{d}{2}}\cap \dot{\B}_{2,1}^{\frac{d}{2}+1}},
\end{align}
where the norm $\|Z\|_{\mathcal{E}_t}$ is defined in \eqref{X}, and $C_0>0$ is a universal constant independent of $T$.
\end{prop}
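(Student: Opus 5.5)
The plan is to redo the linear analysis of Section~\ref{sec:linear} on the full system \eqref{sys}, keeping the nonlinear terms as sources, and then bound those sources by $\varepsilon_0\|Z\|_{\mathcal E_t}$ through product and composition laws in Besov spaces. Concretely, I would split each dyadic block $j$ according to the time intervals $I^\ell_{j,t}$ and $I^h_{j,t}$ from Remark~\ref{remark11}, and treat the two regimes separately before patching them together at $t_j$.

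In the low-frequency regime I work with the pair $(W,Z_1)$ satisfying \eqref{weq} and \eqref{eqz1}. Exactly as in \eqref{west:lin1}--\eqref{eqlqloclin}, the linear remainders $l_1,l_2$ are absorbed using the choice \eqref{k0choice} of $k_0$. The term involving $b'$ is handled as in Lemma~\ref{b'lem}. The result is a bound for $\sup\|Z_j\|+\int(\|W_j\|/b+2^{2j}b\|Z_{1,j}\|+2^j\|Z_{2,j}\|)$ in terms of the data at the entry time, plus $\int\|(\dot\Delta_j f_1,\dot\Delta_j f_2)\|$. The relation between $W$ and $Z_2$, namely \eqref{Z2}, now contains $b\,\partial_kN_2^k$ and $Q$. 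These are quadratic, so \eqref{Z2Wabs} persists up to an $O(\varepsilon_0)$ error. The nonlinear terms are then treated as follows.
\begin{itemize}
\item The terms $\partial_kN_1^k$ and $b\,\partial_k\partial_lN_2^l$ in $f_2$ use the structural conditions \eqref{conditionAVbar}. These give $N_1^k(Z)=O(|Z||Z_2|)$, so $N_1^k$ is bounded in $\dot\B^{d/2+1}_{2,1}$ by $\|Z\|_{\dot\B^{d/2}}\|Z_2\|_{\dot\B^{d/2+1}}$ plus symmetric terms; this is integrable because $Z_2\in L^1(\dot\B^{d/2+1}_{2,1})$.
\item The $b$-weighted pieces are handled using $\|b^{1/2}Z\|_{\widetilde L^2(\dot\B^{d/2+1})}$ together with $\|b^{-1/2}Z_2\|_{\widetilde L^2(\dot\B^{d/2})}$.
\item The term $f_1$ contains $\partial_t$ of quadratic terms. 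I expand it via the chain rule, $\partial_tQ(Z)=DQ(Z)\partial_tZ$, and use $\partial_tZ\in L^1(\dot\B^{d/2}_{2,1})$. For $b'$ and $b\,\partial_t$ I use Lemma~\ref{lemma51} to trade powers of $b$ for derivatives at low frequency.
\end{itemize}
Finally, $\partial_tZ$ itself is recovered from the equations: $\partial_tZ_2=-L_2W/b$, and $\partial_tZ_1$ is given by $\eqref{sys}_1$.

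In the high-frequency regime I use the entropy-symmetrized system \eqref{entropeq} with the Lyapunov functional $\mathcal L_j$. Its variable-coefficient version is built with $\widetilde A^0(U)$ in place of $\bar A^0$, and the nonlinear convective terms are written as commutators $[\dot\Delta_j,\widetilde A^k(U)]\partial_kU$. This gives $\frac{d}{d\tau}\mathcal L_j+\frac{c}{b}\mathcal L_j\lesssim(\|\nabla U\|_{L^\infty}+\|\partial_tU\|_{L^\infty})\mathcal L_j+\text{commutators}$. I then multiply by $b(\tau+t^*)$ as in the proof of \eqref{estnlhigh}, using Case 3 with small $c^*$ (Cases 1 and 2 being direct), to obtain the bounds on $\|bZ\|^h_{\widetilde L^\infty(\dot\B^{d/2+1})}$ and $\|Z\|^h_{L^1(\dot\B^{d/2+1})}$. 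The entry data $Z_j(t_j)$ are controlled by the low-frequency sup-norm in the under-damped case, and conversely in the over-damped case. The $\widetilde L^2$ norms in \eqref{X} then follow by interpolating the $L^\infty$ and $L^1$ bounds. Summing everything, I expect
\[
\|Z\|_{\mathcal E_t}\le C\|Z_0\|_{\dot\B^{d/2}_{2,1}\cap\dot\B^{d/2+1}_{2,1}}+C\|Z\|_{\mathcal E_t}^2+C\|Z_0\|_{\dot\B^{d/2}_{2,1}}\|Z_0\|_{\dot\B^{d/2}_{2,1}\cap\dot\B^{d/2+1}_{2,1}},
\]
where the last term comes from $W(0)$ containing $Q(Z_0)$. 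Choosing $\varepsilon_0$ small then gives \eqref{aprioriestimate}.

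The main obstacle will be the high-frequency nonlinear estimate. The high-frequency dissipation carries only the weight $1/b$, while the nonlinear factor $\|\nabla Z\|_{L^\infty}$ must be integrable after multiplication by $b$. This forces the use of $\|Z\|^h_{L^1(\dot\B^{d/2+1})}$ and the weighted sup $\|bZ\|$. A second delicate point is the mixed terms, where low- and high-frequency pieces of $Z$ with different $b$-weights meet in products across the moving threshold $J_\tau$. I would try first to bound those products blockwise, using Bony decomposition with the threshold frozen at time $\tau$.
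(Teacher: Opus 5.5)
Your proposal follows the paper's route. In low frequencies you run the decoupled $(W,Z_1)$ estimate on $I^\ell_{j,t}$ with $\partial_tZ$ bounded simultaneously; in high frequencies you use the entropy-variable Lyapunov functional weighted by $b(\tau+t^*)$, with commutator and composition remainders; you patch the two regimes at $t_j$ (low$\to$high in the under-damped case, high$\to$low in the over-damped case); and you absorb all nonlinear terms through the smallness $\varepsilon_0$. Two small corrections. First, the term $b'(t)\sum_k\partial_kN_2^k(Z)$ in $f_1$ needs the $|b'|$-integration over $I^\ell_{j,t}$ of Lemma~\ref{b'lem}, not Lemma~\ref{lemma51}, because trading $b'$ for a derivative leaves $\int|b'|/b\,d\tau$, which diverges when $b\to\infty$. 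Second, the bound on $\|Q(Z)\|^\ell_{L^1_t(\dot\B^{\frac d2+1}_{2,1})}$, which arises both from $\partial_kQ$ in $f_2$ and from recovering $Z_2$ from $W$, requires the factorization $Q=q_1(Z)Z_2$ coming from $Q(Z_1,0)=0$, just as for $N_1^k$; no structural condition is actually needed for $N_2^k$.
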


If $b(t)\equiv b_0>0$,
then $J_t\equiv J_0$, so that the time-dependent frequency decomposition
reduces to the usual fixed low--high frequency splitting. Moreover, all
terms involving $b'(t)$ vanish. Consequently, the low- and high-frequency
estimates established below reduce directly to the corresponding
fixed-threshold estimates. The same nonlinear estimates therefore yield
Proposition~\ref{apriori} in this case. Therefore, we only provided the details of proof for the strictly monotone cases $b'(t)>0$ and $b'(t)<0$.

\subsection{Low-frequency analysis in the under-damped case}

We begin with the low-frequency analysis and restrict ourselves to the
case where $b(t)$ is increasing.
The decreasing case can be treated similarly. Although the estimates below are inspired by the linear low-frequency
analysis, they cannot be obtained by a straightforward application of the
linear estimate with source terms. The reason is that the nonlinear source
terms entering the equation of the damped mode contain time derivatives of
the unknown. Hence the bounds for \(\partial_t Z_1\) and \(\partial_t Z_2\)
have to be derived simultaneously with those for \(W,Z_1\) and \(Z_2\).

\begin{lem}\label{eslq}
For all $t\in[0,T]$, the following estimate holds:
\begin{align}
\label{eqlq}
&\|Z\|^\ell_{\widetilde{L}^{\infty}_t(\dot\B_{2,1}^{\frac{d}{2}})}
+\int_{0}^t\Big(\|b(\tau) Z_1\|_{\dot{\B}^{\frac d2+2}_{2,1}}^\ell
+\|Z_2\|_{\dot{\B}^{\frac d2+1}_{2,1}}^\ell
+\left\|\frac{W}{b(\tau)}\right\|_{\dot{\B}^{\frac d2}_{2,1}}^\ell
+\|\partial_t Z\|_{\dot{\B}^{\frac d2}_{2,1}}^\ell
\Big)\,d\tau\notag\\
&\quad+\|b(\tau)^{\frac12}Z\|^\ell_{\widetilde{L}^{2}_t(\dot\B_{2,1}^{\frac{d}{2}+1})}
+\|b(\tau)^{-\frac12}Z_2\|^\ell_{\widetilde{L}^{2}_t(\dot\B_{2,1}^{\frac{d}{2}})}
\notag\\
\lesssim\;&
\big(1+\|Z_0\|_{\dot{\B}_{2,1}^{\frac{d}{2}}}\big)
\Big(
\|Z_0\|_{\dot{\B}_{2,1}^{\frac{d}{2}}}^{\ell}
+\|Z_0\|_{\dot{\B}_{2,1}^{\frac{d}{2}+1}}^{h}
\Big)
+\varepsilon_0\,\|Z\|_{\mathcal{E}_t}.
\end{align}
\end{lem}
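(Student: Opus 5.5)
We follow the linear low-frequency argument of Section~\ref{sect:linear:low}, now with the nonlinear system \eqref{weq}--\eqref{eqz1}, localized by $\dot\Delta_j$ on $I_{j,t}^\ell=[0,\min\{t,t_j\}]$.

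\textbf{Step 1: dyadic energy estimates.} For each $j$ we perform $L^2$ energy estimates on $\dot\Delta_j$\eqref{weq} and $\dot\Delta_j$\eqref{eqz1}, and apply Lemma~\ref{lem2}. This gives the analogues of \eqref{west:lin1}--\eqref{est:Z1lin1}, with $\int_{I_{j,t}^\ell}\|\dot\Delta_j(f_1,f_2)\|_{L^2}d\tau$ added on the right-hand side.

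The linear remainders $l_1,l_2$ are absorbed exactly as in \eqref{Z2Wpre}--\eqref{l:es}, using the choice \eqref{k0choice} of $k_0$. The $b'$ term is handled by the argument of Lemma~\ref{b'lem}. One difference from the linear case is that the relation \eqref{Z2} between $Z_2$ and $W$ now contains the terms $b\,\partial_k N_2^k(Z)$ and $Q(Z)$. At low frequencies, Lemma~\ref{lemma51} bounds $b\,\partial N_2$ by $\|N_2\|_{\dot\B^{d/2}_{2,1}}$. Both terms are quadratic with a factor of $Z_2$, by \eqref{conditionAVbar}, so they are bounded by $\|Z\|_{L^\infty}$ times quantities that are already controlled.

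Multiplying by $2^{jd/2}$ and summing over $j$ then yields the $\widetilde L^\infty$ and $L^1$ parts of \eqref{eqlq}. The initial datum enters through $W_j(0)$ and $Z_j(0)$ for $t_j>0$. This is where the factor $(1+\|Z_0\|_{\dot\B^{d/2}_{2,1}})$ and the term $\|Z_0\|^h_{\dot\B^{d/2+1}_{2,1}}$ appear: $W(0)$ contains $Q(Z_0)$, $b(0)\partial N_2(Z_0)$ and $b(0)\nabla Z_0$, and the blocks with $j\le J_0$ whose $b(0)2^j$ is of size one are controlled by the high norm.

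\textbf{Step 2: the $\widetilde L^2$ norms and the time derivatives.} The bounds on $\|b^{1/2}Z\|^\ell_{\widetilde L^2(\dot\B^{d/2+1})}$ and $\|b^{-1/2}Z_2\|^\ell_{\widetilde L^2(\dot\B^{d/2})}$ follow by interpolating, blockwise, the $\widetilde L^\infty$ bound with the $L^1$ dissipation bounds:
\[
b2^{2j}\|Z_{1,j}\|^2\le \|Z_{1,j}\|\,\bigl(b2^{2j}\|Z_{1,j}\|\bigr),\qquad \frac{\|Z_{2,j}\|^2}{b}\lesssim \|Z_{2,j}\|\Bigl(\frac{\|W_j\|}{b}+2^j\|Z_j\|\Bigr).
\]
For $\partial_t Z$ we use $\partial_tZ_2=-L_2W/b$ and express $\partial_tZ_1$ through $\eqref{sys}_1$. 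This gives $\|\partial_tZ\|^\ell\lesssim\|W/b\|^\ell+\|\nabla Z_2\|^\ell+\|\nabla N_1(Z)\|^\ell$.

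\textbf{Step 3: nonlinear terms (main obstacle).} The hardest term is $f_1$, because it contains $\partial_t\big(b\,\partial_kN_2^k(Z)\big)+\partial_tQ(Z)$. Expanding the derivative gives three kinds of terms:
\begin{itemize}
\item $b'\partial N_2$, which is treated with Lemma~\ref{b'lem};
\item $b\,\partial(DN_2(Z)\partial_tZ)$, which by Lemma~\ref{lemma51} is reduced at low frequency to $\|DN_2(Z)\partial_tZ\|_{\dot\B^{d/2}_{2,1}}\lesssim\|Z\|_{\dot\B^{d/2}}\|\partial_tZ\|_{\dot\B^{d/2}}$;
\item $\partial_tQ=DQ(Z)\partial_tZ$.
\end{itemize}
The last term, with the $1/b$-weight coming from the equation, must be paired with $Z_2$ through $Q(Z_1,0)=0$. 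Our plan is to bound it by $\|Z\|_{L^\infty}\|\partial_tZ\|_{L^1(\dot\B^{d/2})}$ plus $\|Z_2/\sqrt b\|_{\widetilde L^2}\|\sqrt b\,Z\|_{\widetilde L^2}$-type products.

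Since $\partial_tZ$ appears on both sides, the estimate is closed simultaneously: all such contributions are $\lesssim\varepsilon_0\|Z\|_{\mathcal E_t}$ under \eqref{small}. The remaining terms of $f_2$ are products in divergence form. We estimate them with the Besov product laws of the appendix, at regularity $d/2+1$ at low frequency after Lemma~\ref{lemma51}, and with the Chemin--Lerner $\widetilde L^2\times\widetilde L^2\to L^1$ pairings; each is bounded by $\varepsilon_0\|Z\|_{\mathcal E_t}$. Collecting the three steps gives \eqref{eqlq}.
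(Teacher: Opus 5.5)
Your architecture is the paper's: dyadic energy estimates for $(W,Z_1)$ on $I^\ell_{j,t}$, absorption of $l_1,l_2$ through $k_0$, recovery of $Z_2$ from \eqref{Z2}, and closure with $\varepsilon_0\|Z\|_{\mathcal E_t}$. However, you have the structural hypothesis backwards, and the step that relies on it fails. Assumption \eqref{conditionAVbar} gives $Q(Z_1,0)=0$ and $N_1^k(Z_1,0)=0$, not $N_2^k(Z_1,0)=0$. The latter is the extra hypothesis \eqref{structural2}, assumed only in Theorem~\ref{thm3}. It is false for damped Euler with nonlinear pressure, where $N_2^k$ contains $\Pi(\rho)$, a function of $Z_1$ alone.

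Your treatment of $N_2$ (remove the weight with Lemma~\ref{lemma51}, then use a factor $Z_2$) therefore breaks down exactly where an $L^1_t$ bound is needed. Recovering $\|Z_2\|^\ell_{L^1_t(\dot\B^{\frac d2+1}_{2,1})}$ from \eqref{Z2}, and bounding the term $b\,\partial_k\partial_l N_2^l$ of $f_2$, both require $\|bN_2(Z)\|^\ell_{L^1_t(\dot\B^{\frac d2+2}_{2,1})}$. After Lemma~\ref{lemma51} this becomes $\|N_2(Z)\|^\ell_{L^1_t(\dot\B^{\frac d2+1}_{2,1})}$. For a contribution quadratic in $Z_1$, that would need $Z_1^\ell\in L^1_t(\dot\B^{\frac d2+1}_{2,1})$ or $b^{-1/2}Z_1\in L^2_t(L^\infty)$, and neither is controlled by $\mathcal E_t$.

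The missing ingredient is to keep the weight $b$ and apply the hybrid composition estimate \eqref{q1} with $s=\frac d2+2$, $\sigma=\frac d2+1$, using $2^{J_\tau}b(\tau)\le 2^{-k_0}$. This is \eqref{N3}. There the $Z_1$-part is paid for by the parabolic dissipation $\|bZ_1\|^\ell_{L^1_t(\dot\B^{\frac d2+2}_{2,1})}$, the $Z_2$-part by Lemma~\ref{lemma51}, and the high--low interactions by $\|Z\|^h_{L^1_t(\dot\B^{\frac d2+1}_{2,1})}$.

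Conversely, the factorization that is indispensable, and that you never invoke, is $N_1^k(Z)=n_1^k(Z)Z_2$. It follows from $N_1^k(Z_1,0)=0$ and Lemma~\ref{lem:factorization-Z2}. The term $\partial_kN_1^k$ appears in $f_2$ and also in your own bound for $\partial_tZ_1$. It must be estimated in $L^1_t(\dot\B^{\frac d2+1}_{2,1})$ at low frequency, with no weight $b$ to exploit. What closes it is the splitting
$\|Z\|_{L^\infty_t(\dot\B^{\frac d2}_{2,1})}\|Z_2\|_{L^1_t(\dot\B^{\frac d2+1}_{2,1})}+\|b^{\frac12}Z\|_{L^2_t(\dot\B^{\frac d2+1}_{2,1})}\|b^{-\frac12}Z_2\|_{L^2_t(\dot\B^{\frac d2}_{2,1})}$,
which needs one factor to be $Z_2$. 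Calling these terms ``products in divergence form'' does not suffice. The same splitting also handles $\|Q(Z)\|^\ell_{L^1_t(\dot\B^{\frac d2+1}_{2,1})}$, as in \eqref{f2}.

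A smaller point: there is no $1/b$ weight on $\partial_tQ$ in $f_1$. Since $DQ(0)=0$, you simply have $\|\partial_tQ(Z)\|_{L^1_t(\dot\B^{\frac d2}_{2,1})}\lesssim\|Z\|_{L^\infty_t(\dot\B^{\frac d2}_{2,1})}\|\partial_tZ\|_{L^1_t(\dot\B^{\frac d2}_{2,1})}$, so the extra pairing with $Z_2$ you plan there is unnecessary.
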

% We first recall an auxiliary estimate that will be used repeatedly in the
% subsequent analysis.

%    \begin{lem}
    % \label{timeest}
    %     Under hypotheses \eqref{strucadd} and \eqref{small}, we have for all $\sigma \in ]-d/2,d/2],$
     %    \begin{align*}
     %        &\|\partial_t Z_1\|_{\dot{\B}_{2,1}^\sigma}\lesssim\|\nabla Z_2\|_{\dot{\B}_{2,1}^\sigma}+\| Z_2\|_{\dot{\B}_{2,1}^{\frac d2}}\|\nabla Z_1\|_{\dot{\B}_{2,1}^\sigma},\\
      %       &\|\partial_t Z_2\|_{\dot{\B}_{2,1}^\sigma}\lesssim\left\|\frac{W}{b(t)}\right\|_{\dot{\B}_{2,1}^\sigma}.
     %    \end{align*}
  %   \end{lem}
   %  \begin{proof}
    %     The second inequality comes directly from the definition of $W$ is equivalent to 
     %    \begin{align}
    %     \label{z2eq}
     %        \partial_t Z_2=-\frac{L_2W}{b(t)}.
    %     \end{align}
     %    For the first item, we need the explicit expression of $\partial_t Z_1$: since $A_{1,1}^k=0$ and by the condition \eqref{strucadd}, we have,
     %    \begin{align}
     %    \label{z1eq}
     %        \partial_t Z_1+\sum_{k=1}^dA^k_{1,2}\partial_k Z_2=-\sum_{k=1}^d\left(\widetilde{A}^k_{1,1}(Z_2)\partial_k Z_1+\widetilde{A}^k_{1,2}(Z)\partial_k Z_2\right).
    %     \end{align}
       %  All the terms on the right-hand side % are at least quadratic, hence condition \eqref{small}  ensures the desired inequality of $\partial_t Z_1$.
   %  \end{proof}
    % Now we turn to the proof of  Lemma~\ref{eslq}.
    \begin{proof}
    \subsubsection*{Step 1: Decoupling analysis for  \texorpdfstring{$(W,Z_1)$}{TEXT}} 
    %We have the following statement.
We shall prove the low-frequency estimate
\begin{equation}
\begin{aligned}
\label{est:W}
&\quad\|Z\|^\ell_{\widetilde{L}^{\infty}_t(\dot\B_{2,1}^{\frac{d}{2}})}
+\int_{0}^t\Big(\|b(\tau) Z_1\|_{\dot{\B}^{\frac{d}{2}+2}_{2,1}}^\ell +\|Z_2\|_{\dot{\B}^{\frac{d}{2}+1}_{2,1}}^\ell+
\left\|\frac{W}{b(\tau)}\right\|_{\dot{\B}^{\frac{d}{2}}_{2,1}}^\ell+\|\partial_t Z\|_{\dot{\B}^{\frac{d}{2}}_{2,1}}^\ell
\Big)\,d\tau
\\
&\lesssim\|W_0\|_{\dot\B_{2,1}^{\frac{d}{2}}}^\ell+
\|Z_{1,0}\|_{\dot\B_{2,1}^{\frac{d}{2}}}^\ell\\
&\quad+\|(f_1,f_2)\|_{L^1_t(\dot{\B}^{\frac{d}{2}}_{2,1})}^\ell
+\sum_{k=1}^{d}\|b(\tau)N_2^k(Z)\|_{\widetilde{L}^{\infty}_t(\dot\B_{2,1}^{\frac{d}{2}})\cap L^1_t(\dot{\B}^{\frac{d}{2}+2}_{2,1})}^\ell
+\|Q(Z)\|_{\widetilde{L}^{\infty}_t(\dot\B_{2,1}^{\frac{d}{2}})\cap L^1_t(\dot{\B}^{\frac{d}{2}+1}_{2,1})}^\ell.
\end{aligned}
\end{equation}

To prove \eqref{est:W}, we modify the localized energy estimates in the linear analysis
to the nonlinear system. Viewing the nonlinear terms $f_1$ and $f_2$ as source terms in the linear analysis, we obtain, for all
$\tau\in I_{j,t}^\ell$,
\begin{equation}
\begin{aligned}\label{Wj-int-nl}
&\quad\sup_{\tau\in I_{j,t}^\ell}\|W_j(\tau),Z_{1,j}(\tau)\|_{L^2}
+\int_{I_{j,t}^\ell}\Big(\frac{\|W_j(\tau)\|_{L^2}}{b(\tau)}+2^{2j}b(\tau)\|Z_{1,j}(\tau)\|_{L^2}\Big)\,d\tau\\
&\lesssim 
\|W_j(0), Z_{1,j}(0)\|_{L^2}
+\int_{I_{j,t}^\ell}
\big(\|l_{1,j}(\tau)\|_{L^2}+\|f_{1,j}(\tau)\|_{L^2}+\|l_{2,j}(\tau)\|_{L^2}+\|f_{2,j}(\tau)\|_{L^2}\big)\,d\tau.
\end{aligned}
\end{equation}

We observe that the only difference with the linear analysis is the appearance of non-linear terms. Moreover, it will also influence the estimates for $l_1$ and $l_2$. However, the solution is just to use repeatedly the fact that in the low-frequency regime, 
$b(\tau)2^j\le 2^{-k_0}$. By the definition of $W$, we have that
\begin{align}\label{Z2j-nl-abs}
\|Z_{2,j}\|_{L^2}
\lesssim
\|W_j\|_{L^2}
+b(\tau)2^j\|Z_{1,j}\|_{L^2}
+b(\tau)2^j\sum_{k=1}^d\|\dot\Delta_j N_2^k(Z)\|_{L^2}
+\|\dot\Delta_j Q(Z)\|_{L^2}.
\end{align}
Hence, we can recover the bound for $Z_{2,j}$.
Then similarly to the linear case,  we have
\begin{equation}\label{dyadic-coupled-final}
\begin{aligned}
&\quad \sup_{\tau\in I_{j,t}^\ell}\|Z_j(\tau)\|_{L^2}
+\int_{I_{j,t}^\ell}
\left\|\left(\frac{W_j}{b(\tau)},2^{2j}b(\tau)Z_{1,j},2^jZ_{2,j}\right)\right\|_{L^2}
\,d\tau\\
&\qquad+\|(b(\tau)^{\frac{1}{2}}2^jZ_j,b(\tau)^{-\frac{1}{2}}Z_{2,j})\|_{L^2(I_{j,t}^\ell;L^2)}
\\
&\lesssim
\|W_j(0), Z_{1,j}(0)\|_{L^2}+\sum_{k=1}^d\|\dot\Delta_j N_2^k(Z)\|_{L^{\infty}(I_{j,t}^\ell;L^2)}+\|\dot\Delta_j Q(Z)\|_{L^{\infty}(I_{j,t}^\ell;L^2)}
\\
&\qquad
+ \,\int_{I_{j,t}^\ell}
\Big(
b(\tau)\,2^{2j}\sum_{k=1}^{d}
\|\dot\Delta_j N_2^k(Z)(\tau)\|_{L^2}
+ 2^{j}\|\dot\Delta_j Q(Z)(\tau)\|_{L^2}
\Big)\,d\tau \\
&\qquad+ \int_{I_{j,t}^\ell}
\Big(
\|f_{1,j}(\tau)\|_{L^2}
+ \|f_{2,j}(\tau)\|_{L^2}
\Big)\,d\tau .
\end{aligned}
\end{equation}
Then, multiplying \eqref{dyadic-coupled-final} by $2^{j\frac d2}$ and summing over all
low-frequency indices, we recover the low-frequency part of
\eqref{est:W}.

\subsubsection*{Step 2: Estimates of nonlinear terms} 

We are in a position to handle the nonlinear terms on the right-hand side of \eqref{est:W}. We claim 
\begin{align}
\|W_0\|^{\ell}_{\dot{\B}^{\frac{d}{2}}_{2,1}}&\lesssim \big(1+\|Z_0\|_{\dot{\B}^{\frac{d}{2}}_{2,1}}\big)\big(\|Z_0\|_{\dot{\B}^{\frac{d}{2}}_{2,1}}^{\ell}+\|Z_0\|_{\dot{\B}^{\frac{d}{2}+1}_{2,1}}^{h}\big), \label{N1}\\
\|N_2^k(Z)\|_{\widetilde{L}^{\infty}_t(\dot\B_{2,1}^{\frac d2})}^{\ell}&\lesssim \varepsilon_0 \Big(\|Z\|_{\widetilde{L}^{\infty}_t(\dot{\B}^{\frac{d}{2}}_{2,1})}^\ell+  \|b(\tau)Z\|_{\widetilde{L}^{\infty}_t(\dot{\B}^{\frac{d}{2}+1}_{2,1})}^h\Big), \label{N2} \\
\|b(\tau)N_2^k(Z)\|_{L^1_t(\dot{\B}^{\frac{d}{2}+2}_{2,1})}^{\ell}&\lesssim \varepsilon_0\Big( \|b(\tau)Z\|_{L^1_t(\dot{\B}^{\frac{d}{2}+2}_{2,1})}^{\ell}+\|Z\|_{L^1_t(\dot{\B}^{\frac{d}{2}+1}_{2,1})}^{h}\Big),  \label{N3}\\
\|Q(Z)\|_{\widetilde{L}^{\infty}_t(\dot\B_{2,1}^{\frac d2})}^{\ell}&\lesssim  \varepsilon_0\|Z_2\|_{\widetilde{L}^{\infty}_t(\dot\B_{2,1}^{\frac d2})} , \label{N5}\\
\|Q(Z)\|_{L^1_t(\dot{\B}^{\frac{d}{2}+1}_{2,1})}^{\ell}&\lesssim \varepsilon_0 \int_0^t \Big(\|(\frac{W}{b(\tau)},\nabla Z_2)\|_{\dot{\B}^{\frac{d}{2}}_{2,1}}+b(\tau)\|Z_1(\tau)\|_{\dot{\B}^{\frac{d}{2}+2}_{2,1}}^\ell+ \|Z_1(\tau)\|_{\dot{\B}^{\frac{d}{2}+1}_{2,1}}^h\Big)\,d\tau,  \label{N4}\\
\|(f_1,f_2)\|_{L^1_t(\dot{\B}^{\frac{d}{2}}_{2,1})}^{\ell}&\lesssim  \varepsilon_0  \int_0^t \Big(\|(\frac{W}{b(\tau)},\nabla Z_2)\|_{\dot{\B}^{\frac{d}{2}}_{2,1}}+\|\partial_t Z\|_{\dot{\B}^{\frac{d}{2}}_{2,1}}\nonumber\\
&\quad\quad+b(\tau)\|Z_1(\tau)\|_{\dot{\B}^{\frac{d}{2}+2}_{2,1}}^\ell+ \|Z_1(\tau)\|_{\dot{\B}^{\frac{d}{2}+1}_{2,1}}^h\Big)\,d\tau+\varepsilon_0\|Z\|_{\widetilde{L}^{\infty}_t(\dot\B_{2,1}^{\frac d2})}. \label{N6}
\end{align}
 
The estimates \eqref{N1} and \eqref{N2} come directly from the definition, the basic product law and Lemma \ref{lemma51}.

For \eqref{N3}, we have in fact
\begin{equation}
\begin{aligned}
\|b(\tau)N_2^k(Z)\|_{L^1_t(\dot{\B}^{\frac{d}{2}+2}_{2,1})}^{\ell}&\lesssim \int_0^t  \|Z\|_{L^{\infty}} b(\tau)\Big( \|Z\|_{\dot{\B}^{\frac{d}{2}+2}_{2,1}}^{\ell}+\frac{1}{b(\tau)}\|Z\|_{\dot{\B}^{\frac{d}{2}+1}_{2,1}}^h\Big)d\tau\\
&\lesssim \|Z\|_{L^{\infty}_t(L^{\infty})}\Big( \|b(\tau)Z\|_{L^1_t(\dot{\B}^{\frac{d}{2}+2}_{2,1})}^{\ell}+\|Z\|_{L^1_t(\dot{\B}^{\frac{d}{2}+1}_{2,1})}^{h}\Big).
\notag
\end{aligned}
\end{equation}
By \eqref{conditionAVbar}, we have $Q(Z_1,0)=0$. Applying Lemma~\ref{lem:factorization-Z2}, we infer that there exists a smooth
matrix-valued map $q_1$ with $q_1(0)=0$ such that $Q(Z)= q_1(Z) Z_2$. Then \eqref{N5} follows immediately. In view of \eqref{eq:prod1}, 
\begin{equation}\label{Qz:n}
\begin{aligned}
\|Q(Z)\|_{L^1_t(\dot{\B}^{\frac{d}{2}+1}_{2,1})}^{\ell}&\lesssim \int_0^t\Big( \|q_1(Z)\|_{L^{\infty}}\|Z_2\|_{\dot{\B}^{\frac{d}{2}+1}_{2,1}}+\|q_1(Z)\|_{\dot{\B}^{\frac{d}{2}+1}_{2,1}} \|Z_2\|_{L^{\infty}}\Big)d\tau\\
&\lesssim \|Z\|_{L^{\infty}_t(L^{\infty})}\int_0^t \|Z_2\|_{\dot{\B}^{\frac{d}{2}+1}_{2,1}}\,d\tau+\int_0^t \|Z_2\|_{L^{\infty}} \|Z\|_{\dot{\B}^{\frac{d}{2}+1}_{2,1}}\,d\tau.
\end{aligned}
\end{equation}

\iffalse\begin{align*}
\|Z_2\|_{L^{\infty}}&\lesssim \|Z_2\|_{\dot{\B}^{\frac{d}{2}}_{2,1}}^{\ell}+\|Z_2\|_{\dot{\B}^{\frac{d}{2}}_{2,1}}^h
\lesssim  \|Z_2\|_{\dot{\B}^{\frac{d}{2}}_{2,1}}^{\ell}+b(t)\|Z_2\|_{\dot{\B}^{\frac{d}{2}+1}_{2,1}}^h
\end{align*}
and\fi

Plugging \eqref{N2} and \eqref{N5} into \eqref{Z2j-nl-abs} yields
\begin{align*}
\|Z_2(\tau)\|_{\dot{\B}_{2,1}^{\frac{d}{2}}}
\lesssim
\|W(\tau)\|_{\dot{\B}_{2,1}^{\frac{d}{2}}}
+b(\tau)\|Z(\tau)\|_{\dot{\B}_{2,1}^{\frac{d}{2}+1}}
+\varepsilon_0\|Z_2(\tau)\|_{\dot{\B}_{2,1}^{\frac{d}{2}}},
\end{align*}
and $\varepsilon_0$ is suitably small, we infer
\begin{align*}
\|Z_2(\tau)\|_{L^{\infty}}
\lesssim \|W(\tau)\|_{L^{\infty}}+b(\tau)\|\nabla Z(\tau)\|_{L^{\infty}},
\end{align*}
which implies 
\begin{align}\label{Dz:n1}
&\quad\int_0^t \|Z_2(\tau)\|_{L^{\infty}} \|Z(\tau)\|_{\dot{\B}^{\frac{d}{2}+1}_{2,1}}\nonumber\\
&\lesssim \int_0^t \|W\|_{\dot{\B}^{\frac{d}{2}}_{2,1}}\|Z(\tau)\|_{\dot{\B}^{\frac{d}{2}+1}_{2,1}}\,d\tau+\int_0^t b(\tau) \|Z\|_{\dot{\B}^{\frac{d}{2}+1}_{2,1}}^2\,d\tau\nonumber\\
&\lesssim \sup_{\tau\in[0,t]}b(\tau)\|Z(\tau)\|_{\dot{\B}^{\frac{d}{2}+1}_{2,1}}\int_0^t \Big(\|\frac{W}{b(\tau)}\|_{\dot{\B}^{\frac{d}{2}}_{2,1}}+\|Z_2\|_{\dot{\B}^{\frac{d}{2}+1}_{2,1}}\Big)\,d\tau\nonumber\\
&\quad+\sup_{\tau\in [0,t]}\|Z_1(\tau)\|_{\dot{\B}^{\frac{d}{2}}_{2,1}} \int_0^t b(\tau)\|Z_1(\tau)\|_{\dot{\B}^{\frac{d}{2}+2}_{2,1}}^\ell\,d\tau\nonumber\\
&\quad+\sup_{\tau\in [0,t]}b(\tau)\|Z_1(\tau)\|_{\dot{\B}^{\frac{d}{2}+1}_{2,1}}\int_0^t \|Z_1(\tau)\|_{\dot{\B}^{\frac{d}{2}+1}_{2,1}}^h\,d\tau\nonumber\\
&\lesssim \varepsilon_0 \int_0^t \Big(\|\frac{W}{b(\tau)}\|_{\dot{\B}^{\frac{d}{2}}_{2,1}}+\|Z_2\|_{\dot{\B}^{\frac{d}{2}+1}_{2,1}}+b(\tau)\|Z_1(\tau)\|_{\dot{\B}^{\frac{d}{2}+2}_{2,1}}^\ell+ \|Z_1(\tau)\|_{\dot{\B}^{\frac{d}{2}+1}_{2,1}}^h\Big)\,d\tau,
\end{align}
where one has used interpolation. By \eqref{Qz:n} and \eqref{Dz:n1}, we obtain \eqref{N4}.

Furthermore, one observes that
\begin{equation}\label{f2}
\begin{aligned}
\|f_2\|_{L^1_t(\dot{\B}^{\frac{d}{2}}_{2,1})}^{\ell}
\lesssim
\sum_{k=1}^d\|N_1^k(Z)\|_{L^1_t(\dot{\B}^{\frac{d}{2}+1}_{2,1})}^{\ell}
+\sum_{k=1}^d\|b(\tau)N_2^k(Z)\|_{L^1_t(\dot{\B}^{\frac{d}{2}+2}_{2,1})}^{\ell}
+\|Q(Z)\|_{L^1_t(\dot{\B}^{\frac{d}{2}+1}_{2,1})}^{\ell},
\end{aligned}
\end{equation}
where the terms on the right-hand side have been addressed except for $N_1$.

It follows from Lemma~\ref{lem:factorization-Z2} and \eqref{eq:prod1} that
\begin{align*}
\|N_1^k(Z)\|_{L^1_t(\dot{\B}^{\frac{d}{2}+1}_{2,1})}^{\ell}&\lesssim \|Z\|_{L^{\infty}_t(L^{\infty})} \|Z_2\|_{L^1_t(\dot{\B}^{\frac{d}{2}+1}_{2,1})}+\|b(\tau)^{\frac{1}{2}}Z\|_{L^2_t(\dot{\B}^{\frac{d}{2}+1}_{2,1})}\|b(\tau)^{-\frac{1}{2}}Z_2\|_{L^2_t(L^{\infty})}\\
&\lesssim \|Z\|_{L^{\infty}_t(\dot{\B}^{\frac{d}{2}}_{2,1})} \|Z_2\|_{L^1_t(\dot{\B}^{\frac{d}{2}+1}_{2,1})}+ \|b(\tau)^{\frac{1}{2}}Z\|_{L^2_t(\dot{\B}^{\frac{d}{2}+1}_{2,1})}\|b(\tau)^{-\frac{1}{2}}Z_2\|_{L^2_t(\dot{\B}^{\frac{d}{2}}_{2,1})}\lesssim \varepsilon_0 \|Z\|_{\mathcal{E}_t}. 
\end{align*}

Finally, we obtain 
%this term can be bounded similarly to $\|Q(Z)\|_{L^1_t(\dot{\B}^{\frac{d}{2}+1}_{2,1})}^{\ell}
\begin{equation}\label{f1}
\begin{aligned}
\|f_1\|_{L^1_t(\dot{\B}^{\frac{d}{2}}_{2,1})}^{\ell}
&\lesssim
\sum_{k=1}^d \|b'(\tau) N_2^k(Z)\|_{L^1_t(\dot{\B}^{\frac{d}{2}+1}_{2,1})}^{\ell}
+\sum_{k=1}^d\|b(\tau)\partial_Z N_{2}^k(Z) \partial_t Z\|_{L^1_t(\dot{\B}^{\frac{d}{2}+1}_{2,1})}^{\ell}\\
&\quad+\|\partial_t q_1(Z) Z_2\|_{L^1_t(\dot{\B}^{\frac{d}{2}}_{2,1})}^{\ell}+\|q_1(Z)\partial_t Z_2\|_{L^1_t(\dot{\B}^{\frac{d}{2}}_{2,1})}^{\ell}
\end{aligned}
\end{equation}
where
\begin{equation}
\begin{aligned}
\|b'(\tau) N_2^k(Z)\|_{L^1_t(\dot{\B}^{\frac{d}{2}+1}_{2,1})}^{\ell}
&\lesssim \|N_2^k(Z)\|_{\widetilde{L}^{\infty}_t(\dot{\B}^{\frac{d}{2}}_{2,1})}
\lesssim \|Z\|_{\widetilde{L}^{\infty}_t(\dot{\B}^{\frac{d}{2}}_{2,1})}^2,\\
\|b(t)\partial_Z N_2^k(Z) \partial_tZ\|_{L^1_t(\dot{\B}^{\frac{d}{2}+1}_{2,1})}^{\ell}&\lesssim \|\partial_Z N_2^k(Z) \partial_tZ\|_{L^1_t(\dot{\B}^{\frac{d}{2}}_{2,1})}^{\ell}\lesssim \|Z\|_{L^{\infty}_t(\dot{\B}^{\frac{d}{2}}_{2,1})}\|\partial_tZ\|_{L^1_t(\dot{\B}^{\frac{d}{2}}_{2,1})},\\
\|\partial_t q_1(Z) Z_2\|_{L^1_t(\dot{\B}^{\frac{d}{2}}_{2,1})}^{\ell}&\lesssim \|Z_2\|_{\widetilde{L}^{\infty}_t(\dot{\B}^{\frac{d}{2}}_{2,1})} \|\partial_tZ\|_{L^1_t(\dot{\B}^{\frac{d}{2}}_{2,1})},\\
\|q_1(Z)\partial_t Z_2\|_{L^1_t(\dot{\B}^{\frac{d}{2}}_{2,1})}^{\ell}
&\lesssim
\|Z\|_{\widetilde{L}^{\infty}_t(\dot{\B}^{\frac{d}{2}}_{2,1})}
\left\|\frac{W}{b(\tau)}\right\|_{L^1_t(\dot{\B}^{\frac{d}{2}}_{2,1})}^{\ell}.
\end{aligned}
\end{equation}

By \eqref{est:W} and \eqref{N1}--\eqref{N6}, we end up with \eqref{eqlq} and finish the proof of Lemma \ref{eslq}.
\end{proof}

\subsection{High-frequency analysis in the under-damped case}

    We establish the following estimates in the high-frequency regime.  
     
              \begin{lem}
    \label{eslq:high}      
 For all $t\in[0,T]$, we have
\begin{align}
\label{eqlq:h1}
&\|b(\tau)Z\|^h_{\widetilde{L}^{\infty}_t(\dot\B^{\frac{d}{2}+1}_{2,1})}
+\int_{0}^t\Big(
\|Z(\tau)\|^h_{\dot\B_{2,1}^{\frac{d}{2}+1}}
+\|\partial_t Z(\tau)\|_{\dot\B_{2,1}^{\frac{d}{2}}}^h
\Big)\,d\tau\notag\\
\lesssim&
\big(1+\|Z_0\|_{\dot{\B}^{\frac{d}{2}}_{2,1}}\big)\big(\|Z_0\|_{\dot{\B}^{\frac{d}{2}}_{2,1}}^{\ell}+\|Z_0\|_{\dot{\B}^{\frac{d}{2}+1}_{2,1}}^{h}\big)+\varepsilon_0\|Z\|_{\mathcal{E}_t}.
\end{align}
    \end{lem}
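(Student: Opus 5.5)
The plan is to run the high-frequency hypocoercive argument of Section~\ref{sec:linear} on the nonlinear entropy-symmetrized system \eqref{entropeq}, not on \eqref{eq0}. This avoids the loss of derivatives produced by the quasilinear terms $\partial_k\mathbf N^k(Z)$. Set $\widetilde U:=U-\bar U$ and localize with $\dot\Delta_j$. The principal part $\widetilde A^0(U)\partial_t+\sum_k\widetilde A^k(U)\partial_k$ is symmetric with variable coefficients, so commutators $[\dot\Delta_j,\widetilde A^k(U)]\partial_k\widetilde U$ appear, together with the source $\mathbf r(U)/b(t)$.

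For each $j$ and $\tau\in I^h_{j,t}=[t_j,t]$, I would use the Lyapunov functional $\mathcal L_j$ of Section~\ref{sec:linear}, with the energy part weighted by $\widetilde A^0(U)$. Its time derivative produces $\partial_t\widetilde A^0(U)$, which is bounded by $\|\partial_tZ\|_{L^\infty}$. Exactly as in the proof of \eqref{estnlhigh}, I then consider $b(\tau+t^*)\mathcal L_j$ and use \eqref{b'condi}, i.e.\ $c'-b'\ge c'/2$. This gives, after Lemma~\ref{lem2},
\[
\sup_{I^h_{j,t}}\|b\,U_j\|_{L^2}+\int_{I^h_{j,t}}\|U_j\|_{L^2}\,d\tau\lesssim \|b(t_j)U_j(t_j)\|_{L^2}+\int_{I^h_{j,t}} b(\tau)\|\mathcal R_j\|_{L^2}\,d\tau .
\]
Here $\mathcal R_j$ collects the commutators, the terms $\partial_t\widetilde A^0,\ \operatorname{div}\widetilde A^k$ and $\dot\Delta_j\mathbf r(U)/b$. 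The corrector term in $\mathcal L_j$ carries a factor $1/(b2^j)$, so it costs no derivatives. Multiplying by $2^{j(\frac d2+1)}$ and summing, the $t_j$-data are handled as in Section~\ref{sec:linear}. For $t_j>0$ we have $2^{j}b(t_j)\simeq 2^{-k_0}$, so $2^{j(\frac d2+1)}b(t_j)\|U_j(t_j)\|\lesssim 2^{j\frac d2}\|Z_j(t_j)\|$, which is controlled by the low-frequency sup already bounded in Lemma~\ref{eslq}. For $t_j=0$ the data term is simply $\|Z_0\|^h_{\dot\B^{d/2+1}_{2,1}}$. The passage between $U$ and $Z$ is a smooth change of variables vanishing at $0$; by composition estimates and \eqref{Linfty} it costs only $\varepsilon_0\|Z\|_{\mathcal E_t}$ in these norms.

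Nonlinear terms. The commutator estimate in $\dot\B^{d/2+1}_{2,1}$ yields $\|\nabla U\|_{L^\infty}\|U\|_{\dot\B^{d/2+1}_{2,1}}$. After multiplication by $b$ and integration in time, I would bound it by $\|b\nabla Z\|_{L^\infty_t(L^\infty)}\int_0^t\|Z\|_{\dot\B^{d/2+1}_{2,1}}$. In that integral the high part is in $\mathcal E_t$. The low part I would control by $\|Z_2\|_{L^1(\dot\B^{d/2+1})}$ together with interpolation of $Z_1^\ell$ between $\widetilde L^\infty(\dot\B^{d/2})$ and $bL^1(\dot\B^{d/2+2})$; if that fails, I would instead pair $\|b^{1/2}Z\|_{\widetilde L^2(\dot\B^{d/2+1})}$ with itself. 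The term $\partial_t\widetilde A^0$ is handled by $\|\partial_t Z\|_{L^1(\dot\B^{d/2})}$ times $\sup\|bZ\|_{\dot\B^{d/2+1}}$. For the source, $\mathbf r(U)$ is quadratic and, by \eqref{conditionAVbar}, contains a factor $Z_2$. Hence $b\cdot b^{-1}\|\mathbf r\|_{\dot\B^{d/2+1}}\lesssim\varepsilon_0(\|Z_2\|_{\dot\B^{d/2+1}}+\ldots)$, which is integrable in $\mathcal E_t$.

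Time derivative. The bound for $\|\partial_tZ\|^h_{L^1(\dot\B^{d/2})}$ is read off from the equation \eqref{eq0}: it is at most $\|Z\|^h_{\dot\B^{d/2+1}}+b^{-1}\|Z\|^h_{\dot\B^{d/2}}$ plus nonlinear terms. By \eqref{timehigh}, $b^{-1}\|Z\|^h_{\dot\B^{d/2}}\lesssim\|Z\|^h_{\dot\B^{d/2+1}}$.

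The main obstacle I expect is the commutator and $\partial_t\widetilde A^0$ terms carrying the weight $b(\tau)$. They must close using only the quantities in $\mathcal E_t$, in particular without any a priori $L^1_t$ bound on the low part of $\|Z_1\|_{\dot\B^{d/2+1}}$.
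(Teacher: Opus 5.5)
Your outline follows the paper's proof step by step. It uses the entropy variables $\widetilde U$ and the localized Lyapunov functional with the corrector of Section~\ref{sec:linear}. It applies the weight $b(\tau+t^*)$ together with \eqref{b'condi} and Lemma~\ref{lem2}. The transition data at $t_j$ are absorbed through \eqref{relationtj} into the low-frequency supremum. Finally, $Z$ is recovered from $\widetilde U=G_0Z+G(Z)Z$, and $\partial_tZ$ from \eqref{sys}.

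The nonlinear bookkeeping needs correcting, though. Your first-choice bound for the commutator pulls out $\sup_\tau b\|\nabla Z\|_{L^\infty}$ and then controls $\int_0^t\|Z_1\|^\ell_{\dot\B^{\frac d2+1}_{2,1}}\,d\tau$ by interpolation. This cannot work. Interpolating between $\widetilde L^\infty_t(\dot\B^{\frac d2}_{2,1})$ and $b\,L^1_t(\dot\B^{\frac d2+2}_{2,1})$ only gives $b^{\frac12}Z_1^\ell\in L^2_t(\dot\B^{\frac d2+1}_{2,1})$. Already for the linear heat flow with $b\equiv1$, that time integral is comparable to the low-frequency $\dot\B^{\frac d2-1}_{2,1}$ norm of the data, which $\mathcal E_t$ does not control. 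Only your fallback works, and it is exactly \eqref{N2h}: keep both factors under the time integral and bound $\int_0^t b\|\nabla Z\|^2_{\dot\B^{\frac d2}_{2,1}}\,d\tau$ by $\|b^{\frac12}Z\|^2_{\widetilde L^2_t(\dot\B^{\frac d2+1}_{2,1})}$. For $\mathbf r(U)=\widetilde q_1(Z)Z_2$, the cross term $\|Z\|_{\dot\B^{\frac d2+1}_{2,1}}\|Z_2\|_{\dot\B^{\frac d2}_{2,1}}$ is closed the same way, by pairing with $\|b^{-\frac12}Z_2\|_{\widetilde L^2_t(\dot\B^{\frac d2}_{2,1})}$ as in \eqref{N1h}.

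The same obstruction reappears in terms you treat as harmless, where there is no factor $b$ available for such a pairing. The frozen-coefficient remainder entering the corrector reduces, after the factor $1/(b2^j)$, to $\int_0^t\|(\widetilde A^k(U)-\bar A^k)\partial_k\widetilde U\|^h_{\dot\B^{\frac d2}_{2,1}}\,d\tau$. Its part containing $\nabla Z_1^\ell$ is not time-integrable if you bound it by $\|Z\|_{\dot\B^{\frac d2}_{2,1}}\|\nabla Z_1^\ell\|_{\dot\B^{\frac d2}_{2,1}}$.

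The missing ingredient is the high-frequency gain of a factor $b$. For $j\geq J_\tau+1$ one has $1\leq 2^{k_0}b(\tau)2^j$, so $\|f\|^h_{\dot\B^{\frac d2}_{2,1}}\lesssim\|bf\|^h_{\dot\B^{\frac d2+1}_{2,1}}$ by \eqref{timehigh}. This reduces the term to $\|b^{\frac12}Z\|^2_{\widetilde L^2_t(\dot\B^{\frac d2+1}_{2,1})}$ plus $\|Z\|_{L^\infty_t(\dot\B^{\frac d2}_{2,1})}\|bZ^\ell\|_{L^1_t(\dot\B^{\frac d2+2}_{2,1})}$, which is what the paper does right after \eqref{N3h}.

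The same device, in the form \eqref{q2} of Lemma~\ref{compositionlp}, is what makes two further terms cost only $\varepsilon_0\|Z\|_{\mathcal E_t}$. These are $\|G(Z)Z\|^h_{L^1_t(\dot\B^{\frac d2+1}_{2,1})}$ and the high frequencies of $\partial_k\mathbf N^k(Z)$ in the $\partial_tZ$ bound. Composition estimates and \eqref{Linfty} alone would again require $Z_1^\ell$ in $L^1_t(\dot\B^{\frac d2+1}_{2,1})$. With these adjustments, your argument closes as in the paper.
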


The proof of Lemma \ref{eslq:high} consists of the following three steps.

 \subsubsection*{Step 1: Construction of Lyapunov functionals}

 First, we construct spectrally localized Lyapunov functionals as follows.    As we mentioned before, we shall use the method of Lyapunov functionals for the entropy variable $U=U(V)$ defined in \eqref{w}.
 
 \begin{lem}\label{lemma6}
Let $U$ solve \eqref{entropeq}--\eqref{rtildeU} and set $\widetilde{U}=U-\bar{U}$.
For any $j\in \Z$ and $\tau\in I_{j,t}^h$, there exists a Lyapunov functional $\mathcal{L}_{j}(\tau)$ satisfying
\[
\mathcal{L}_{j}(\tau)\sim b(\tau+t^*)\|\widetilde{U}_j(\tau)\|_{L^2}^2
\]
and
 \begin{align}
 &\frac{d}{d\tau}\mathcal{L}_{j}(\tau)+\frac{c}{b(\tau)} \mathcal{L}_{j}(\tau)\lesssim b(\tau+t^*)\Big(\|\nabla U\|_{L^\infty}\|\widetilde{U}_j\|_{L^2}+\|(R_j^1, R_j^2, 2^{-j}b(\tau)^{-1}R_j^3)\|_{L^2}\Big) \sqrt{\frac{\mathcal{L}_{j}(\tau)}{b(\tau+t^*)}},\label{Ly:nonlinear}
 \end{align}
with $c>0$ a universal constant and
\begin{align*}
R_j^1&=\frac{\ddj \mathbf{r}(U)}{b(\tau)},\\
R_j^2&=[\widetilde{A}^0(U),\dot{\Delta}_j]\partial_t \widetilde{U}+\sum_{k=1}^d[\widetilde{A}^k(U),\dot{\Delta}_j]\partial_k \widetilde{U},\\
R_j^3&=-\dot{\Delta}_j\Big(\big( \widetilde{A}^0(U)-\Bar{A}^0\big)\partial_t \widetilde{U}\Big)-\sum_{k=1}^d \dot{\Delta}_j\Big(\big( \widetilde{A}^k(U)-\Bar{A}^k\big)\partial_{x_k} \widetilde{U}\Big),
\end{align*}
where $t^*$ is defined in \eqref{b'condi}
 \end{lem}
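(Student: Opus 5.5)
The plan is to localize the quasilinear entropy system \eqref{entropeq} in frequency and rerun the linear hypocoercive construction of Section~\ref{sec:linear}, treating everything that is not linear with frozen coefficients as a source term. Concretely, set $\widetilde U=U-\bar U$ and rewrite \eqref{entropeq} as
\[
\bar A^0\partial_t\widetilde U+\sum_k\bar A^k\partial_k\widetilde U+\frac{\bar B\widetilde U}{b(\tau)}=\frac{\mathbf r(U)}{b(\tau)}+\big(\text{terms with }\widetilde A^i(U)-\bar A^i\big).
\]
Applying $\dot\Delta_j$ splits the right-hand side into $R^1_j$, $R^3_j$ and, after symmetrizing with the full matrices, the commutators $R^2_j$. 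We then build two pieces. The first is an energy part $E_j:=\langle\widetilde A^0(U)\widetilde U_j,\widetilde U_j\rangle$, using the nonlinear symmetrizer so that no derivative is lost. For it we use the symmetry of $\widetilde A^k(U)$ and integrate by parts, which produces the term $\|\nabla U\|_{L^\infty}\|\widetilde U_j\|^2$ (from $\partial_t\widetilde A^0(U)$ and $\partial_k\widetilde A^k(U)$), and the dissipative sign of $\bar B$ gives $\frac{c'}{b}|B\widetilde U_j|^2$ up to $R^1_j,R^2_j$. The second is the corrector $\frac{\delta}{2^{2k_0}b(\tau)}\mathcal I_j$ defined as in Section~\ref{sec:linear}, with the weight $|\xi|^{-1}$ and the constant matrices $A,B$, computed from the frozen-coefficient equation with $R^3_j$ as forcing. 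Because the corrector carries the factor $(b(\tau)2^j)^{-1}$, the forcing $R^3_j$ enters only through $2^{-j}b^{-1}\|R^3_j\|\,\|\widetilde U_j\|$. This is exactly why $R^3_j$ appears with this weight in \eqref{Ly:nonlinear} and does not cost a derivative.

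Next we combine the two pieces exactly as in the linear proof. On $I^h_{j,t}$ we have $b(\tau)2^j\ge2^{-k_0}$, so by \eqref{bdd} the functional $E_j+\frac{\delta}{2^{2k_0}b}\mathcal I_j$ is equivalent to $\|\widetilde U_j\|^2$, using that $\widetilde A^0(U)$ is uniformly positive by \eqref{Linfty}. The Kalman bound \eqref{xiajie} then yields the inequality
\[
\frac{d}{d\tau}\widetilde{\mathcal L}_j+\frac{c}{b}\widetilde{\mathcal L}_j\lesssim 2^{-j}\Big|\Big(\tfrac1b\Big)'\Big|\,\widetilde{\mathcal L}_j+\text{sources}.
\]
We then define $\mathcal L_j:=b(\tau+t^*)\widetilde{\mathcal L}_j$, as was done for \eqref{estnlhigh}. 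Differentiating gives an extra term $b'(\tau+t^*)\widetilde{\mathcal L}_j$, and this is absorbed because $b'(\tau+t^*)<2c^*\ll c$ by \eqref{b'condi}, together with $b(\tau+t^*)\sim b(\tau)$. The term involving $(1/b)'$ is also absorbed pointwise: using $2^{-j}\le 2^{k_0}b$ it is bounded by $2^{k_0}b'/b$, which is again small by \eqref{b'condi}. Finally, dividing the source contributions through by $\sqrt{\mathcal L_j}$ gives the stated form, with the factor $\sqrt{\mathcal L_j/b(\tau+t^*)}\sim\|\widetilde U_j\|_{L^2}$.

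The main obstacle is bookkeeping rather than ideas: the time derivative of the energy part involves $\partial_t\widetilde A^0(U)$. We plan to control it by $\|\nabla U\|_{L^\infty}$ after substituting $\partial_t U$ from the equation. This leaves a zeroth-order term $b^{-1}|\widetilde H(U)|$, which is quadratic-small and can be absorbed into the dissipation using $\|Z\|_{L^\infty}\lesssim\varepsilon_0$. We must also check that the $(1/b)'$ term is absorbable uniformly, including on $[0,t^*]$, where $b'$ is merely bounded. If the pointwise absorption fails there, we instead fall back on the integrated argument of Lemma~\ref{b'lem}, which only costs a $2^{k_0}\delta$ factor after integration.
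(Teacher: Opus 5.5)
Your architecture is the paper's: the energy estimate with the nonlinear symmetrizer $\widetilde A^0(U)$ (producing $R^1_j$, $R^2_j$ and the $\|\nabla U\|_{L^\infty}$ term), a corrector computed from the frozen-coefficient equation with $R^1_j+R^3_j$ as forcing (whence the weight $2^{-j}b(\tau)^{-1}$ on $R^3_j$), and a $b(\tau+t^*)$ weight whose derivative is absorbed through \eqref{b'condi}. Your remark that $\partial_\tau\widetilde A^0(U)$ also contains the zeroth-order piece $b^{-1}\widetilde H(U)$ is more careful than the paper, which records only $\|\nabla U\|_{L^\infty}$. That piece is absorbed by the dissipation using $\|\widetilde U\|_{L^\infty}\lesssim\varepsilon_0$.

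The step that fails is where you place the time weights. You take $\mathcal L_j=b(\tau+t^*)\big(E_j+\frac{\delta}{2^{2k_0}b(\tau)}\mathcal I_j\big)$ with $E_j=\langle\widetilde A^0(U)\widetilde U_j,\widetilde U_j\rangle_{L^2}$. Differentiating the corrector coefficient then produces $b(\tau+t^*)\frac{\delta}{2^{2k_0}}\big(\frac{1}{b(\tau)}\big)'\mathcal I_j$. Using $2^{-j}\lesssim 2^{k_0}b(\tau)$, this term is of size $C_1\delta\,|b'(\tau)|\,\frac{b(\tau+t^*)}{b(\tau)}\|\widetilde U_j\|_{L^2}^2$. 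The coercive dissipation $\frac{c}{b(\tau)}\mathcal L_j$ is only of size $c_1\delta\,\frac{b(\tau+t^*)}{b(\tau)}\|\widetilde U_j\|_{L^2}^2$. Both constants are proportional to $\delta$, so $\delta$ does not help.

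Pointwise absorption therefore requires $|b'(\tau)|\ll c_1/C_1$ at the \emph{unshifted} time $\tau$. But \eqref{b'condi} gives this only for $\tau>t^*$, and on $[0,t^*]$ the hypotheses of Theorem~\ref{thm0} allow $b'$ to be large. The shift does not help, because the derivative falls on $1/b(\tau)$ rather than on $b(\tau+t^*)$. Your fallback via Lemma~\ref{b'lem} would give an integrated bound. That suffices for \eqref{highlocalized}, but it does not prove the differential inequality \eqref{Ly:nonlinear} that the lemma asserts.

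The repair, which is exactly the paper's choice, is to put the time weight on the energy part only:
$\mathcal L_j=b(\tau+t^*)E_j+\eta\sum_k\varepsilon_k\Re\int(BA^{k-1}\widehat{\widetilde U}_j)\cdot(BA^{k}\widehat{\widetilde U}_j)|\xi|^{-1}d\xi$ with a constant $\eta$. Equivalently, weight the corrector in your $\widetilde{\mathcal L}_j$ by $1/b(\tau+t^*)$ instead of $1/b(\tau)$, so the product with $b(\tau+t^*)$ is constant. Each ingredient then goes through:

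\begin{itemize}
\item Since $b$ is increasing, $b(\tau+t^*)|\xi|\ge b(\tau)|\xi|\gtrsim 2^{-k_0}$ on $I^h_{j,t}$. Hence the corrector remains a small perturbation of $b(\tau+t^*)E_j$, and $\mathcal L_j\sim b(\tau+t^*)\|\widetilde U_j\|_{L^2}^2$.
\item The error terms of \eqref{xianxing}, multiplied by $\eta/|\xi|$, are at most $C\eta 2^{2k_0}|B\widehat{\widetilde U}_j|^2$. For small $\eta$ they are absorbed by the weighted energy dissipation $c'\frac{b(\tau+t^*)}{b(\tau)}|B\widehat{\widetilde U}_j|^2\ge c'|B\widehat{\widetilde U}_j|^2$.
\item The only derivative of a weight left is $b'(\tau+t^*)E_j$, which is small for every $\tau\ge0$ precisely because of the shift.
\item The resulting dissipation is $\gtrsim\mathcal L_j/b(\tau+t^*)$, which is equivalent to $\mathcal L_j/b(\tau)$ via $b(\tau+t^*)\le C_{t^*}b(\tau)$, as used right after the lemma.
\end{itemize}

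The same design, with $b(\tau)$ in place of $b(\tau+t^*)$, handles the over-damped case because then $b'(\tau)E_j\le 0$. Your version would there generate $(1/b)'>0$ with no bound on $|b'|$.
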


 \begin{proof}
 For the energy estimate, we write the system \eqref{entropeq}--\eqref{rtildeU} for $\widetilde{U}_j\triangleq \dot{\Delta}_j \widetilde{U}$ as follows: 
\begin{align}
  \widetilde{A}^0(U) \partial_\tau \widetilde{U}_j
+ \sum_{k=1}^d \widetilde{A}^k(U)\partial_k \widetilde{U}_j
+\frac{\bar B\widetilde{U}_j}{b(\tau)}
=R_j^1+R_j^2.
    \label{eqhigh}
\end{align}
Then, since $\widetilde{A}^k(U)$ is symmetric, taking the $L^2(\R^d,\R^n)$ scalar product with $\widetilde U_j$ and integrating by parts yields
\begin{equation}
\begin{aligned}\label{basicenergy}
    &\frac{d}{d\tau}\langle\widetilde{A}^0(U) \widetilde{U}_j,\widetilde{U}_j\rangle_{L^2}+\frac{1}{b(\tau)}\langle\bar B\widetilde{U}_j,\widetilde{U}_j\rangle_{L^2}\\
   & \lesssim \|\nabla U\|_{L^\infty}\|\widetilde{U}_j\|_{L^2}^2+\mc{(R_j^1,R_j^2)}\mc{\widetilde{U}_j}.
\end{aligned}
\end{equation}
As in the linear estimate, we introduce the corrector $\sI(z)$ defined in \eqref{corrector} and rewrite \eqref{eqhigh} by
\begin{align}
\Bar{A}^0 \partial_\tau \widetilde{U}_j
+ \sum_{k=1}^d \Bar{A}^k\partial_k \widetilde{U}_j
+\frac{\bar B\widetilde{U}_j}{b(\tau)}
=R_j^1+R_j^3.
    \label{eqhigh:1}
\end{align}
 Adapting the computation on linear analysis to the case with a non-zero source term, we get
\begin{equation}\label{low}
\begin{aligned}
&\frac{d}{dt}(\sum_{k=1}^{n-1}\varepsilon_k\Re(BA^{k-1} \widehat{\widetilde{U}}_j\cdot BA^{k}\widehat{\widetilde{U}}_j))+\frac{r}{2}\sum_{k=1}^{n-1}\varepsilon_k|BA^{k}\widehat{\widetilde{U}}_j|^2\\
&\leqslant  \bigg(\frac{1}{4b^2(\tau)r}+\frac{r}{4}+\frac{1}{4b(\tau)}\bigg)|B\widehat{\widetilde{U}}_j|^2\\
&\quad +\sum_{k=1}^{n-1}\varepsilon_k\left(\Re(BA^{k-1} \widehat{\widetilde{U}}_j \cdot BA^{k} (\Bar{A}^0)^{-1}(\widehat{R_j^1+R_j^3}))+\Re(BA^{k-1}(\Bar{A}^0)^{-1}(\widehat{R_j^1+R_j^3})  \cdot BA^{k}\widehat{\widetilde{U}}_j)\right) .
\end{aligned}
\end{equation}
Let us define
\begin{align*}
    \sL_{j}(t)=b(t+t^*)\langle\widetilde{A}^0(U) \widetilde{U}_j,\widetilde{U}_j\rangle_{L^2}+\sum_{k=1}^{n-1}\varepsilon_k\Re\int_{\R^d}(BA^{k-1}\widehat{\widetilde{U}}_j(\xi))\cdot(BA^{k}\widehat{\widetilde{U}}_j(\xi))|\xi|^{-1}d\xi.
\end{align*}
One notes $\underline{a} {\rm Id}\leq \widetilde{A}^0(U)\leq \bar{a}{\rm Id}$ with two positive constants $\underline{a}, \bar{a}$ due to \eqref{small}. As in the linear case, combining \eqref{basicenergy} and \eqref{low}, and using the Cauchy-Schwarz inequality to bound the term involving $R_j^1$ and $R_j^3$, we infer \eqref{Ly:nonlinear}.
\end{proof}

\begin{rem}
Here we observe that $c$ depends only on the coefficients of the system and is independent of $b(t)$.
\end{rem}

%Since there is a time-dependent weight in the definition of the functional, we shall use the following modified energy inequality:
%\begin{align}
 %   \frac{1}{2}\frac{d}{dt}\mc{2^{k_0}b(\tau)Z_j}^2+2^{k_0}\mc{B Z_j}^2\leqslant \frac{1}{2}\mc{2^{k_0}b'(\tau)Z_j}^2
%+C 2^{k_0}b(\tau) \mc{R_j^1}\mc{Z_j}.\label{moeneeqhigh}
%\end{align}

%\subsubsection*{Step 2: Cross estimates}
%We note $Z_j\triangleq\dot{\Delta}_jZ$ and we write the localized equation as 
%\begin{align}
%    \label{eqloc}
 %   \partial_t Z_j+\sum_{k=1}^dA^k\partial_k Z_j+\frac{BZ_j}{b(t)}=F_j\triangleq-\sum_{k=1}^d\widetilde{A}^k(Z)\partial_kZ_j+\sum_{k=1}^d[\widetilde{A}_k(Z),\ddj]\partial_k Z.
%\end{align}

%$b(t+t_*)$

\subsubsection*{Step 2: Estimate of $U$}

\iffalse
We now establish the estimate for $U$ using the Lyapunov inequalities obtained in Lemma \ref{lemma6}. We need to carry out energy estimates with the weight $b(t)$; however, one must overcome the additional $b'(t)$ term. In the case $b(t)=K^{-1}(1+t)^{\alpha}$ with $\alpha\leq 1$, it can be absorbed by the dissipation when $c-\alpha (1+t)^{\alpha-1}\gtrsim 1$, which requires $K>>1$ for $\alpha=1$ and $t>>1$ for $\alpha<1$. Here, we can consider all time $t\geq0$ if the weight $(1+t)^{\alpha}$ is replaced by $(A+t)^{\alpha}$ with a suitably large constant $A$. In the same spirit, we consider general $b(t)$ as we did in the linear case. Similarly, we obtain
\begin{equation}
 \begin{aligned}
 &\frac{d}{dt}\Big(b(t+t^*)\mathcal{L}_{j}(t)\Big)+\frac{c^*}{2}\mathcal{L}_{j}(t)\\
 &\lesssim b(t+t^*)\Big(\|\nabla U\|_{L^\infty}\|\widetilde{U}_j\|_{L^2}+\|(R_j^1, R_j^2, 2^{-j}b(\tau)^{-1}R_j^3)\|_{L^2}\Big) \sqrt{\mathcal{L}_{j}(t)},
 \end{aligned}
\end{equation}
\fi

Taking $t^*>0$ sufficiently large if necessary, we assume that $b'(\tau+t^*)\leq \frac{c^*}{4}$ for $\tau\geq0$. Multiplying the localized Lyapunov inequality of Lemma~\ref{lemma6}
by $b(\tau+t^*)$, we get
\begin{equation}\label{weighted-localized-U}
\begin{aligned}
&\frac{d}{d\tau}\Big(b(\tau+t^*)\sL_j(\tau)\Big)
+\frac{c^*}{2}\sL_j(\tau)  \\
&\lesssim
b(\tau+t^*)\Big(
\|\nabla U\|_{L^\infty}\|\widetilde U_j\|_{L^2}
+\|(R_j^1,R_j^2,2^{-j}b(\tau)^{-1}R_j^3)\|_{L^2}
\Big)\sqrt{\sL_j(\tau)} .
\end{aligned}
\end{equation}
Since Lemma~\ref{lem2} applies to \eqref{weighted-localized-U} with $
f(\tau)=b(\tau+t^*)$, $g=1$ and $X(\tau)=\sqrt{\sL_j(\tau)}$, we infer that
\begin{equation}\label{highlocalized}
\begin{aligned}
&b(t+t^*)\|\widetilde U_j(t)\|_{L^2}
+\int_{t_j}^t\|\widetilde U_j(\tau)\|_{L^2}\,d\tau  \\
&\lesssim
b(t_j+t^*)\|\widetilde U_j(t_j)\|_{L^2}  \\
&\quad+
\int_{t_j}^t b(\tau+t^*)\Big(
\|\nabla U\|_{L^\infty}\|\widetilde U_j\|_{L^2}
+\|(R_j^1,R_j^2,2^{-j}b(\tau)^{-1}R_j^3)\|_{L^2}
\Big)\,d\tau .
\end{aligned}
\end{equation}
Since $b(t+t^*)$ fulfills $C_{t^*}^{-1}b(t)\leq  b(t+t^*)\leq C_{t^*} b(t)$ for some constant $C_{t^*}>0$,
we obtain that
 \begin{equation*}
\begin{aligned}
    &\quad \|b(\tau)\widetilde{U}\|_{\widetilde{L}^\infty_t(\dot\B_{2,1}^{\frac{d}{2}+1})}^h+\|\widetilde{U}\|_{{L}^1_t(\dot\B_{2,1}^{\frac{d}{2}+1})}^h\\
    &\lesssim \sum_{j\in \Z,t_j<t} b(t_j)2^{j(\frac d2+1)}\|\widetilde{U}_j(t_j)\|_{L^2}+\|b(\tau)\nabla U\|_{L^{\infty}_t(L^{\infty})}\|\widetilde{U}(\tau)\|_{{L}^1_t(\dot\B_{2,1}^{\frac{d}{2}+1})}^h\\
    &\quad+\int_{0}^t\sum_{j\geqslant J_\tau-1}
\left(
b(\tau)2^{j(\frac{d}{2}+1)}
\big(\|R_j^1\|_{L^2}+\|R_j^2\|_{L^2}\big)
+2^{\frac{jd}{2}}\|R^3_j\|_{L^2}
\right)\,d\tau.
\end{aligned}
\end{equation*}
By \eqref{conditionAVbar} and the block form $\mathbf Q(Z)=(0,Q(Z))^{\top}$, we have $\mathbf Q(Z_1,0)=0$. Moreover, the construction of the variable $Z$ gives  $S^{-1}\bar T^{\top}G(V(Z))=-BZ+\mathbf Q(Z)$ (see Appendix \ref{AppendixNF}).  Since $B\begin{pmatrix}Z_1\\0\end{pmatrix}=0$, it follows that $G(V(Z_1,0))=0$. Hence $V(Z_1,0)\in\mathcal E$, and the entropy condition implies that $U(Z_1,0)\in\mathcal M$. Since $\bar U\in\mathcal M$ and $\ker D_U\widetilde H(\bar U)=\mathcal M$, Definition \eqref{rtildeU} yields $\mathbf r(U(Z_1,0))=0$.  Therefore, Lemma~\ref{lem:factorization-Z2} implies that $\mathbf r(U(Z))=\widetilde q_1(Z)Z_2$ for some smooth matrix-valued map $\widetilde q_1$. Moreover, $\widetilde q_1(0)=0$ because $\mathbf r$ vanishes to second order at $\bar U$. Then, using the product law \eqref{eq:prod1} and the composition estimates in Lemma~\ref{compositionlp}, we deduce that
%Since $U=U(Z)$ depends on $Z$ smoothly and satisfies  $U(0)=0$, Lemma \ref{lem:factorization-Z2}  implies $\mathbf{r}(U(Z))=\widetilde{q}_1(Z) Z_2$ for some function $\widetilde{q}_1(Z)$ satisfying $\widetilde{q}_1(0)=0$. Then, using the product law \eqref{eq:prod1} and composite estimates in Lemma \ref{compositionlp}, we deduce that
\begin{equation}\label{N1h}
\begin{aligned}
&\quad\int_{0}^t\sum_{j\geqslant J_\tau-1}b(\tau)2^{j(\frac{d}{2}+1)}\mc{R_j^1}\,d\tau\\
&\lesssim \int_{0}^t\sum_{j\geqslant J_\tau-1} 2^{j(\frac d2 +1)}\|\dot{\Delta}_j\mathbf{r}(U)\|_{L^2}\\
&\lesssim \int_0^t  \Big(\|Z\|_{\dot{\B}^{\frac{d}{2}}_{2,1}}\|Z_2\|_{\dot{\B}^{\frac{d}{2}+1}_{2,1}} +\|Z\|_{\dot{\B}^{\frac{d}{2}+1}_{2,1}}\|Z_2\|_{\dot{\B}^{\frac{d}{2}}_{2,1}}\Big)\,d\tau\\
&\lesssim \|Z\|_{L^{\infty}_t(\dot{\B}^{\frac{d}{2}}_{2,1})}\|Z_2\|_{L^{1}_t(\dot{\B}^{\frac{d}{2}+1}_{2,1})}+\|b(\tau)^{\frac{1}{2}}Z\|_{L^{2}_t(\dot{\B}^{\frac{d}{2}+1}_{2,1})}\|b(\tau)^{-\frac{1}{2}}Z_2\|_{L^{2}_t(\dot{\B}^{\frac{d}{2}}_{2,1})}.
\end{aligned}
\end{equation}
Then, by the classical composite and commutator analysis (Lemma \ref{commhigh}), we have
\begin{equation}
\begin{aligned}\label{N2h}
\int_{0}^t\sum_{j\geqslant J_\tau-1}b(\tau)2^{j(\frac{d}{2}+1)}\mc{R_j^2}\,d\tau&\lesssim  \int_{0}^t b(\tau)\|\nabla Z\|_{\dot{\B}^{\frac{d}{2}}_{2,1}} (\|\partial_t Z\|_{\dot{\B}^{\frac{d}{2}}_{2,1}}+\|\nabla Z\|_{\dot{\B}^{\frac{d}{2}}_{2,1}})\,d\tau\\
&\lesssim  \|b(\tau)Z\|_{L^{\infty}_t(\dot{\B}^{\frac{d}{2}+1}_{2,1})} \|\partial_t Z\|_{L^{1}_t(\dot{\B}^{\frac{d}{2}}_{2,1})}+\|b(\tau)^{\frac{1}{2}}Z\|_{L^{2}_t(\dot{\B}^{\frac{d}{2}+1}_{2,1})}^2
\end{aligned}
\end{equation}
Then for $R_j^3$,  
\begin{equation}\label{N3h}
\begin{aligned}
&\quad\int_{0}^t\sum_{j\geqslant J_\tau-1}2^{j\frac{d}{2}}\mc{R_j^3}\,d\tau\\
&\lesssim \int_{0}^t\Big(\| \big( \widetilde{A}^0(U)-\Bar{A}^0\big)\partial_t \widetilde{U}\|_{\dot{\B}^{\frac{d}{2}}_{2,1}}^h+ \sum_{k=1}^d \|\big( \widetilde{A}^k(U)-\Bar{A}^k\big)\partial_{x_k} \widetilde{U}\|_{\dot{\B}^{\frac{d}{2}}_{2,1}}^h\Big)\,d\tau.
%&\lesssim\int_{0}^t \left(\| \widetilde{U}\|_{\dot{\B}^{\frac{d}{2}}_{2,1}} \|\partial_t \widetilde{U}\|_{\dot{\B}^{\frac{d}{2}}_{2,1}}+\| \widetilde{U}_2\|_{\dot{\B}^{\frac{d}{2}}_{2,1}}\|\nabla \widetilde{U}\|_{\dot{\B}^{\frac{d}{2}}_{2,1}}+\| \widetilde{U}\|_{\dot{\B}^{\frac{d}{2}}_{2,1}}\|\nabla \widetilde{U}_2\|_{\dot{\B}^{\frac{d}{2}}_{2,1}}\right)\,d\tau.
\end{aligned}
\end{equation}
As $\partial_t \widetilde{U}=D_{Z}\widetilde{U} \partial_t Z$, we have
\begin{align}
 \int_{0}^t\| \big( \widetilde{A}^0(U)-\Bar{A}^0\big)\partial_t \widetilde{U}\|_{\dot{\B}^{\frac{d}{2}}_{2,1}}^h&\lesssim (1+\|Z\|_{L^{\infty}_t(\dot{\B}^{\frac{d}{2}}_{2,1})})\|Z\|_{L^{\infty}_t(\dot{\B}^{\frac{d}{2}}_{2,1})} \|\partial_t Z\|_{L^1_t(\dot{\B}^{\frac{d}{2}}_{2,1})}.
\end{align}
Also, writing $F_1(Z)=\big( \widetilde{A}^k(U(Z))-\Bar{A}^k\big) D_{Z}\widetilde{U}(Z)$, we have
\begin{equation}
\begin{aligned}
&\quad\int_{0}^t
\|\big( \widetilde{A}^k(U)-\Bar{A}^k\big)\partial_{x_k} \widetilde{U}\|_{\dot{\B}^{\frac{d}{2}}_{2,1}}^h\,d\tau\\
&\lesssim
\int_{0}^t b(\tau)
\|F_1(Z)\partial_{x_k} Z^{\ell}\|_{\dot{\B}^{\frac{d}{2}+1}_{2,1}}^h \,d\tau
+\int_{0}^t
\|F_1(Z)\partial_{x_k} Z^{h}\|_{\dot{\B}^{\frac{d}{2}}_{2,1}}^h \,d\tau\\
&\lesssim
\|b(\tau)^{\frac{1}{2}} Z\|_{L^2_t(\dot{\B}^{\frac{d}{2}+1}_{2,1})}
\|b(\tau)^{\frac{1}{2}} \nabla Z^\ell \|_{L^2_t(\dot{\B}^{\frac{d}{2}}_{2,1})}\\
&\quad
+\|Z\|_{L^{\infty}_t(\dot{\B}^{\frac{d}{2}}_{2,1})}
\Big(
\|b(\tau)\nabla Z^\ell\|_{L^{1}_t(\dot{\B}^{\frac{d}{2}+1}_{2,1})}
+\|\nabla Z^h\|_{L^{1}_t(\dot{\B}^{\frac{d}{2}}_{2,1})}
\Big).
\end{aligned}
\end{equation}
For the first term, by the definition of $t_j$ we have
\begin{align*}
    \sum_{j\in \Z,t_j<t} b(t_j)2^{j(\frac d2+1)}\|\widetilde{U}_j(t_j)\|_{L^2}&\leqslant\sum_{j\in \Z,0<t_j<t} 2^{j\frac d2}\|\widetilde{U}_j(t_j)\|_{L^2}+\sum_{j\in \Z,t_j=0} b(0)2^{j(\frac d2+1)}\|\widetilde{U}_j(0)\|_{L^2}\\
    &\lesssim\sum_{j\in \Z,0<t_j<t} 2^{j\frac d2}\|\widetilde{U}_j(t_j)\|_{L^2}+\|\widetilde{U}_0\|_{\dot\B_{2,1}^{\frac{d}{2}+1}}^h.
\end{align*}
Since the map $Z\mapsto \widetilde U(Z)$ is smooth near $0$ and satisfies
$\widetilde U(0)=0$, standard composition estimates in Proposition~\ref{Composition} yield
\begin{align}
\|\widetilde{U}_0\|_{\dot\B_{2,1}^{\frac{d}{2}+1}}
&\lesssim \|Z_0\|_{\dot{\B}^{\frac{d}{2}+1}_{2,1}}.
\end{align}
To address $\sum_{j\in \Z,0<t_j<t} 2^{j\frac d2}\|\widetilde{U}_j(t_j)\|_{L^2}$, we recall that $U=(D_{V}\eta(V(Z)))^{\top}$, where $\eta(V)$ is convex. Thus,  $\widetilde{U}=\widetilde{U}(Z)$ defines a smooth local diffeomorphism around $Z=0$ and satisfies $\widetilde{U}(0)=0$. By Taylor's expansion for vector-valued functions, there exists a positive definite constant matrix $G_0$ and a function $G(Z)$ ($G(0)=0$) such that
\begin{align}
\widetilde{U}=G_0 Z+G(Z) Z,\label{wideU0Z}
\end{align}
In light of the decomposition \eqref{wideU0Z}, and standard product laws, as well as estimates for the composition function $G(Z)$, it holds 
\begin{equation}\label{104}
\begin{aligned}
\sum_{j\in \Z,0<t_j<t} 2^{j\frac d2}\|\widetilde{U}_j(t_j)\|_{L^2}
&\lesssim \|\widetilde U\|_{\widetilde{L}^{\infty}_t(\dot{\B}^{\frac{d}{2}}_{2,1})}^{\ell}\lesssim \|Z\|_{\widetilde{L}^{\infty}_t(\dot{\B}^{\frac{d}{2}}_{2,1})}^{\ell}+\| Z\|_{\widetilde{L}^{\infty}_t(\dot{\B}^{\frac{d}{2}}_{2,1})}^2\\
%&\lesssim \big(1+\|Z_0\|_{\dot{\B}^{\frac{d}{2}}_{2,1}}\big)\big(\|Z_0\|_{\dot{\B}^{\frac{d}{2}}_{2,1}}^{\ell}+\|Z_0\|_{\dot{\B}^{\frac{d}{2}+1}_{2,1}}^{h}\big)+\varepsilon_0 \|Z\|_{\mathcal{E}_t}.
\end{aligned}
\end{equation}

%the low-frequency estimate \eqref{eqlq} at hand

%Now if $t_j>0$ we integrate \eqref{basicenergy} over $[0,t_j]$, multiplying the resulting inequality with $2^{\frac{d}{2}j}$ and sum it over $j_t<j<j_0$ to arrive at
%\begin{align}\label{99}
%\sum_{j\in \Z,0<t_j<t} 2^{j\frac d2}\|\widetilde{U}_j(t_j)\|_{L^2}&\lesssim \|\widetilde{U}(0)\|_{\dot\B_{2,1}^{\frac{d}{2}}}^\ell+\int_0^{t}\Big( \|\nabla U\|_{L^{\infty}} \|\widetilde{U}(\tau)\|_{\dot\B_{2,1}^{\frac{d}{2}}}^\ell+\sum_{j\leqslant J_\tau} 2^{\frac{d}{2}j}\|(R_j^1,R_j^2)\|_{L^2}\Big)\,d\tau.
%\end{align}

Combining all the above, we obtain:
\begin{equation}\label{highW}
\begin{aligned}
    &\quad\|b(\tau)\widetilde{U}\|_{\widetilde{L}^\infty_t(\dot\B_{2,1}^{\frac{d}{2}+1})}^h+\|\widetilde{U}\|_{{L}^1_t(\dot\B_{2,1}^{\frac{d}{2}+1})}^h+ \|\partial_t \widetilde{U}\|_{L^1_t(\dot{\B}^{\frac{d}{2}}_{2,1})}^h\lesssim \|Z_0\|_{\dot\B_{2,1}^{\frac{d}{2}+1}}+\|Z\|_{\widetilde{L}^{\infty}_t(\dot{\B}^{\frac{d}{2}}_{2,1})}^{\ell}+\varepsilon_0 \|Z\|_{\mathcal{E}_t}.
\end{aligned}
\end{equation}

\subsubsection*{Step 3: Recovering the bounds for $Z$}

We now recover the desired bounds for $Z$ in high frequencies from the estimates for $\widetilde{U}$. 
Recalling the expression \eqref{wideU0Z}, we have
\begin{align*}
\|Z\|_{L^1_t(\dot\B_{2,1}^{\frac{d}{2}+1})}^h&\lesssim \|\widetilde{U}\|_{L^1_t(\dot\B_{2,1}^{\frac{d}{2}+1})}^h+\big\|G(Z) Z\big\|_{L^1_t(\dot\B_{2,1}^{\frac{d}{2}+1})}^h.
%\|b(\tau)Z(\tau)\|_{\widetilde{L}^\infty_t(\dot\B_{2,1}^{\frac{d}{2}+1})}^h\lesssim \|b(\tau)\widetilde{U}(\tau)\|_{\widetilde{L}^\infty_t(\dot\B_{2,1}^{\frac{d}{2}+1})}.
\end{align*}
Note that $g(Z)=G(Z)Z$ satisfies $g(0)=0$ and $D_Z g(0)=0$. Hence, the nonlinear term can be addressed in terms of \eqref{small} and Lemma \ref{compositionlp}:
\begin{align*}
\|G(Z)Z\|_{L^1_t(\dot\B_{2,1}^{\frac{d}{2}+1})}^h&\lesssim  
\|Z\|_{L^{\infty}_t(L^{\infty})}
\Big( \|b(\tau)Z\|_{L^1_t(\dot{\B}^{\frac{d}{2}+2}_{2,1})}^{\ell}+\|Z\|_{L^1_t(\dot{\B}^{\frac{d}{2}+1}_{2,1})}^h\Big)\\
&\lesssim \varepsilon_0 \Big( \|b(\tau)Z\|_{L^1_t(\dot{\B}^{\frac{d}{2}+2}_{2,1})}^{\ell}+\|Z\|_{L^1_t(\dot{\B}^{\frac{d}{2}+1}_{2,1})}^h\Big). 
\end{align*}
Similarly, it holds that
\begin{align*}
\|b(\tau)Z\|_{\widetilde{L}^\infty_t(\dot\B_{2,1}^{\frac{d}{2}+1})}^h&\lesssim \|b(\tau)\widetilde{U}\|_{\widetilde{L}^\infty_t(\dot\B_{2,1}^{\frac{d}{2}+1})}^h+\|b(\tau)G(Z)Z\|_{\widetilde{L}^\infty_t(\dot\B_{2,1}^{\frac{d}{2}+1})}^h\\
%&\lesssim \|b(\tau)\widetilde{U}(\tau)\|_{\widetilde{L}^\infty_t(\dot\B_{2,1}^{\frac{d}{2}+1})}^h+\big\|b(\tau)\big(g_1(Z)-g_1(0) \big)Z\big\|_{\widetilde{L}^\infty_t(\dot\B_{2,1}^{\frac{d}{2}+1})}\|Z\|_{\widetilde{L}^\infty_t(\dot\B_{2,1}^{\frac{d}{2}})}\\
%&\quad+\big\|\big(g_1(Z)-g_1(0) \big)Z\big\|_{\widetilde{L}^\infty_t(\dot\B_{2,1}^{\frac{d}{2}+1})}\|b(\tau) Z\|_{\widetilde{L}^\infty_t(\dot\B_{2,1}^{\frac{d}{2}+1})}\\
%&\lesssim \|b(\tau)\widetilde{U}(\tau)\|_{\widetilde{L}^\infty_t(\dot\B_{2,1}^{\frac{d}{2}+1})}^h+\varepsilon_0\Big( \|Z\|_{\widetilde{L}^\infty_t(\dot\B_{2,1}^{\frac{d}{2}})}^{\ell}+\|b(\tau) Z\|_{\widetilde{L}^\infty_t(\dot\B_{2,1}^{\frac{d}{2}+1})}^h\Big).
\end{align*}
where
\begin{align*}
\|b(\tau)G(Z)Z\|_{\widetilde{L}^\infty_t(\dot\B_{2,1}^{\frac{d}{2}+1})}^h%&\lesssim %\big\|b(\tau) G(Z) \big\|_{\widetilde{L}^\infty_t(\dot\B_{2,1}^{\frac{d}{2}+1})}\|Z\|_{\widetilde{L}^\infty_t(\dot\B_{2,1}^{\frac{d}{2}})}+\big\|G(Z)\big\|_{\widetilde{L}^\infty_t(\dot\B_{2,1}^{\frac{d}{2}+1})}\|b(\tau) Z\|_{\widetilde{L}^\infty_t(\dot\B_{2,1}^{\frac{d}{2}+1})}\\
%&\lesssim \varepsilon_0\Big( \|Z\|_{\widetilde{L}^\infty_t(\dot\B_{2,1}^{\frac{d}{2}})}+\|b(\tau) Z\|_{\widetilde{L}^\infty_t(\dot\B_{2,1}^{\frac{d}{2}+1})}\Big)\\
&\lesssim \varepsilon_0\Big( \|Z\|_{\widetilde{L}^\infty_t(\dot\B_{2,1}^{\frac{d}{2}})}^{\ell}+\|b(\tau) Z\|_{\widetilde{L}^\infty_t(\dot\B_{2,1}^{\frac{d}{2}+1})}^h\Big).
\end{align*}
For bounding the time derivative, using the system \eqref{sys} and the estimates for composite functions, we get 
\begin{equation*}
\begin{aligned}
    \|\partial_t Z\|_{L^1_t(\dot{\B}^{\frac{d}{2}}_{2,1})}^h&\lesssim \|Z\|_{L^1_t(\dot{\B}^{\frac{d}{2}+1}_{2,1})}^h+\left\| \frac{Z_2}{b(\tau)}\right\|_{L^1_t(\dot{\B}^{\frac{d}{2}}_{2,1})}^h+\sum_{k=1}^d \|N_1^k(Z)\|_{L^1_t(\dot{\B}^{\frac{d}{2}+1}_{2,1})}^h+\left\| \frac{Q(Z)}{b(\tau)}\right\|_{L^1_t(\dot{\B}^{\frac{d}{2}}_{2,1})}^h\\
    &\lesssim \|Z\|_{L^1_t(\dot{\B}^{\frac{d}{2}+1}_{2,1})}^h+\|Z\|_{\widetilde{L}^{\infty}_t(\dot{\B}^{\frac{d}{2}}_{2,1})} \|Z\|_{\mathcal{E}_t}.
\end{aligned}
\end{equation*}
Therefore, combining the above estimates with \eqref{highW} yields \eqref{eqlq:h1}.

\subsection{The over-damped case in low and high frequencies}
Similarly to the linear analysis, since $b'(\tau)<0$, we have for all $\tau\in I_{t,j}^h$
\begin{equation}
 \begin{aligned}
 &\frac{d}{d\tau}\Big(b(\tau)\mathcal{L}_{j}(\tau)\Big)+\frac{c}{2}\mathcal{L}_{j}(\tau)\\
 &\lesssim b(\tau)\Big(\|\nabla U\|_{L^\infty}\|\widetilde{U}_j\|_{L^2}+\|(R_j^1, R_j^2, 2^{-j}b(\tau)^{-1}R_j^3)\|_{L^2}\Big) \sqrt{\mathcal{L}_{j}(\tau)},
 \end{aligned}
\end{equation}
and then immediately
\begin{equation}
\begin{aligned}
    &\quad \|b(\tau)\widetilde{U}\|_{\widetilde{L}^\infty_t(\dot\B_{2,1}^{\frac{d}{2}+1})}^h+\|\widetilde{U}\|_{{L}^1_t(\dot\B_{2,1}^{\frac{d}{2}+1})}^h\\
    &\lesssim \sum_{j\in \Z,t_j>0} b(0)2^{j(\frac d2+1)}\|\widetilde{U}_j(0)\|_{L^2}+\|b(\tau)\nabla U\|_{L^{\infty}_t(L^{\infty})}\|\widetilde{U}(\tau)\|_{{L}^1_t(\dot\B_{2,1}^{\frac{d}{2}+1})}^h\\
    &\quad+\int_{0}^t\sum_{j\geqslant J_\tau-1}
\left(
b(\tau)2^{j(\frac{d}{2}+1)}
\big(\|R_j^1\|_{L^2}+\|R_j^2\|_{L^2}\big)
+2^{\frac{jd}{2}}\|R^3_j\|_{L^2}
\right)\,d\tau.
\end{aligned}
\end{equation}
While for the low-frequency part
\begin{equation}
\begin{aligned}
&\quad\|Z\|^\ell_{\widetilde{L}^{\infty}_t(\dot\B_{2,1}^{\frac{d}{2}})}
+\int_{0}^t\Big(\|b(\tau) Z_1\|_{\dot{\B}^{\frac{d}{2}+2}_{2,1}}^\ell +\|Z_2\|_{\dot{\B}^{\frac{d}{2}+1}_{2,1}}^\ell+
\left\|\frac{W}{b(\tau)}\right\|_{\dot{\B}^{\frac{d}{2}}_{2,1}}^\ell+\|\partial_t Z\|_{\dot{\B}^{\frac{d}{2}}_{2,1}}^\ell
\Big)\,d\tau
\\
&\lesssim\sum_{j\in\Z,\;0<t_j\le t}2^{j\frac d2}\|Z_j(t_j)\|_{L^2}
+\sum_{j\in\Z,\; t_j=0}2^{j\frac d2}\|Z_j(0)\|_{L^2}\\
&\quad+\|(f_1,f_2)\|_{L^1_t(\dot{\B}^{\frac{d}{2}}_{2,1})}^\ell
+\sum_{k=1}^{d}\|b(\tau)N_2^k(Z)\|_{\widetilde{L}^{\infty}_t(\dot\B_{2,1}^{\frac{d}{2}})\cap L^1_t(\dot{\B}^{\frac{d}{2}+2}_{2,1})}^\ell
+\|Q(Z)\|_{\widetilde{L}^{\infty}_t(\dot\B_{2,1}^{\frac{d}{2}})\cap L^1_t(\dot{\B}^{\frac{d}{2}+1}_{2,1})}^\ell,
\end{aligned}
\end{equation}
and the terms on the second line can be bounded as
\begin{align*}
&\quad\sum_{j\in\Z,\;0<t_j\le t}2^{j\frac d2}\|Z_j(t_j)\|_{L^2}
+\sum_{j\in\Z,\; t_j=0}2^{j\frac d2}\|Z_j(0)\|_{L^2}\\
&\lesssim\sum_{j\in\Z,\;0<t_j\le t}2^{j(\frac d2+1)}\|b(t_j)Z_j(t_j)\|_{L^2}
+\sum_{j\in\Z,\; t_j=0}2^{j\frac d2}\|Z_j(0)\|_{L^2}\\
&\lesssim
\|b(\tau)Z\|_{\widetilde{L}^\infty_t(\dot\B_{2,1}^{\frac d2+1})}^h
+\|Z_0\|_{\dot{\B}^{\frac d2}_{2,1}}^\ell
\lesssim
\|Z_0\|_{\dot{\B}^{\frac d2}_{2,1}}
+\|Z_0\|_{\dot{\B}^{\frac d2+1}_{2,1}}.
\end{align*}
The nonlinear estimates are identical to those in the under-damped case, which completes the proof of the a priori estimate.

\subsection{Proof of Theorem~\ref{thm0}}

We start with the following local well-posedness result.
\begin{prop}\label{localwell}
For any initial data $Z_0$ in the nonhomogeneous Besov space $\B^{\frac{d}{2}+1}_{2,1}$, there exists a positive time $T_1$, depending only on the coefficients of the matrices $A^j$, on $B$, on $Q$, and on $\|Z_0\|_{\B_{2,1}^{\frac{d}{2}+1}}$, such that System~\eqref{eq0} has a unique classical solution $Z$ with
\[
Z\in \sC^1([0,T_1]\times \R^d)\quad\text{and}\quad
Z\in \sC([0,T_1];\B_{2,1}^{\frac{d}{2}+1})\cap \sC^1([0,T_1];\B_{2,1}^{\frac{d}{2}}).
\]
Furthermore, if the maximal time of existence $T^*$ is finite, then
\[
\int_0^{T^*}\|\nabla Z(t)\|_{L^\infty}\,dt=+\infty.
\]
\end{prop}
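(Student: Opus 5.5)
The plan is to reduce Proposition~\ref{localwell} to the classical local theory for quasilinear symmetrizable hyperbolic systems. The time-dependent factor $1/b(t)$ only multiplies zeroth-order terms, and it is smooth and bounded on compact time intervals. On any $[0,T]$ we have $b(t)\ge \min_{[0,T]}b>0$, because $b$ is continuous and positive. The argument then runs on the entropy formulation \eqref{entropeq}, in which $\widetilde A^0(U)$ is symmetric positive definite and the $\widetilde A^k(U)$ are symmetric. We work on the unknown $\widetilde U=U-\bar U=\widetilde U(Z)$, a smooth local diffeomorphism with $\widetilde U(0)=0$. By Proposition~\ref{Composition}, membership in $\B^{\frac d2+1}_{2,1}$ transfers between $Z$ and $\widetilde U$, so it suffices to solve for $\widetilde U$.

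First, build the solution by a linearized iteration. Set $\widetilde U^0=\widetilde U_0$. Given $\widetilde U^m$, let $\widetilde U^{m+1}$ solve the linear symmetric hyperbolic system
\[
\widetilde A^0(U^m)\partial_t\widetilde U^{m+1}+\sum_k\widetilde A^k(U^m)\partial_k\widetilde U^{m+1}+\frac{\bar B\widetilde U^{m+1}}{b(t)}=\frac{\mathbf r(U^m)}{b(t)}.
\]
For each dyadic block $\dot\Delta_j$, perform the $L^2$ energy estimate exactly as in \eqref{basicenergy}, with commutators bounded by Lemma~\ref{commhigh}. This gives, in nonhomogeneous $\B^{s}_{2,1}$ with $s=\frac d2+1$,
\[
\|\widetilde U^{m+1}(t)\|_{\B^{s}_{2,1}}\le \Big(\|\widetilde U_0\|_{\B^{s}_{2,1}}+\int_0^t \frac{\|\mathbf r(U^m)\|_{\B^s_{2,1}}}{b}\Big)\exp\Big(C\int_0^t\|\nabla U^m\|_{\dot\B^{\frac d2}_{2,1}}+\|\partial_tU^m\|_{L^\infty}\Big).
\]
The damping term has a sign and is simply dropped. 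Choosing $T_1$ small relative to $\|Z_0\|_{\B^{s}_{2,1}}$ and $\max_{[0,1]}1/b$, we get uniform bounds in $\sC([0,T_1];\B^s_{2,1})\cap \sC^1([0,T_1];\B^{s-1}_{2,1})$. The uniform bound on $\partial_t$ comes from the equation and the product laws, and we also keep $U^m$ inside $\mathcal O_U$ and a small neighborhood where $\widetilde A^0$ is uniformly positive.

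Next, show the sequence is Cauchy in $\sC([0,T_1];\B^{s-1}_{2,1})$ by the same energy method on differences; this is the standard loss of one derivative. Pass to the limit, and recover $\B^s_{2,1}$ regularity and time continuity by the Fatou property and a Bona--Smith type regularization. Here the summability index $1$ matters: it gives $\B^{\frac d2}_{2,1}\hookrightarrow L^\infty$, hence $Z\in\sC^1$, and $\B^{s}_{2,1}$ is critical, so there is no $\epsilon$ of room. Uniqueness follows from the same difference estimate.

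For the continuation criterion, apply the block energy estimate to the solution itself. We use the tame estimates: commutators are bounded by $\|\nabla U\|_{L^\infty}\|\widetilde U\|_{\B^s_{2,1}}$, and compositions and $\mathbf r$ are bounded by $C(\|U\|_{L^\infty})\|\widetilde U\|_{\B^s_{2,1}}$. This yields the Gronwall bound
\[
\|\widetilde U(t)\|_{\B^s_{2,1}}\le \|\widetilde U_0\|_{\B^s_{2,1}}\exp\Big(C\int_0^t(1+\|\nabla U\|_{L^\infty}+b^{-1})\Big).
\]
Thus, if $T^*<\infty$ and $\int_0^{T^*}\|\nabla Z\|_{L^\infty}<\infty$, the norm stays bounded and the solution can be continued.

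The main obstacle is that this tame commutator estimate with only $\|\nabla U\|_{L^\infty}$, rather than $\|\nabla U\|_{\B^{\frac d2}_{2,1}}$, is delicate in the critical $\ell^1$ Besov setting. The first approach would follow Bahouri--Chemin--Danchin [Bahouri2011, Ch.~4], where it holds with a logarithmic or $\B^0_{\infty,1}$ correction. If needed, the criterion would be stated with $\|\nabla Z\|_{\B^0_{\infty,1}}$ and then reduced via a logarithmic interpolation argument. A secondary issue is keeping $\|U\|_{L^\infty}$ controlled so that $U$ stays in $\mathcal O_U$; this is handled by $\|Z(t)\|_{L^\infty}\le\|Z_0\|_{L^\infty}+\int_0^t\|\partial_tZ\|_{L^\infty}$.
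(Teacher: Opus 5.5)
Your plan is correct and is essentially the paper's proof. The paper simply invokes the local theory for quasilinear symmetric systems in Chapter~4 of \cite{Bahouri2011}, and your entropy-variable iteration, with $1/b$ treated as a bounded zeroth-order factor on compact time intervals, is exactly that argument; this is also why your $T_1$ depends on $b$, a dependence the statement leaves implicit.

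The obstacle you raise at the end is not real, and the attribution to \cite{Bahouri2011} is off. For every $s>0$, Chapter~2 of \cite{Bahouri2011} gives $\sum_{j\in\Z}2^{js}\|[a,\dot\Delta_j]\partial_k v\|_{L^2}\lesssim\|\nabla a\|_{L^\infty}\|v\|_{\dot\B^{s}_{2,1}}+\|\nabla v\|_{L^\infty}\|\nabla a\|_{\dot\B^{s-1}_{2,1}}$, and the same holds in the nonhomogeneous setting. Combined with Proposition~\ref{Composition}, this yields your tame bound directly in $\B^{\frac d2+1}_{2,1}$, hence the criterion with $\|\nabla Z\|_{L^\infty}$ exactly as stated. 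Your $\B^0_{\infty,1}$ fallback would give a strictly weaker criterion, and logarithmic interpolation has no spare regularity to use at this critical level.
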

The proof of the proposition follows from Chapter 4 of \cite{Bahouri2011}. Then we separate the proof of Theorem \ref{thm0} into several steps. 
\subsubsection*{Step 1: Construction of approximate solutions}
Fix the initial datum $Z_0\in \dot\B_{2,1}^{\frac{d}{2}}\cap \dot\B_{2,1}^{\frac{d}{2}+1}$ satisfying the smallness condition. We approximate it by
\[
Z_0^n\triangleq ({\rm Id}-\dot S_{-n})Z_0,\qquad n\geq1.
\]
By construction, $Z_0^n\in \B_{2,1}^{\frac{d}{2}+1}$. Consequently, Proposition~\ref{localwell} provides a unique maximal solution
\[
Z^n\in \sC([0,T_n^*);\B_{2,1}^{\frac{d}{2}+1})\cap \sC^1([0,T_n^*);\B_{2,1}^{\frac{d}{2}})
\]
associated with the data $Z_0^n$.

\subsubsection*{Step 2: Uniform estimates}

Note that, by the definition $Z_0^n=({\rm Id}-\dot S_{-n})Z_0$ and the boundedness of Littlewood--Paley operators on Besov spaces,
\[
\|Z^n_0\|_{\dot\B_{2,1}^{\frac{d}{2}}}+\|Z^n_0\|_{\dot\B_{2,1}^{\frac{d}{2}+1}}
\leq C_1 \|Z_0\|_{\dot\B_{2,1}^{\frac d2}\cap \dot\B_{2,1}^{\frac d2+1}}
\]
for some constant $C_1>0$ independent of $n$. Let $T_n^*\in(0,\infty]$ be the maximal existence time of the classical solution $Z^n$ given by Proposition~\ref{localwell}, and define
\[
T_n^{**}\triangleq \sup\Big\{\,t\in[0,T_n^*)\;:\; \|Z^n\|_{\mathcal E_t}\le 2C_0C_1 \|Z_0\|_{\dot\B_{2,1}^{\frac d2}\cap \dot\B_{2,1}^{\frac d2+1}}\,\Big\},
\]
where $C_0>0$ is the constant in Proposition~\ref{apriori}. On $[0,T_n^{**})$, the bootstrap assumption $X(Z^n)\le 2C_0C_1\|Z_0\|_{\dot\B_{2,1}^{\frac d2}\cap \dot\B_{2,1}^{\frac d2+1}}$ together with the a priori estimate in Proposition~\ref{apriori} yields, for $\|Z_0\|_{\dot\B_{2,1}^{\frac d2}\cap \dot\B_{2,1}^{\frac d2+1}}$ sufficiently small,
\[
\|Z^n\|_{\mathcal E_t}\le \frac32\,C_0C_1 \|Z_0\|_{\dot\B_{2,1}^{\frac d2}\cap \dot\B_{2,1}^{\frac d2+1}},\qquad t\in[0,T_n^{**}).
\]
In particular, this improves the bootstrap bound. Hence the standard  argument implies that $T_n^{**}=T_n^*$, and therefore
\[
\sup_{t\in[0,T]}\Big(\|Z^n(t)\|_{\dot\B_{2,1}^{\frac{d}{2}}}+\|b(t)Z^n(t)\|_{\dot\B_{2,1}^{\frac{d}{2}+1}}\Big)\le C\,\|Z_0\|_{\dot\B_{2,1}^{\frac d2}\cap \dot\B_{2,1}^{\frac d2+1}}
\quad\text{for all }T<T_n^*.
\]

Assume by contradiction that $T_n^*<\infty$. Since $b$ is continuous and strictly positive, due to the uniform estimate, there exist constants $0<m\le M<\infty$ independent of $n$ such that
\[
m\le b(t)\le M\qquad\text{for all }t\in[0,T_n^*).
\]
Hence the above uniform bound implies $Z^n\in L^\infty(0,T_n^*;\dot\B_{2,1}^{\frac{d}{2}+1})$. Using the critical embedding $\dot\B_{2,1}^{\frac{d}{2}+1}\hookrightarrow \dot{W}^{1,\infty}$, we obtain
\[
\|\nabla Z^n(t)\|_{L^\infty}\lesssim \|Z^n(t)\|_{\dot\B_{2,1}^{\frac{d}{2}+1}}
\quad\text{for a.e. }t\in(0,T_n^*),
\]
so that $\nabla Z^n\in L^1(0,T_n^*;L^\infty)$. This contradicts the blow-up criterion in Proposition~\ref{localwell}. Therefore, $T_n^*=\infty$ for all $n\ge1$, and we prove the global existence of approximate solutions $Z^n$ for $n\geq1$.

\subsubsection*{Step 3: Convergence}
\begin{prop}[Stability estimate]\label{stability}
Let $\widetilde{Z}=Z^1-Z^2$, where $Z^1$ and $Z^2$ are two solutions of \eqref{eq0} with respective initial data $Z_0^1$ and $Z_0^2$, satisfying the bounds encoded in the norm $X$ on $[0,T]$. If both $\|Z^1\|_{L^\infty(\dot\B_{2,1}^{\frac{d}{2}})}$ and $\|Z^2\|_{L^\infty(\dot\B_{2,1}^{\frac{d}{2}})}$ are smaller than a constant $c$, then for all $t\in[0,T]$,
\[
\|\widetilde{Z}\|_{L_t^\infty(\dot\B_{2,1}^{\frac{d}{2}})}
\lesssim_c
\|\widetilde{Z}_0\|_{\dot\B_{2,1}^{\frac{d}{2}}}
+\int_0^t\Big(\|(Z^1,Z^2)\|_{\dot\B_{2,1}^{\frac{d}{2}}}+\|(Z^1,Z^2)\|_{\dot\B_{2,1}^{\frac{d}{2}+1}}\Big)\,\|\widetilde{Z}\|_{\dot\B_{2,1}^{\frac{d}{2}}}\,d\tau.
\]
\end{prop}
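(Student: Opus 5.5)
We prove the stability estimate by an $L^2$ energy method at the level of each dyadic block, working directly in the regularity $\dot\B^{\frac d2}_{2,1}$. Since the difference system loses one derivative, we only aim at a Gronwall-type bound and do not try to recover any dissipation. We use the symmetrized form \eqref{entropeq}. Let $U^i=U(V(Z^i))$ and $\delta U=U^1-U^2$. Then $\delta U$ solves
\[
\widetilde A^0(U^1)\partial_t\delta U+\sum_k\widetilde A^k(U^1)\partial_k\delta U+\frac{\bar B\,\delta U}{b(t)}=\frac{\mathbf r(U^1)-\mathbf r(U^2)}{b(t)}-\big(\widetilde A^0(U^1)-\widetilde A^0(U^2)\big)\partial_tU^2-\sum_k\big(\widetilde A^k(U^1)-\widetilde A^k(U^2)\big)\partial_kU^2 .
\]
Because $Z\mapsto\widetilde U(Z)$ is a smooth local diffeomorphism, Proposition~\ref{Composition} and the product laws give $\|\widetilde Z\|_{\dot\B^{\frac d2}_{2,1}}\sim\|\delta U\|_{\dot\B^{\frac d2}_{2,1}}$ whenever both solutions are small (below $c$). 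The problem therefore reduces to estimating $\delta U$.

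\textbf{Step 1: localized energy estimate.} We apply $\dot\Delta_j$ and take the $L^2$ inner product with $\dot\Delta_j\delta U$, using the weight $\widetilde A^0(U^1)$. This follows \eqref{basicenergy}, but without the corrector and without any time weight. The damping term $\frac{1}{b}\langle\bar B\dot\Delta_j\delta U,\dot\Delta_j\delta U\rangle$ is nonnegative and is dropped. By symmetry and an integration by parts, the transport part contributes $\|\nabla U^1\|_{L^\infty}\|\dot\Delta_j\delta U\|_{L^2}^2$. The commutators $[\widetilde A^k(U^1),\dot\Delta_j]\partial_k\delta U$ are handled by the commutator estimate of Lemma~\ref{commhigh} at regularity $\frac d2$, giving $\lesssim c_j2^{-j\frac d2}\|\nabla U^1\|_{\dot\B^{\frac d2}_{2,1}}\|\delta U\|_{\dot\B^{\frac d2}_{2,1}}$. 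The commutator $[\widetilde A^0(U^1),\dot\Delta_j]\partial_t\delta U$ is treated the same way after substituting $\partial_t\delta U$ from the equation. Then Lemma~\ref{lem2}, multiplication by $2^{j\frac d2}$ and summation over $j$ yield
\[
\|\delta U\|_{L^\infty_t(\dot\B^{\frac d2}_{2,1})}\lesssim\|\delta U_0\|_{\dot\B^{\frac d2}_{2,1}}+\int_0^t\|Z^1\|_{\dot\B^{\frac d2+1}_{2,1}}\|\delta U\|_{\dot\B^{\frac d2}_{2,1}}\,d\tau+\int_0^t\|\text{RHS}\|_{\dot\B^{\frac d2}_{2,1}}\,d\tau .
\]

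\textbf{Step 2: source terms.} The terms $(\widetilde A^k(U^1)-\widetilde A^k(U^2))\partial_kU^2$ are controlled by the product law $\dot\B^{\frac d2}_{2,1}\times\dot\B^{\frac d2}_{2,1}\to\dot\B^{\frac d2}_{2,1}$ together with difference composition estimates. This gives $\lesssim_c\|\delta U\|_{\dot\B^{\frac d2}_{2,1}}\|Z^2\|_{\dot\B^{\frac d2+1}_{2,1}}$, which has the form of the claimed integrand. The $\widetilde A^0$ term involves $\partial_tU^2$. We rewrite $\partial_tU^2$ using the equation, so that it is bounded by $\|\nabla Z^2\|_{\dot\B^{\frac d2}_{2,1}}$ plus $b^{-1}$ times the relaxation term.

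\textbf{Step 3: the relaxation source (main obstacle).} The hardest step is the term $\frac{1}{b}\big(\mathbf r(U^1)-\mathbf r(U^2)\big)$, together with the $\frac{1}{b}$-part of $\partial_tU^2$. These carry the factor $1/b(t)$, which is not integrable in time in general. To handle them, we first use the structure $\mathbf r(U)\in\mathcal M^\perp$ and the factorization $\mathbf r(U(Z))=\widetilde q_1(Z)Z_2$ with $\widetilde q_1(0)=0$. The difference then splits into $\widetilde q_1(Z^1)\widetilde Z_2+(\widetilde q_1(Z^1)-\widetilde q_1(Z^2))Z^2_2$. The first piece is quadratic with a small coefficient, $\lesssim c\,\|\widetilde Z_2\|$, and lies in the dissipated directions. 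We therefore keep the damping term $\frac{\kappa}{b}\|\dot\Delta_j\delta U_2\|^2$ instead of discarding it, and absorb this piece into it when $c$ is small. The second piece is bounded by $\frac{1}{b}\|Z^2_2\|\,\|\widetilde Z\|$. Since $Z^2$ has finite $\mathcal E$-norm, its damped part satisfies $\frac{W}{b}\in L^1_t$, and through \eqref{Z2} we have $\frac{Z_2}{b}\approx\frac{W}{b}+\nabla Z_1+(\text{nonlinear})$. This bounds the second piece by $\|Z^2\|_{\dot\B^{\frac d2}_{2,1}\cap\dot\B^{\frac d2+1}_{2,1}}$-type quantities plus the $L^1$-in-time controlled quantity $\|W^2/b\|$, and the same substitution is applied to the $\frac1b$-part of $\partial_tU^2$. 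If this step fails for general $b$, the fallback is to prove the estimate on finite intervals $[0,T]$ with constants depending on $\inf_{[0,T]}b$, which is enough for the Cauchy-sequence argument in Step 3 of the proof of Theorem~\ref{thm0}. Finally, the estimate is converted back to $\widetilde Z$ via the diffeomorphism $Z\mapsto\widetilde U(Z)$.
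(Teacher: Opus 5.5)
Your argument is correct, and its skeleton coincides with the paper's. Both use a dyadic $L^2$ energy estimate for $\delta U=U^1-U^2$ in the entropy variables and a commutator bound at regularity $\frac d2$; the one you want is \eqref{eq:com1}, since Lemma~\ref{commhigh} only sums over $j\geq J$. Both then apply product and difference-composition laws to $(\widetilde A^k(U^1)-\widetilde A^k(U^2))\partial_kU^2$, Lemma~\ref{lem2}, summation in $j$, and the equivalence $\|\widetilde Z\|_{\dot\B^{\frac d2}_{2,1}}\sim\|\delta U\|_{\dot\B^{\frac d2}_{2,1}}$.

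The real difference is your Step~3. The paper does not exploit any structure in the relaxation term. It simply uses that $1/b$ is bounded on the compact interval $[0,T]$ and writes $b^{-1}\|\mathbf r(U^1)-\mathbf r(U^2)\|_{\dot\B^{\frac d2}_{2,1}}\lesssim\|\delta U\|_{\dot\B^{\frac d2}_{2,1}}\|(U^1,U^2)\|_{\dot\B^{\frac d2}_{2,1}}$, which is exactly your fallback. This is the origin of the term $\|(Z^1,Z^2)\|_{\dot\B^{\frac d2}_{2,1}}$ in the stated integrand. It also means the constant hidden in $\lesssim_c$ really depends on $\sup_{[0,T]}b^{-1}$: uniform if $b$ is nondecreasing, genuinely $T$-dependent in the over-damped case. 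This is harmless for the convergence and uniqueness steps.

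Your main route gives a Gronwall weight whose $1/b$-contributions are integrable on all of $\R_+$. However, that weight contains $\|W^2/b\|_{\dot\B^{\frac d2}_{2,1}}$, which is not controlled by $\|Z^2\|_{\dot\B^{\frac d2}_{2,1}}+\|Z^2\|_{\dot\B^{\frac d2+1}_{2,1}}$ uniformly in $b$. So it proves a variant of the proposition, and the displayed inequality is only reached through the fallback. If you pursue the main route anyway, three points need care.

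First, the damping controls $\delta U_2$, not $\widetilde Z_2$. You can factorize $\mathbf r(U)=\hat q(U)U_2$ with $\hat q(\bar U)=0$ directly in the entropy variable. This is legitimate because $\mathbf r(U)=0$ whenever $U_2=0$, by the entropy condition and Lemma~\ref{lem:factorization-Z2}. Alternatively, track the conversion $\widetilde Z_2\leftrightarrow\delta U_2$; it produces errors of the form $b^{-1}\|Z^2_2\|\,\|\widetilde Z\|$, to be treated like your second piece.

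Second, the damping only acts on the $\delta U_2$-component. The absorption must therefore be done on the squared inequality, i.e.\ with $\|b^{-\frac12}\delta U_2\|_{\widetilde L^2_t(\dot\B^{\frac d2}_{2,1})}$, and not after dividing by $\|\dot\Delta_j\delta U\|_{L^2}$ as in Lemma~\ref{lem2}.

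Third, you keep the time matrix $\widetilde A^0(U^1)$, more carefully than the paper, whose difference equation is written without it. The energy identity then contains $\langle\partial_t\widetilde A^0(U^1)\dot\Delta_j\delta U,\dot\Delta_j\delta U\rangle$, which carries a $1/b$ through $\partial_tU^1$. It is bounded by $\|\partial_tZ^1\|_{\dot\B^{\frac d2}_{2,1}}\|\dot\Delta_j\delta U\|_{L^2}^2$, which is integrable in time thanks to the $\mathcal E_t$-norm. Again, it takes the stated form only once $1/b$ is bounded on $[0,T]$.
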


\begin{proof}
Let $U^1$ and $U^2$ be the corresponding entropy variables, and set
\[
\delta U=U^1-U^2,\qquad V_i=V(U^i)\quad (i=1,2).
\]
Then $\delta U$ solves
\[
\partial_t \delta U+\sum_{k=1}^d A^k(V_1)\partial_k \delta U
=-\sum_{k=1}^d (A^k(V_1)-A^k(V_2))\partial_k U^2
-\frac{\bar B\delta U-(\mathbf r(U^1)-\mathbf r(U^2))}{b(t)}.
\]
Applying $\ddj$, taking the $L^2$ inner product with $\delta U_j$, integrating over $\R^d$, and using Lemma~\ref{lem2}, we get for all $j\in \Z$ and $t\in[0,T]$,
\begin{align*}
\|\delta U_j(t)\|_{L^2}+\kappa\int_0^t\frac{\|\delta U_{2,j}\|_{L^2}}{b(\tau)}\,d\tau
&\leqslant \|\delta U_{j,0}\|_{L^2}+\int_0^t\frac{\|\ddj(\mathbf r(U^1)-\mathbf r(U^2))\|_{L^2}}{b(\tau)}\,d\tau\\
&\quad+\int_{0}^t\sum_{k=1}^d\big\|[\ddj,A^k(V_1)]\partial_k \delta U\big\|_{L^2}\,d\tau\\
&\quad+\int_0^t\sum_{k=1}^d\big\|\ddj\big((A^k(V_1)-A^k(V_2))\partial_k U^2\big)\big\|_{L^2}\,d\tau.
\end{align*}
Multiplying this inequality by $2^{j\frac{d}{2}}$ and using the commutator estimate \eqref{eq:com1} yields
\begin{align*}
2^{j\frac{d}{2}}\|\delta U_j(t)\|_{L^2}
&\leqslant 2^{j\frac{d}{2}}\|\delta U_{j,0}\|_{L^2}
+\int_0^t\frac{2^{j\frac{d}{2}}\|\ddj(\mathbf r(U^1)-\mathbf r(U^2))\|_{L^2}}{b(\tau)}\,d\tau\\
&\quad+\int_{0}^t c_j\|\nabla U^1\|_{\dot\B_{2,1}^{\frac{d}{2}}}\,2^{j\frac{d}{2}}\|\delta U_j\|_{L^2}\,d\tau
+\int_0^t\sum_{k=1}^d 2^{j\frac{d}{2}}
\big\|\ddj\big((A^k(V_1)-A^k(V_2))\partial_k U^2\big)\big\|_{L^2}\,d\tau.
\end{align*}
By the product law and the composition estimate \eqref{eq:compo}, we have
\[
\|(A^k(V_1)-A^k(V_2))\partial_k U^2\|_{\dot\B_{2,1}^{\frac{d}{2}}}
\lesssim
\|\delta U\|_{\dot\B_{2,1}^{\frac{d}{2}}}\,\|\nabla U^2\|_{\dot\B_{2,1}^{\frac{d}{2}}},
\]
and similarly, since $b$ is positive and continuous, both $b$ and $1/b$ are bounded on compact time intervals,
\[
\Big\|\frac{\mathbf r(U^1)-\mathbf r(U^2)}{b(\tau)}\Big\|_{\dot\B_{2,1}^{\frac{d}{2}}}
\lesssim
\|\delta U\|_{\dot\B_{2,1}^{\frac{d}{2}}}\,\|(U^1,U^2)\|_{\dot\B_{2,1}^{\frac{d}{2}}}.
\]
Summing over $j\in\Z$ and using the fact that $Z$ is a smooth function of $U$ gives the desired estimate.
\end{proof}

The stability estimate above, together with $Z_0^n\to Z_0$ in $\dot\B_{2,1}^{\frac{d}{2}}$, implies that $(Z^n)_{n\in \N}$ is a Cauchy sequence in $L_T^\infty(\dot\B_{2,1}^{\frac{d}{2}})$ for all finite $T$, and thus converges to some limit $Z$ in that space. Using a diagonal argument, we further obtain $Z\in L^\infty(\R_+;\dot\B_{2,1}^{\frac{d}{2}})$. Since the regularity is high, passing to the limit in \eqref{eq0} is straightforward.

\subsubsection*{Step 4: Uniqueness}

 Let $Z^1$ and $Z^2$ be two solutions of \eqref{eq0} satisfying the bounds encoded in the norm $X$ on $[0,T]$. For all $T>0$, owing to the embedding $L_T^\infty\hookrightarrow L_T^1$ and the continuity of $b$, we have
\[
\int_0^T\Big(\|(Z^1,Z^2)\|_{\dot\B_{2,1}^{\frac{d}{2}}}+\|(Z^1,Z^2)\|_{\dot\B_{2,1}^{\frac{d}{2}+1}}\Big)\,dt<+\infty.
\]
Furthermore, $\|(Z^1,Z^2)\|_{L^\infty_T(\dot\B_{2,1}^{\frac{d}{2}})}$ is bounded. Combining Proposition~\ref{stability} with Gronwall's inequality yields that $Z^1\equiv Z^2$ on $[0,T]$. As $T$ is arbitrary, uniqueness follows globally in time.

\section{Proof of Theorem \ref{thm2} and Theorem \ref{thm3}}\label{sec:asymptotics}

\subsection{Time-weighted estimates}

We are now in a position to establish optimal time-decay estimates. Since $b$ is positive and continuous on $[0,t_b]$, up to modifying the constants $c_b$ and $C_b$, we may assume that \eqref{mu:infer} holds for all $t\geq0$. In the critical case $\alpha=1$, the constant $c_b$ is further assumed to be sufficiently large, in accordance with the smallness condition on $b'$ in Theorem~\ref{thm0}.
 %Without loss of generality, in what follows we just consider $b(t)=K(1+t)^{-\alpha}$.

\begin{prop}
Let $S\in C^1(\mathbb R_+)$ be an increasing function with $S(0)=0$. It holds that
\begin{align}
\|S(\tau)Z\|_{\mathcal{E}_t}
\lesssim
\int_0^t S'(\tau)\Big(
\|Z(\tau)\|_{\dot{\B}^{\frac{d}{2}}_{2,1}}^{\ell}
+b(\tau)\|Z(\tau)\|_{\dot{\B}^{\frac{d}{2}+1}_{2,1}}^h
\Big)\,d\tau.
\label{110}
\end{align}
Here, $\|\cdot\|_{\mathcal{E}_t}$ is defined by \eqref{X}.
\end{prop}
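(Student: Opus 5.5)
We prove \eqref{110} by redoing the a priori analysis of Section~\ref{sec:global} for the weighted unknown $S(t)Z$, treating the commutator of $S$ with $\partial_t$ as an extra linear source. Since $S$ depends only on time, $S(t)Z$ solves \eqref{sys} with the same linear operators. The nonlinear terms are multiplied by $S$, and one extra source $S'(t)Z$ appears. Because $S(0)=0$, the weighted initial datum vanishes. The damped mode of $SZ$ is $SW$, which satisfies \eqref{weq} with the added source $S'W$ and with $S(l_1+f_1)$. Similarly, $SZ_1$ satisfies \eqref{eqz1} with the added source $S'Z_1$. The factor $S$ commutes with $\dot\Delta_j$ and with the frequency splitting at $J_\tau$. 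Hence all dyadic energy inequalities \eqref{Wj-int-nl}--\eqref{dyadic-coupled-final} and the Lyapunov inequality \eqref{Ly:nonlinear}/\eqref{weighted-localized-U} remain valid for $SZ$, with the extra terms $S'Z$ on the right-hand side.

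\textbf{Step 1 (low frequencies).} On $I^\ell_{j,t}$, Lemma~\ref{lem2} gives
\[
\sup_{I_{j,t}^\ell}\|S Z_j\|_{L^2}+\int_{I_{j,t}^\ell}\Big\|\Big(\frac{SW_j}{b},2^{2j}bSZ_{1,j},2^jSZ_{2,j}\Big)\Big\|_{L^2}d\tau
\lesssim \int_{I^\ell_{j,t}} S'\|Z_j\|_{L^2}\,d\tau+(\text{weighted nonlinear terms}).
\]
The boundary term at $t_j$ is handled as follows. In the under-damped case the interval starts at $0$, so the boundary value is $S(0)Z_j(0)=0$. In the over-damped case the boundary value is $S(t_j)Z_j(t_j)$, which is controlled by the high-frequency estimate, as in the unweighted proof. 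Multiplying by $2^{j\frac d2}$ and summing over $j$, Fubini yields $\int_0^tS'\|Z\|^\ell_{\dot\B^{d/2}_{2,1}}$. The $L^2_t$ norms and $\partial_t(SZ)$ are recovered exactly as before.

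\textbf{Step 2 (high frequencies).} Multiply the Lyapunov inequality by $b(\tau+t^*)$ as in \eqref{weighted-localized-U}, now for the functional $\mathcal L_j$ built on $S\widetilde U_j$. The extra term is
\[
S'\, b(\tau+t^*)\|\widetilde U_j\|_{L^2}\sqrt{\mathcal L_j}.
\]
After Lemma~\ref{lem2}, it contributes $\int S' b\|\widetilde U_j\|$. Multiplying by $2^{j(\frac d2+1)}$ and summing produces $\int_0^t S' b\|Z\|^h_{\dot\B^{d/2+1}_{2,1}}$, using \eqref{wideU0Z} to pass between $\widetilde U$ and $Z$. In the under-damped case the matching term at $t_j$ is $S(t_j)b(t_j)2^{j(\frac d2+1)}\|Z_j(t_j)\|\lesssim S(t_j)2^{j\frac d2}\|Z_j(t_j)\|$. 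This is bounded by the weighted low-frequency $\widetilde L^\infty$ norm from Step 1, which gives the same coupling as in Section~\ref{sec:global}.

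\textbf{Step 3 (nonlinear terms, the main obstacle).} Every nonlinear estimate \eqref{N1}--\eqref{N6}, \eqref{N1h}--\eqref{N3h} has the form (unweighted norm)$\times$(norm carrying one factor of the solution). We place the weight $S$ on the factor in $\mathcal E_t$ and bound the other factor by $\|Z\|_{\mathcal E_t}\le C\beta$ from Theorem~\ref{thm0}. This yields $\lesssim\beta\|SZ\|_{\mathcal E_t}$, which is absorbed into the left-hand side. The delicate terms are those involving $\partial_t$ acting on nonlinear quantities, namely $f_1$, $R^2_j$ and $R^3_j$. Writing $S\partial_t Z=\partial_t(SZ)-S'Z$ generates further terms of the form $S'\times$(quadratic). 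These are bounded by $\beta\int S'(\|Z\|^\ell_{\dot\B^{d/2}_{2,1}}+b\|Z\|^h_{\dot\B^{d/2+1}_{2,1}})$ using the product laws together with Lemmas~\ref{lemma51} and \ref{b'lem}. One must check that each such term fits into these two norms, with the high frequencies weighted by $b$; I expect this bookkeeping, especially for $b\,\partial_t N_2$ in $f_1$, to be the hardest point. Summing the low- and high-frequency estimates and absorbing the $\beta$-small terms gives \eqref{110}.
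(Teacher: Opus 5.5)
Your proposal is correct and follows essentially the paper's own proof. Both arguments multiply the damped-mode and $Z_1$ equations, and the $b$-weighted high-frequency Lyapunov inequality, by the time weight, and treat the $S'$-terms as sources that produce exactly the right-hand side of \eqref{110}; both use $S(0)=0$ to kill the data, handle the $t_j$ boundary terms through the low/high coupling, and absorb the weighted nonlinear terms by putting the weight on one factor. The only differences are bookkeeping: you build the functional on $S\widetilde U_j$ instead of multiplying the inequality by $S^2$, and you control $\partial_t(SZ)$ rather than $S\partial_t Z$, which costs you harmless extra $S'\times$(quadratic) terms that the paper never generates.
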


\begin{proof}
Multiplying \eqref{eqz1} and \eqref{weq} by $S(t)$, we obtain
\begin{equation}
\begin{aligned}
    \label{eqz1:t}
    &\partial_t(S(t) Z_1)+\sum_{k=1}^dA^k_{1,1}\partial_k(S(t) Z_1)
    -b(t)\sum_{k=1}^d\sum_{l=1}^dA^k_{1,2}L_2^{-1}A^l_{2,1}
    \partial_k\partial_l (S(t) Z_1)\\
    &=S'(t)Z_1+S(t)l_2+S(t)f_2,
\end{aligned}
\end{equation}
and
\begin{align}
    \partial_t (S(t)W)+\frac{L_2 (S(t)W)}{b(t)}
    =S'(t) W+S(t)l_1+S(t)f_1 .
    \label{weq:t}
\end{align}
Then, carrying out the same energy estimates on \eqref{eqz1:t}--\eqref{weq:t}
as in the proof of the low-frequency estimate and using the decoupling
argument as in \eqref{est:W}, we have
\begin{equation*}
\begin{aligned}
&\quad\|S(\tau) Z\|^\ell_{\widetilde{L}^{\infty}_t(\dot\B_{2,1}^{\frac d2})}
+\int_{0}^t S(\tau)\Big(
\|b(\tau) Z_1\|_{\dot{\B}^{\frac d2+2}_{2,1}}^\ell
+\|Z_2\|_{\dot{\B}^{\frac d2+1}_{2,1}}^\ell
+\left\|\frac{W}{b(\tau)}\right\|_{\dot{\B}^{\frac d2}_{2,1}}^\ell
+\|\partial_t Z\|_{\dot{\B}^{\frac d2}_{2,1}}^\ell
\Big)\,d\tau\\
&\quad
+\|S(\tau)b(\tau)^{\frac{1}{2}} Z\|_{\widetilde{L}^2_t(\dot{\B}^{\frac{d}{2}+1}_{2,1})}^{\ell}
+\|S(\tau)b(\tau)^{-\frac{1}{2}} Z_2\|_{\widetilde{L}^2_t(\dot{\B}^{\frac{d}{2}}_{2,1})}^{\ell}\\
&\lesssim
\|S'(\tau) (Z_1,W)\|^\ell_{L^{1}_t(\dot\B_{2,1}^{\frac d2})}
+\|S(\tau)(f_1,f_2)\|_{L^1_t(\dot{\B}^{\frac{d}{2}}_{2,1})}^\ell\\
&\quad
+\sum_{k=1}^{d}\|S(\tau)b(\tau)N_2^k(Z)\|_{\widetilde{L}^{\infty}_t(\dot\B_{2,1}^{\frac{d}{2}})
\cap L^1_t(\dot{\B}^{\frac{d}{2}+2}_{2,1})}^\ell+\|S(\tau)Q(Z)\|_{\widetilde{L}^{\infty}_t(\dot\B_{2,1}^{\frac{d}{2}})
\cap L^1_t(\dot{\B}^{\frac{d}{2}+1}_{2,1})}^\ell.
\end{aligned}
\end{equation*}
The analysis of the nonlinear terms follows the same lines as in the proof of
\eqref{N2}--\eqref{N6}. More precisely,
\begin{align*}
\|S(\tau) N_2^k(Z)\|_{\widetilde{L}^{\infty}_t(\dot\B_{2,1}^{\frac d2})}^{\ell}
&\lesssim
\|Z\|_{\mathcal{E}_t} \Big(
\|S(\tau) Z\|_{\widetilde{L}^{\infty}_t(\dot{\B}^{\frac{d}{2}}_{2,1})}^\ell
+\|S(\tau)b(\tau)Z\|_{\widetilde{L}^{\infty}_t(\dot{\B}^{\frac{d}{2}+1}_{2,1})}^h
\Big), \\
\|S(\tau)b(\tau)N_2^k(Z)\|_{L^1_t(\dot{\B}^{\frac{d}{2}+2}_{2,1})}^{\ell}
&\lesssim
\|Z\|_{\mathcal{E}_t}\Big(
\|S(\tau)b(\tau)Z\|_{L^1_t(\dot{\B}^{\frac{d}{2}+2}_{2,1})}^{\ell}
+\|S(\tau) Z\|_{L^1_t(\dot{\B}^{\frac{d}{2}+1}_{2,1})}^{h}
\Big),  \\
\|S(\tau) Q(Z)\|_{\widetilde{L}^{\infty}_t(\dot\B_{2,1}^{\frac d2})}^{\ell}
&\lesssim
\|Z\|_{\mathcal{E}_t}\|S(\tau) Z\|_{\widetilde{L}^{\infty}_t(\dot\B_{2,1}^{\frac d2})}, \\
\|S(\tau) Q(Z)\|_{L^1_t(\dot{\B}^{\frac{d}{2}+1}_{2,1})}^{\ell}
&\lesssim
\|Z\|_{\mathcal{E}_t}\int_0^t S(\tau) \Big(
\left\|\frac{W}{b(\tau)}\right\|_{\dot{\B}^{\frac{d}{2}}_{2,1}}
+\|\nabla Z_2\|_{\dot{\B}^{\frac{d}{2}}_{2,1}}\\
&\qquad\qquad\qquad\qquad
+b(\tau)\|Z_1(\tau)\|_{\dot{\B}^{\frac{d}{2}+2}_{2,1}}^\ell
+\|Z_1(\tau)\|_{\dot{\B}^{\frac{d}{2}+1}_{2,1}}^h
\Big)\,d\tau,  \\
\|S(\tau)(f_1,f_2)\|_{L^1_t(\dot{\B}^{\frac{d}{2}}_{2,1})}^{\ell}
&\lesssim
\|Z\|_{\mathcal{E}_t}\int_0^t S(\tau) \Big(
\left\|\frac{W}{b(\tau)}\right\|_{\dot{\B}^{\frac{d}{2}}_{2,1}}
+\|\nabla Z_2\|_{\dot{\B}^{\frac{d}{2}}_{2,1}}
+\|\partial_t Z\|_{\dot{\B}^{\frac{d}{2}}_{2,1}}\\
&\qquad\qquad\qquad\qquad
+b(\tau)\|Z_1(\tau)\|_{\dot{\B}^{\frac{d}{2}+2}_{2,1}}^\ell
+\|Z_1(\tau)\|_{\dot{\B}^{\frac{d}{2}+1}_{2,1}}^h
\Big)\,d\tau\\
&\quad
+\|Z\|_{\mathcal{E}_t}\|S(\tau) Z\|_{\widetilde{L}^{\infty}_t(\dot\B_{2,1}^{\frac d2})}.
\end{align*}
Note that the $\widetilde{L}^{\infty}_t(\dot\B_{2,1}^{\frac d2})$ estimates
of $N_2^k(Z)$ and $Q(Z)$ motivate the present time-weighted Lyapunov method,
since the standard differential energy method is not sufficient to control
these terms directly.

Moreover, we have
\[
\|S'(\tau) (Z_1,W)\|^\ell_{L^{1}_t(\dot\B_{2,1}^{\frac d2})}
\lesssim
(1+\|Z\|_{\mathcal{E}_t})\int_0^t S'(\tau)\Big(
\|Z(\tau)\|_{\dot{\B}^{\frac d2}_{2,1}}^{\ell}
+b(\tau)\|Z(\tau)\|_{\dot{\B}^{\frac{d}{2}+1}_{2,1}}^h
\Big)\,d\tau.
\]
Consequently, we establish the desired low-frequency estimate:
\begin{equation}\label{low:timeweight}
\begin{aligned}
&\|S(\tau) Z\|^\ell_{\widetilde{L}^{\infty}_t(\dot\B_{2,1}^{\frac d2})}
+\int_{0}^t S(\tau)\Big(
\|b(\tau) Z_1\|_{\dot{\B}^{\frac d2+2}_{2,1}}^\ell
+\|Z_2\|_{\dot{\B}^{\frac d2+1}_{2,1}}^\ell
+\left\|\frac{W}{b(\tau)}\right\|_{\dot{\B}^{\frac d2}_{2,1}}^\ell
+\|\partial_t Z\|_{\dot{\B}^{\frac d2}_{2,1}}^\ell
\Big)\,d\tau\\
&\quad
+\|S(\tau)b(\tau)^{\frac{1}{2}} Z\|_{\widetilde{L}^2_t(\dot{\B}^{\frac{d}{2}+1}_{2,1})}^{\ell}
+\|S(\tau)b(\tau)^{-\frac{1}{2}} Z_2\|_{\widetilde{L}^2_t(\dot{\B}^{\frac{d}{2}}_{2,1})}^{\ell}\\
&\lesssim
(1+\|Z\|_{\mathcal{E}_t})\int_0^t S'(\tau)\Big(
\|Z(\tau)\|_{\dot{\B}^{\frac d2}_{2,1}}^{\ell}
+b(\tau)\|Z(\tau)\|_{\dot{\B}^{\frac{d}{2}+1}_{2,1}}^h
\Big)\,d\tau
+\|Z\|_{\mathcal{E}_t}X(S(\tau) Z).
\end{aligned}
\end{equation}

It remains to address the time-weighted regularity estimates in high frequencies.
For the under-damped case, starting from the $b$-weighted high-frequency Lyapunov inequality in
Step~2, we have
\[
\frac{d}{dt}\Big(b(t+t^*)\mathcal L_j(t)\Big)
+c\mathcal L_j(t)
\lesssim
b(t+t^*)\Big(
\|\nabla U\|_{L^\infty}\|\widetilde U_j\|_{L^2}
+\|(R_j^1,R_j^2,2^{-j}b(t)^{-1}R_j^3)\|_{L^2}
\Big)\sqrt{\mathcal L_j(t)} .
\]
Multiplying the above inequality by $S(t)^2$, we get
\begin{equation}\label{Ly:nonlinear-timeweighted}
\begin{aligned}
&\frac{d}{dt}\Big(S(t)^2b(t+t^*)\mathcal L_j(t)\Big)
+cS(t)^2\mathcal L_j(t)\\
&\quad\lesssim
S(t)^2b(t+t^*)\Big(
\|\nabla U\|_{L^\infty}\|\widetilde U_j\|_{L^2}
+\|(R_j^1,R_j^2,2^{-j}b(t)^{-1}R_j^3)\|_{L^2}
\Big)\sqrt{\mathcal L_j(t)}\\
&\qquad
+S(t)S'(t)b(t+t^*)\mathcal L_j(t).
\end{aligned}
\end{equation}
The term involving $S'(t)$ is kept as a source term. Then, arguing as in
Steps~2--3 of the proof of Lemma~\ref{eslq:high} and applying Lemma~\ref{lem2} to \eqref{Ly:nonlinear-timeweighted} with $f(t)=b(t+t^*)$,
        $g(t)=1$ and $X(t)=S(t)\sqrt{\mathcal L_j(t)}$, one obtains
\begin{equation}\label{high:timeweight}
\begin{aligned}
&\|S(\tau)b(\tau)Z\|^h_{\widetilde{L}^{\infty}_t(\dot\B^{\frac{d}{2}+1}_{2,1})}
+\int_{0}^tS(\tau)\Big(
\|Z(\tau)\|^h_{\dot\B_{2,1}^{\frac{d}{2}+1}}
+\left\|\frac{W(\tau)}{b(\tau)}\right\|_{\dot{\B}^{\frac{d}{2}}_{2,1}}^h
+\|\partial_t Z(\tau)\|_{\dot\B_{2,1}^{\frac{d}{2}}}^h
\Big)\,d\tau\\
&\lesssim
(1+\|Z\|_{\mathcal{E}_t})\|S'(\tau)b(\tau)Z\|^h_{L^{1}_t(\dot\B^{\frac{d}{2}+1}_{2,1})}
+(1+\|Z\|_{\mathcal{E}_t})\|Z\|_{\mathcal{E}_t}\|S(t)Z\|_{\mathcal{E}_t}.
\end{aligned}
\end{equation}
The over-damped case can be addressed similarly. We omit the details for brevity.

Eventually, combining \eqref{low:timeweight}, \eqref{high:timeweight}, and
$\|Z\|_{\mathcal{E}_t}\lesssim \|Z_0\|_{\dot{\B}^{\frac d2}_{2,1}\cap \dot{\B}^{\frac d2+1}_{2,1}}\ll1$, we obtain \eqref{110}.
\end{proof}

\subsection{Gain of decay rates}\label{subsection:inter}

Now we are ready to prove the decay estimates \eqref{decay1} and \eqref{decay2}. The proof is divided into two cases.
\begin{itemize}
\item $-1<\alpha\leqslant1$.
\end{itemize}
We shall prove
\begin{align}\label{inter}
\|Z(t)\|_{\dot{\B}_{2,1}^{\frac d2}}^\ell
+\|b(t)Z(t)\|_{\dot{\B}_{2,1}^{\frac d2+1}}^h
\lesssim
(1+t)^{-\frac{1+\alpha}{2}\big(\frac d2-\sigma_1\big)}
\Big(
\|Z_0\|_{\dot{\B}_{2,1}^{\frac d2}\cap \dot{\B}_{2,1}^{\frac d2+1}}
+\|Z\|_{\widetilde L^{\infty}_t(\dot{\B}^{\sigma_1}_{2,\infty})}
\Big),
\end{align}
for $-1<\alpha\leqslant 1$.
Using the real interpolation inequality, we have
\begin{align}
\|Z\|_{\dot{\B}^{\frac d2}_{2,1}}^\ell
\lesssim
\big(\|Z\|_{\dot{\B}^{\sigma_1}_{2,\infty}}^\ell\big)^{\theta}
\big(\|Z\|_{\dot{\B}^{\frac d2+2}_{2,1}}^\ell\big)^{1-\theta},
\end{align}
where $\theta\in(0,1)$ is determined by
\[
\theta\sigma_1+(1-\theta)\Big(\frac d2+2\Big)=\frac d2 .
\]
Note that $b(\tau)\sim 1$ for $0\leq \tau\leq t_b$ and that \eqref{mu:infer} holds for $\tau\geq t_b$, with $t_b\gg1$ fixed. Set
\begin{align*}
a_1
&:=
\sup_{t\geq0}\Big(
\|Z(t)\|_{\dot{\B}^{\frac d2}_{2,1}}^\ell
+\|b(t)Z(t)\|_{\dot{\B}^{\frac d2+1}_{2,1}}^h
\Big),\quad a_2:=
\|Z\|_{\widetilde L^{\infty}_t(\dot{\B}^{\sigma_1}_{2,\infty})}.
\end{align*}
We also denote
\[
E^\ell(t):=\|Z(t)\|_{\dot{\B}^{\frac d2}_{2,1}}^\ell,
\qquad
E^h(t):=\|b(t)Z(t)\|_{\dot{\B}^{\frac d2+1}_{2,1}}^h,
\qquad
E(t):=E^\ell(t)+E^h(t).
\]

We  consider the cases $-1<\alpha<1$ and $\alpha=1$ separately.
\begin{itemize}
\item Case 1: $-1<\alpha<1$.
\end{itemize}
Taking $S(t)=t^M$ in \eqref{110}, we obtain
\begin{align}
&t^M \Big( E^\ell(t)+E^h(t)\Big)
+a_2^{-\frac{\theta}{1-\theta}}
\int_0^t \tau^{M+\alpha} \big(E^{\ell}(\tau) \big)^{\frac{1}{1-\theta}}\,d\tau
+a_3\int_0^t \tau^{M-\alpha} E^h(\tau)\,d\tau \nonumber \\
&\leq
M\int_0^t\tau^{M-1}\Big( E^\ell(\tau)+E^h(\tau)\Big)\,d\tau,
\label{DD3}
\end{align}
where $a_3>0$ is the constant in the dissipation rate.

To analyze the right-hand side of \eqref{DD3}, we make full use of the dissipation. First, by H\"older's and Young's inequalities, we have
\begin{align}
&\quad M\int_0^t \tau^{M-1} E^\ell(\tau)\,d\tau \nonumber\\
&\leq C\Bigg(\int_0^t
\tau^{\frac{M-1-(1-\theta)(M+\alpha)}{\theta}}\,d\tau
\Bigg)^{\theta}
\Bigg(\int_0^t
\tau^{M+\alpha}\big(E^\ell(\tau)\big)^{\frac{1}{1-\theta}}\,d\tau
\Bigg)^{1-\theta}\nonumber \\
&\leq C
\bigg(a_2 t^{M-\frac{(1+\alpha)(1-\theta)}{\theta}}\bigg)^{\theta}
\Bigg(
a_2^{-\frac{\theta}{1-\theta}}
\int_0^t \tau^{M+\alpha}\big(E^\ell(\tau)\big)^{\frac{1}{1-\theta}}\,d\tau
\Bigg)^{1-\theta} \nonumber \\
&\leq
\frac{1}{2}a_2^{-\frac{\theta}{1-\theta}}
\int_0^t \tau^{M+\alpha}\big(E^\ell(\tau)\big)^{\frac{1}{1-\theta}}\,d\tau
+ C\,a_2 t^{M-\frac{(1+\alpha)(1-\theta)}{\theta}}.
\label{R1111}
\end{align}
To handle the term involving $E^h(t)$ on the right-hand side of \eqref{DD3}, we have
\begin{align}
&\quad M\int_0^t\tau^{M-1}E^h(\tau)\,d\tau \nonumber\\
&\leq
C\Big(\int_0^t \tau^{M-\alpha} E^h(\tau)\,d\tau\Big)^{\frac{M-1}{M-\alpha}}
\Big(\int_0^t \sup_{\tau\in[0,t]}E(\tau)\,d\tau\Big)^{\frac{1-\alpha}{M-\alpha}} \nonumber\\
&\leq
\frac{1}{2} a_3\int_0^t \tau^{M-\alpha} E^h(\tau)\,d\tau
+C a_3^{-1} a_1 t.
\label{R11112}
\end{align}
This essentially corresponds to the fact that the rate
$e^{-a_3\int_0^t \tau^{-\alpha}\,d\tau}$, formally obtained by setting
$E^\ell(t)=0$, can lead to any algebraic decay rate $t^{1-M}$.
Substituting \eqref{R1111} and \eqref{R11112} into \eqref{DD3}, we get
\begin{equation*}
t^M \Big( E^\ell(t)+E^h(t)\Big)
\lesssim
\max\{a_1,a_2\}
\Big(t^{M-\frac{(1+\alpha)(1-\theta)}{\theta}}+t \Big).
\end{equation*}
Since $M\gg1$ can be chosen arbitrarily large, we arrive at \eqref{inter}
for $-1<\alpha<1$.

In the limiting case $\alpha=1$, the term involving $E^h(t)$ on the right-hand side of \eqref{DD3} can be directly absorbed by the left-hand side. Consequently, by \eqref{R1111} and \eqref{DD3}, we have
\begin{equation*}
t^M \Big( E^\ell(t)+E^h(t)\Big)
+(a_3-M)\int_0^t\tau^{M-1} E^h(\tau)\,d\tau
\lesssim
a_2 t^{M-\frac{2(1-\theta)}{\theta}}.
\end{equation*}
Once the large constant $M$ is fixed, the constant in the critical case $\alpha=1$ is chosen so that the corresponding dissipation constant $a_3$ satisfies $a_3>M$. Hence \eqref{inter} also holds in the critical case.

For later use in the proof of Theorem~\ref{thm3}, in the case $-1<\alpha\leq1$,
\eqref{110} with $S(t)=t^M$ implies the following stronger result:
\begin{equation}
\begin{aligned}\label{weight:ee}
&\quad
\|\tau^M Z\|_{\widetilde{L}^{\infty}_t(\dot{\B}_{2,1}^{\frac d2})}^\ell
+\|\tau^M b(\tau)Z\|_{\widetilde{L}^{\infty}_t(\dot{\B}_{2,1}^{\frac d2+1})}^h\\
&\quad
+\|\tau^M b(\tau)^{\frac{1}{2}}Z\|_{\widetilde{L}^{2}_t(\dot{\B}_{2,1}^{\frac d2+1})}
+\|\tau^M b(\tau)^{-\frac{1}{2}}Z_2\|_{\widetilde{L}^{2}_t(\dot{\B}_{2,1}^{\frac d2})}\\
&\quad
+\int_0^t\tau^M\Big(
b(\tau)\|Z_1\|_{\dot{\B}^{\frac d2+2}_{2,1}}^{\ell}
+\|Z_1\|_{\dot{\B}^{\frac d2+1}_{2,1}}^h
+\|Z_2\|_{\dot{\B}^{\frac d2+1}_{2,1}}
+\|\partial_t Z\|_{\dot{\B}^{\frac d2}_{2,1}}
+\frac{1}{b(\tau)}\|W\|_{\dot{\B}^{\frac d2}_{2,1}}
\Big)\,d\tau\\
&\lesssim
t^{M-\frac{1+\alpha}{2}(\frac{d}{2}-\sigma_1)}
\Big(
\|Z_0\|_{\dot{\B}_{2,1}^{\frac d2}\cap \dot{\B}_{2,1}^{\frac d2+1}}
+\|Z\|_{\widetilde L^{\infty}_t(\dot{\B}^{\sigma_1}_{2,\infty})}
\Big).
\end{aligned}
\end{equation}

\begin{itemize}
\item Case 2: $\alpha=-1$.
\end{itemize}
In this case, we will prove
\begin{align}\label{118}
\| Z(t)\|_{\dot{\B}_{2,1}^{\frac d2}}^\ell
+\| b(t)Z(t)\|_{\dot{\B}_{2,1}^{\frac d2+1}}^h
\lesssim
(\log(e+t))^{-\frac {1}{2}(\frac{d}{2}-\sigma_1)}
\Big(
\|Z_0\|_{\dot{\B}_{2,1}^{\frac d2}\cap \dot{\B}_{2,1}^{\frac d2+1}}
+\|Z\|_{\widetilde L^{\infty}_t(\dot{\B}^{\sigma_1}_{2,\infty})}
\Big).
\end{align}
For this purpose, we take
$S(t)=(\log(1+t))^M$, where $M$ is chosen sufficiently large so that
all the powers of $\log(1+\tau)$ appearing below are integrable near
$\tau=0$. Then \eqref{110} yields
\begin{align*}
&(\log(1+t))^M\Big(E^\ell(t)+E^h(t)\Big)\\
&\quad
+\int_0^t(\log(1+\tau))^M
\left(
a_2^{-\frac{\theta}{1-\theta}}(1+\tau)^{-1}
\big(E^\ell(\tau)\big)^{\frac{1}{1-\theta}}
+a_3(1+\tau)E^h(\tau)
\right)\,d\tau \\
&\leq
M\int_0^t(\log(1+\tau))^{M-1}(1+\tau)^{-1}
\Big(E^\ell(\tau)+E^h(\tau)\Big)\,d\tau.
\end{align*}
Similarly, we have
\begin{align*}
&\quad
\int_0^t
\frac{(\log(1+\tau))^{M-1}}{1+\tau}E^\ell(\tau)\,d\tau \\
&\leq
C\Bigg(\int_0^t
\frac{(\log(1+\tau))^M}{1+\tau}
\big(E^\ell(\tau)\big)^{\frac{1}{1-\theta}}\,d\tau
\Bigg)^{1-\theta}
\Bigg(\int_0^t
\frac{(\log(1+\tau))^{\frac{M\theta-1}{\theta}}}{1+\tau}\,d\tau
\Bigg)^{\theta}\\
&\leq
\frac{1}{2}a_2^{-\frac{\theta}{1-\theta}}
\int_0^t
\frac{(\log(1+\tau))^M}{1+\tau}
\big(E^\ell(\tau)\big)^{\frac{1}{1-\theta}}\,d\tau
+C\,a_2(\log(1+t))^{M-\frac{1-\theta}{\theta}},
\end{align*}
and
\begin{align*}
&\quad
\int_0^t
\frac{(\log(1+\tau))^{M-1}}{1+\tau}E^h(\tau)\,d\tau \\
&\leq
C\Big(\int_0^t(\log(1+\tau))^M(1+\tau)E^h(\tau)\,d\tau\Big)^{\frac{1}{2}}
\Big(\int_0^t
\frac{(\log(1+\tau))^{M-2}}{(1+\tau)^3}\,d\tau\,
\sup_{\tau\in[0,t]}E(\tau)
\Big)^{\frac{1}{2}}\\
&\leq
\frac{1}{2} a_3\int_0^t(\log(1+\tau))^M(1+\tau)E^h(\tau)\,d\tau
+C a_3^{-1}a_1 .
\end{align*}
Choosing $M$ sufficiently large, we obtain \eqref{118}.
Keeping all the dissipative terms in
$\|S(\tau)Z\|_{\mathcal E_t}$ in \eqref{110} also yields the
logarithmic counterpart of \eqref{weight:ee}. This estimate will be
used below to control the source terms in the error equation.

We now claim that
\begin{align}\label{Zsigma1}
\|Z\|_{\widetilde L^{\infty}_t(\dot{\B}^{\sigma_1}_{2,\infty})}\lesssim 1 .
\end{align}
Assuming \eqref{Zsigma1}, the estimate \eqref{inter} yields \eqref{decay1}
with $\sigma=\frac d2$ for $-1<\alpha\leq1$, while the logarithmic estimate above yields \eqref{decay2}
with $\sigma=\frac d2$ in the case $\alpha=-1$.
The decay rates in \eqref{decay1} and \eqref{decay2} corresponding to any $\sigma\in(\sigma_1,\frac d2)$ then follow by interpolation between \eqref{Zsigma1} and the corresponding estimate at the level $\dot{\B}^{\frac d2}_{2,1}$.

\subsection{Uniform evolution at lower-order regularity}

As discussed above, the decay estimates rely on the key lower-order regularity estimate \eqref{Zsigma1}. This is justified in the following proposition.

\begin{prop}\label{prop:lowre}
Additionally, assume that $Z_0\in \dot{\B}^{\sigma_1}_{2,\infty}\cap \dot{\B}^{\frac d2+1}_{2,1}$ with $
-\frac{d}{2}\leq \sigma_1<\frac{d}{2}$. 
Define
\begin{equation*}
\begin{aligned}
\|Z\|_{\mathcal{Y}_t}:={}&
\|Z\|_{\widetilde{L}_t^\infty(\dot{\B}^{\sigma_1}_{2,\infty})}
+\|b(\tau)Z\|_{\widetilde{L}_t^\infty(\dot{\B}^{\sigma_1+1}_{2,\infty})}
+\|b(\tau)Z_1\|^\ell_{L_t^1(\dot{\B}^{\sigma_1+2}_{2,\infty})}
+\|Z\|^h_{L_t^1(\dot{\B}^{\sigma_1+1}_{2,\infty})}\\
&+\|Z_2\|_{L_t^1(\dot{\B}^{\sigma_1+1}_{2,\infty})}
+\|b(\tau)^{\frac12}Z\|_{\widetilde{L}_t^2(\dot{\B}^{\sigma_1+1}_{2,\infty})}
+\|b(\tau)^{-\frac12}Z_2\|_{\widetilde{L}_t^2(\dot{\B}^{\sigma_1}_{2,\infty})}\\
&+\|\partial_t Z\|_{L_t^1(\dot{\B}^{\sigma_1}_{2,\infty})}
+\|b(\tau)^{-1}W\|_{L_t^1(\dot{\B}^{\sigma_1}_{2,\infty})}.
\end{aligned}
\end{equation*}
Then we have
\begin{equation}
\begin{aligned}\label{low:a}
\|Z\|_{\mathcal{Y}_t}
\lesssim
\|Z_0\|_{\dot{\B}^{\sigma_1}_{2,\infty}\cap \dot{\B}^{\frac{d}{2}+1}_{2,1}}.
\end{aligned}
\end{equation}
In particular, \eqref{Zsigma1} holds.
\end{prop}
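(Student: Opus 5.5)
The plan is to redo the a priori estimates of Section~\ref{sec:global} at the regularity level $\sigma_1$ instead of $\frac d2$, with $\ell^\infty$ in the frequency index ($\dot{\B}^{\cdot}_{2,\infty}$ norms). The nonlinear terms will be bounded by $\|Z\|_{\mathcal{E}_t}\|Z\|_{\mathcal{Y}_t}$, which is possible because $\|Z\|_{\mathcal{E}_t}$ is already known to be small by \eqref{Xbound}. The linear part carries over verbatim. The dyadic inequalities \eqref{dyadic-coupled-final} for the low frequencies and \eqref{highlocalized} for the high frequencies hold blockwise and do not depend on the regularity index. Multiplying them by $2^{j\sigma_1}$ (respectively $2^{j(\sigma_1+1)}$) and taking $\sup_j$ instead of $\sum_j$ therefore gives the $\mathcal{Y}_t$ analogue of \eqref{est:W} and \eqref{highW}.

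The patching at the transition times $t_j$ is done exactly as before. In the under-damped case, $b(t_j)2^{j(\sigma_1+1)}\|Z_j(t_j)\|_{L^2}=2^{-k_0}2^{j\sigma_1}\|Z_j(t_j)\|_{L^2}$ by \eqref{relationtj}, so the high-frequency initial term is controlled by the low-frequency $\widetilde L^\infty_t(\dot{\B}^{\sigma_1}_{2,\infty})$ norm. The over-damped case is the reverse. The data terms give $\|Z_0\|_{\dot{\B}^{\sigma_1}_{2,\infty}}$ at low frequencies and $\|Z_0\|^h_{\dot{\B}^{\sigma_1+1}_{2,\infty}}$ at high frequencies. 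The latter is bounded by $\|Z_0\|_{\dot{\B}^{\sigma_1}_{2,\infty}\cap\dot{\B}^{\frac d2+1}_{2,1}}$ by interpolation, since $\sigma_1<\sigma_1+1<\frac d2+1$. The same holds for $W_0$ via \eqref{N1}-type product bounds.

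For the nonlinear terms, namely the $\sigma_1$-analogues of \eqref{N1}--\eqref{N6} and of the high-frequency remainders $R^1_j,R^2_j,R^3_j$, I will use the product law
\[
\|fg\|_{\dot{\B}^{s}_{2,\infty}}\lesssim \|f\|_{\dot{\B}^{\frac d2}_{2,1}}\|g\|_{\dot{\B}^{s}_{2,\infty}},\qquad -\tfrac d2< s\le \tfrac d2,
\]
together with its endpoint version at $s=-\frac d2$ (valid for $\dot{\B}^{-\frac d2}_{2,\infty}$ with $f\in\dot{\B}^{\frac d2}_{2,1}$). I will also use the factorizations $Q(Z)=q_1(Z)Z_2$, $N_1^k=(\cdot)Z_2$ and $\mathbf r(U)=\widetilde q_1(Z)Z_2$ from Lemma~\ref{lem:factorization-Z2}, and composition estimates. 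In every quadratic term the factor carrying the high derivative or the time weight (for instance $b^{1/2}\nabla Z$, $\partial_tZ$, $Z_2$ in $L^1(\dot{\B}^{\frac d2+1})$, or $W/b$) is placed in the $\mathcal{E}_t$ norm, and the other factor in $\mathcal{Y}_t$ at regularity $\sigma_1$, or vice versa. For example, the bound
\[
\|q_1(Z)\partial_tZ_2\|_{L^1_t(\dot{\B}^{\sigma_1}_{2,\infty})}\lesssim\|Z\|_{L^\infty_t(\dot{\B}^{\frac d2}_{2,1})}\|W/b\|_{L^1_t(\dot{\B}^{\sigma_1}_{2,\infty})}
\]
fits this pattern. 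The $b'$-terms are handled by the $\ell^\infty$ version of Lemma~\ref{b'lem}, whose proof is blockwise. The result is $\|Z\|_{\mathcal{Y}_t}\lesssim \|Z_0\|_{\dot{\B}^{\sigma_1}_{2,\infty}\cap\dot{\B}^{\frac d2+1}_{2,1}}+\|Z\|_{\mathcal{E}_t}\|Z\|_{\mathcal{Y}_t}$, and since $\|Z\|_{\mathcal{E}_t}\ll1$, absorption gives \eqref{low:a}. Because $\mathcal{Y}_t$ is not a priori finite, I will first run the argument on the approximate solutions of Theorem~\ref{thm0} on $[0,T]$, where $b$ is bounded above and below, or simply note continuity in $t$, and then pass to the limit.

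The main obstacle is the high-frequency nonlinear terms at the lower regularity. The terms $2^{j(\sigma_1+1)}b\|R^2_j\|$ (commutators) and $2^{j\sigma_1}\|R^3_j\|$ require commutator estimates in $\dot{\B}^{\sigma_1+1}_{2,\infty}$. These hold only for $\sigma_1+1>-\frac d2$, which is fine here, but also only up to $\sigma_1+1\le\frac d2+1$ with one factor in $\dot{\B}^{\frac d2+1}_{2,1}$. At the same time, the time weights must be distributed so that the low-frequency part of each high-frequency product is converted using Lemma~\ref{lemma51}. The endpoint $\sigma_1=-\frac d2$ is delicate for the low-frequency products $b\,\nabla N_2^k$ and $Q$. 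There I will exploit the divergence form, which gives one extra derivative, and the factor $Z_2$, so that the product is estimated in $\dot{\B}^{-\frac d2+1}$ where the product law is non-endpoint, and only afterwards is the derivative taken.
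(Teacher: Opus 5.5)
Your proposal is essentially the paper's proof. Both weight the blockwise inequalities \eqref{dyadic-coupled-final} and \eqref{highlocalized} by $2^{j\sigma_1}$, resp.\ $2^{j(\sigma_1+1)}$, take a supremum in $j$, handle the transition terms at $t_j$ via \eqref{relationtj}, bound every nonlinear term by $\|Z\|_{\mathcal{E}_t}\|Z\|_{\mathcal{Y}_t}$, and absorb. The paper shortcuts the over-damped high frequencies (where $b\lesssim 1$) directly from \eqref{Xbound} and Lemma~\ref{lemma51}, and it does not address the a priori finiteness of $\|Z\|_{\mathcal{Y}_t}$, which you do.

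One small correction: the $\dot{\B}_{2,\infty}$ product law holds for $-\frac d2\le s<\frac d2$, not $-\frac d2\le s\le\frac d2$. So the endpoint $\sigma_1=-\frac d2$ is already covered by \eqref{eq:prod3} and needs no divergence-form workaround. The value $s=\frac d2$, by contrast, must be excluded, which is why $\sigma_1<\frac d2$ is strict.
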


\begin{proof}
We address the low and high frequencies separately.
\begin{itemize}
\item {\emph{Step 1: Low-frequency analysis.}}
\end{itemize}

Multiplying \eqref{dyadic-coupled-final} by $2^{\sigma_1j}$ and taking the supremum over the low-frequency dyadic blocks, we obtain
\begin{equation}
\begin{aligned}
\label{est:W999}
&\quad
\|Z\|^\ell_{\widetilde{L}^{\infty}_t(\dot\B_{2,\infty}^{\sigma_1})}
+\int_{0}^t\Big(
\|b(\tau) Z_1(\tau) \|_{\dot\B_{2,\infty}^{\sigma_1+2}}^\ell
+\|Z_2(\tau)\|_{\dot\B_{2,\infty}^{\sigma_1+1}}^\ell\\
&\qquad\qquad\qquad
+\left\|\frac{W(\tau)}{b(\tau)}\right\|_{\dot\B_{2,\infty}^{\sigma_1}}^\ell
+\|\partial_t Z(\tau) \|_{\dot\B_{2,\infty}^{\sigma_1}}^\ell
\Big)\,d\tau\\
&\quad
+\|b(\tau)^{\frac12}Z\|_{\widetilde{L}_t^2(\dot{\B}^{\sigma_1+1}_{2,\infty})}^{\ell}
+\|b(\tau)^{-\frac12}Z_2\|_{\widetilde{L}_t^2(\dot{\B}^{\sigma_1}_{2,\infty})}^{\ell}\\
&\lesssim
\|W_0\|_{\dot\B_{2,\infty}^{\sigma_1}}^\ell
+\|Z_{1,0}\|_{\dot\B_{2,\infty}^{\sigma_1}}^\ell
+\|(f_1,f_2)\|_{L^1_t(\dot\B_{2,\infty}^{\sigma_1})}^\ell\\
&\quad
+\sum_{k=1}^{d}\|b(\tau)N_2^k(Z)\|_{\widetilde{L}^{\infty}_t(\dot\B_{2,\infty}^{\sigma_1})
\cap L^1_t(\dot\B_{2,\infty}^{\sigma_1+2})}^\ell
+\|Q(Z)\|_{\widetilde{L}^{\infty}_t(\dot\B_{2,\infty}^{\sigma_1})
\cap L^1_t(\dot\B_{2,\infty}^{\sigma_1+1})}^\ell .
\end{aligned}
\end{equation}
According to the definition of $W$ and the standard estimates for composition functions, we have
\begin{equation*}
\begin{aligned}
\|W_0\|_{\dot\B_{2,\infty}^{\sigma_1}}^\ell
&\lesssim
\|Z_0\|_{\dot\B_{2,\infty}^{\sigma_1}}^\ell
+\|b(0)\nabla Z_0\|_{\dot\B_{2,\infty}^{\sigma_1}}^\ell
+\sum_{k=1}^d\|b(0)\nabla N_2^k(Z_0)\|_{\dot\B_{2,\infty}^{\sigma_1}}^\ell
+\|Q(Z_0)\|_{\dot\B_{2,\infty}^{\sigma_1}}\\
&\lesssim
\|Z_0\|_{\dot\B_{2,\infty}^{\sigma_1}}
+\|Z_0\|_{\dot{\B}^{\frac{d}{2}}_{2,1}}\|Z_0\|_{\dot\B_{2,\infty}^{\sigma_1}}.
\end{aligned}
\end{equation*}
Moreover,
\begin{equation*}
\begin{aligned}
\sum_{k=1}^d
\|N_2^k(Z)\|^\ell_{\widetilde{L}^{\infty}_t(\dot\B_{2,\infty}^{\sigma_1})}
+\|Q(Z)\|^\ell_{\widetilde{L}^{\infty}_t(\dot\B_{2,\infty}^{\sigma_1})}
\lesssim
\|Z\|_{\widetilde{L}^{\infty}_t(\dot\B_{2,1}^{\frac{d}{2}})}
\|Z\|_{\widetilde{L}^{\infty}_t(\dot\B_{2,\infty}^{\sigma_1})}.
\end{aligned}
\end{equation*}
Here and in what follows, we frequently use the product law \eqref{eq:prod3} under the restriction
$-\frac{d}{2}\leq \sigma_1<\frac{d}{2}$. Using  the standard product and composition estimates (see Proposition \ref{LP}, Lemmas \ref{Composition} and \ref{compositionlp}), straightforward computations yield
\begin{equation*}
\begin{aligned}
\sum_{k=1}^{d}
\|b(\tau)N_2^k(Z)\|_{L^1_t(\dot\B_{2,\infty}^{\sigma_1+2})}^\ell
&\lesssim
\|Z\|_{L^{\infty}_t(\dot{\B}^{\frac{d}{2}}_{2,1})}
\Big(
\|b(\tau)Z\|_{L^1_t(\dot\B_{2,\infty}^{\sigma_1+2})}^\ell
+\|Z\|_{L^1_t(\dot{\B}^{\sigma_1+1}_{2,\infty})}^h
\Big),\\
\|Q(Z)\|_{L^1_t(\dot\B_{2,\infty}^{\sigma_1+1})}^\ell
&\lesssim
\|Z\|_{L^{\infty}_t(\dot{\B}^{\frac{d}{2}}_{2,1})}
\Big(
\|Z_2\|_{L^1_t(\dot\B_{2,\infty}^{\sigma_1+1})}^\ell
+\|Z\|_{L^1_t(\dot{\B}^{\sigma_1+1}_{2,\infty})}^h
\Big).
\end{aligned}
\end{equation*}
The terms concerning $f_1$ and $f_2$ can be analyzed as in \eqref{f2}--\eqref{f1}. More precisely,
\begin{equation*}
\begin{aligned}
\|(f_1,f_2)\|_{L^1_t(\dot\B^{\sigma_1}_{2,\infty})}^{\ell}
\lesssim
\|Z\|_{\mathcal{E}_t}\,\|Z\|_{\mathcal{Y}_t}.
\end{aligned}
\end{equation*}
Combining the above estimates with \eqref{est:W999}, we get
\begin{equation}
\begin{aligned}\label{low:sigma1}
&\|Z\|^\ell_{\widetilde{L}^{\infty}_t(\dot\B_{2,\infty}^{\sigma_1})}
+\|b(\tau)^{\frac12}Z\|_{\widetilde{L}_t^2(\dot{\B}^{\sigma_1+1}_{2,\infty})}^{\ell}
+\|b(\tau)^{-\frac12}Z_2\|_{\widetilde{L}_t^2(\dot{\B}^{\sigma_1}_{2,\infty})}^{\ell}\\
&\quad
+\int_{0}^t\Big(
\|b(\tau) Z_1(\tau) \|_{\dot\B_{2,\infty}^{\sigma_1+2}}^\ell
+\|Z_2(\tau)\|_{\dot\B_{2,\infty}^{\sigma_1+1}}^\ell
+\left\|\frac{W(\tau)}{b(\tau)}\right\|_{\dot\B_{2,\infty}^{\sigma_1}}^\ell
+\|\partial_t Z(\tau) \|_{\dot\B_{2,\infty}^{\sigma_1}}^\ell
\Big)\,d\tau\\
&\lesssim
\|Z_0\|_{\dot{\B}^{\sigma_1}_{2,\infty}}
+\|Z\|_{\mathcal{E}_t}\,\|Z\|_{\mathcal{Y}_t}.
\end{aligned}
\end{equation}

\begin{itemize}
\item {\emph{Step 2: High-frequency analysis.}}
\end{itemize}

When $b'(t)\leq0$ (in particular, in the polynomial case with $\alpha\leq0$), we have $b(t)\lesssim1$. In this case, the desired high-frequency bounds follow directly from the uniform estimate \eqref{Xbound} and Lemma~\ref{lemma51}.

It remains to consider the case $b'(t)>0$. Recalling the localized estimate \eqref{highlocalized}, we arrive at
\begin{equation}
\begin{aligned}
&\quad
\|b(\tau)\widetilde{U}\|_{\widetilde{L}^\infty_t(\dot\B_{2,\infty}^{\sigma_1+1})}^h
+\|\widetilde{U}\|_{L^1_t(\dot\B_{2,\infty}^{\sigma_1+1})}^h\\
&\lesssim
\sup_{\substack{j\in\Z\\ t_j<t}}
b(t_j)2^{j(\sigma_1+1)}\|\widetilde{U}_j(t_j)\|_{L^2}
+\|b(\tau)\nabla U\|_{L^{\infty}_t(L^{\infty})}
\|\widetilde{U}\|_{L^1_t(\dot\B_{2,\infty}^{\sigma_1+1})}^h\\
&\quad
+\int_{0}^t
\sup_{j\geqslant J_\tau-1}
\left(
b(\tau)2^{j(\sigma_1+1)}
\big(\|R_j^1\|_{L^2}+\|R_j^2\|_{L^2}\big)
+2^{j\sigma_1}\|R_j^3\|_{L^2}
\right)\,d\tau .
\end{aligned}
\end{equation}
In the spirit of \eqref{N1h}--\eqref{N3h}, using Lemmas~\ref{commhigh} and \ref{compositionlp}, the commutator and composite function terms are bounded as follows:
\begin{equation}
\begin{aligned}
&\quad
\int_{0}^t
\sup_{j\geqslant J_\tau-1}
b(\tau)2^{j(\sigma_1+1)}\|R_j^1\|_{L^2}\,d\tau\\
&\lesssim
\|Z\|_{L^{\infty}_t(\dot{\B}^{\frac{d}{2}}_{2,1})}
\|Z_2\|_{L^{1}_t(\dot{\B}^{\sigma_1+1}_{2,\infty})}
+\|b(\tau)^{\frac{1}{2}}Z\|_{L^{2}_t(\dot{\B}^{\frac{d}{2}+1}_{2,1})}
\|b(\tau)^{-\frac{1}{2}}Z_2\|_{L^{2}_t(\dot{\B}^{\sigma_1}_{2,\infty})},\\
&\quad
\int_{0}^t
\sup_{j\geqslant J_\tau-1}
b(\tau)2^{j(\sigma_1+1)}\|R_j^2\|_{L^2}\,d\tau\\
&\lesssim
\|b(\tau)Z\|_{L^{\infty}_t(\dot{\B}^{\frac{d}{2}+1}_{2,1})}
\|\partial_t Z\|_{L^{1}_t(\dot{\B}^{\sigma_1}_{2,\infty})}
+\|b(\tau)^{\frac{1}{2}}Z\|_{L^{2}_t(\dot{\B}^{\frac{d}{2}+1}_{2,1})}
\|b(\tau)^{\frac{1}{2}}Z\|_{L^{2}_t(\dot{\B}^{\sigma_1+1}_{2,\infty})}.
\end{aligned}
\end{equation}
For $R_j^3$, we have
\begin{equation}
\begin{aligned}
&\quad
\int_{0}^t
\sup_{j\geqslant J_\tau-1}
2^{j\sigma_1}\|R_j^3\|_{L^2}\,d\tau\\
&\lesssim
\int_{0}^t
\Big(
\|(\widetilde{A}^0(U)-\bar{A}^0)\partial_t\widetilde{U}\|_{\dot{\B}^{\sigma_1}_{2,\infty}}^h
+\sum_{k=1}^d
\|(\widetilde{A}^k(U)-\bar{A}^k)\partial_{x_k}\widetilde{U}\|_{\dot{\B}^{\sigma_1}_{2,\infty}}^h
\Big)\,d\tau .
\end{aligned}
\end{equation}
Moreover,
\begin{align}
&\quad
\int_{0}^t
\|(\widetilde{A}^0(U)-\bar{A}^0)\partial_t\widetilde{U}\|_{\dot{\B}^{\sigma_1}_{2,\infty}}^h\,d\tau
\lesssim
(1+\|Z\|_{L^{\infty}_t(\dot{\B}^{\frac{d}{2}}_{2,1})})
\|Z\|_{L^{\infty}_t(\dot{\B}^{\frac{d}{2}}_{2,1})}
\|\partial_t Z\|_{L^1_t(\dot{\B}^{\sigma_1}_{2,\infty})},
\label{R3-lowreg-1}
\end{align}
and
\begin{align}
&\quad
\int_{0}^t
\|(\widetilde{A}^k(U)-\bar{A}^k)\partial_{x_k}\widetilde{U}\|_{\dot{\B}^{\sigma_1}_{2,\infty}}^h\,d\tau
\notag\\
&\lesssim
\|b(\tau)^{\frac{1}{2}}Z\|_{L^2_t(\dot{\B}^{\frac{d}{2}+1}_{2,1})}
\|b(\tau)^{\frac{1}{2}}\nabla Z^\ell\|_{L^2_t(\dot{\B}^{\sigma_1}_{2,\infty})}
\notag\\
&\quad
+\|Z\|_{L^{\infty}_t(\dot{\B}^{\frac d2}_{2,1})}
\Big(
\|b(\tau)\nabla Z^\ell\|_{L^{1}_t(\dot{\B}^{\sigma_1+1}_{2,\infty})}
+\|\nabla Z^h\|_{L^{1}_t(\dot{\B}^{\sigma_1}_{2,\infty})}
\Big).
\label{R3-lowreg-2}
\end{align}
According to the definition of $t_j$, we have
\begin{align}
\sup_{\substack{j\in\Z\\ t_j<t}}
b(t_j)2^{j(\sigma_1+1)}\|\widetilde{U}_j(t_j)\|_{L^2}&\lesssim
\sup_{\substack{j\in\Z\\ 0<t_j<t}}
2^{j\sigma_1}\|\widetilde{U}_j(t_j)\|_{L^2}
+\sup_{\substack{j\in\Z\\ t_j=0}}
b(0)2^{j(\sigma_1+1)}\|\widetilde{U}_j(0)\|_{L^2}
\notag\\
&\lesssim
\|\widetilde U\|_{\widetilde L^\infty_t(\dot{\B}^{\sigma_1}_{2,\infty})}^{\ell}
+\|\widetilde U_0\|_{\dot{\B}^{\sigma_1+1}_{2,\infty}}^h
\notag\\
&\lesssim
\|Z\|_{\widetilde L^\infty_t(\dot{\B}^{\sigma_1}_{2,\infty})}^{\ell}
+\|Z_0\|_{\dot{\B}^{\frac d2+1}_{2,1}}.
\label{transition-lowreg}
\end{align}
Combining the above estimates, the low-frequency estimate \eqref{low:sigma1}, and the smallness of $\|Z\|_{\mathcal{E}_t}$, we derive the desired bounds for $\widetilde U$ in high frequencies. Then, using composition and product estimates, one recovers the desired bounds for $Z$ in high frequencies, as in Step~3 of the proof of Lemma~\ref{eslq:high}. Consequently,
\begin{equation}
\begin{aligned}\label{high:sigma1}
&\|b(\tau)Z\|_{\widetilde L^\infty_t(\dot{\B}^{\sigma_1+1}_{2,\infty})}^h
+\|Z\|_{L^1_t(\dot{\B}^{\sigma_1+1}_{2,\infty})}^h\lesssim
\|Z_0\|_{\dot{\B}^{\sigma_1}_{2,\infty}\cap\dot{\B}^{\frac d2+1}_{2,1}}
+\|Z\|_{\mathcal{E}_t}\|Z\|_{\mathcal{Y}_t}.
\end{aligned}
\end{equation}

\begin{itemize}
\item {\emph{Step 3: Closing the lower-order estimate.}}
\end{itemize}

Collecting all the above estimates in low and high frequencies, we end up with
\begin{align*}
\|Z\|_{\mathcal{Y}_t}
\lesssim
\|Z_0\|_{\dot{\B}^{\sigma_1}_{2,\infty}\cap \dot{\B}^{\frac d2+1}_{2,1}}
+\|Z\|_{\mathcal{E}_t}\|Z\|_{\mathcal{Y}_t}.
\end{align*}
Since $\|Z\|_{\mathcal{E}_t}\lesssim \|Z_0\|_{\dot{\B}^{\frac d2}_{2,1}\cap \dot{\B}^{\frac d2+1}_{2,1}}\ll1$, the last term can be absorbed into the left-hand side. This yields \eqref{low:a} and finishes the proof of Proposition~\ref{prop:lowre}.
\end{proof}

\subsection{Enhanced stability of \texorpdfstring{$Z_2$}{Z2}}

We are now in a position to prove the improved decay estimate for $Z_2$.
From the second equation of \eqref{sys} and the dissipativity of $L_2$, we
have, for each dyadic block,
\begin{align}
\frac{d}{dt}\|Z_{2,j}(t)\|_{L^2}
+\frac{\kappa}{b(t)}\|Z_{2,j}(t)\|_{L^2}
\lesssim
2^j\|Z_j(t)\|_{L^2}
+2^j\sum_{k=1}^d\|\dot{\Delta}_j N^k_2(Z)(t)\|_{L^2}
+\frac{1}{b(t)}\|\dot{\Delta}_j Q(Z)(t)\|_{L^2}.
\label{Z2-dyadic-enhanced}
\end{align}
By Gronwall's inequality, for any $t\geq t_b$ with $t_b\gg1$ given in
\eqref{mu:infer}, multiplying the resulting inequality by $2^{j\sigma}$ and
summing over $j\in\mathbb Z$, we obtain
\begin{align}
&\quad\|Z_2(t)\|_{\dot{\B}^{\sigma}_{2,1}}\notag\\
&\lesssim
\exp\left(-\kappa\int_{t_b}^t\frac{ds}{b(s)}\right)
\|Z_2(t_b)\|_{\dot{\B}^{\sigma}_{2,1}}
\notag\\
&\quad
+\int_{t_b}^t
\exp\left(-\kappa\int_{\tau}^t\frac{ds}{b(s)}\right)
\Big(
\|Z(\tau)\|_{\dot{\B}^{\sigma+1}_{2,1}}
+\sum_{k=1}^d\|N_2^k(Z)(\tau)\|_{\dot{\B}^{\sigma+1}_{2,1}}
+\frac{1}{b(\tau)}\|Q(Z)(\tau)\|_{\dot{\B}^{\sigma}_{2,1}}
\Big)\,d\tau .
\label{Z2-duhamel-besov}
\end{align}
Here, due to interpolation, \eqref{Xbound} and \eqref{low:a}, we have
\begin{align}
\|Z_2(t)\|_{\dot{\B}^{\sigma}_{2,1}}
\lesssim
\|Z_2(t)\|_{\dot{\B}^{\frac d2}_{2,1}}
+\|Z_2(t)\|_{\dot{\B}^{\sigma_1}_{2,\infty}}
\lesssim
\|Z_0\|_{\dot{\B}_{2,1}^{\frac d2}\cap \dot{\B}_{2,1}^{\frac d2+1}}
+\|Z_0\|_{\dot{\B}^{\sigma_1}_{2,\infty}},
\quad t\geq0.
\label{init}
\end{align}
Recalling \eqref{mu:infer}, we have
\[
\int_{t_b}^t\frac{ds}{b(s)}
\geq
c_b\int_{t_b}^t(1+s)^{-\alpha}\,ds.
\]
Therefore,
\begin{equation}\label{exp1}
\begin{aligned}
\exp\left(-\kappa\int_{t_b}^t\frac{ds}{b(s)}\right)
\lesssim
\begin{cases}
\exp\big(-c(1+t)^2\big), & \alpha=-1,\\[0.4em]
\exp\big(-c(1+t)^{1-\alpha}\big), & -1<\alpha<1,\\[0.4em]
(1+t)^{-\kappa c_b}, & \alpha=1.
\end{cases}
\end{aligned}
\end{equation}
In particular, when $-1\leq\alpha<1$, this decay implies any algebraic
decay, while in the case $\alpha=1$ the algebraic decay rate is ensured by
choosing $c_b$ sufficiently large.

\begin{itemize}
\item Case 1: $-1<\alpha\leq1$.
\end{itemize}
We first consider the case $-1<\alpha\leq1$. For any fixed
$\gamma\in\mathbb R$, we claim that
\begin{align}
\int_{t_b}^t
\exp\left(-\kappa\int_\tau^t\frac{ds}{b(s)}\right)
(1+\tau)^{-\gamma}\,d\tau
\lesssim
(1+t)^{\alpha-\gamma}.
\label{kernel-algebraic-refined}
\end{align}
Indeed, for $\tau\in[t_b,t/2]$, \eqref{exp1} gives a sufficiently fast decay
with respect to $t$, and hence
\[
\int_{t_b}^{t/2}
\exp\left(-\kappa\int_\tau^t\frac{ds}{b(s)}\right)
(1+\tau)^{-\gamma}\,d\tau
\lesssim
(1+t)^{\alpha-\gamma}.
\]
For $\tau\in[t/2,t]$, we have
\[
\int_\tau^t\frac{ds}{b(s)}
\gtrsim
(1+t)^{-\alpha}(t-\tau),
\]
and $\tau\approx t$. Thus,
\[
\begin{aligned}
&\quad
\int_{t/2}^{t}
\exp\left(-\kappa\int_\tau^t\frac{ds}{b(s)}\right)
(1+\tau)^{-\gamma}\,d\tau\\
&\lesssim
(1+t)^{-\gamma}
\int_{t/2}^{t}
\exp\big(-c(1+t)^{-\alpha}(t-\tau)\big)\,d\tau
\lesssim
(1+t)^{\alpha-\gamma}.
\end{aligned}
\]
Combining the above two estimates proves \eqref{kernel-algebraic-refined}.

Let $\sigma_1<\sigma\leq \frac d2-1$. By the decay estimate \eqref{decay1}
and the composite estimates, we have
\begin{align}
\|Z(\tau)\|_{\dot{\B}^{\sigma+1}_{2,1}}
+\sum_{k=1}^d\|N_2^k(Z)(\tau)\|_{\dot{\B}^{\sigma+1}_{2,1}}
\lesssim
(1+\tau)^{-\frac{1+\alpha}{2}(\sigma+1-\sigma_1)}.
\label{source-Z2-decay}
\end{align}
Define
\[
M_\sigma(t):=
\sup_{t_b\leq s\leq t}
(1+s)^{\frac{1+\alpha}{2}(\sigma-\sigma_1)+\frac{1-\alpha}{2}}
\|Z_2(s)\|_{\dot{\B}^{\sigma}_{2,1}}.
\]
Using the factorization $Q(Z)=q_1(Z)Z_2$, \eqref{Xbound}, and the definition of
$M_\sigma(t)$, we get
\begin{align}
\frac{1}{b(\tau)}\|Q(Z)(\tau)\|_{\dot{\B}^{\sigma}_{2,1}}
&\lesssim
(1+\tau)^{-\alpha}
\|Z(\tau)\|_{\dot{\B}^{\frac d2}_{2,1}}
\|Z_2(\tau)\|_{\dot{\B}^{\sigma}_{2,1}}
\notag\\
&\lesssim
\|Z_0\|_{\dot{\B}_{2,1}^{\frac d2}\cap \dot{\B}_{2,1}^{\frac d2+1}}
(1+\tau)^{-\frac{1+\alpha}{2}(\sigma+1-\sigma_1)}
M_\sigma(t).
\label{Q-Z2-besov}
\end{align}
Substituting \eqref{init}, \eqref{source-Z2-decay} and \eqref{Q-Z2-besov}
into \eqref{Z2-duhamel-besov}, and using \eqref{exp1} and
\eqref{kernel-algebraic-refined}, we arrive at
\begin{align*}
M_\sigma(t)
\lesssim
\|Z_0\|_{\dot{\B}_{2,1}^{\frac d2}\cap \dot{\B}_{2,1}^{\frac d2+1}}
+\|Z_0\|_{\dot{\B}^{\sigma_1}_{2,\infty}}
+\|Z_0\|_{\dot{\B}_{2,1}^{\frac d2}\cap \dot{\B}_{2,1}^{\frac d2+1}}
M_\sigma(t).
\end{align*}
Taking the initial data sufficiently small, we infer that
\[
M_\sigma(t)
\lesssim
\|Z_0\|_{\dot{\B}_{2,1}^{\frac d2}\cap \dot{\B}_{2,1}^{\frac d2+1}}
+\|Z_0\|_{\dot{\B}^{\sigma_1}_{2,\infty}}.
\]
Consequently, \eqref{decayimprove1} holds 
for all $\sigma_1<\sigma\leq \frac d2-1$ and $-1<\alpha\leq1$.

\begin{itemize}
\item Case 2: $\alpha=-1$.
\end{itemize}

It remains to consider the limiting case $\alpha=-1$. In this case, for fixed
$\gamma\in\mathbb R$, we claim that
\begin{align}
\int_{t_b}^t
\exp\left(-\kappa\int_{\tau}^t\frac{ds}{b(s)}\right)
(\log(e+\tau))^{-\gamma}\,d\tau
\lesssim
(1+t)^{-1}(\log(e+t))^{-\gamma}.
\label{kernel-log}
\end{align}
Let us justify the corresponding logarithmic convolution estimate. Since
\[
\exp\left(-\kappa\int_{\tau}^t\frac{ds}{b(s)}\right)
\lesssim
\exp\left(-c\big((1+t)^2-(1+\tau)^2\big)\right),
\]
we split the time integral into $[t_b,t/2]$ and $[t/2,t]$. For
$\tau\in[t_b,t/2]$,
\[
\int_{t_b}^{t/2}
\exp\left(-\kappa\int_{\tau}^t\frac{ds}{b(s)}\right)
(\log(e+\tau))^{-\gamma}\,d\tau
\lesssim
e^{-c(1+t)^2}(1+t)
\lesssim
(1+t)^{-1}(\log(e+t))^{-\gamma}.
\]
For $\tau\in[t/2,t]$, we use $(1+t)^2-(1+\tau)^2
=(t-\tau)(2+t+\tau)
\gtrsim
(1+t)(t-\tau)$ and $\log(e+\tau)\approx\log(e+t)$ to obtain
\[
\begin{aligned}
&\quad
\int_{t/2}^{t}
\exp\left(-\kappa\int_{\tau}^t\frac{ds}{b(s)}\right)
(\log(e+\tau))^{-\gamma}\,d\tau\\
&\lesssim
(\log(e+t))^{-\gamma}
\int_{t/2}^{t}
e^{-c(1+t)(t-\tau)}\,d\tau
\lesssim
(1+t)^{-1}(\log(e+t))^{-\gamma}.
\end{aligned}
\]
Combining the two estimates gives \eqref{kernel-log}.

Using the logarithmic decay estimate \eqref{decay2}, we have
\begin{align}
\|Z(\tau)\|_{\dot{\B}^{\sigma+1}_{2,1}}
+\sum_{k=1}^d\|N_2^k(Z)(\tau)\|_{\dot{\B}^{\sigma+1}_{2,1}}
\lesssim
(\log(e+\tau))^{-\frac12(\sigma+1-\sigma_1)},
\quad
\sigma_1<\sigma\leq \frac d2-1.
\label{source-Z2-log}
\end{align}
Define
\[
M_\sigma^{\log}(t):=
\sup_{t_b\leq s\leq t}
(1+s)(\log(e+s))^{\frac12(\sigma+1-\sigma_1)}
\|Z_2(s)\|_{\dot{\B}^{\sigma}_{2,1}}.
\]
Then, using the factorization $Q(Z)=q_1(Z)Z_2$ and \eqref{Xbound}, we get
\begin{align}
\frac{1}{b(\tau)}\|Q(Z)(\tau)\|_{\dot{\B}^{\sigma}_{2,1}}
&\lesssim
(1+\tau)
\|Z(\tau)\|_{\dot{\B}^{\frac d2}_{2,1}}
\|Z_2(\tau)\|_{\dot{\B}^{\sigma}_{2,1}}
\notag\\
&\lesssim
\|Z_0\|_{\dot{\B}_{2,1}^{\frac d2}\cap \dot{\B}_{2,1}^{\frac d2+1}}
(\log(e+\tau))^{-\frac12(\sigma+1-\sigma_1)}
M_\sigma^{\log}(t).
\label{Q-Z2-log}
\end{align}
Combining \eqref{Z2-duhamel-besov}, \eqref{exp1}, \eqref{kernel-log},
\eqref{source-Z2-log} and \eqref{Q-Z2-log}, we obtain
\begin{align*}
M_\sigma^{\log}(t)
\lesssim
\|Z_0\|_{\dot{\B}_{2,1}^{\frac d2}\cap \dot{\B}_{2,1}^{\frac d2+1}}
+\|Z_0\|_{\dot{\B}^{\sigma_1}_{2,\infty}}
+\|Z_0\|_{\dot{\B}_{2,1}^{\frac d2}\cap \dot{\B}_{2,1}^{\frac d2+1}}
M_\sigma^{\log}(t).
\end{align*}
Taking the initial data sufficiently small, we conclude that
\[
M_\sigma^{\log}(t)
\lesssim
\|Z_0\|_{\dot{\B}_{2,1}^{\frac d2}\cap \dot{\B}_{2,1}^{\frac d2+1}}
+\|Z_0\|_{\dot{\B}^{\sigma_1}_{2,\infty}}.
\]
Therefore, we conclude \eqref{decay2improve} 
for all $\sigma_1<\sigma\leq \frac d2-1$ in the limiting case $\alpha=-1$.

\subsection{Enhanced stability of the time-dependent diffusion profile}

This section is devoted to the proof of the enhanced stability of the
time-dependent diffusion profile under the additional structural assumptions
\eqref{structural2}. Let
\[
\delta Z_1:=Z_1-\mathcal{N},
\]
where $\mathcal{N}$ is the solution to \eqref{ZL}. Since
$N_1^k(Z)=0$ and $Q(Z)=0$, it follows from \eqref{eqz1} that
$\delta Z_1$ satisfies
\begin{equation}\label{errorZ}
\left\{
\begin{aligned}
&\partial_t \delta Z_1
-b(t)\sum_{k=1}^d\sum_{l=1}^d
A^k_{1,2}L_2^{-1}A^l_{2,1}
\partial_k\partial_l \delta Z_1
=
\sum_{k=1}^d\partial_k\widetilde{G}^k(Z),\\
&\delta Z_1(0,x)=0,
\end{aligned}
\right.
\end{equation}
where
\begin{equation}\label{Gtilde-def}
\begin{aligned}
\widetilde{G}^k(Z)
:={}&
-A^k_{1,2}W
+b(t)\sum_{l=1}^d
A^k_{1,2}L_2^{-1}A^l_{2,2}\partial_l Z_2
-b(t)\sum_{l=1}^d
A^k_{1,2}L_2^{-1}\partial_l N^l_2(Z).
\end{aligned}
\end{equation}
The source term in \eqref{errorZ} consists of the damped mode $W$, a
linear term involving one spatial derivative of $Z_2$, and a nonlinear
term involving $N_2(Z)$. The weighted estimates established previously
for $W$ and $Z_2$, together with the divergence structure of the
right-hand side of \eqref{errorZ}, yield one additional spatial
derivative and hence an additional diffusive decay factor.

For later use, let $M\gg1$ be fixed and define the time weight
\[
S_M(t):=
\begin{cases}
(1+t)^M, & -1<\alpha\leq1,\\[0.3em]
(\log(1+t))^M, & \alpha=-1,
\end{cases},\qquad 
\Lambda_\alpha(t):=
\begin{cases}
1, & -1\leq\alpha\leq0,\\[0.3em]
b(t)^{-1}, & 0<\alpha\leq1.
\end{cases}
\]
We also set
\[
R_M(t):=
\begin{cases}
t^{M-\frac{1+\alpha}{2}
(\frac d2-\sigma_1)},
& -1<\alpha\leq1,\\[0.3em]
(\log(1+t))^{M-\frac12
(\frac d2-\sigma_1)},
& \alpha=-1.
\end{cases}
\]

\begin{lem}\label{lem:deltaZ1-weighted}
Let $-1\leq\alpha\leq1$. For any $M\gg1$ and all $t\geq1$, it holds that
\begin{align}
&\quad
\|S_M(\tau)\Lambda_\alpha(\tau)\delta Z_1\|
_{\widetilde{L}^{\infty}_t
(\dot{\B}^{\frac d2-1}_{2,1})}^{\ell}
+
\|S_M(\tau)b(\tau)\Lambda_\alpha(\tau)\delta Z_1\|
_{L^1_t
(\dot{\B}^{\frac d2+1}_{2,1})}^{\ell}
\notag\\
&\lesssim
R_M(t)
\Big(
\|Z_0\|_{\dot{\B}^{\sigma_1}_{2,\infty}
\cap\dot{\B}^{\frac d2+1}_{2,1}}
+
\|\Lambda_\alpha(\tau)\delta Z_1\|
_{\widetilde{L}^{\infty}_t
(\dot{\B}^{\sigma_1-1}_{2,\infty})}^{\ell}
\Big).
\label{deltaZ1-weighted}
\end{align}
\end{lem}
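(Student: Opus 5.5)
We plan to run the same time-weighted Lyapunov argument that gave \eqref{110} and \eqref{weight:ee}, now applied to the error equation \eqref{errorZ} at the regularity level $\dot{\B}^{\frac d2-1}_{2,1}$ and restricted to low frequencies, with the weight $S_M(t)\Lambda_\alpha(t)$.

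\emph{Weighted dyadic estimate.} Set $Y:=S_M\Lambda_\alpha\delta Z_1$. It solves
\[
\partial_t Y-b(t)\sum_{k,l}A^k_{1,2}L_2^{-1}A^l_{2,1}\partial_k\partial_l Y=(S_M\Lambda_\alpha)'\delta Z_1+S_M\Lambda_\alpha\sum_k\partial_k\widetilde G^k(Z),\qquad Y(0)=0 .
\]
We localize with $\dot\Delta_j$ and use strong ellipticity \eqref{ellip} on $I^\ell_{j,t}$, exactly as for \eqref{est:Z1lin1}. Lemma~\ref{lem2} then gives $\sup\|Y_j\|_{L^2}+\kappa_2\int 2^{2j}b\|Y_j\|_{L^2}$ on the left. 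The transition time $t_j$ is handled as in Section~\ref{sec:linear}: when $b'<0$, the initial value at $t_j$ is bounded by the high-frequency part, which is controlled by \eqref{weight:ee} via Lemma~\ref{lemma51}. We multiply by $2^{j(\frac d2-1)}$ and sum over $j$.

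\emph{Source terms.} The divergence term costs one derivative, so we need
\[
\|S_M\Lambda_\alpha\widetilde G\|^\ell_{L^1_t(\dot{\B}^{\frac d2}_{2,1})}.
\]
\begin{itemize}
\item For $W$: we write $\Lambda_\alpha W=(b\Lambda_\alpha)\,W/b$. Since $b\Lambda_\alpha\lesssim 1$ when $\alpha>0$ and $b\lesssim1$ when $\alpha\le0$, the bound \eqref{weight:ee} controls this piece by $R_M(t)$. This is exactly why $\Lambda_\alpha=b^{-1}$ is needed for $\alpha>0$.
\item For $b\,\nabla Z_2$: in low frequencies $b2^j\le 2^{-k_0}$, so $\|b\Lambda_\alpha\nabla Z_2\|^\ell_{\dot{\B}^{\frac d2}}\lesssim\Lambda_\alpha\|Z_2\|_{\dot{\B}^{\frac d2}}$. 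When $\alpha>0$ we instead keep this as $\|Z_2\|_{\dot{\B}^{\frac d2+1}}$. Both are covered by \eqref{weight:ee}.
\item For $bN_2^l(Z)$: since $N_2(Z_1,0)=0$, Lemma~\ref{lem:factorization-Z2} gives $N_2=n(Z)Z_2$. The product law then bounds this term by $\|Z\|_{\mathcal E_t}$ times the same weighted $Z_2$ norms.
\end{itemize}

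\emph{Weight-derivative term.} This term is $(S_M\Lambda_\alpha)'\delta Z_1$, with $(S_M\Lambda_\alpha)'\lesssim S_M'\Lambda_\alpha+S_M|\Lambda_\alpha'|$. When $\alpha>0$, $|\Lambda_\alpha'|=b'/b^2$, which for $b\sim(1+t)^\alpha$ is $\lesssim \Lambda_\alpha(1+t)^{-1}$, of the same order as $S_M'/S_M$. So the term is essentially $\int S_M'\Lambda_\alpha\|\delta Z_1\|^\ell_{\dot{\B}^{\frac d2-1}_{2,1}}$.

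To absorb it we copy \eqref{R1111}. We interpolate $\dot{\B}^{\frac d2-1}_{2,1}$ between $\dot{\B}^{\sigma_1-1}_{2,\infty}$ and $\dot{\B}^{\frac d2+1}_{2,1}$, with the dissipation weighted by $b\sim(1+\tau)^\alpha$ (or $(1+\tau)^{-1}$ when $\alpha=-1$). Hölder and Young then give
\[
\tfrac12\,(\text{dissipation})+C\,\|\Lambda_\alpha\delta Z_1\|^\ell_{\widetilde L^\infty_t(\dot{\B}^{\sigma_1-1}_{2,\infty})}\,R_M(t).
\]
The powers are consistent: gaining two derivatives costs $t^{-(1+\alpha)}$, so the net rate is $\frac{1+\alpha}{2}(\frac d2-\sigma_1)$, which matches $R_M$. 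The $\log$ case is handled as in \eqref{118}.

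The main obstacle is this absorption step. The dissipation is only available in the time-varying low-frequency region, so the interpolation must be performed blockwise on $I^\ell_{j,t}$ and then reassembled, as in the decay argument. For $\alpha\in(0,1]$, one must also check that $\Lambda_\alpha'$ does not spoil the sign.
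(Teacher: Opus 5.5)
Your outline matches the paper's proof. It uses the same weighted localized energy estimate for $S_M\Lambda_\alpha\,\delta Z_1$ on $I^\ell_{j,t}$ and the same three-piece treatment of $\|S_M\Lambda_\alpha\widetilde G^k(Z)\|^\ell_{L^1_t(\dot{\B}^{\frac d2}_{2,1})}$ through \eqref{weight:ee}. It also uses the same interpolation between $\dot{\B}^{\sigma_1-1}_{2,\infty}$ and $\dot{\B}^{\frac d2+1}_{2,1}$ followed by H\"older and Young, and your exponent bookkeeping correctly returns $R_M(t)$. Three steps, however, need correcting.

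\emph{The $b\nabla Z_2$ source term.} The bound $\|b\Lambda_\alpha\nabla Z_2\|^\ell_{\dot{\B}^{\frac d2}_{2,1}}\lesssim\Lambda_\alpha\|Z_2\|^\ell_{\dot{\B}^{\frac d2}_{2,1}}$ that you propose for $\alpha\le0$ does not close.
\begin{itemize}
\item Estimate \eqref{weight:ee} contains no $L^1_t(\dot{\B}^{\frac d2}_{2,1})$ control of $Z_2$, and at low frequencies $\dot{\B}^{\frac d2}_{2,1}$ is not dominated by $\dot{\B}^{\frac d2+1}_{2,1}$.
\item Heuristically, Darcy's law $Z_2\approx -bL_2^{-1}\sum_kA^k_{2,1}\partial_kZ_1$ shows that $\int_0^tS_M\|Z_2\|_{\dot{\B}^{\frac d2}_{2,1}}\,d\tau$ is of size $R_M(t)\,t^{\frac{1+\alpha}{2}}$. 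This cancels exactly the extra decay the lemma is meant to capture.
\item The fix is to use your second route for every $\alpha\in[-1,1]$, as the paper does. Since $b\Lambda_\alpha\lesssim1$ on the whole range, $\|S_Mb\Lambda_\alpha Z_2\|^\ell_{L^1_t(\dot{\B}^{\frac d2+1}_{2,1})}\lesssim\|S_MZ_2\|_{L^1_t(\dot{\B}^{\frac d2+1}_{2,1})}$, which is in \eqref{weight:ee}.
\end{itemize}

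\emph{The weight derivative.} Your bound $|\Lambda_\alpha'|=b'/b^2\lesssim\Lambda_\alpha(1+t)^{-1}$ needs a pointwise estimate $b'/b\lesssim(1+t)^{-1}$, which \eqref{mu:infer} does not provide. You do not need it. For $\alpha>0$, \eqref{mu:infer} forces $b\to\infty$, so only Case 3 of Theorem~\ref{thm0} is possible and $b'>0$. Hence $\Lambda_\alpha'\le0$ and $\big((S_M\Lambda_\alpha)'\big)_+\le S_M'\Lambda_\alpha$, and the $\Lambda_\alpha'$ contribution is simply discarded, as in the paper.

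\emph{The transition time.} When $I^\ell_{j,t}$ starts at $t_j>0$ (the over-damped case), the boundary value is that of $\delta Z_1=Z_1-\mathcal N$, not just $Z_1$. Estimate \eqref{weight:ee} together with Lemma~\ref{lemma51} handles $Z_1$. For $\mathcal N$ you also need the explicit localized bound $\|\dot\Delta_j\mathcal N(t)\|_{L^2}\lesssim e^{-c\,2^{2j}\int_0^t b(s)\,ds}\|\dot\Delta_jZ_{1,0}\|_{L^2}$, which is what enters \eqref{deltaZ1-high}.

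Finally, the ``main obstacle'' you single out is not one. By Fubini, the low-frequency $L^1_t$ norms equal $\int_0^t\|\cdot\|^\ell\,d\tau$ with threshold $J_\tau$. The interpolation can therefore be carried out pointwise in $\tau$, exactly as in Subsection~\ref{subsection:inter}, with no blockwise reassembly.
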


\begin{proof}
We first deal with the high-frequency part. By \eqref{inter},
\eqref{low:a}, and the corresponding estimate for the linear diffusion
equation \eqref{ZL}, we have
\begin{align}
\|S_M(\tau)\Lambda_\alpha(\tau)\delta Z_1\|
_{\widetilde{L}^{\infty}_t
(\dot{\B}^{\frac d2-1}_{2,1})}^{h}
\lesssim
R_M(t)
\|Z_0\|_{\dot{\B}^{\sigma_1}_{2,\infty}
\cap\dot{\B}^{\frac d2+1}_{2,1}}.
\label{deltaZ1-high}
\end{align}
Indeed, for the diffusion profile, the localized energy estimate gives
\[
\|\dot{\Delta}_j\mathcal{N}(t)\|_{L^2}
\lesssim
\exp\left(
-c\,2^{2j}\int_0^t b(\tau)\,d\tau
\right)
\|\dot{\Delta}_j Z_{1,0}\|_{L^2},
\]
which yields the same high-frequency bound.

It remains to consider the low frequencies. Applying $\dot{\Delta}_j$
to \eqref{errorZ} and taking the $L^2$ inner product with
$\dot{\Delta}_j\delta Z_1$, we obtain, for all $j\leq J_t$,
\begin{align}
\frac12\frac{d}{dt}
\|\dot{\Delta}_j\delta Z_1\|_{L^2}^2
+\kappa_0 2^{2j}b(t)
\|\dot{\Delta}_j\delta Z_1\|_{L^2}^2
\lesssim
2^j\sum_{k=1}^d
\|\dot{\Delta}_j\widetilde{G}^k(Z)\|_{L^2}
\|\dot{\Delta}_j\delta Z_1\|_{L^2}.
\label{deltaZ1L2-new}
\end{align}
The localized estimate is integrated on $I_{j,t}^{\ell}$.
If this interval starts at $t_j$, the boundary term at $t_j$ is
controlled by the high-frequency estimate \eqref{deltaZ1-high}.

Multiplying \eqref{deltaZ1L2-new} by
$S_M(t)^2\Lambda_\alpha(t)^2$, we obtain
\begin{align*}
&\frac12\frac{d}{dt}
\left(
S_M^2\Lambda_\alpha^2
\|\dot{\Delta}_j\delta Z_1\|_{L^2}^2
\right)
+\kappa_0 2^{2j}b(t)S_M^2\Lambda_\alpha^2
\|\dot{\Delta}_j\delta Z_1\|_{L^2}^2
\\
&\lesssim
2^jS_M^2\Lambda_\alpha^2
\sum_{k=1}^d
\|\dot{\Delta}_j\widetilde{G}^k(Z)\|_{L^2}
\|\dot{\Delta}_j\delta Z_1\|_{L^2}
+
\big((S_M\Lambda_\alpha)'\big)_+S_M\Lambda_\alpha
\|\dot{\Delta}_j\delta Z_1\|_{L^2}^2.
\end{align*}
Here and below, $r_+:=\max\{r,0\}$. The negative part of
$(S_M\Lambda_\alpha)'$ has a favorable sign and is therefore discarded.

Dividing by $\sqrt{
S_M^2\Lambda_\alpha^2
\|\dot{\Delta}_j\delta Z_1\|_{L^2}^2+\varepsilon^2}$, integrating in time, and then letting $\varepsilon\to0$, we infer, after
multiplying by $2^{j(\frac d2-1)}$ and summing over the low-frequency
indices, that
\begin{align}
&\quad
\|S_M(\tau)\Lambda_\alpha(\tau)\delta Z_1\|
_{\widetilde{L}^{\infty}_t
(\dot{\B}^{\frac d2-1}_{2,1})}^{\ell}
+
\|S_M(\tau)b(\tau)\Lambda_\alpha(\tau)\delta Z_1\|
_{L^1_t
(\dot{\B}^{\frac d2+1}_{2,1})}^{\ell}
\notag\\
&\lesssim
\sum_{k=1}^d
\|S_M(\tau)\Lambda_\alpha(\tau)\widetilde{G}^k(Z)\|
_{L^1_t
(\dot{\B}^{\frac d2}_{2,1})}^{\ell}
+
\left\|
\big((S_M\Lambda_\alpha)'(\tau)\big)_+\delta Z_1
\right\|_{L^1_t
(\dot{\B}^{\frac d2-1}_{2,1})}^{\ell}.
\label{deltaZ1-basic}
\end{align}

We first estimate the source term. From \eqref{Gtilde-def}, we have
\begin{align}
&\sum_{k=1}^d
\|S_M(\tau)\Lambda_\alpha(\tau)\widetilde{G}^k(Z)\|
_{L^1_t
(\dot{\B}^{\frac d2}_{2,1})}^{\ell}
\notag\\
&\lesssim
\|S_M(\tau)\Lambda_\alpha(\tau)W\|
_{L^1_t
(\dot{\B}^{\frac d2}_{2,1})}^{\ell}
+
\|S_M(\tau)b(\tau)\Lambda_\alpha(\tau)Z_2\|
_{L^1_t
(\dot{\B}^{\frac d2+1}_{2,1})}^{\ell}
\notag\\
&\quad
+
\sum_{l=1}^d
\|S_M(\tau)b(\tau)\Lambda_\alpha(\tau)N_2^l(Z)\|
_{L^1_t
(\dot{\B}^{\frac d2+1}_{2,1})}^{\ell}.
\label{source-Gtilde}
\end{align}
We now estimate the three terms on the right-hand side of
\eqref{source-Gtilde}. First, by the definition of
$\Lambda_\alpha$, we have
\begin{equation}\label{balpha-uniform}
b(\tau)\Lambda_\alpha(\tau)\lesssim 1,
\qquad
b(\tau)^{\frac12}\Lambda_\alpha(\tau)
\lesssim b(\tau)^{-\frac12},
\qquad -1\leq\alpha\leq1.
\end{equation}
Indeed, both inequalities are identities when $\alpha>0$, whereas
they follow from the uniform boundedness of $b$ when $\alpha\leq0$.

For the damped mode, we have
\begin{align}
\|S_M(\tau)\Lambda_\alpha(\tau)W\|_{L^1_t
(\dot{\B}^{\frac d2}_{2,1})}^{\ell}
\lesssim
\left\|S_M(\tau)\frac{W}{b(\tau)}\right\|_{L^1_t
(\dot{\B}^{\frac d2}_{2,1})}^{\ell}.
\label{weighted-W-error}
\end{align}
In fact, this is an equality when $\alpha>0$, while for
$\alpha\leq0$ it follows from the uniform boundedness of $b$.

For the linear term involving $Z_2$, the first inequality in
\eqref{balpha-uniform} gives
\begin{align}
\|S_M(\tau)b(\tau)\Lambda_\alpha(\tau)Z_2\|_{L^1_t
(\dot{\B}^{\frac d2+1}_{2,1})}^{\ell}
\lesssim
\|S_M(\tau)Z_2\|_{L^1_t
(\dot{\B}^{\frac d2+1}_{2,1})}.
\label{weighted-Z2-error}
\end{align}

It remains to estimate the nonlinear term. The structural condition
$N_2^l(Z_1,0)=0$ implies that
\[
N_2^l(Z)=g_2^l(Z)Z_2,
\]
where $g_2^l$ is smooth and $g_2^l(0)=0$. Hence, by the product
estimates,
\begin{align}
&\sum_{l=1}^d
\|S_M(\tau)b(\tau)\Lambda_\alpha(\tau)N_2^l(Z)\|_{L^1_t
(\dot{\B}^{\frac d2+1}_{2,1})}^{\ell}
\notag\\
&\lesssim
\|Z\|_{L^\infty_t(\dot{\B}^{\frac d2}_{2,1})}
\|S_M(\tau)b(\tau)\Lambda_\alpha(\tau)Z_2\|_{L^1_t
(\dot{\B}^{\frac d2+1}_{2,1})}
\notag\\
&\quad+
\|S_M(\tau)b(\tau)^{\frac12}Z\|_{\widetilde L^2_t
(\dot{\B}^{\frac d2+1}_{2,1})}
\|b(\tau)^{\frac12}\Lambda_\alpha(\tau)Z_2\|_{\widetilde L^2_t
(\dot{\B}^{\frac d2}_{2,1})}
\notag\\
&\lesssim
\|Z\|_{\mathcal E_t}
\left(
\|S_M(\tau)Z_2\|_{L^1_t
(\dot{\B}^{\frac d2+1}_{2,1})}
+
\|S_M(\tau)b(\tau)^{-\frac12}Z_2\|_{\widetilde L^2_t
(\dot{\B}^{\frac d2}_{2,1})}
\right),
\label{weighted-N2-error}
\end{align}
where \eqref{balpha-uniform} has been used in the last inequality.

For $-1<\alpha\leq1$, the contribution on $[0,1]$ is controlled by
\eqref{Xbound}, while for $\tau\geq1$ one has
$S_M(\tau)\lesssim\tau^M$ and hence the required bounds follow from
\eqref{weight:ee}. In the case $\alpha=-1$, we use the corresponding
logarithmically weighted estimate established above. Combining
\eqref{source-Gtilde}, \eqref{weighted-W-error},
\eqref{weighted-Z2-error}, and \eqref{weighted-N2-error}, we obtain
\begin{align}
\sum_{k=1}^d
\|S_M(\tau)\Lambda_\alpha(\tau)\widetilde{G}^k(Z)\|
_{L^1_t(\dot{\B}^{\frac d2}_{2,1})}^{\ell}
\lesssim
R_M(t)
\|Z_0\|_{\dot{\B}^{\sigma_1}_{2,\infty}
\cap\dot{\B}^{\frac d2+1}_{2,1}}.
\label{Gtilde-bound-new}
\end{align}

We now handle the last term in \eqref{deltaZ1-basic}. Note that
\[
\frac{\big((S_M\Lambda_\alpha)'(t)\big)_+}
{S_M(t)\Lambda_\alpha(t)}
\lesssim
\begin{cases}
(1+t)^{-1},&-1<\alpha\leq1,\\[0.3em]
\big((1+t)\log(1+t)\big)^{-1},&\alpha=-1.
\end{cases}
\]
Let $\theta_*\in(0,1)$ be defined by $\frac d2-1
=
\theta_*(\sigma_1-1)
+(1-\theta_*)\Big(\frac d2+1\Big)$. Using the interpolation inequality between
$\dot{\B}^{\sigma_1-1}_{2,\infty}$ and
$\dot{\B}^{\frac d2+1}_{2,1}$, followed by Young's inequality,
and arguing as in Subsection~\ref{subsection:inter}, we obtain, for
any $\zeta_*>0$,
\begin{align}
&\left\|
\big((S_M\Lambda_\alpha)'(\tau)\big)_+
\delta Z_1
\right\|_{L^1_t
(\dot{\B}^{\frac d2-1}_{2,1})}^{\ell}
\notag\\
&\leq
\zeta_*
\|S_M(\tau)b(\tau)\Lambda_\alpha(\tau)\delta Z_1\|
_{L^1_t
(\dot{\B}^{\frac d2+1}_{2,1})}^{\ell}
+
C_{\zeta_*}R_M(t)
\|\Lambda_\alpha(\tau)\delta Z_1\|
_{\widetilde{L}^{\infty}_t
(\dot{\B}^{\sigma_1-1}_{2,\infty})}^{\ell}.
\label{lower-order-delta}
\end{align}
Combining \eqref{deltaZ1-basic}, \eqref{Gtilde-bound-new} and
\eqref{lower-order-delta}, and taking $\zeta_*>0$ sufficiently small,
we obtain \eqref{deltaZ1-weighted}. This completes the proof.
\end{proof}

We also need the following lower-regularity bound.

\begin{lem}\label{lem:deltaZ1-low}
It holds that
\begin{align}
\|\Lambda_\alpha(\tau)\delta Z_1\|
_{\widetilde{L}^{\infty}_t
(\dot{\B}^{\sigma_1-1}_{2,\infty})}^{\ell}
\lesssim
\|Z_0\|_{\dot{\B}^{\sigma_1}_{2,\infty}
\cap\dot{\B}^{\frac d2+1}_{2,1}}.
\label{deltaZ1-low}
\end{align}
\end{lem}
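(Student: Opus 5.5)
I will prove Lemma~\ref{lem:deltaZ1-low} directly from the triangle inequality $\delta Z_1=Z_1-\mathcal N$, treating $Z_1$ and $\mathcal N$ separately. This avoids running an energy estimate on \eqref{errorZ} at the level $\dot{\B}^{\sigma_1-1}_{2,\infty}$, which would require controlling $\widetilde G^k(Z)$ in $L^1_t(\dot{\B}^{\sigma_1}_{2,\infty})$ with the weight $\Lambda_\alpha$. For the $Z_1$ part, I use Lemma~\ref{lemma51} in low frequencies in the form
\[
\|\Lambda_\alpha f\|^\ell_{\dot{\B}^{\sigma_1-1}_{2,\infty}}\lesssim \|\Lambda_\alpha b f\|^\ell_{\dot{\B}^{\sigma_1}_{2,\infty}}.
\]
This reads as $2^{-j}\Lambda_\alpha\le 2^{k_0}b\Lambda_\alpha$ whenever $j\le J_\tau$; the direction is favourable, since $2^j\le 2^{-k_0}/b$ gives $2^{-j}\ge 2^{k_0}b$, so I must check which direction I actually need. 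We want to bound $2^{j(\sigma_1-1)}\|\Delta_j f\|$ by $2^{j\sigma_1}\|\Delta_j f\|$ times $2^{-j}$, and $2^{-j}$ is large at low frequency. This bad direction is exactly why the weight $\Lambda_\alpha$ appears.

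I therefore instead exploit the structure $\delta Z_1$ solves \eqref{errorZ} with zero data and a divergence source. Per block, integrating \eqref{deltaZ1L2-new} on $I^\ell_{j,t}$ (weighted by $\Lambda_\alpha$, which is nonincreasing when $\alpha>0$ and constant otherwise, so the derivative term has a good sign) gives
\[
\Lambda_\alpha\|\Delta_j\delta Z_1\|_{L^2}\lesssim (\text{boundary at }t_j)+2^j\int_{I^\ell_{j,t}}\Lambda_\alpha\|\Delta_j\widetilde G(Z)\|_{L^2}\,d\tau.
\]
Multiplying by $2^{j(\sigma_1-1)}$ and taking the supremum in $j$, the derivative in the source exactly compensates the loss, and it remains to bound $\|\Lambda_\alpha\widetilde G\|_{L^1_t(\dot{\B}^{\sigma_1}_{2,\infty})}^\ell$. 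This is controlled by the pieces of $\|Z\|_{\mathcal Y_t}$ from Proposition~\ref{prop:lowre}, via the following three bounds. First, $\Lambda_\alpha W\lesssim W/b$, which is in $L^1_t(\dot{\B}^{\sigma_1}_{2,\infty})$. Second, $b\Lambda_\alpha\nabla Z_2\lesssim\nabla Z_2$, which is in $L^1_t(\dot{\B}^{\sigma_1}_{2,\infty})$. Third, for $b\Lambda_\alpha\nabla N_2(Z)$ with $N_2=g_2(Z)Z_2$, the product law \eqref{eq:prod3} (valid for $-d/2\le\sigma_1<d/2$) gives
\[
\|Z\|_{L^\infty(\dot{\B}^{d/2}_{2,1})}\|Z_2\|_{L^1(\dot{\B}^{\sigma_1+1}_{2,\infty})}+\|b^{1/2}Z\|_{L^2(\dot{\B}^{d/2+1}_{2,1})}\|b^{-1/2}Z_2\|_{L^2(\dot{\B}^{\sigma_1}_{2,\infty})},
\]
using \eqref{balpha-uniform}. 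All these quantities are bounded by \eqref{Xbound} and \eqref{low:a}.

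The boundary term at $t_j$ arises only in the over-damped case, because in the under-damped case $I^\ell_{j,t}$ starts at $0$ and $\delta Z_1(0)=0$. In the over-damped case $b\lesssim1$ and $\Lambda_\alpha=1$, so the boundary term equals $\sup_j 2^{j(\sigma_1-1)}\|\Delta_j\delta Z_1(t_j)\|$ with $2^j\sim 1/b(t_j)\gtrsim1$. Since these are high frequencies, it is bounded by $\|\delta Z_1\|_{L^\infty_t(\dot{\B}^{\sigma_1}_{2,\infty})}\le\|Z_1\|+\|\mathcal N\|$. Here $Z_1$ is controlled by \eqref{low:a}, and $\mathcal N$ by the explicit heat-type bound $\|\Delta_j\mathcal N(t)\|\lesssim\|\Delta_j Z_{1,0}\|$.

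I expect the main obstacle to be checking the sign of $\Lambda_\alpha'$ and the correct handling of $b'$ in the weighted block estimate, together with verifying the borderline case $\sigma_1=-d/2$ in the product law for $N_2$. If the latter fails, I would split $Z_2$ into low and high frequencies using Lemma~\ref{lemma51} as in \eqref{N3}.
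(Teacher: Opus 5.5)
After you discard the opening triangle-inequality attempt (rightly: $Z_1$ and $\mathcal N$ are individually not bounded in $\dot{\B}^{\sigma_1-1}_{2,\infty}$ at low frequencies, so the gain must come from the zero-data, divergence-form structure of \eqref{errorZ}), your argument is correct and is essentially the paper's proof. It uses the $\Lambda_\alpha$-weighted localized energy estimate on $I^\ell_{j,t}$ with the favorable sign of $\Lambda_\alpha'$, followed by the bounds $\Lambda_\alpha W\lesssim W/b$, $b\Lambda_\alpha\lesssim 1$ and the product estimate for $N_2^k=g_2^k(Z)Z_2$, all closed by \eqref{Xbound} and \eqref{low:a}; your handling of the boundary term at $t_j$ (present only in the over-damped case, where $\Lambda_\alpha=1$ and $2^{-j}=2^{k_0}b(t_j)\lesssim 1$ reduce it to the $\dot{\B}^{\sigma_1}_{2,\infty}$ bounds on $Z_1$ and $\mathcal N$) is slightly more explicit than the paper's appeal to \eqref{deltaZ1-high}.
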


\begin{proof}
If $-1\leq\alpha\leq0$, then $\Lambda_\alpha=1$, and hence no
additional zeroth-order term appears in the equation for $\Lambda_\alpha\delta Z_1$.
If $0<\alpha\leq1$, then $\Lambda_\alpha=b^{-1}$ and
\[
\frac{\Lambda_\alpha'(t)}{\Lambda_\alpha(t)}
=
-\frac{b'(t)}{b(t)}
\leq0.
\]
Thus, the zeroth-order term generated by differentiating
$\Lambda_\alpha$ has a favorable sign and may be discarded in the
localized energy estimate.

As in the proof of Lemma~\ref{lem:deltaZ1-weighted}, the localized
estimate is performed on $I_{j,t}^{\ell}$. If this interval starts
at $t_j$, the boundary term at $t_j$ is controlled by
\eqref{deltaZ1-high} together with the uniform lower-regularity
bounds for $Z$ and $\mathcal N$.

Repeating the localized estimate \eqref{deltaZ1L2-new} for
$\Lambda_\alpha\delta Z_1$ at the level
$\dot{\B}^{\sigma_1-1}_{2,\infty}$ yields
\begin{align}
&\quad
\|\Lambda_\alpha(\tau)\delta Z_1\|
_{\widetilde{L}^{\infty}_t
(\dot{\B}^{\sigma_1-1}_{2,\infty})}^{\ell}
+
\|b(\tau)\Lambda_\alpha(\tau)\delta Z_1\|
_{L^1_t
(\dot{\B}^{\sigma_1+1}_{2,\infty})}^{\ell}\lesssim
\sum_{k=1}^d
\|\Lambda_\alpha(\tau)\widetilde{G}^k(Z)\|
_{L^1_t
(\dot{\B}^{\sigma_1}_{2,\infty})}^{\ell}.
\label{deltaZ1-low-energy}
\end{align}
Using \eqref{Gtilde-def}, we obtain
\begin{align}
&\sum_{k=1}^d
\|\Lambda_\alpha(\tau)\widetilde{G}^k(Z)\|
_{L^1_t
(\dot{\B}^{\sigma_1}_{2,\infty})}^{\ell}
\notag\\
&\lesssim
\|\Lambda_\alpha(\tau)W\|_{L^1_t
(\dot{\B}^{\sigma_1}_{2,\infty})}^{\ell}
+
\|b(\tau)\Lambda_\alpha(\tau)Z_2\|_{L^1_t
(\dot{\B}^{\sigma_1+1}_{2,\infty})}^{\ell}
+
\sum_{k=1}^d
\|b(\tau)\Lambda_\alpha(\tau)N_2^k(Z)\|_{L^1_t
(\dot{\B}^{\sigma_1+1}_{2,\infty})}^{\ell}.
\label{Gtilde-low-1}
\end{align}
By the definition of $\Lambda_\alpha$ and the uniform boundedness
of $b$ when $\alpha\leq0$, we have
\begin{align}
\|\Lambda_\alpha(\tau)W\|_{L^1_t
(\dot{\B}^{\sigma_1}_{2,\infty})}^{\ell}
&\lesssim
\left\|\frac{W}{b(\tau)}\right\|_{L^1_t
(\dot{\B}^{\sigma_1}_{2,\infty})}^{\ell},
\label{low-W-sigma}\\
\|b(\tau)\Lambda_\alpha(\tau)Z_2\|_{L^1_t
(\dot{\B}^{\sigma_1+1}_{2,\infty})}^{\ell}
&\lesssim
\|Z_2\|_{L^1_t
(\dot{\B}^{\sigma_1+1}_{2,\infty})}^{\ell}.
\label{low-Z2-sigma}
\end{align}
Furthermore, as $N_2^k(Z)=g_2^k(Z)Z_2$, the product estimates give
\begin{align}
&\sum_{k=1}^d
\|b(\tau)\Lambda_\alpha(\tau)N_2^k(Z)\|_{L^1_t
(\dot{\B}^{\sigma_1+1}_{2,\infty})}^{\ell}
\notag\\
&\lesssim
\|Z\|_{L^\infty_t(\dot{\B}^{\frac d2}_{2,1})}
\|b(\tau)\Lambda_\alpha(\tau)Z_2\|_{L^1_t
(\dot{\B}^{\sigma_1+1}_{2,\infty})}^{\ell}
\notag\\
&\quad+
\|b(\tau)^{\frac12}Z\|_{\widetilde L^2_t
(\dot{\B}^{\frac d2+1}_{2,1})}
\|b(\tau)^{\frac12}\Lambda_\alpha(\tau)Z_2\|_{\widetilde L^2_t
(\dot{\B}^{\sigma_1}_{2,\infty})}
\notag\\
&\lesssim
\|Z\|_{\mathcal E_t}\|Z\|_{\mathcal Y_t},
\label{N2-low-product}
\end{align}
where we employed $b(\tau)^{\frac12}\Lambda_\alpha(\tau)
\lesssim b(\tau)^{-\frac12}$.

Combining \eqref{Gtilde-low-1}, \eqref{low-W-sigma},
\eqref{low-Z2-sigma}, \eqref{N2-low-product},
\eqref{Xbound}, and \eqref{low:a}, we obtain
\begin{align}
\sum_{k=1}^d
\|\Lambda_\alpha(\tau)\widetilde{G}^k(Z)\|
_{L^1_t
(\dot{\B}^{\sigma_1}_{2,\infty})}^{\ell}
\lesssim
\|Z_0\|_{\dot{\B}^{\sigma_1}_{2,\infty}
\cap\dot{\B}^{\frac d2+1}_{2,1}}.
\label{Gtilde-low}
\end{align}
Combining \eqref{deltaZ1-low-energy} and \eqref{Gtilde-low} proves
\eqref{deltaZ1-low}.
\end{proof}

We now conclude the proof of the enhanced stability of the diffusion
profile. From \eqref{deltaZ1-high}, \eqref{deltaZ1-weighted}, and
\eqref{deltaZ1-low}, we have
\[
\|\Lambda_\alpha(t)\delta Z_1(t)\|
_{\dot{\B}^{\frac d2-1}_{2,1}}
\lesssim
\begin{cases}
(1+t)^{-\frac{1+\alpha}{2}
(\frac d2-\sigma_1)},
& -1<\alpha\leq1,\\[0.4em]
(\log(e+t))^{-\frac12
(\frac d2-\sigma_1)},
& \alpha=-1.
\end{cases}
\]
Interpolating the low-frequency part of the above estimate with
\eqref{deltaZ1-low}, we obtain, for
$\sigma_1<\sigma\leq\frac d2-1$,
\begin{align}
\|\Lambda_\alpha(t)\delta Z_1(t)\|
_{\dot{\B}^{\sigma}_{2,1}}^{\ell}
\lesssim
\begin{cases}
(1+t)^{-\frac{1+\alpha}{2}
(\sigma+1-\sigma_1)},
&-1<\alpha\leq1,\\[0.4em]
(\log(e+t))^{-\frac12
(\sigma+1-\sigma_1)},
&\alpha=-1.
\end{cases}
\label{deltaZ1-Lambda-decay}
\end{align}
For the high-frequency part, the definition of $J_t$ gives
\[
\|\Lambda_\alpha(t)\delta Z_1(t)\|
_{\dot{\B}^{\sigma}_{2,1}}^h
\lesssim
b(t)^{\frac d2-1-\sigma}
\|\Lambda_\alpha(t)\delta Z_1(t)\|
_{\dot{\B}^{\frac d2-1}_{2,1}}^h.
\]
For $-1<\alpha\leq1$, using
$b(t)\sim(1+t)^\alpha$ and
$\sigma\leq\frac d2-1$, we have $-\frac{1+\alpha}{2}\Big(\frac d2-\sigma_1\Big)
+\alpha\Big(\frac d2-1-\sigma\Big)
\leq
-\frac{1+\alpha}{2}(\sigma+1-\sigma_1)$. Hence, the high-frequency contribution decays at least as fast as
the right-hand side of \eqref{deltaZ1-Lambda-decay}. When
$\alpha=-1$, the factor
$b(t)^{\frac d2-1-\sigma}$ provides an additional algebraic decay.
Recalling the definition of $\Lambda_\alpha$, we finally obtain
\eqref{decayimprove2}.
This finishes the proof of Theorem \ref{thm3}.

\section{Application to compressible Euler system with time-dependent damping}\label{sect:7}

Our analysis can be applied to the compressible Euler equations with time-dependent damping in
$\mathbb{R}^{d}$ ($d\geq1$):
\begin{equation}\label{euler1}
\left\{
    \begin{aligned}
    &\partial_{t}\rho+\div(\rho u)=0,\\
    &\partial_{t}(\rho u)+\div (\rho u\otimes u)
      +\nabla P(\rho)+\frac{\rho u}{b(t)}=0,
    \end{aligned}
\right.
\end{equation}
supplemented with the initial condition
\begin{align}
&(\rho, \rho u)|_{t=0}=(\rho_0, \rho_0 u_0).\label{euler0}
\end{align}
We consider solutions $(\rho,u)$ in a neighborhood of the equilibrium state
$(\bar{\rho},0)$, where $\bar{\rho}>0$ is a constant background density.
Here $\rho=\rho(t,x)\ge0$ denotes the density and
$u=u(t,x)\in\mathbb{R}^{d}$ the velocity field.
The time-dependent friction coefficient is given by $1/b(t)$ with $b(t)>0$.
The pressure function $P(\rho)$ is assumed to satisfy
$P\in C^{\infty}(\mathbb{R}_{+})$ and $P'(\bar{\rho})>0$.

Let $V=(\rho,m)$ with $m=\rho u$.
System \eqref{euler1} admits the convex mechanical entropy
\begin{equation}\label{entropy-euler}
\eta(V)
=\frac{|m|^{2}}{2\rho}+h(\rho)
\quad\text{with}\quad
h(\rho):=\int_{\bar\rho}^{\rho}\int_{\bar\rho}^{s}\frac{P'(s')}{s'}\,ds'\,ds .
\end{equation}
The associated entropy variable is defined by
$U:=\big(D_V\eta(V)\big)^{\top}=(U_1,U_2)^{\top}$, that is,
\[
U=
\begin{pmatrix}
-\dfrac{|m|^2}{2\rho^{2}}+h'(\rho)
\\[2pt]
\dfrac{m}{\rho}
\end{pmatrix}.
\]
Since $\eta$ is strictly convex, the mapping $V\mapsto U$ is locally invertible,
and we may write $V=V(U)$.

In terms of the entropy variable $U$, system \eqref{euler1} can be rewritten in
the symmetric hyperbolic form
\begin{equation}\label{sym-hyp-U}
\widetilde A^0(U)\,\partial_t U
+\sum_{j=1}^d \widetilde A^j(U)\,\partial_{x_j}U
=\frac{1}{b(t)}\,\widetilde H(U),
\end{equation}
where $\widetilde A^j(U)$ $(j=0,\dots,d)$ are symmetric matrices and
$\widetilde H(U)$ is given by
{\small
\begin{equation*}
\renewcommand{\arraystretch}{1.2}
\begin{aligned}
\widetilde A^0(U)
&=
\begin{pmatrix}
\dfrac{\rho}{P'(\rho)} &
\dfrac{m^{\!\top}}{P'(\rho)}
\\[6pt]
\dfrac{m}{P'(\rho)} &
\rho I_d+\dfrac{m\otimes m}{\rho\,P'(\rho)}
\end{pmatrix},
\\[2mm]
\widetilde A^j(U)
&=
\begin{pmatrix}
\dfrac{m_j}{P'(\rho)} &
\rho\,e_j^{\!\top}+\dfrac{m_j}{\rho\,P'(\rho)}\,m^{\!\top}
\\[8pt]
\rho\,e_j+\dfrac{m_j}{\rho\,P'(\rho)}\,m &
m_j I_d
+\;m\otimes e_j
+\;e_j\otimes m
+\;\dfrac{m_j}{\rho\,P'(\rho)}\,m\otimes m
\end{pmatrix},
\qquad j=1,\dots,d,
\\[2mm]
\widetilde H(U)
&=
\begin{pmatrix}
0\\
-m
\end{pmatrix}.
\end{aligned}
\end{equation*}}
\noindent
Here $e_j$ denotes the $j$-th canonical basis vector of $\mathbb{R}^d$.

Let $\bar V=(\bar\rho,0)$ and $\bar U:=U(\bar V)$.
Denote the perturbation $r:=\rho-\bar\rho$.
Instead of freezing the variable coefficients in \eqref{sym-hyp-U}, we
linearize \eqref{euler1} around $(\bar\rho,0)$ while keeping all nonlinear
terms in divergence form.
Recalling that $m=\rho u$, the system \eqref{euler1} is equivalent to the linearized constant-coefficient system with
divergence-form nonlinear remainders:
\begin{equation}\label{lin-euler-rm}
\left\{
\begin{aligned}
&\partial_t r+\div m=0,\\
&\partial_t m+P'(\bar\rho)\nabla r+\frac{1}{b(t)}\,m
= -\sum_{j=1}^d \partial_{x_j}\Big(\frac{m_j}{\rho}\,m+\Pi(\rho)\,e_j\Big).
\end{aligned}
\right.
\end{equation}
with $\Pi(\rho):=P(\rho)-P(\bar\rho)-P'(\bar\rho)(\rho-\bar\rho)$.

Next, we normalize \eqref{lin-euler-rm} by a constant symmetrizer at the
equilibrium state. Define the new unknown
\begin{equation}\label{defZ-euler}
Z:=
\begin{pmatrix}
Z_1\\ Z_2
\end{pmatrix}
:=
\begin{pmatrix}
\sqrt{\dfrac{P'(\bar\rho)}{\bar\rho}}\;r\\[6pt]
\dfrac{1}{\sqrt{\bar\rho}}\;m
\end{pmatrix}.
\end{equation}
Then \eqref{lin-euler-rm} is equivalent to the normalized constant-coefficient
symmetric system
\begin{align}\label{eulerre}
\partial_t Z
+\sum_{k=1}^d A^k\,\partial_{x_k}Z
+\frac{1}{b(t)}\,B\,Z
=
-\sum_{k=1}^d \partial_{x_k}\mathbf N^{k}(Z),
\end{align}
where
\begin{equation*}
\begin{aligned}
A^k
&=
\begin{pmatrix}
0 & \sqrt{P'(\bar\rho)}\,e_k^{\!\top}\\
\sqrt{P'(\bar\rho)}\,e_k & 0
\end{pmatrix},
\qquad
B
=
\begin{pmatrix}
0&0\\
0&I_d
\end{pmatrix},
\\[2mm]
\mathbf N^{k}(Z)
&:=
\begin{pmatrix}
0\\[3pt]
\dfrac{1}{\sqrt{\bar\rho}}
\Big(\dfrac{m_k}{\rho}\,m+\Pi(\rho)\,e_k\Big)
\end{pmatrix},
\qquad k=1,\dots,d.
\end{aligned}
\end{equation*}
%Here %$m=\sqrt{\bar\rho}\,Z_2$ and
%$\rho=\bar\rho+\sqrt{\dfrac{\bar\rho}{P'(\bar\rho)}}\,Z_1$, so that
%$\mathbf N^{k}(Z)$ is a smooth function of $Z$ in a neighborhood of $Z=0$.

According to Theorems \ref{thm0}-\ref{thm3}, we have the global existence and decay estimates of solutions for the system \eqref{euler1}.

\begin{thm}\label{thm:euler1}
Assume that $b(t)$ fulfills $b(t)>0$ and $b'(t)\leq 0$ or  $b'(t)>0$ and $\limsup_{t\to\infty}|b'(t)|\leq c^*$,
where $c^*>0$ depends only on $d$, $\bar{\rho}$ and $P'(\bar{\rho})$. If $(\rho_0, m_0)$ with $m_0=\rho_0 u_0$ satisfies
\begin{align}
\|(\rho_0-\bar{\rho},m_0)\|_{\dot{\B}^{\frac{d}{2}}_{2,1}\cap \dot{\B}^{\frac{d}{2}+1}_{2,1}}\leq \beta_0
\end{align}
for some constant $\beta_0>0$, then the system \eqref{euler1} has a global-in-time classical solution $(\rho, m)$ with $m=\rho u$ satisfying 
\begin{alignat*}{3}
(\rho-\bar{\rho},m)
&\in \mathcal{C}_b(\mathbb{R}_+;\dot{\B}_{2,1}^{\frac{d}{2}}),\qquad&
b(t)(\rho-\bar{\rho},m)
&\in \mathcal{C}_b(\mathbb{R}_+;\dot{\B}_{2,1}^{\frac{d}{2}+1}),\qquad&
b(t)^{\frac{1}{2}}(\rho-\bar{\rho},m)
&\in L^2(\mathbb{R}_+;\dot{\B}_{2,1}^{\frac{d}{2}+1}),\\
b(t)(\rho-\bar{\rho},m)^{\ell}
&\in L^1(\mathbb{R}_+;\dot{\B}_{2,1}^{\frac{d}{2}+2}),\qquad&
m
&\in L^1(\mathbb{R}_+;\dot{\B}_{2,1}^{\frac{d}{2}+1}),\qquad&
(\rho-\bar{\rho},m)^h
&\in L^1(\mathbb{R}_+;\dot{\B}_{2,1}^{\frac{d}{2}+1}),\\
\frac{m}{\sqrt{b(t)}}
&\in L^2(\mathbb{R}_+;\dot{\B}_{2,1}^{\frac{d}{2}}),\qquad&
\partial_t\rho,\partial_t m
&\in L^1(\mathbb{R}_+;\dot{\B}_{2,1}^{\frac{d}{2}}),\qquad&
\nabla P(\rho)+\frac{m}{b(t)}
&\in L^1(\mathbb{R}_+;\dot{\B}_{2,1}^{\frac d2}).
\end{alignat*}

and
\begin{equation*}
\begin{aligned}
&\quad \|(\rho-\bar{\rho},m)\|_{\widetilde{L}_t^\infty(\dot{\B}_{2,1}^{\frac d2})}
+\|b(\tau)(\rho-\bar{\rho},m)\|_{\widetilde{L}_t^\infty(\dot{\B}_{2,1}^{\frac d2+1})}+\|b(\tau)(\rho-\bar{\rho})\|^\ell_{L_t^1(\dot{\B}_{2,1}^{\frac d2+2})}+\|(\rho-\bar{\rho},m)\|^h_{L_t^1(\dot{\B}_{2,1}^{\frac d2+1})}\\
&\quad\quad+\|m\|_{L_t^1(\dot{\B}_{2,1}^{\frac d2+1})}+\|b(\tau)^{-\frac12} m\|_{\widetilde{L}_t^2(\dot{\B}_{2,1}^{\frac d2})}+\|b(\tau)^{\frac{1}{2}}(\rho-\bar{\rho},m)\|_{\widetilde{L}_t^2(\dot{\B}_{2,1}^{\frac d2+1})}\\
&\quad\quad+\|\partial_t (\rho-\bar{\rho},m)\|_{L_t^1(\dot{\B}_{2,1}^{\frac d2})}+\Big\|\nabla P(\rho)+\frac{m}{b(t)}\Big\|_{L_t^1(\dot{\B}_{2,1}^{\frac d2})}\\
&\lesssim \|(\rho_0-\bar{\rho},m_0)\|_{\dot{\B}^{\frac{d}{2}}_{2,1}\cap \dot{\B}^{\frac{d}{2}+1}_{2,1}}.
\end{aligned}
\end{equation*}
\end{thm}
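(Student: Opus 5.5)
The plan is to deduce Theorem~\ref{thm:euler1} from Theorem~\ref{thm0}, applied to the normalized system \eqref{eulerre}, by checking its hypotheses and then translating the bounds back to $(\rho-\bar\rho,m)$.

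\emph{Structure.} By \eqref{defZ-euler}, $Z$ is a constant invertible linear image of $(r,m)$, so Besov norms of $Z$ and $(\rho-\bar\rho,m)$ are equivalent, in all regularities and frequency localizations; in particular the smallness hypothesis is equivalent to $\|Z_0\|_{\dot\B^{d/2}_{2,1}\cap\dot\B^{d/2+1}_{2,1}}\le\beta$ after adjusting $\beta_0$. In \eqref{eulerre}, $B=\mathrm{diag}(0,I_d)$, so \eqref{partdissip} holds with $L_2=I_d$, $\kappa=1$. The matrices $A^k$ are symmetric with $A^k_{1,1}=0$. The source term has $Q\equiv0$, and $N^k_1\equiv0$ because the mass equation is linear in $(r,m)$; hence \eqref{conditionAVbar} holds. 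The entropy hypotheses of Definition~\ref{defnentropy} come from \eqref{entropy-euler}: $\eta$ is strictly convex near $\bar V$ since $P'(\bar\rho)>0$. Moreover $G(V)=(0,-m)$ gives $\mathcal M=\mathbb R\times\{0\}$ and $\mathcal E=\{m=0\}$. The vector $D_V\eta=(-|m|^2/2\rho^2+h'(\rho),\,m/\rho)$ lies in $\mathcal M$ iff $m=0$. Finally, at $m=0$ one has $D_VG\,(D^2\eta)^{-1}=-\mathrm{diag}(0,\rho I_d)$, which is symmetric and nonpositive with kernel $\mathcal M$.

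\emph{(SK) condition.} Take $\omega\in\mathbb S^{d-1}$ and suppose $B\phi=0$, i.e.\ $\phi=(\phi_1,0)$. Then $A_\omega\phi=(0,\sqrt{P'(\bar\rho)}\,\phi_1\omega)$, and $\lambda\phi=A_\omega\phi$ forces $\phi_1\omega=0$, hence $\phi=0$. The same computation shows that $-\sum A^k_{1,2}L_2^{-1}A^l_{2,1}\partial_k\partial_l=-P'(\bar\rho)\Delta$ is strongly elliptic, consistent with \eqref{ellip}.

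\emph{Conclusion.} The assumptions on $b$ are exactly Cases 1--3 of Theorem~\ref{thm0}, with $c^*$ depending only on $d$, $\bar\rho$ and $P'(\bar\rho)$, since those data determine $A^k$ and $B$. Theorem~\ref{thm0} therefore gives a unique global $Z$ obeying \eqref{Xbound}. Undoing \eqref{defZ-euler} yields every listed norm of $(\rho-\bar\rho,m)$. The $\partial_t$ terms transfer directly. For the damped-mode term, \eqref{dampdef} with $Q=0$ gives
\[
W=Z_2+b(t)\bigl(\sqrt{P'(\bar\rho)}\nabla Z_1-\textstyle\sum_k\partial_kN^k_2(Z)\bigr),
\]
so $W/b(t)$ is a constant multiple of $m/b(t)+\nabla P(\rho)+\mathrm{div}(m\otimes m/\rho)$. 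The convective piece $\mathrm{div}(m\otimes m/\rho)$ lies in $L^1_t(\dot\B^{d/2}_{2,1})$ by the product law, since it is quadratic in $m$:
\[
\bigl\|\mathrm{div}(m\otimes m/\rho)\bigr\|_{L^1_t(\dot\B^{d/2}_{2,1})}\lesssim\|m\|_{L^\infty_t(\dot\B^{d/2}_{2,1})}\|m\|_{L^1_t(\dot\B^{d/2+1}_{2,1})}.
\]
Both factors on the right are already controlled, so $\nabla P(\rho)+m/b$ obeys the stated bound.

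Classical regularity follows from $b$ being locally bounded above and below, which gives $Z\in\mathcal C([0,T];\dot\B^{d/2+1}_{2,1})\hookrightarrow \mathcal C^1$, together with the bound on $\partial_tZ$. Positivity of $\rho$ follows from $\|r\|_{L^\infty}\lesssim\beta_0$. The only delicate step is the entropy verification and the precise identification of $W$. These are computations rather than real obstacles, since the abstract theorem carries all the analysis.
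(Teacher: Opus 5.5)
Your proposal is correct and follows the paper's route: the paper simply rewrites \eqref{euler1} as \eqref{eulerre} and invokes Theorem~\ref{thm0}, and your checks of the entropy, (SK) and \eqref{conditionAVbar} hypotheses, together with the identity $\sqrt{\bar\rho}\,W/b=m/b+\nabla P(\rho)+\mathrm{div}(m\otimes m/\rho)$, supply what that citation leaves implicit. Two minor repairs are needed: in estimating $\mathrm{div}(m\otimes m/\rho)$, derivatives also fall on $1/\rho$ and produce a cubic term, which must be bounded by $\|m\|_{L^\infty_t(\dot{\B}^{\frac d2}_{2,1})}\|b(\tau)^{-\frac12}m\|_{\widetilde{L}^2_t(\dot{\B}^{\frac d2}_{2,1})}\|b(\tau)^{\frac12}(\rho-\bar\rho)\|_{\widetilde{L}^2_t(\dot{\B}^{\frac d2+1}_{2,1})}$ (the low frequencies of $\rho-\bar\rho$ are not controlled in $L^1_t(\dot{\B}^{\frac d2+1}_{2,1})$); and the listed property $b\,m^\ell\in L^1(\mathbb{R}_+;\dot{\B}^{\frac d2+2}_{2,1})$ is not part of \eqref{X}, but it follows from $m\in L^1(\mathbb{R}_+;\dot{\B}^{\frac d2+1}_{2,1})$ and Lemma~\ref{lemma51}, and both quantities are still bounded by \eqref{Xbound}.
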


\begin{thm}\label{thm:euler2}
Let $(\rho,m)$ be the global solution to the system \eqref{euler1}--\eqref{euler0} with initial data $(\rho_0,m_0)$ obtained in Theorem~\ref{thm:euler1}. Consider $b(t)=\frac{(1+t)^{\alpha}}{K}$ with $-1\leq \alpha\leq 1$, where  $K>0$ if $-1\leq \alpha<1$, and $K>K_0$ for some sufficiently large constant $K_0>0$ when $\alpha=1$.
Furthermore, assume that $
(\rho_0-\bar{\rho},m_0)\in \dot{\B}^{\sigma_1}_{2,\infty}$ with $-\frac{d}{2}\leq \sigma_1<\frac{d}{2}$. 
Then, it holds that
\begin{itemize}
\item \textbf{Case 1: $-1<\alpha\leq 1$.}
For any $\sigma_1<\sigma\leq \frac{d}{2}$ and $\sigma_1<\sigma'\leq \frac{d}{2}-1$,
\begin{align*}
\|(\rho-\bar{\rho},u)(t)\|_{\dot{\B}^{\sigma}_{2,1}}
&\lesssim (1+t)^{-\frac{1+\alpha}{2}(\sigma-\sigma_1)},\\
\|u(t)\|_{\dot{\B}^{\sigma'}_{2,1}}
&\lesssim (1+t)^{-\frac{1+\alpha}{2}(\sigma'-\sigma_1)-\frac{1-\alpha}{2}}.
\end{align*}

\item \textbf{Case 2: $\alpha=-1$.}
For any $\sigma_1<\sigma\leq \frac{d}{2}$ and $\sigma_1<\sigma'\leq \frac{d}{2}-1$,
\begin{align*}
\|(\rho-\bar{\rho},u)(t)\|_{\dot{\B}^{\sigma}_{2,1}}
&\lesssim \,\log(e+t)^{-\frac{1}{2}(\sigma-\sigma_1)},\\
\|u(t)\|_{\dot{\B}^{\sigma'}_{2,1}}
&\lesssim \,\log(e+t)^{-\frac{1}{2}(\sigma'-\sigma_1)-\frac{1}{2}}(1+t)^{-1}.
\end{align*}
\end{itemize}
%Here $C>0$ denotes a constant independent of time.
\end{thm}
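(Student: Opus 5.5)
The plan is to obtain Theorem~\ref{thm:euler2} as a direct corollary of Theorem~\ref{thm2}, applied to the normalized system \eqref{eulerre}. Afterwards the bounds for $Z=(Z_1,Z_2)$ are converted into bounds for $(\rho-\bar\rho,u)$.

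\textbf{Step 1: the structural hypotheses.} For \eqref{eulerre} we have $A^k_{1,1}=0$ and $Q\equiv 0$. We also have $N^k_1\equiv0$ and
\[
N_2^k(Z)=\bar\rho^{-1/2}\Big(\tfrac{m_k m}{\rho}+\Pi(\rho)e_k\Big).
\]
Hence \eqref{partdissip} holds with $L_2=I_d$ and $\kappa=1$. The (SK) condition holds because $-\sum_{k,l}A^k_{1,2}A^l_{2,1}\partial_k\partial_l=-P'(\bar\rho)\Delta$ is strongly elliptic, which is equivalent to (SK) when $A_{1,1}=0$. The condition $N_1^k(Z_1,0)=0$ in \eqref{conditionAVbar} is trivial. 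Note that $N_2^k(Z_1,0)=\bar\rho^{-1/2}\Pi(\rho)e_k\neq0$. This does not matter, because \eqref{conditionAVbar} only requires $N_1^k(Z_1,0)=0$ and $Q(Z_1,0)=0$.

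The choice $b(t)=(1+t)^\alpha/K$ satisfies \eqref{mu:infer} with $c_b=C_b=K^{-1}$. It also satisfies Case 2 or Case 3 of Theorem~\ref{thm0}:
\begin{itemize}
\item for $\alpha<0$, $b'<0$;
\item for $0<\alpha<1$, $b'\to0$;
\item for $\alpha=1$, $b'=1/K\le c^*$ once $K\ge K_0:=1/c^*$.
\end{itemize}
The case $\alpha=0$ is the constant case. For $\alpha=1$, the proof of Theorem~\ref{thm2} additionally needs $c_b$ large, so that the dissipation constant $a_3$ exceeds the weight exponent $M$ and the decay in \eqref{exp1} is fast enough. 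Since $c_b=K^{-1}$ here, this needs a closer look. I expect "large $c_b$" in that argument to mean a strong damping $1/b=K(1+t)^{-1}$, that is, large $K$. I would enlarge $K_0$ accordingly and record it as part of the hypothesis.

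\textbf{Step 2: transfer of the data.} Since $Z$ is a constant linear change of variables of $(\rho-\bar\rho,m)$, we have
\[
\|Z_0\|_{\dot\B^{d/2}_{2,1}\cap\dot\B^{d/2+1}_{2,1}}\sim\|(\rho_0-\bar\rho,m_0)\|_{\dot\B^{d/2}_{2,1}\cap\dot\B^{d/2+1}_{2,1}},
\qquad
\|Z_0\|_{\dot\B^{\sigma_1}_{2,\infty}}\sim\|(\rho_0-\bar\rho,m_0)\|_{\dot\B^{\sigma_1}_{2,\infty}}.
\]
Taking $\beta_0$ small, Theorem~\ref{thm0} (equivalently Theorem~\ref{thm:euler1}) applies, and Theorem~\ref{thm2} then gives \eqref{decay1}, \eqref{decayimprove1}, \eqref{decay2} and \eqref{decay2improve} for $Z$. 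These bounds transfer immediately to $\rho-\bar\rho$ and $m$.

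\textbf{Step 3: from $m$ to $u$ (main obstacle).} We write
\[
u=\frac{m}{\rho}=\frac{m}{\bar\rho}+\mathcal G(\rho-\bar\rho)\,m,
\qquad \mathcal G(s)=\frac{1}{\bar\rho+s}-\frac1{\bar\rho},\quad \mathcal G(0)=0.
\]
The product law \eqref{eq:prod3} and the composition estimates apply for $-\frac d2\le\sigma_1<\sigma\le\frac d2$. In the range $-d/2<\sigma\le d/2$ they give
\[
\|\mathcal G(\rho-\bar\rho)m\|_{\dot\B^{\sigma}_{2,1}}\lesssim\|\rho-\bar\rho\|_{\dot\B^{d/2}_{2,1}}\|m\|_{\dot\B^{\sigma}_{2,1}}.
\]
Since $\|\rho-\bar\rho\|_{\dot\B^{d/2}_{2,1}}$ stays uniformly small, $u$ inherits the decay of $m$, both at level $\sigma$ and at the improved level $\sigma'$.

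The delicate endpoint is $\sigma\le-d/2$, which can occur only for $\sigma'$ near $\sigma_1=-d/2$. There I would instead use the $\dot\B^{\sigma}_{2,\infty}$-type product law
\[
\dot\B^{d/2}_{2,1}\times\dot\B^{\sigma}_{2,1}\to\dot\B^{\sigma}_{2,\infty}.
\]
Alternatively, I would bound the product term in $\dot\B^{\sigma'}_{2,1}$ by interpolating between $\dot\B^{\sigma_1}_{2,\infty}$, which is uniformly bounded by Proposition~\ref{prop:lowre}, and $\dot\B^{d/2}_{2,1}$. Its quadratic structure, the product of two decaying factors, supplies more than enough decay to absorb the loss. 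This completes the plan.
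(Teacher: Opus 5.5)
Your proposal is correct and follows the paper's route: the paper simply invokes Theorems~\ref{thm0} and~\ref{thm2} for the normalized system \eqref{eulerre}, which has $A^k_{1,1}=0$, $Q\equiv0$ and $N^k_1\equiv0$, and then transfers the bounds back to $(\rho-\bar\rho,u)$. Two small corrections. First, \eqref{mu:infer} holds with $c_b=C_b=K$, not $K^{-1}$, so the requirement that $c_b$ be large when $\alpha=1$ is directly a largeness condition on $K$ and needs no reinterpretation. Second, since $\sigma'>\sigma_1\ge-\frac d2$, the endpoint case $\sigma'\le-\frac d2$ never occurs; the product law \eqref{eq:prod2}, together with the smallness of $\|\rho-\bar\rho\|_{\dot{\B}^{d/2}_{2,1}}$, already gives $\|u\|_{\dot{\B}^{s}_{2,1}}\lesssim\|m\|_{\dot{\B}^{s}_{2,1}}$ on the whole range $-\frac d2<s\le\frac d2$.
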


Finally, we establish the asymptotic equivalence of solutions between the Euler equations \eqref{eulerre} (with $P(\rho)=A\rho$) and the linear diffusion equation with a time-dependent coefficient:
\begin{align}
\partial_t\rho^*-b(t)A\Delta \rho^*=0,\quad \rho^*|_{t=0}= \rho_0. \label{ZL:euler}
\end{align}

\begin{thm}\label{thmEuler}
Let $(\rho,m)$ be the global solution to the system \eqref{euler1}--\eqref{euler0} with initial data $(\rho_0,m_0)$ obtained in Theorem~\ref{thm:euler1}. Consider $P(\rho)=A\rho$ and $b(t)=\frac{(1+t)^{\alpha}}{K}$ with $-1\leq \alpha\leq 1$.
Assume $
(\rho_0-\bar{\rho},m_0)\in \dot{\B}^{\sigma_1}_{2,\infty}$ with $-\frac{d}{2}\leq \sigma_1<\frac{d}{2}$. Then, for all $\sigma_1<\sigma\leq \frac{d}{2}-1$, we have
\begin{align*}
 \|(\rho-\rho^*)(t)\|_{\dot{\B}^{\sigma}_{2,1}}&\leq C
\begin{cases}
(1+t)^{-\frac{1+\alpha}{2}(\sigma-\sigma_1)
-\frac{1-\alpha}{2}},
& 0<\alpha\leq 1,\\[0.5em]
(1+t)^{-\frac{1+\alpha}{2}(\sigma+1-\sigma_1)},
& -1<\alpha\leq 0,\\[0.5em]
(\log(e+t))^{-\frac12(\sigma-\sigma_1+1)},
& \alpha=-1.
\end{cases}
 \end{align*}
where the profile $\rho^*$ is the solution to \eqref{ZL:euler}. 
\end{thm}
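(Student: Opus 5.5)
The plan is to deduce Theorem~\ref{thmEuler} from Theorem~\ref{thm3} applied to the normalized system \eqref{eulerre}, and then to translate the result back to $\rho$.

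\textbf{Structural assumptions.} First I check that \eqref{eulerre} fits the framework. There $Q\equiv0$ and $N_1^k\equiv0$. For $P(\rho)=A\rho$ one has $\Pi\equiv0$, so $N_2^k(Z)=\bar\rho^{-1/2}\frac{m_k}{\rho}m$, which is quadratic in $m$. Since $m=\sqrt{\bar\rho}\,Z_2$, this gives $N_2^k(Z_1,0)=0$. Hence both \eqref{conditionAVbar} and \eqref{structural2} hold. The remaining hypotheses are also satisfied:
\begin{itemize}
\item $A^k_{1,1}=0$;
\item the (SK) condition holds, because $-\sum_{k,l}A^k_{1,2}L_2^{-1}A^l_{2,1}\partial_k\partial_l=-P'(\bar\rho)\Delta$ is strongly elliptic;
\item \eqref{mu:infer} is immediate for $b=(1+t)^\alpha/K$;
\item the smallness of $\limsup b'$ needed in the critical case $\alpha=1$ is the assumption $K>K_0$.
\end{itemize}
The datum $Z_0$ lies in $\dot\B^{\sigma_1}_{2,\infty}$ because $Z$ is a constant multiple of $(\rho-\bar\rho,m)$. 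Consequently the solution of Theorem~\ref{thm:euler1} is the one given by Theorem~\ref{thm0}.

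\textbf{Identifying the profile.} Here $P'(\bar\rho)=A$, and \eqref{ZL} for $\mathcal N$ becomes $\partial_t\mathcal N-b(t)A\Delta\mathcal N=0$ with $\mathcal N(0)=Z_{1,0}=\sqrt{A/\bar\rho}\,(\rho_0-\bar\rho)$. Since the heat flow annihilates constants (modulo the $\mathcal S'_h$ convention), I define $\rho^*$ so that $\rho^*-\bar\rho=\sqrt{\bar\rho/A}\,\mathcal N$. Then
\[
\rho-\rho^*=\sqrt{\bar\rho/A}\,(Z_1-\mathcal N),
\]
and \eqref{decayimprove2} transfers directly to $\|(\rho-\rho^*)(t)\|_{\dot\B^\sigma_{2,1}}$. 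For $-1<\alpha\le0$ the exponent $-\frac{1+\alpha}{2}(\sigma-\sigma_1)-\frac{1+\alpha}{2}$ in \eqref{decayimprove2} equals $-\frac{1+\alpha}{2}(\sigma+1-\sigma_1)$, which is the stated rate. For $\alpha=-1$ the logarithmic rate matches.

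\textbf{Main obstacle: the endpoint $\alpha=1$.} Theorem~\ref{thm3} is stated only for $\alpha<1$, while Theorem~\ref{thmEuler} includes $\alpha=1$. At $\alpha=1$ the claimed bound $(1+t)^{-(\sigma-\sigma_1)}$ carries no extra gain. I would therefore get it without the profile estimate: bound $\|Z_1(t)\|_{\dot\B^\sigma_{2,1}}$ by \eqref{decay1}, which is valid for $\alpha=1$ once $K$ is large, and bound $\|\mathcal N(t)\|_{\dot\B^\sigma_{2,1}}$ by the explicit heat-kernel estimate. The latter uses $\exp(-cA2^{2j}\int_0^tb)$ with $\int_0^tb\sim t^2$ together with $Z_{1,0}\in\dot\B^{\sigma_1}_{2,\infty}$, giving the rate $(1+t)^{-(\sigma-\sigma_1)}$. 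Alternatively, Lemmas~\ref{lem:deltaZ1-weighted} and \ref{lem:deltaZ1-low} are already proved for $-1\le\alpha\le1$, so the concluding interpolation argument of Theorem~\ref{thm3} applies verbatim at $\alpha=1$ with $\Lambda_1=b^{-1}$.

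The only point I expect to require care is that $K>K_0$ also makes the time-weighted estimate \eqref{weight:ee} valid at $\alpha=1$, where one needs $a_3>M$. This is exactly the choice of $c_b$ made in Section~\ref{sec:asymptotics}.
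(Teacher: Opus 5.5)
Your proposal is correct and matches the paper's route: the paper obtains Theorem~\ref{thmEuler} simply by applying Theorem~\ref{thm3} to the normalized system \eqref{eulerre}, where the choice $P(\rho)=A\rho$ gives $\Pi\equiv0$ and hence \eqref{structural2}. Theorem~\ref{thm3} is only stated for $\alpha<1$, and the paper passes over the endpoint $\alpha=1$; your separate treatment of it is correct, whether via \eqref{decay1} plus the explicit heat-kernel bound or by noting that Lemmas~\ref{lem:deltaZ1-weighted} and~\ref{lem:deltaZ1-low} are proved for $-1\leq\alpha\leq1$, and it rightly relies on the largeness of $K$ that you flag.
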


\begin{rem}\normalfont
The pressure law \(P(\rho)=A\rho\) is required to match the structural condition \(N_2^k(Z_1,0)=0\) in \eqref{structural2}. It should be possible to consider more general pressure laws; see \cite{SXZ} for the case of constant damping coefficients. However, a more refined analysis is needed to handle the nonlinear nature of the pressure.
\end{rem}

\begin{rem}\normalfont
Compared with our previous work \cite{CBPSZ}, where the analysis was devoted to the global existence and relaxation limit in the case of a  power-law damping
coefficient, in Theorems \ref{thm:euler2} and \ref{thmEuler}, we address more general 
coefficient \(b(t)\) and also describe the corresponding time decay rates and
asymptotic stability.
\end{rem}

\appendix

\renewcommand{\thesection}{Appendix~\Alph{section}}

\section{Normal form}\label{AppendixNF}
In this section, we derive the constant-coefficient formulation \eqref{eq0}
from the original system \eqref{system}. We first recall the normal-form
change of variables from \cite{KY} and then linearize at an equilibrium while
retaining the nonlinear terms as quadratic remainders.

\begin{defi}\label{dfn:normal}
{\rm (Normal form)} A symmetric dissipative system written in a variable
$\mathcal V=(\mathcal V_1,\mathcal V_2)^{\top}$ is said to be in normal form
if its time matrix is block diagonal with respect to the orthogonal
decomposition $\mathbb R^n=\mathcal M\oplus\mathcal M^{\perp}$.
\end{defi}

As in \cite{KY}, write $V=(V_1,V_2)^{\top}$ and
$U=(U_1,U_2)^{\top}$ according to
$\mathbb R^n=\mathcal M\oplus\mathcal M^{\perp}$, where
$U_2=\big(D_{V_2}\eta(V)\big)^{\top}$. Introduce the mapping
$V\mapsto\mathcal V$ by
\begin{equation}\label{2.8}
\begin{aligned}
  \mathcal{V}= \begin{pmatrix}
   \mathcal{V}_1 \\ \mathcal{V}_2
    \end{pmatrix}:=
    \begin{pmatrix}
   V_1 \\ U_2
    \end{pmatrix}.
    \end{aligned}
\end{equation}
Locally, this mapping is a diffeomorphism. Denote its inverse by
$V=V(\mathcal V)$, set $U(\mathcal V):=U(V(\mathcal V))$, and write
$T(\mathcal V):=D_{\mathcal V}U(\mathcal V)$. Multiplying
\eqref{entropeq} by $T(\mathcal V)^{\top}$ yields
\begin{equation}
\begin{aligned}
&\mathcal{A}^{0}(\mathcal{V})\partial_{t}\mathcal{V}+\sum_{k=1}^{d}\mathcal{A}^{k}(\mathcal{V})\partial_{x_{k}}\mathcal{V}=\frac{1}{b(t)}\mathcal{H}(\mathcal{V})
\label{normal}
\end{aligned}\end{equation}
with
\begin{equation}\label{2.10}
\begin{aligned}
\mathcal{A}^{0}(\mathcal{V})&=T(\mathcal V)^{\top}
 \widetilde{A}^0(U(\mathcal V))T(\mathcal V),\\
\mathcal{A}^{i}(\mathcal{V})&=T(\mathcal V)^{\top}
 \widetilde{A}^i(U(\mathcal V))T(\mathcal V),\\
\mathcal{H}(\mathcal{V})&=T(\mathcal V)^{\top}
 \widetilde{H}(U(\mathcal V)).
\end{aligned}
\end{equation}
According to \cite[Theorem 2.4]{KY}, system \eqref{normal} is a symmetric
dissipative system in normal form. Let $\bar V\in\mathcal E$ satisfy
$G(\bar V)=0$, and let $\bar U$ and $\bar{\mathcal V}$ be the corresponding
states. Since $\widetilde H(\bar U)=0$, differentiation of \eqref{2.10}
at $\bar{\mathcal V}$ gives
\begin{equation}\label{2.11}
\begin{aligned}
\mathcal L(\bar{\mathcal V})
:=-D_{\mathcal V}\mathcal H(\bar{\mathcal V})
=\bar T^{\top}\widetilde B(\bar U)\bar T,
\qquad \bar T:=T(\bar{\mathcal V}).
\end{aligned}
\end{equation}
Moreover, the normal-form structure implies
\begin{equation}\label{2.12}
\begin{aligned}
\mathcal A^0(\bar{\mathcal V})
=\begin{pmatrix}
\mathcal A^0_{11}&0\\0&\mathcal A^0_{22}
\end{pmatrix}>0,
\qquad
\mathcal L(\bar{\mathcal V})
=\begin{pmatrix}
0&0\\0&\mathcal L_{22}
\end{pmatrix},
\qquad \mathcal L_{22}>0.
\end{aligned}
\end{equation}

We now pass from this variable-coefficient normal form to the
constant-coefficient perturbation system. Set
\begin{equation*}
H_0:=D_UV(\bar U)=\widetilde A^0(\bar U),
\qquad V=\bar V+H_0\mathbf V,
\end{equation*}
and define the Taylor remainders
\begin{equation*}
\begin{aligned}
\mathcal R_G(V)&:=G(V)-G(\bar V)-D_VG(\bar V)(V-\bar V),\\
\mathcal R_F^k(V)&:=F^k(V)-F^k(\bar V)
 -D_VF^k(\bar V)(V-\bar V).
\end{aligned}
\end{equation*}

Because
\begin{equation*}
D_VF^k(\bar V)H_0=\widetilde A^k(\bar U),
\qquad
-D_VG(\bar V)H_0=\widetilde B(\bar U),
\end{equation*}
the Taylor expansion of \eqref{system} at $\bar V$ gives
\begin{equation}\label{tildeV111}
\begin{aligned}
H_0\partial_t\mathbf V
+\sum_{k=1}^d\widetilde A^k(\bar U)\partial_{x_k}\mathbf V
+\frac{1}{b(t)}\widetilde B(\bar U)\mathbf V
=\frac{1}{b(t)}\mathcal R_G(V)
-\sum_{k=1}^d\partial_{x_k}\mathcal R_F^k(V).
\end{aligned}
\end{equation}
Set $\mathbf V=\bar T\mathcal Z$ and multiply \eqref{tildeV111} from the
left by $\bar T^{\top}$. By \eqref{2.10} and \eqref{2.11}, we obtain
\begin{equation}\label{tildeV}
\begin{aligned}
\mathcal A^0(\bar{\mathcal V})\partial_t\mathcal Z
+\sum_{k=1}^d\mathcal A^k(\bar{\mathcal V})\partial_{x_k}\mathcal Z
+\frac{1}{b(t)}\mathcal L(\bar{\mathcal V})\mathcal Z
=\frac{1}{b(t)}\bar T^{\top}\mathcal R_G(V)
-\sum_{k=1}^d\partial_{x_k}
 \big(\bar T^{\top}\mathcal R_F^k(V)\big).
\end{aligned}
\end{equation}

Let
\begin{equation*}
S:=\mathcal A^0(\bar{\mathcal V})^{1/2},
\qquad Z:=S\mathcal Z.
\end{equation*}
Multiplying \eqref{tildeV} by $S^{-1}$ and substituting
$\mathcal Z=S^{-1}Z$ give
\begin{equation*}
\partial_tZ+\sum_{k=1}^d A^k\partial_{x_k}Z
+\frac{1}{b(t)}BZ
=\frac{1}{b(t)}\mathbf Q(Z)
+\sum_{k=1}^d\partial_{x_k}\mathbf N^k(Z),
\end{equation*}
where
\begin{align*}
A^k&:=S^{-1}\mathcal A^k(\bar{\mathcal V})S^{-1},\\
B&:=S^{-1}\mathcal L(\bar{\mathcal V})S^{-1},\\
\mathbf Q(Z)&:=S^{-1}\bar T^{\top}\mathcal R_G(V(Z)),\\
\mathbf N^k(Z)&:=-S^{-1}\bar T^{\top}\mathcal R_F^k(V(Z)),
\end{align*}
with
\begin{equation*}
V(Z)=\bar V+H_0\bar T S^{-1}Z.
\end{equation*}
The matrices $A^k$ are symmetric. By \eqref{2.12}, $S$ is block
diagonal and
\begin{equation*}
B=\begin{pmatrix}0&0\\0&L_2\end{pmatrix},
\qquad L_2>0.
\end{equation*}
%The normal-form source structure also gives $\mathbf Q(Z)=(0,Q_2(Z))^{\top}$. 
Since $G$ takes values in $\mathcal M^\perp$, so does its Taylor
remainder $\mathcal R_G$. Moreover, $\bar T^{\top}$ preserves
$\mathcal M^\perp$ because $\mathcal V_2=U_2$, while $S^{-1}$ preserves
$\mathcal M^\perp$ because it is block diagonal. Therefore,
\[
\mathbf Q(Z)
=S^{-1}\bar T^{\top}\mathcal R_G(V(Z))
\in\mathcal M^\perp,
\qquad
\mathbf Q(Z)=
\begin{pmatrix}
0\\ Q(Z)
\end{pmatrix}.
\]
Finally,
$\mathcal R_G$ and $\mathcal R_F^k$ vanish to second order at $\bar V$;
hence $\mathbf Q$ and $\mathbf N^k$ are quadratic near $Z=0$ and have
the representations stated in \eqref{R0Ri}. This is precisely system
\eqref{eq0}.

\section{Construction of the corrector}\label{sec:appendixBZ}

We explain precisely how to define the corrector appropriately with the choice of constants. The proof follows similar arguments in \cite{BZ,c2}. Assume that $z$ satisfies the linear system \eqref{eqlin}.
Our goal is to establish inequality \eqref{xianxing}.
The key point is to control the right-hand side terms appearing in
\eqref{eqcorrec}.

\noindent
$\bullet$ \emph{Terms}
\(
\mathcal{I}_k^1
:=\frac{1}{b(t)}(BA^{k-1}Bz\cdot BA^kz),
\quad k\in\{1,\dots,n-1\}.
\)

Since $A$ is uniformly bounded on $\SSS^{d-1}$, we have
\begin{align*}
\varepsilon_k|\mathcal{I}_k^1|
&\le C\frac{1}{b(t)}\varepsilon_k |Bz|\,|BA^kz|\le \frac{\varepsilon_0}{16nr\,b^2(t)}|Bz|^2
   +Cr\,\frac{\varepsilon_k^2}{\varepsilon_0}|BA^kz|^2,
\end{align*}
where Young's inequality has been used and $\varepsilon_0$ will be fixed later.

\noindent
$\bullet$ \emph{Terms}
\(
\frac{1}{b(t)}(BA^{k-1}z\cdot BA^kBz),
\quad k\in\{2,\dots,n-1\}.
\)

These terms can be bounded in the same way as $\mathcal{I}_k^1$ and hence
lead to estimates of the same form.

\noindent
$\bullet$ \emph{The term}
\(
\frac{1}{b(t)}\varepsilon_1(Bz\cdot BABz).
\)

It is directly controlled by
\[
\frac{1}{b(t)}\varepsilon_1 |Bz\cdot BABz|
\le C\frac{\varepsilon_1}{b(t)}|Bz|^2.
\]

\noindent
$\bullet$ \emph{Terms}
\(
\mathcal{I}_k^2
:= r(BA^{k-1}z\cdot BA^{k+1}z),
\quad k\in\{1,\dots,n-2\}.
\)

Using Young's inequality, we obtain
\begin{align*}
\varepsilon_k|\mathcal{I}_k^2|
&\le \varepsilon_k r |BA^{k-1}z|\,|BA^{k+1}z| \le \frac{\varepsilon_{k-1}}{16}r|BA^{k-1}z|^2
   +Cr\,\frac{\varepsilon_k^2}{\varepsilon_{k-1}}|BA^{k+1}z|^2.
\end{align*}
In order for the second term to be absorbed by the left-hand side,
we impose the condition
\begin{align}
4C\varepsilon_k^2\le \varepsilon_{k-1}\varepsilon_{k+1}.
\label{condi1}
\end{align}

\noindent
$\bullet$ \emph{The last term}
\(
\mathcal{I}_{n-1}^2
:= \varepsilon_{n-1}r(BA^{n-2}z\cdot BA^nz).
\)

By the Cayley--Hamilton theorem, there exist coefficients $c_\omega^j$,
uniformly bounded on $\SSS^{d-1}$, such that $A_\omega^n=\sum_{j=0}^{n-1}c_\omega^j A_\omega^j$. Therefore,
\begin{align*}
|\mathcal{I}_{n-1}^2|
&\le C\varepsilon_{n-1}r\sum_{j=0}^{n-1}|BA^{n-2}z|\,|BA^jz| \le \sum_{j=0}^{n-1}
\left(
\frac{C\varepsilon_{n-1}^2 r}{\varepsilon_j}|BA^{n-2}z|^2
+\frac{\varepsilon_j r}{16}|BA^jz|^2
\right).
\end{align*}
Hence, we further require
\begin{align}
4Cn\,\varepsilon_{n-1}^2
\le \varepsilon_j\varepsilon_{n-2},
\qquad j=0,\dots,n-1.
\label{condi2}
\end{align}
It is possible to choose the parameters
$\varepsilon_k$ so that both \eqref{condi1} and \eqref{condi2} hold.
More precisely, we take $\varepsilon_k=\varepsilon^{m_k}$ with $\varepsilon>0$
small enough, $m_0=0$ (corresponding to $\varepsilon_0=1$),
and exponents $m_1,\dots,m_{n-1}$ satisfying, for some sufficiently small
$\delta>0$,
\begin{align*}
m_k &\ge \frac{m_{k-1}+m_{k+1}}{2}+\delta,
\qquad k=1,\dots,n-2,\\
m_{n-1} &\ge \frac{m_k+m_{n-2}}{2}
+\delta.
\end{align*}

Collecting all the above estimates, we finally obtain
\begin{align*}
\frac{d}{dt}\mathcal{I}
+\sum_{k=1}^{n-1}\varepsilon_k r|BA^kz|^2
\leqslant\;
&\Big(\frac{(2n-3)\varepsilon_0}{16nr\,b^2(t)}
      +\frac{C\varepsilon_1}{b(t)}
      +\frac{2\varepsilon_0}{16}r\Big)|Bz|^2 \\
&+\sum_{j=1}^{n-3}r
\Big(C\frac{\varepsilon_j^2+\varepsilon_{j+1}^2}{\varepsilon_0}
     +\frac{2\varepsilon_j}{16}
     +C\frac{\varepsilon_j^2}{\varepsilon_{j-1}}\Big)|BA^jz|^2 \\
&+r\Big(\sum_{j=0}^{n-1}C\frac{\varepsilon_{n-1}^2}{\varepsilon_j}
     +C\frac{\varepsilon_{n-2}^2+\varepsilon_{n-1}^2}{\varepsilon_0}
     +\frac{2\varepsilon_{n-2}}{16}
     +C\frac{\varepsilon_{n-2}^2}{\varepsilon_{n-3}}\Big)|BA^{n-2}z|^2 \\
&+r\Big(C\frac{\varepsilon_{n-1}^2}{\varepsilon_0}
     +\frac{2\varepsilon_{n-1}}{16}
     +C\frac{\varepsilon_{n-1}^2}{\varepsilon_{n-2}}\Big)|BA^{n-1}z|^2.
\end{align*}
Using conditions \eqref{condi1} and \eqref{condi2},
all the right-hand side terms can be absorbed,
and inequality \eqref{xianxing} follows.

\section{Technical tools}
We have the following Gronwall-type lemma.
\begin{lem}\label{lem2}
    Let $X:[T_0,T_1]\rightarrow \R_+$ be a continuous function such that $X^2$ is differentiable. Assume that there exists a constant $c\geqslant 0$ and  $C^1$ functions $f,g$ with $g>0$, $f>0,0\leqslant f'\leqslant  \frac{c}{2}g$ on $[T_0,T_1]$ and a measurable function $A:[T_0,T_1]\rightarrow \R_+$ such that 
    \begin{align*}
        \frac{d}{dt}(f(t)X^2)+c g(t)X^2\leqslant AX \quad  a.e.\  on \ [T_0,T_1].
    \end{align*}
    Then, for all $t\in [T_0,T_1]$, we have 
    \begin{align*}
     2f(t)X(t)+c'\int_{T_0}^t g(\tau)X(\tau)d\tau\leqslant 2f(T_0)X(T_0)+\int_{T_0}^tA(\tau)d\tau,
    \end{align*}
    with $c'=\frac{c}{2}$.
\end{lem}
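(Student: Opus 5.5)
The plan is to work with $Y(t):=f(t)X(t)^2$. Since $f>0$, the hypothesis reads $Y'+c\,\frac{g}{f}Y\le A\sqrt{Y/f}$. The natural object is $\sqrt{Y}=\sqrt f\,X$, but the conclusion is phrased with $f X$. I will therefore aim for the differential inequality
\[
\frac{d}{dt}\big(fX\big)+\frac{c}{2}gX\le \frac{A}{2}
\]
wherever $X>0$, and then integrate it.

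The first step is a formal computation assuming $X>0$. We have $\frac{d}{dt}(fX^2)=2fXX'+f'X^2$. Also $\frac{d}{dt}(fX)=fX'+f'X$, so
\[
2X\,\frac{d}{dt}(fX)=\frac{d}{dt}(fX^2)+f'X^2 .
\]
Combining this with the hypothesis gives
\[
2X\,\frac{d}{dt}(fX)+cgX^2-f'X^2\le AX .
\]
Now use the assumption $0\le f'\le \frac c2 g$. It yields $cg-f'\ge \frac c2 g$, and dividing by $X$ gives
\[
2\frac{d}{dt}(fX)+\frac c2 gX\le A .
\]
Integrating over $[T_0,t]$ gives
\[
2f(t)X(t)+\frac c2\int_{T_0}^t gX\,d\tau\le 2f(T_0)X(T_0)+\int_{T_0}^t A\,d\tau ,
\]
which is the claim with $c'=c/2$.

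The obstacle is rigor: only $X^2$ is assumed differentiable, and $X$ may vanish. To handle this I regularize with $X_\varepsilon:=\sqrt{X^2+\varepsilon^2}$, which is differentiable with $X_\varepsilon X_\varepsilon'=XX'$ in the sense $\frac12(X^2)'$. I then redo the computation with $fX_\varepsilon^2$ in place of $fX^2$. From $\frac d{dt}(fX_\varepsilon^2)=\frac d{dt}(fX^2)+\varepsilon^2f'$ we get
\[
\frac d{dt}(fX_\varepsilon^2)+cgX_\varepsilon^2\le AX+\varepsilon^2(f'+cg).
\]
Dividing by $X_\varepsilon\ge\varepsilon$ and using $X\le X_\varepsilon$, the same manipulation as before gives
\[
2\frac{d}{dt}(fX_\varepsilon)+\frac c2 gX_\varepsilon\le A+\varepsilon(f'+cg).
\]
Both sides are integrable on the compact interval, so integrating and letting $\varepsilon\to0$ by dominated convergence yields the inequality.

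A subtlety is that the hypothesis holds only a.e.; the integration step then needs $fX_\varepsilon$ to be absolutely continuous. I would take this as the standing interpretation of "differentiable" in this setting. Alternatively, one can integrate the original inequality for $fX^2$ first and then argue via a comparison with the ODE solution. I expect this regularization and the absolute-continuity bookkeeping to be the only delicate point. The algebraic use of $f'\le\frac c2 g$ is exactly what absorbs the extra $f'X^2$ term.
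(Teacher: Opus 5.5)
Your proof is correct and follows the paper's argument: the identity $2X\frac{d}{dt}(fX)=\frac{d}{dt}(fX^2)+f'X^2$, the bound $cg-f'\geq \frac{c}{2}g$, division by $X$ and integration in time, together with the regularization $X_\varepsilon=\sqrt{X^2+\varepsilon^2}$, which the paper only invokes as standard and which you carry out correctly. Your absolute-continuity caveat is not needed: $X^2$ is differentiable everywhere, so $fX_\varepsilon$ is too, and a function differentiable at every point of a compact interval whose derivative is bounded above almost everywhere by an integrable function satisfies the corresponding integrated inequality.
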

\begin{proof}
The proof of the case where $f$ is constant can be found in the appendix of \cite{c1}. Here we only need to note that we have 
\begin{align*}
\frac{d}{dt}\bigl(f(t)X^2\bigr)+cg(t)X^2
&=f'(t)X^2+2f(t)X\frac{dX}{dt}+cg(t)X^2\\
&=2X\frac{d}{dt}(fX)+(cg(t)-f'(t))X^2\\
&\geq 2X\frac{d}{dt}(fX)+\frac c2 g(t)X^2,
\end{align*}
where we used $0\leqslant f'\leqslant  \frac{c}{2}g$. Then, we can divide by $X$ on both sides and integrate in time.  The general case follows from a standard regularization argument.
\end{proof}

The following elementary factorization lemma will be used to exploit the structural vanishing conditions in \eqref{conditionAVbar}; see also \cite[Proposition~3.1]{KY}.

\begin{lem}\label{lem:factorization-Z2}
Let $n=n_1+n_2$ and write $Z=(Z_1,Z_2)\in\R^{n_1}\times\R^{n_2}$.
Let $G$ be a $C^{1}$ vector-valued map defined in a neighborhood $\mathcal O$ of
$0\in\R^{n}$.
Assume that
\[
G(Z_1,0)=0 \qquad \text{for all }(Z_1,0)\in\mathcal O.
\]
Then there exists a continuous matrix-valued map $G_1$
such that
\[
G(Z)=G_1(Z)\,Z_2
\qquad \text{for all $Z\in \mathcal O$.}
\]
\end{lem}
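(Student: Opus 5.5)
The plan is a one-variable Taylor argument in the $Z_2$ direction with the $Z_1$ component frozen. Shrinking $\mathcal O$ if necessary, I first assume it is star-shaped in the $Z_2$ variable, i.e. $(Z_1,sZ_2)\in\mathcal O$ for all $s\in[0,1]$ whenever $(Z_1,Z_2)\in\mathcal O$; a ball or a product of balls centred at $0$ will do. This is harmless, because the lemma is only ever applied near $Z=0$: in \eqref{conditionAVbar} the vanishing is assumed only for $Z_1$ close to $0$, and the smallness \eqref{Linfty} keeps $Z$ in such a neighbourhood.

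For fixed $Z=(Z_1,Z_2)\in\mathcal O$ I then set $\phi(s):=G(Z_1,sZ_2)$ for $s\in[0,1]$. This function is $C^1$ because $G$ is $C^1$, and by the hypothesis it satisfies $\phi(0)=G(Z_1,0)=0$. The fundamental theorem of calculus together with the chain rule gives
\[
G(Z)=\phi(1)-\phi(0)=\int_0^1 D_{Z_2}G(Z_1,sZ_2)\,Z_2\,ds .
\]
I therefore define
\[
G_1(Z):=\int_0^1 D_{Z_2}G(Z_1,sZ_2)\,ds ,
\]
which is an $n_2$-column matrix-valued map. This gives $G(Z)=G_1(Z)Z_2$ on $\mathcal O$ directly.

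The remaining point is the continuity of $G_1$. The integrand $(Z,s)\mapsto D_{Z_2}G(Z_1,sZ_2)$ is continuous on $\mathcal O\times[0,1]$. It is therefore uniformly continuous on compact sets of the form $K\times[0,1]$, where $K$ is a compact neighbourhood of a given point in $\mathcal O$. Continuity of the parameter-dependent integral then follows, for instance by dominated convergence.

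Two remarks connect the lemma to how the paper uses it. First, if $G$ is $C^k$ or $C^\infty$, the same formula shows by differentiating under the integral that $G_1$ is $C^{k-1}$ or $C^\infty$. This is the smoothness needed for the composition estimates applied to $q_1$, $\widetilde q_1$ and $g_2^l$. Second, if $DG(0)=0$, as happens for the quadratic remainders $Q$, $N^k$ and $\mathbf r$, then $G_1(0)=D_{Z_2}G(0)=0$, which is the property $q_1(0)=0$ that the paper uses. No step is a real obstacle. The only care needed is the geometric restriction on $\mathcal O$ (star-shapedness in $Z_2$), which is why I restrict to a neighbourhood of $0$ at the start.
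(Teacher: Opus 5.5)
Your proof is correct and is the same argument as the paper's: apply the fundamental theorem of calculus to $s\mapsto G(Z_1,sZ_2)$ and set $G_1(Z)=\int_0^1 D_{Z_2}G(Z_1,sZ_2)\,ds$. The paper's proof also needs the segment $\{(Z_1,sZ_2): s\in[0,1]\}$ to stay in $\mathcal O$ but never says so, and your restriction to a neighbourhood of $0$ that is star-shaped in $Z_2$ makes this explicit; it is harmless for the applications, and the statement on an arbitrary open $\mathcal O$ can be recovered by a partition of unity, since the admissible values of $G_1(Z)$ form an affine set.
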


\begin{proof}
Fix $(Z_1,Z_2)\in\mathcal O$ and define $\gamma:[0,1]\to\mathbb{R}^n$ by $\gamma(s):=G(Z_1,sZ_2)$. Owing to $\gamma(0)=G(Z_1,0)=0$, one has
\[
G(Z_1,Z_2)=\gamma(1)-\gamma(0)=\int_0^1 \gamma'(s)\,ds.
\]
Since $G\in C^{1}$, the chain rule yields $\gamma'(s)=D_{Z_2}G(Z_1,sZ_2)\,Z_2$. Therefore, we have
\[
G(Z_1,Z_2)=G_1(Z)Z_2\quad\text{with}\quad G_1(Z)=\left(\int_0^1 D_{Z_2}G(Z_1,sZ_2)\,ds\right).
\]
\end{proof}

The proof of the following  inequality may be found in, e.g., \cite[Chap. 2]{Bahouri2011}.
\begin{lem} \label{Commutateur1}
There exists a constant $C$ such that  for all
$1\leq p,q,r\leq\infty$ such that $\frac{1}{p}+\frac{1}{q}=\frac{1}{r},$ 
all functions $a$ with a gradient  in $L^p,$ and $b$ in $L^q,$ 
we have
$$\norme{[\dot{\Delta}_j,a]b}_{L^r}\leq C2^{-j}\norme{\nabla a}_{L^p}\norme{b}_{L^q}
\quad\hbox{for all }\ j\in\Z.$$
\end{lem}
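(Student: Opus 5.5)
The plan is to reduce to the case of two factors $\dot\Delta_j$-localized in frequency via Bony's paraproduct decomposition. Concretely, I would write
\[
[\dot{\Delta}_j,a]b=[\dot\Delta_j,T_a]b+\dot\Delta_j(T'_b a)-T'_{\dot\Delta_j b}a ,
\]
or, more simply, use the kernel representation of $\dot\Delta_j$ directly. The latter seems cleanest, since the statement is a pure $L^p$ estimate with no Besov structure.

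Let $h:=\mathcal F^{-1}\varphi$. Then $\dot\Delta_j f(x)=2^{jd}\int h(2^j(x-y))f(y)\,dy$, and hence
\[
[\dot\Delta_j,a]b(x)=2^{jd}\int h(2^j(x-y))\big(a(y)-a(x)\big)b(y)\,dy .
\]
Next I would apply the first-order Taylor formula
\[
a(y)-a(x)=\int_0^1 \nabla a(x+s(y-x))\cdot(y-x)\,ds .
\]
After the change of variables $z=2^j(x-y)$, this gives
\[
[\dot\Delta_j,a]b(x)=-2^{-j}\int_0^1\!\!\int_{\R^d} h(z)\,z\cdot\nabla a\big(x-s2^{-j}z\big)\,b\big(x-2^{-j}z\big)\,dz\,ds .
\]

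The factor $2^{-j}$ has now appeared explicitly. It remains to take the $L^r$ norm in $x$. By Minkowski's integral inequality, I would move the $L^r_x$ norm inside the $dz\,ds$ integral. For fixed $z$ and $s$, Hölder's inequality with $\frac1p+\frac1q=\frac1r$ together with translation invariance of Lebesgue norms gives
\[
\big\|\nabla a(\cdot-s2^{-j}z)\,b(\cdot-2^{-j}z)\big\|_{L^r}\le\|\nabla a\|_{L^p}\|b\|_{L^q}.
\]
Consequently
\[
\|[\dot\Delta_j,a]b\|_{L^r}\le 2^{-j}\Big(\int|z||h(z)|\,dz\Big)\|\nabla a\|_{L^p}\|b\|_{L^q}.
\]
This yields the claim with $C=\||\cdot|\,h\|_{L^1}$. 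That constant is finite because $h$ is a Schwartz function, $\varphi$ being smooth and compactly supported. It is also independent of $j$, $p$, $q$, $r$.

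The only delicate point, and the one I expect to be the main obstacle, is justifying the formula for functions with merely $\nabla a\in L^p$ and $b\in L^q$; in particular, $a$ itself need not be integrable. I would first prove the estimate for $a,b\in\mathcal S$ and then pass to the general case by density and mollification. The endpoint cases $p=\infty$ or $q=\infty$ need separate care, since Schwartz functions are not dense there. In those cases I would instead argue directly: the integral identity above holds pointwise almost everywhere for $a$ locally Lipschitz, and Minkowski's inequality still applies. This is a routine regularization, as in \cite[Chap.~2]{Bahouri2011}.
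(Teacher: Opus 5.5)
Your proof is correct: the kernel identity $[\dot\Delta_j,a]b(x)=2^{jd}\int h(2^j(x-y))\,(a(y)-a(x))\,b(y)\,dy$, followed by the first-order Taylor formula, the rescaling $z=2^j(x-y)$, Minkowski's inequality and H\"older's inequality, gives the bound with $C=\||\cdot|\,h\|_{L^1}$, independent of $j,p,q,r$. The paper gives no proof of its own and refers to \cite[Chap.~2]{Bahouri2011}; your argument is exactly the standard proof found there, and the only remaining point is the routine justification of the pointwise identity, which you already flag.
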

The following result is  proved, for instance, in \cite[Chap. 2]{Bahouri2011}.  %and \cite{RLG}, respectively.
\begin{prop}\label{C1}    %The following inequality holds for 
For all $1\leq p\leq\infty$
%\begin{itemize}\item  
and  $-\min(d/p,d/p')<s\leq d/p,$ we have
 \begin{equation}\label{eq:com1}
2^{js}\norme{[w,\dot{\Delta}_j]\nabla v}_{L^p}\leq Cc_j\norme{\nabla w}_{\dot{\B}^{\frac{d}{p}}_{p,1}}\norme{v}_{\dot{\B}^{s}_{p,1}}  \with\sum_{j\in\mathbb{Z}}c_j=1.
\end{equation}
%\item   If $-d/2\leq s<{d}/{2}+1$, then
%\begin{equation}\label{eq:com3} 
%\sup_{j\in\Z} 2^{js}\|[w,\ddj]\nabla v\|_{L^2}\leq C\|\nabla w\|_{\dot\B^{\frac d2}_{2,1}} \|v\|_{\dot\B^{s}_{2,\infty}}.\end{equation}
%\item If  $s\in \left]-1-\frac{d}{2},\frac{d}{2}\right]$, then we have
 %\begin{equation}\label{eq:com2}
% \norme{\nabla ([w,\dot{\Delta}_j]v)}_{L^2}\leq Cc_j2^{-js}\norme{\nabla  w}_{\dot{\B}^{\frac{d}{2}}_{2,1}}\norme{v}%_{\dot{\B}^{s}_{2,1}} \with\sum_{j\in\mathbb{Z}}c_j=1.\end{equation}
 % \end{itemize}
 \end{prop}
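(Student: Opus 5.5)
The statement to prove is Proposition~\ref{C1}, the commutator estimate \eqref{eq:com1}. I will follow the standard paradifferential route of \cite[Chap.~2]{Bahouri2011}. Using Bony's decomposition, I split
\[
[w,\dot\Delta_j]\nabla v=[T_w,\dot\Delta_j]\nabla v+T'_{\dot\Delta_j\nabla v}w-\dot\Delta_j T_{\nabla v}w-\dot\Delta_j R(w,\nabla v),
\]
where $T'_ab:=T_ab+R(a,b)$. Each piece is then estimated in $L^p$ with the weight $2^{js}$, and I show it is bounded by $c_j\|\nabla w\|_{\dot\B^{d/p}_{p,1}}\|v\|_{\dot\B^{s}_{p,1}}$ for some $(c_j)\in\ell^1$ of norm at most one.

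\emph{Paraproduct commutator.} By spectral localization, $[T_w,\dot\Delta_j]\nabla v=\sum_{|j'-j|\le4}[\dot S_{j'-1}w,\dot\Delta_j]\nabla\dot\Delta_{j'}v$. Lemma~\ref{Commutateur1} with exponents $(\infty,p)$ bounds each term by $C2^{-j}\|\nabla\dot S_{j'-1}w\|_{L^\infty}2^{j'}\|\dot\Delta_{j'}v\|_{L^p}$. Combined with $\|\nabla w\|_{L^\infty}\lesssim\|\nabla w\|_{\dot\B^{d/p}_{p,1}}$, this gives the required bound for every $s\in\mathbb R$.

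\emph{Remaining paraproducts.} For $\dot\Delta_jT_{\nabla v}w$, I write $\nabla\dot S_{j'-1}v\,\dot\Delta_{j'}w$ with $|j'-j|\le4$. When $s<d/p$, I use $\|\dot S_{j'-1}\nabla v\|_{L^\infty}\lesssim 2^{j'(1-s+d/p)}c_{j'}\|v\|_{\dot\B^s_{p,1}}$, which follows from Bernstein and requires $s-1-d/p<0$, together with $\|\dot\Delta_{j'}w\|_{L^p}\lesssim 2^{-j'(1+d/p)}c_{j'}\|\nabla w\|_{\dot\B^{d/p}_{p,1}}$. The endpoint $s=d/p$ is handled by placing $\nabla v$ in $\dot\B^{d/p-1}_{\infty,1}$ and summing, which still yields an $\ell^1$ sequence. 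The term $T'_{\dot\Delta_j\nabla v}w$ is localized at frequencies $\gtrsim2^{j}$, so it is a sum over $j'\ge j-3$ of $\dot\Delta_j\nabla v\cdot\dot\Delta_{j'}w$. I put $\dot\Delta_j\nabla v$ in $L^\infty$ via Bernstein and control $\sum_{j'\ge j-3}\|\dot\Delta_{j'}w\|_{L^p}$ by $2^{-j(1+d/p)}$ times the $w$-norm. The resulting factor $2^{j(s+1+d/p)}2^{-j(1+d/p)}2^{-js}$ is balanced, so the sum is fine.

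\emph{Remainder.} I expect $\dot\Delta_jR(w,\nabla v)=\sum_{j'\ge j-3}\dot\Delta_j(\dot\Delta_{j'}w\,\widetilde\Delta_{j'}\nabla v)$ to be the main obstacle, because this is where the lower bound $s>-\min(d/p,d/p')$ enters. If $p\ge2$, I estimate the product in $L^{p/2}$ and return to $L^p$ by Bernstein at the cost $2^{jd/p}$. If $p<2$, I use $L^1$ and pay $2^{jd(1-1/p)}=2^{jd/p'}$. In either case each summand is bounded by $2^{j\min(d/p,d/p')}2^{-j'(s+\min(d/p,d/p'))}c_{j'}$ times the norms. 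Hence the sum over $j'\ge j-3$ converges to $c_j2^{-js}$ exactly when $s+\min(d/p,d/p')>0$. Collecting the four pieces gives \eqref{eq:com1}.
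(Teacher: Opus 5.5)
Your proposal is correct and is the standard paradifferential proof from \cite[Chap.~2]{Bahouri2011}, which the paper cites in place of giving its own argument: you split via Bony into $[T_w,\dot\Delta_j]\nabla v$, $T'_{\dot\Delta_j\nabla v}w$, $\dot\Delta_jT_{\nabla v}w$ and $\dot\Delta_jR(w,\nabla v)$, and the lower bound on $s$ enters only through the remainder. One cosmetic point: your bound on $\dot S_{j'-1}\nabla v$ only needs $s<1+d/p$, so $s=d/p$ needs no separate treatment (and at that endpoint $\nabla v$ lies in $\dot\B^{-1}_{\infty,1}$, not $\dot\B^{d/p-1}_{\infty,1}$).
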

 
 The following product laws in Besov spaces have been used several times.
 \begin{prop} \label{LP} Let $(s,p,r)$ be in $]0,\infty[\times[1,\infty]^2.$ Then, 
 $\dot{\B}^{s}_{p,r}\cap L^\infty$ is an algebra and we have
\begin{equation}\label{eq:prod1}
\norme{ab}_{\dot{\B}^{s}_{p,r}}\leq C\bigl(\norme{a}_{L^\infty}\norme{b}_{\dot{\B}^{s}_{p,r}}+\norme{a}_{\dot{\B}^{s}_{p,r}}\norme{b}_{L^\infty}\bigr)\cdotp
\end{equation}
If, furthermore, $-\min(d/p,d/p')<s\leq d/p,$ then the following inequality holds:
\begin{equation}\label{eq:prod2}
\|ab\|_{\dot\B^{s}_{p,1}}\leq C\|a\|_{\dot\B^{\frac dp}_{p,1}}\|b\|_{\dot\B^{s}_{p,1}}.
\end{equation}
Finally,  if  $-d/p<\sigma_1\leq \min(d/p,d/p')$, then the following inequality holds true: 
\begin{equation}\label{eq:prod3} 
\norme{ab}_{\dot{\B}^{-\sigma_1}_{p,\infty}}\leq C  \norme{a}_{\dot{\B}^{\frac{d}{p}}_{p,1}}\norme{b}_{\dot{\B}^{-\sigma_1}_{p,\infty}}.
\end{equation}
\end{prop}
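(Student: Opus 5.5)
The statement is Proposition \ref{LP}. These product laws are standard, and I would prove them with Bony's paraproduct decomposition $ab=T_ab+T_ba+R(a,b)$, where $T_ab=\sum_j \dot S_{j-1}a\,\dot\Delta_j b$ and $R(a,b)=\sum_{|j-j'|\le1}\dot\Delta_j a\,\dot\Delta_{j'}b$. Each piece is estimated using the spectral localization of its summands. Alternatively, one may simply cite \cite[Chap.~2]{Bahouri2011}. The plan below indicates how the estimates arise.

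\textbf{Proof of \eqref{eq:prod1}.} Since $s>0$, we treat the three pieces separately.

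For the paraproducts, each term of $T_ab$ is supported in an annulus $2^j\mathcal C$. Using $\|\dot S_{j-1}a\|_{L^\infty}\lesssim\|a\|_{L^\infty}$ gives
\[
\|T_ab\|_{\dot\B^s_{p,r}}\lesssim\|a\|_{L^\infty}\|b\|_{\dot\B^s_{p,r}}.
\]
By symmetry, $T_ba$ is bounded by $\|b\|_{L^\infty}\|a\|_{\dot\B^s_{p,r}}$.

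For the remainder, the terms are supported only in balls $B(0,C2^j)$. Hence
\[
\|\dot\Delta_k R(a,b)\|_{L^p}\lesssim\sum_{j\ge k-N}\|\dot\Delta_j a\|_{L^\infty}\|\dot\Delta_j b\|_{L^p}.
\]
Multiplying by $2^{ks}$ and using $s>0$, a discrete Young convolution inequality in $\ell^r$ gives the bound $\|a\|_{L^\infty}\|b\|_{\dot\B^s_{p,r}}$.

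\textbf{Proof of \eqref{eq:prod2}.} Here $s$ may be negative, so the pieces need different bounds.

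For $T_ab$, we bound $\|\dot S_{j-1}a\|_{L^\infty}$ through the embedding $\dot\B^{d/p}_{p,1}\hookrightarrow L^\infty$.

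For $T_ba$, the condition $s\le d/p$ lets us write
\[
2^{js}\|\dot S_{j-1}b\|_{L^\infty}\lesssim\sum_{j'<j}2^{(j-j')(s-d/p)}\,2^{j's}\|\dot\Delta_{j'}b\|_{L^p}.
\]
This is summable in $\ell^1$ because $s-d/p\le 0$, with the endpoint $s=d/p$ handled by the $\ell^1$ summability index.

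For the remainder, we apply Bernstein on balls to go from $L^{p/2}$ (or $L^1$) to $L^p$. The summation then converges precisely when $s+\min(d/p,d/p')>0$. I expect this to be the main obstacle: the exponent bookkeeping when $p>2$ versus $p<2$, which is exactly what produces the threshold $-\min(d/p,d/p')$.

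\textbf{Proof of \eqref{eq:prod3}.} The decomposition is the same, now with $\ell^\infty$ summation.

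The term $T_ab$ is harmless.

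For $T_ba$, we need $-\sigma_1<d/p$ to sum $\dot S_{j-1}b$. Because the index is $\ell^\infty$, the endpoint $\sigma_1=-d/p$ is excluded here, which matches the hypothesis.

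For $R(a,b)$, the requirement $-\sigma_1+d/p\ge0$ (or its dual $d/p'$) makes the low-frequency sum converge. The $\ell^1$ summability of $a$ absorbs the endpoint case $\sigma_1=\min(d/p,d/p')$.

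Combining the three bounds in each case yields the stated inequalities.
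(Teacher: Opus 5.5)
Your proposal is correct and is the standard Bony paraproduct argument (cf. \cite[Chap.~2]{Bahouri2011}). The paper quotes Proposition~\ref{LP} as a known result without proof, and uses the same paraproduct splitting in its own proof of Proposition~\ref{prohigh}.

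One slip needs fixing in the $T_ba$ bound. The left-hand side of your display should be $2^{j(s-d/p)}\|\dot S_{j-1}b\|_{L^\infty}$, with the missing factor $2^{jd/p}$ attached to $\|\dot\Delta_j a\|_{L^p}$. What the $j'$-sum must give you is a bound uniform in $j$, not $\ell^1$ summability; at $s=d/p$ that sequence is bounded but not in $\ell^1$. You then pair this uniform bound with the $\ell^1$ sum defining $\|a\|_{\dot\B^{d/p}_{p,1}}$.
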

The following result for left composition can be found in \cite{Bahouri2011}.
\begin{prop}\label{Composition}
Let $p\geq 1$ and let $f$ be a function in $\mathcal{C}^\infty(\mathbb{R})$ such that $f(0)=0$. Let $(s_1,s_2)\in]0,\infty[^2$ and $(r_1,r_2)\in[1,\infty]^2$. We assume that $s_1<{d}/{p}$ or that $s_1={d}/{p}$ and $r_1=1$.

Then, for every real-valued function $u$ in $\dot{\B}^{s_1}_{p,r_1}\cap\dot{\B}^{s_2}_{p,r_2}\cap L^\infty$, the function $f\circ u$ belongs to $\dot{\B}^{s_1}_{p,r_1}\cap\dot{\B}^{s_2}_{p,r_2}\cap L^\infty$ and we have
$$\norme{f\circ u}_{\dot{\B}^{s_k}_{p,r_k}}\leq C\left(f',\norme{u}_{L^\infty}\right)\norme{u}_{\dot{\B}^{s_k}_{p,r_k}}\quad\hbox{for}\  k\in\{1,2\}.$$
\end{prop}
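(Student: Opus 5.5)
The statement to prove is Proposition~\ref{Composition}, the classical left-composition estimate. I would follow the standard Meyer paradifferential-type telescoping argument from \cite{Bahouri2011}.

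Write $u_q:=\dot S_q u$ and decompose telescopically:
\[
f\circ u=\sum_{q\in\Z}\big(f(u_{q+1})-f(u_q)\big)=\sum_{q}m_q\,\dot\Delta_q u,
\qquad
m_q:=\int_0^1 f'(u_q+\tau\dot\Delta_q u)\,d\tau .
\]
This decomposition requires two facts. First, $u\in\mathcal S'_h$, so $u_q\to0$ in $L^\infty$ as $q\to-\infty$; since $f(0)=0$, we get $f(u_q)\to0$. Second, $u_q\to u$ as $q\to+\infty$; for $s_1<d/p$, or $s_1=d/p$ with $r_1=1$, the convergence holds in a sense sufficient to pass to the limit in $f$.

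The key bounds are
\[
\|u_q\|_{L^\infty}+\|\dot\Delta_q u\|_{L^\infty}\lesssim\|u\|_{L^\infty}
\]
and, by the chain rule and Bernstein's inequality, for every multi-index $\alpha$,
\[
\|\partial^\alpha m_q\|_{L^\infty}\le C\big(f',\|u\|_{L^\infty}\big)\,2^{q|\alpha|}.
\]
Since $\dot\Delta_q u$ is spectrally localized, it follows that
\[
\|\partial^\alpha(m_q\dot\Delta_q u)\|_{L^p}\le C\,2^{q|\alpha|}\|\dot\Delta_q u\|_{L^p}.
\]

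To estimate $\dot\Delta_j(f\circ u)$, I would split the sum at $q=j$.

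For $q\ge j$, I use $\|\dot\Delta_j(m_q\dot\Delta_q u)\|_{L^p}\lesssim\|\dot\Delta_q u\|_{L^p}$. Multiplying by $2^{js}$ gives the factor $\sum_{q\ge j}2^{(j-q)s}\,2^{qs}\|\dot\Delta_q u\|_{L^p}$, a convolution with an $\ell^1$ kernel, where $s>0$ is used.

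For $q<j$, I apply Bernstein with $N>s$ derivatives:
\[
\|\dot\Delta_j g\|_{L^p}\lesssim 2^{-jN}\max_{|\alpha|=N}\|\partial^\alpha g\|_{L^p},
\]
which gives the bound $2^{(q-j)N}\|\dot\Delta_q u\|_{L^p}$. Multiplying by $2^{js}$ yields $2^{(q-j)(N-s)}\,2^{qs}\|\dot\Delta_q u\|_{L^p}$, again summable.

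Young's inequality for series in $\ell^{r_k}$ then gives
\[
\|f\circ u\|_{\dot\B^{s_k}_{p,r_k}}\le C\big(f',\|u\|_{L^\infty}\big)\|u\|_{\dot\B^{s_k}_{p,r_k}}
\]
for each $k$, since the argument never used anything beyond $s_k>0$ and $\|u\|_{L^\infty}$. The bound $f\circ u\in L^\infty$ is immediate from $|f(u)|\le\|f'\|_{L^\infty(B(0,\|u\|_{L^\infty}))}|u|$.

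The main obstacle is the step establishing $f\circ u\in\mathcal S'_h$, which is needed for the homogeneous norm to be meaningful. This is where the hypothesis $s_1<d/p$, or $s_1=d/p$ with $r_1=1$, enters. I would argue that $u\in\dot\B^{s_1}_{p,r_1}$ embeds into a space guaranteeing $\|\dot S_j u\|_{L^\infty}\to0$, and hence the low-frequency part of $f\circ u$ vanishes as well. The derivative bounds on $m_q$ are routine Faà di Bruno computations.
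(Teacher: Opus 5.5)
Your dyadic estimate is the standard Meyer first-linearization argument used in \cite{Bahouri2011}, which the paper cites instead of giving a proof. The bounds on $\partial^\alpha m_q$, the splitting at $q=j$ and Young's inequality for series are correct, and they use only $s_k>0$.

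The gap is in the step you single out yourself, namely $f\circ u\in\mathcal{S}'_h$. The property $\|\dot S_j u\|_{L^\infty}\to0$ is just the definition of $u\in\mathcal{S}'_h$. It holds for every element of every homogeneous Besov space, so no embedding is needed to obtain it. More importantly, it does not survive composition with a nonlinear $f$. Take $u=\cos x_1$ and $f(y)=y^2$. Then $\dot S_j u=0$ for all $j\leq -1$, whereas $f(u)=\frac12+\frac12\cos 2x_1$ satisfies $\dot S_j(f\circ u)=\frac12$ for all $j\leq -1$. So the step ``hence the low-frequency part of $f\circ u$ vanishes'' is not a valid inference, and the hypothesis on $(s_1,p,r_1)$ has to enter differently.

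What you need is a quantitative splitting of $u$ that is stable under $f$. Since $0<s_1\leq d/p$, necessarily $p<\infty$. For $Q\in\Z$, because $u\in\mathcal{S}'_h$,
\[
\|\dot S_Q u\|_{L^\infty}\leq\sum_{q<Q}\|\dot\Delta_q u\|_{L^\infty}\lesssim\sum_{q<Q}2^{q\frac dp}\|\dot\Delta_q u\|_{L^p}\longrightarrow0\quad\text{as } Q\to-\infty .
\]
This is exactly where $s_1<d/p$, or $s_1=d/p$ with $r_1=1$, is used. On the other hand, $\|(\mathrm{Id}-\dot S_Q)u\|_{L^p}\leq\sum_{q\geq Q}\|\dot\Delta_q u\|_{L^p}<\infty$ because $s_1>0$.

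Since $f(0)=0$ and $f$ is Lipschitz on $[-C\|u\|_{L^\infty},C\|u\|_{L^\infty}]$, you have the pointwise bounds $|f(\dot S_Qu)|\lesssim|\dot S_Qu|$ and $|f(u)-f(\dot S_Qu)|\lesssim|(\mathrm{Id}-\dot S_Q)u|$. Combined with Bernstein's inequality, these give
\[
\|\dot S_J(f\circ u)\|_{L^\infty}\lesssim\|\dot S_Q u\|_{L^\infty}+2^{J\frac dp}\|(\mathrm{Id}-\dot S_Q)u\|_{L^p},
\]
with constants depending only on $f'$ and $\|u\|_{L^\infty}$. Letting first $J\to-\infty$ and then $Q\to-\infty$ yields $f\circ u\in\mathcal{S}'_h$.

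By contrast, the convergence $f(\dot S_q u)\to f(u)$ as $q\to+\infty$, which you also attribute to the hypothesis, does not need it. Since $u\in L^\infty$, $\dot S_q u\to u$ almost everywhere with uniform $L^\infty$ bounds, and dominated convergence gives convergence in $\mathcal{S}'$.
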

As a consequence (see \cite[Cor. 2.66]{Bahouri2011}), if $g$ is a $\cC^\infty(\R)$ function such that $g'(0)=0$, then,  
for all $u,v$ in $\dot\B^s_{p,1}\cap L^\infty$ with $s>0,$ we have
\begin{equation}\label{eq:compo}
\|g(v)-g(u)\|_{\dot\B^s_{p,1}} \leq C\Bigl(\|v-u\|_{L^\infty}\|(u,v)\|_{\dot\B^s_{p,1}} + 
\|v-u\|_{\dot\B^s_{p,1}} \|(u,v)\|_{L^\infty}\Bigr)\cdotp
\end{equation}
We also need more involved product laws to handle 
the high frequencies of some non-linear terms.
 \begin{prop} \label{prohigh} 
For all $\sigma\geq s>0$, we have, for the threshold $J$ between low and high frequencies:
\begin{equation}\label{eq:prod4}
\|ab\|^h_{\dot\B^{s}_{2,1}}\lesssim  \|a\|_{\dot\B^{\frac d2}_{2,1}}\|b\|^h_{\dot\B^{s}_{2,1}}
+\|b\|_{\dot\B^{\frac d2}_{2,1}}\|a\|^h_{\dot\B^{s}_{2,1}}+ 2^{(s-\sigma)J}(\|a\|^\ell_{\dot\B^{\frac d2}_{2,1}}\|b\|^\ell_{\dot\B^{\sigma}_{2,1}}
+ \|b\|^\ell_{\dot\B^{\frac d2}_{2,1}}\|a\|^\ell_{\dot\B^{\sigma}_{2,1}}).
\end{equation}
\end{prop}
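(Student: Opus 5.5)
We prove Proposition~\ref{prohigh}, the high-frequency product law (eq:prod4), through Bony's paraproduct decomposition $ab=T_ab+T_ba+R(a,b)$. Here $T_ab=\sum_{j}\dot S_{j-1}a\,\dot\Delta_j b$ and $R(a,b)=\sum_{|j-j'|\le1}\dot\Delta_j a\,\dot\Delta_{j'}b$. Each piece is then localized to frequencies $k\ge J+1$. The guiding idea is the following: every block $\dot\Delta_k(ab)$ with $k>J$ is fed either by a high-frequency block of $a$ or $b$, or by low-frequency blocks of both factors. In the latter case the frequency of at least one low block is comparable to $2^k\gtrsim 2^J$, so the surplus regularity $\sigma-s\ge0$ can be exchanged for the factor $2^{(s-\sigma)J}$.

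For the paraproduct $T_ab$, the block $\dot\Delta_k T_ab$ only involves $\dot S_{j-1}a\,\dot\Delta_jb$ with $|j-k|\le 4$. We write $b=b^\ell+b^h$, where $b^\ell=\sum_{j\le J}\dot\Delta_jb$.

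The contribution of $b^h$ is bounded in the standard way. Indeed, $\|\dot S_{j-1}a\|_{L^\infty}\lesssim\|a\|_{\dot\B^{d/2}_{2,1}}$, so this part is bounded by $\|a\|_{\dot\B^{d/2}_{2,1}}\|b\|^h_{\dot\B^s_{2,1}}$, up to a harmless shift of finitely many indices near $J$. Those boundary indices have frequency $\sim 2^J$ and are absorbed either into the $h$-norm or into the low-low term.

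For the contribution of $b^\ell$, only indices $j\in[J-3,J]$ survive, since we need both $k>J$ and $|j-k|\le4$. For these we estimate
\[
2^{ks}\|\dot S_{j-1}a\,\dot\Delta_jb\|_{L^2}\lesssim 2^{(s-\sigma)J}\|a\|_{L^\infty}\,2^{j\sigma}\|\dot\Delta_j b\|_{L^2},
\]
using $2^{ks}\sim2^{js}=2^{(s-\sigma)j}2^{j\sigma}$ and $2^{(s-\sigma)j}\sim 2^{(s-\sigma)J}$. Then we split $a=a^\ell+a^h$. The part $a^h$ is controlled through $\|\dot S_{j-1}a^h\|_{L^\infty}\lesssim\|a^h\|_{\dot\B^{d/2}_{2,1}}\le\|a\|_{\dot\B^{d/2}_{2,1}}$, paired with $b^\ell$ in $\dot\B^{s}$, which yields the first term on the right. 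The part $a^\ell$ gives $2^{(s-\sigma)J}\|a\|^\ell_{\dot\B^{d/2}_{2,1}}\|b\|^\ell_{\dot\B^\sigma_{2,1}}$. The paraproduct $T_ba$ is handled symmetrically, with the roles of $a$ and $b$ exchanged.

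For the remainder $R(a,b)$ we use $s>0$. We have $\dot\Delta_k R=\sum_{j\ge k-3}\dot\Delta_k(\dot\Delta_ja\,\widetilde\Delta_jb)$, which gives
\[
2^{ks}\|\dot\Delta_kR\|_{L^2}\lesssim\sum_{j\ge k-3}2^{(k-j)s}\,2^{js}\|\dot\Delta_ja\|_{L^2}\|\widetilde\Delta_jb\|_{L^\infty}.
\]
For $j>J$ this gives $\|b\|_{\dot\B^{d/2}_{2,1}}\|a\|^h_{\dot\B^s_{2,1}}$ by a discrete Young inequality, and the sum converges because $s>0$. For $j\le J$ we have $k\in(J,j+3]$, so $j\in[J-2,J]$ and again $2^{js}\sim2^{(s-\sigma)J}2^{j\sigma}$. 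This produces $2^{(s-\sigma)J}\|b\|^\ell_{\dot\B^{d/2}_{2,1}}\|a\|^\ell_{\dot\B^\sigma_{2,1}}$.

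Summing over $k\ge J+1$ yields (eq:prod4). The main technical point is the bookkeeping at the boundary indices $|j-J|\le4$, where a block of one factor is "low" yet feeds a "high" output block. I would handle it by noting that all such indices satisfy $2^j\sim2^J$, and that the constant $C$ is allowed to depend on the fixed number of shifted indices.
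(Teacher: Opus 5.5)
Your proof is correct and follows essentially the paper's route. Both use Bony's decomposition together with a low/high splitting of both factors. The factor $2^{(s-\sigma)J}$ comes from low--low interactions, which can only reach output frequencies comparable to $2^J$, and $s>0$ is what makes the remainder summable.

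The paper is more compact in three ways:
\begin{itemize}
\item it absorbs $R(a,b)$ into $T'_ba=\sum_j\dot S_{j+2}b\,\dot\Delta_ja$;
\item it splits $ab$ into the six pieces $T_{a^\ell}b^\ell+T'_{b^\ell}a^\ell+T'_ba^h+T_ab^h+T'_{b^h}a^\ell+T_{a^h}b^\ell$;
\item it gets the gain on the low--low pieces in one step, from $\|f\|^h_{\dot\B^{s}_{2,1}}\le 2^{(s-\sigma)J}\|f\|^h_{\dot\B^{\sigma}_{2,1}}$ plus the usual continuity of $T$ and $T'$.
\end{itemize}
You do the same bookkeeping block by block instead.

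One sentence of yours needs correcting. In the $b^\ell$-part of $T_ab$ you say the piece with $a^h$ yields the first term on the right. That is not right: pairing $\|a\|_{\dot\B^{\frac d2}_{2,1}}$ with $b^\ell$ in $\dot\B^{s}_{2,1}$ gives a low-frequency norm of $b$, not $\|b\|^h_{\dot\B^{s}_{2,1}}$. Such a bound is not dominated by the right-hand side when $s<\frac d2$; take $a$ localized at frequency $2^m$ with $m\gg J$ and $b$ at frequency $\sim2^J$.

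The fix is that this piece is identically zero. For $j\le J$ (or even $j\le J+1$), the symbol of $\dot S_{j-1}$ is supported in $|\xi|\le\frac43 2^{J}$, while $a^h$ lives in $|\xi|\ge\frac32 2^{J}$. Hence $\dot S_{j-1}a^h=0$, and directly $\|\dot S_{j-1}a\|_{L^\infty}\lesssim\|a\|^\ell_{\dot\B^{\frac d2}_{2,1}}$.
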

\begin{proof} 
Recall the  following so-called Bony decomposition 
(first introduced by J.-M. Bony in \cite{Bony})
for the product 
of two tempered distributions $f$ and $g$:
$$fg=T_fg+T'_gf\with T_fg\triangleq\sum_{j\in\Z}\dot S_{j-1}f\,\ddj g\andf
T'_gf\triangleq \sum_{j\in\Z}\dot S_{j+2}g\,\ddj f.$$
Using this decomposition 
 and further splitting  $a$ and $b$ into low and high frequencies,
 we get
 $$ ab=T_{a^\ell}b^\ell+T'_{b^\ell}a^\ell+T'_{b}a^h+T_{a}b^h+T'_{b^h}a^\ell+T_{a^h}b^\ell.$$
 All the terms in the right-hand side, except for the last two ones, may be bounded
 by means of standard results of continuity for operators $T$ and $T'$
   (see again \cite[Chap. 2]{Bahouri2011}). Provided $\sigma\geq s>0,$ we get,
 $$\begin{aligned}
 \|T'_{b^\ell}a^\ell\|^h_{\dot\B^{s}_{2,1}}& \lesssim 2^{(s-\sigma)J}\|T'_{b^\ell}a^\ell\|^h_{\dot\B^{\sigma}_{2,1}}\lesssim 2^{(s-\sigma)J}\|b^\ell\|_{L^{\infty}}\|a^\ell\|_{\dot\B^{\sigma}_{2,1}},\\
 \|T'_{b}a^h\|_{\dot\B^{s}_{2,1}}&\lesssim\|b\|_{L^\infty}\|a^h\|_{\dot\B^{s}_{2,1}}.
\end{aligned}$$ 
Let $J$ be the integer corresponding to the threshold 
between low and high frequencies. Since $a^\ell=\dot S_{J+1}a$ and $b^h=({\rm Id}-\dot S_{J+1})b,$ we see that 
$$
T'_{b^h} a^\ell=  \dot S_{J+2}b^h\,\dot\Delta_{J+1} a^\ell.$$
Consequently, as 
$\dot S_{J+2}b^h=(\dot\Delta_{J-1}+\dot\Delta_{J}+\dot\Delta_{J+1})b^h,$
$$\|T'_{b^h} a^\ell\|_{\dot\B^{s}_{2,1}}\lesssim
\|\dot\Delta_{J+1}a^\ell\|_{L^\infty} \|\dot S_{J+2}b^h\|_{L^2}\lesssim \|a\|_{L^\infty} 
 \|b\|^h_{\dot\B^{s}_{2,1}}.$$
Adding this latter inequality to the previous one and to the symmetric ones (with just operator $T$ instead of $T'$), together with the embeddings
 $\dot\B^{\frac d2}_{2,1}\hookrightarrow L^\infty$, completes the proof of \eqref{eq:prod4}. 
\end{proof}
To handle commutators  in high frequencies, we need the following lemma.

  \begin{lem}
\label{commhigh} Let $s>0$. For $j\in\Z,$ 
denote $\mathfrak{R}_j\triangleq [w,\ddj] z=w\ddj z-\ddj(wz)$. There exists  a constant $C$ depending only on  the threshold 
number $J$ between low and high frequencies and on $s,$ $d,$ such that

$$\displaylines{
\sum_{j\geq J}\left(2^{js}\norme{\mathfrak{R}_j}_{L^2}\right)\leq C\Bigl(\norme{\nabla w}_{\dot{\B}^{\frac d2}_{2,1}}\norme{z}^h_{\dot{\B}^{s-1}_{2,1}}
+ 2^{(s-\sigma_1)J}\norme{z}^\ell_{\dot{\B}^{\frac{d}{2}}_{2,1}}\norme{w}^\ell_{\dot{\B}^{\sigma_1}_{2,1}}\hfill\cr\hfill+\norme{z}_{\dot{\B}^{\frac d2-k}_{2,1}}\norme{w}^h_{\dot{\B}^{s+k}_{2,1}}
+ 2^{(s-\sigma_2-1)J}\norme{z}^\ell_{\dot{\B}^{\sigma_2}_{2,1}}\norme{ \nabla  w}^\ell_{\dot{\B}^{\frac{d}{2}}_{2,1}}\Bigr),}$$
for any $k\geq0$, $\sigma_1 \geq s$ and $\sigma_2\in\R.$

\end{lem}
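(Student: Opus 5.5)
The plan is a Littlewood--Paley paraproduct argument of the type used for \eqref{eq:prod4}, but applied to the commutator and restricted to the high-frequency blocks $j\geq J$. First I would write $w=w^\ell+w^h$ and $z=z^\ell+z^h$ with respect to the threshold $J$. Then I would decompose $\mathfrak R_j=[w,\ddj]z$ with Bony's decomposition,
\[
[w,\ddj]z=[T_w,\ddj]z+T'_{\ddj z}w-\ddj T'_z w,
\]
where $T'_zw=\sum_k\dot S_{k+2}z\,\dot\Delta_k w$ collects the remainder and the paraproduct of $z$ by $w$.

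Next I would handle the \emph{paraproduct commutator} $[T_w,\ddj]z$. By almost orthogonality it equals $\sum_{|k-j|\le4}[\dot S_{k-1}w,\ddj]\dot\Delta_k z$. Lemma~\ref{Commutateur1} bounds each term in $L^2$ by $2^{-j}\|\nabla \dot S_{k-1}w\|_{L^\infty}\|\dot\Delta_kz\|_{L^2}$. I then split $z$ into $z^h$ and $z^\ell$.
\begin{itemize}
\item For $z^h$, multiplying by $2^{js}$ and summing over $j\geq J$ gives $\|\nabla w\|_{\dot\B^{d/2}_{2,1}}\|z\|^h_{\dot\B^{s-1}_{2,1}}$, which is the first term on the right.
\item For $z^\ell$, only the finitely many $j$ near $J$ survive, since $k\le J$ and $|k-j|\le4$. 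There I bound $\|\nabla\dot S_{k-1}w\|_{L^\infty}\lesssim\|\nabla w\|_{\dot\B^{d/2}_{2,1}}$ and $2^{j(s-1)}\|\dot\Delta_kz^\ell\|_{L^2}\lesssim 2^{(s-1-\sigma_2)J}\|z\|^\ell_{\dot\B^{\sigma_2}_{2,1}}$.
\item The piece of $\nabla w$ that matters near $J$ is essentially $\nabla w^\ell$; the contribution of $w^h$ to $\dot S_{k-1}w$ for $k\sim J$ is at most a few blocks and is absorbed into the other terms. This produces the fourth term $2^{(s-\sigma_2-1)J}\|z\|^\ell_{\dot\B^{\sigma_2}_{2,1}}\|\nabla w\|^\ell_{\dot\B^{d/2}_{2,1}}$, valid for any $\sigma_2\in\R$ because no summation in $j$ is involved.
\end{itemize}

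Then I would treat the \emph{remainder-type terms} $T'_{\ddj z}w$ and $\ddj T'_zw$. Here $w$ carries the high frequency, so I split $w=w^\ell+w^h$.
\begin{itemize}
\item For $w^h$, I use $\|\dot S_{k+2}z\|_{L^\infty}\lesssim 2^{kk'}\|z\|_{\dot\B^{d/2-k'}_{2,1}}$, with $k'=k\ge0$ the parameter in the statement. The $2^{kk'}$ is absorbed into $2^{k(s+k')}\|\dot\Delta_kw\|_{L^2}$. Summing over $j\geq J$ with $k\gtrsim j$ uses $s>0$ and gives $\|z\|_{\dot\B^{d/2-k}_{2,1}}\|w\|^h_{\dot\B^{s+k}_{2,1}}$.
\item For $w^\ell$, the output frequency $j\geq J$ forces $k\ge j-3\ge J-3$. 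So again only finitely many blocks near $J$ appear. For them I use $\|\dot S_{k+2}z\|_{L^\infty}\lesssim\|z\|_{\dot\B^{d/2}_{2,1}}$, with the low part of $z$ dominant near $J$, together with $2^{js}\|\dot\Delta_kw^\ell\|_{L^2}\lesssim 2^{(s-\sigma_1)J}\|w\|^\ell_{\dot\B^{\sigma_1}_{2,1}}$, which uses $\sigma_1\ge s$. This is the second term.
\end{itemize}

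The constant $C$ depends on $J$, $s$ and $d$ only through these finitely many boundary blocks and the geometric sums.

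The main obstacle I expect is bookkeeping at the interface $j\approx J$. There low and high parts of $w$ and $z$ mix within a fixed number of dyadic shells, and each cross term must be routed to one of the four stated quantities. In particular, cases like $z^h$ paired with $w^\ell$ in the remainder need $\|z^h\|_{L^\infty}$ near $J$ to be controlled by $\|z\|_{\dot\B^{d/2}_{2,1}}$. I would handle these by bounding every shell $|k-J|\le4$ crudely with Bernstein and absorbing the powers of $2^{J}$ into $C$.
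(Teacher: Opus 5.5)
Your decomposition $[w,\ddj]z=[T_w,\ddj]z+T'_{\ddj z}w-\ddj T'_zw$ is exactly the paper's: $[T_w,\ddj]z$ is its $\mathcal{R}^2_j$, $-\ddj T'_zw$ is its $\mathcal{R}^1_j$, and $T'_{\ddj z}w=\sum_{|j'-j|\le1}(w-\dot S_{j'-1}w)\ddj\dot\Delta_{j'}z$ is its $\mathcal{R}^3_j$. Your treatment of $[T_w,\ddj]z$ is fine, and so is your treatment of the $w^h$-parts of the two remainder terms. In the paraproduct commutator there is in fact no $w^h$ contribution to absorb, since $\dot S_{k-1}w=\dot S_{k-1}w^\ell$ for $k\le J+1$.

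The gap is in the $w^\ell$-parts of the remainder terms. In $T'_{\ddj z}w^\ell$ with $j\ge J+1$, the $z$-factor $\dot S_{k+2}\ddj z$ is purely high-frequency. In $\ddj T'_zw^\ell$, the factor $\dot S_{J+2}z$ contains the high block $\dot\Delta_{J+1}z$, and it is paired with $\dot\Delta_Jw^\ell$. Nothing makes "the low part of $z$ dominant" there. Bounding by the full norm produces $2^{(s-\sigma_1)J}\|z\|_{\dot\B^{\frac d2}_{2,1}}\|w\|^\ell_{\dot\B^{\sigma_1}_{2,1}}$, which is not on the right-hand side: the second and fourth terms contain only $\|z\|^\ell$, and the third pairs the full $z$-norm with $\|w\|^h$, which does not control $\dot\Delta_Jw^\ell$. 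Your fallback does not repair this. The target you name, controlling $\|z^h\|_{L^\infty}$ near $J$ by $\|z\|_{\dot\B^{\frac d2}_{2,1}}$, is not available on the right-hand side. No power of $2^J$ turns $\|\dot\Delta_{J+1}z\|_{L^2}$ into a low-frequency norm of $z$. Moreover, $J=J_\tau$ moves with time in this paper, so $J$-dependent constants are not harmless.

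The missing idea is to put $L^\infty$ on the $w$-block and $L^2$ on the high $z$-block. Since $\dot\Delta_kw^\ell$ is spectrally supported in an annulus of size $2^k$, one has $\|\dot\Delta_kw^\ell\|_{L^\infty}\lesssim2^{-k}\|\nabla w\|_{L^\infty}$. As $j,k,j''$ differ by $O(1)$ in these terms,
\[
2^{js}\|\dot\Delta_{j''}z\|_{L^2}\|\dot\Delta_kw^\ell\|_{L^\infty}\lesssim2^{j''(s-1)}\|\dot\Delta_{j''}z\|_{L^2}\,\|\nabla w\|_{\dot\B^{\frac d2}_{2,1}}.
\]
This lands in the first term $\|\nabla w\|_{\dot\B^{\frac d2}_{2,1}}\|z\|^h_{\dot\B^{s-1}_{2,1}}$ with a $J$-independent constant. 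The boundary block $j=J$ of $T'_{\ddj z}w^\ell$, where $\ddj z$ is low, goes into the fourth term the same way.

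This is how the paper handles $\mathcal{R}^3_j$, without splitting $w$ at all, via $\|w-\dot S_{j'-1}w\|_{L^\infty}\lesssim2^{-j}\|\nabla w\|_{L^\infty}$. In $\ddj T'_zw$, only $T'_{z^\ell}w^\ell$ should be sent to the second term. The piece $T'_{z^h}w^\ell$ should be treated with $L^\infty$ on the low-frequency factor, as $T'_{b^h}a^\ell$ is in the proof of \eqref{eq:prod4}, together with the Bernstein gain above.
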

\begin{proof}
From Bony's decomposition recalled above
and the fact that $\dot\Delta_j\dot\Delta_{j'}=0$ for $|j-j'|\geq2,$
we deduce that
$$\begin{aligned}\mathfrak{R}_j&=-\ddj(T'_{z}w)-\sum_{|j'-j|\leq4} [\ddj,\dot S_{j'-1}w]\dot\Delta_{j'}z
- \sum_{|j'-j|\leq 1}\left(\dot{S}_{j'-1}w-w\right)\dot{\Delta}_j\dot{\Delta}_{j'} z\\
&\triangleq\mathcal{R}^1_j + \mathcal{R}^2_j +  \mathcal{R}^3_j. 
\end{aligned}$$
To estimate  $\mathcal{R}^1_j$, we  use the decomposition
$$
T'_zw=T'_{z^\ell}w^\ell +T'_{z^h}w^\ell+T'_zw^h$$
and proceed as in the proof of Proposition \ref{prohigh}. In the end, we get
$$\|T'_{z}w\|^h_{\dot\B^{s}_{2,1}}\lesssim\|z\|_{\dot\B^{-k}_{\infty,1}}\|w\|^h_{\dot\B^{s+k}_{2,1}}+2^{(s-\sigma_1)J} \|z\|^\ell_{\dot\B^{\frac d2}_{2,1}}\|w\|^\ell_{\dot\B^{\sigma_1}_{2,1}}.$$
Therefore, since $\dot\B^{\frac d2-k}_{2,1}\hookrightarrow\dot\B^{-k}_{\infty,1},$
\begin{equation}\label{eq:Rj1}
\sum_{j\in\Z}\left(2^{js}\norme{\cR_j^1}_{L^2}\right)\lesssim
\|z\|_{\dot\B^{\frac{d}{2}-k}_{2,1}}\|w\|^h_{\dot\B^{s+k}_{2,1}}+ 2^{(s-\sigma_1)J}\|z\|^\ell_{\dot\B^{\frac d2}_{2,1}}\|w\|^\ell_{\dot\B^{\sigma_1}_{2,1}}.
\end{equation}
Next, taking advantage of Lemma \ref{C1}, we see that
if $j'\geq J$ and $|j-j'|\leq 4,$ then we have 
$$
2^{js} \|[\ddj,\dot S_{j'-1}w] \dot\Delta_{j'}z\|_{L^2}
\lesssim \|\nabla \dot S_{j'-1}w\|_{L^{\infty}} \, 2^{j'(s-1)}\| \dot\Delta_{j'}z\|_{L^2}
$$
while, if $j'<J,$ $j\geq J$ and $|j-j'|\leq 4,$ 
\begin{align*}
2^{js} \|[\ddj,\dot S_{j'-1}w]\dot\Delta_{j'}z\|_{L^2}
&\lesssim 2^{J(s-\sigma_2-1)}2^{j(\sigma_2+1)} \|[\ddj,\dot S_{j'-1}w]\dot\Delta_{j'}z\|_{L^2}\\
&\lesssim 2^{J(s-\sigma_2-1)}\|\nabla\dot S_{j'-1}w\|_{L^{\infty}} \, 2^{j'\sigma_2}\| \dot\Delta_{j'}z\|_{L^2}.
\end{align*}
Therefore, 
\begin{align*}
\sum_{j\geq J}\left(2^{js}\norme{\cR_j^2}_{L^2}\right)&\lesssim 
\norme{z}^h_{\dot{\B}^{s-1}_{2,1}}\norme{\nabla w}_{L^\infty}+2^{(s-\sigma_2-1)J}\norme{z}^\ell_{\dot{\B}^{\sigma_2}_{2,1}}\norme{\nabla w}_{L^{\infty}}^\ell\cdotp
\end{align*}
Then, with suitable embeddings, one gets
\begin{align}\label{eq:Rj22}
\sum_{j\geq J}\left(2^{js}\norme{\cR_j^2}_{L^2}\right)&\lesssim 
\norme{z}^h_{\dot{\B}^{s-1}_{2,1}}\norme{\nabla w}_{\dot{\B}^{\frac{d}{2}}_{2,1}}+2^{(s-\sigma_2-1)J}\norme{z}^\ell_{\dot{\B}^{\sigma_2}_{2,1}}\norme{\nabla w}_{\dot{\B}^{\frac{d}{2}}_{2,1}}^\ell\cdotp
\end{align}
Finally, for all $j\geq J$ and $|j'-j|\leq 1,$ we have
$$\begin{aligned}
2^{js}\|(\dot{S}_{j'-1}w-w)\dot{\Delta}_j\dot{\Delta}_{j'}z\|_{L^2}
&\leq 2^j\|\dot{S}_{j'-1}w-w\|_{L^\infty}\, 2^{j(s-1)}\| \dot\Delta_{j'}\ddj z\|_{L^2}\\
&\leq C\| \nabla (\dot{S}_{j'-1}w-w)\|_{L^\infty}\, 2^{j(s-1)}\| \ddj z\|_{L^2}.\end{aligned}
$$
Hence
\begin{equation}\label{eq:Rj3}
\sum_{j\geq J}\left(2^{js}\norme{\cR_j^3}_{L^2}\right)\leq 
C\|\nabla w\|_{L^\infty}\|z\|^h_{\dot\B^{s-1}_{2,1}}.\end{equation}
Putting \eqref{eq:Rj1}, \eqref{eq:Rj22} and \eqref{eq:Rj3} together  yields the desired estimate.
\end{proof}

\begin{lem}\label{compositionlp}
Let $J\in\mathbb Z$ be the threshold between low and high frequencies.
Let $F:\mathbb R^n\to\mathbb R^n$ be a smooth function, and let
$w=w(x)$ be an $\mathbb R^n$-valued function on $\mathbb R^d$. Then
\begin{align}
&\|F(w)-F(0)-DF(0)w\|_{\dot{\B}^{s}_{2,1}}^{\ell}
\leq C_{w} \|w\|_{\dot{\B}^{\frac{d}{2}}_{2,1}}
\big(
\|w\|_{\dot{\B}^{s}_{2,1}}^{\ell}
+2^{J(s-\sigma)}\|w\|_{\dot{\B}^{\sigma}_{2,1}}^{h}
\big),
\quad s>-\frac{d}{2},\quad \sigma\geq-\frac{d}{2},
\label{q1}\\
&\|F(w)-F(0)-DF(0)w\|_{\dot{\B}^{s}_{2,1}}^{h}
\leq C_{w}\|w\|_{\dot{\B}^{\frac{d}{2}}_{2,1}}
\big(
2^{J(s-\sigma)}\|w\|_{\dot{\B}^{\sigma}_{2,1}}^{\ell}
+\|w\|_{\dot{\B}^{s}_{2,1}}^{h}
\big),
\quad s>-\frac{d}{2},\quad \sigma\in\mathbb R,
\label{q2}
\end{align}
where $C_w>0$ depends only on $\|w\|_{L^\infty}$, $F''$, $s$, $\sigma$ and $d$.

Furthermore, one has
\begin{align}
&\|F(w)-F(0)-DF(0)w\|_{\dot{\B}^{s}_{2,\infty}}^{\ell}
\leq C_{w}\|w\|_{\dot{\B}^{\frac{d}{2}}_{2,1}}
\big(
\|w\|_{\dot{\B}^{s}_{2,\infty}}^{\ell}
+2^{J(s-\frac{d}{2}-1)}
\|w\|_{\dot{\B}^{\frac{d}{2}+1}_{2,1}}^{h}
\big),
\quad s\geq-\frac{d}{2}.
\label{q5}
\end{align}
\end{lem}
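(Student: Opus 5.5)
The plan is to prove the three estimates of Lemma~\ref{compositionlp} by a second-order Taylor expansion, which turns each of them into a product estimate. Write
\[
F(w)-F(0)-DF(0)w=\mathcal G(w)(w,w),\qquad \mathcal G(w):=\int_0^1(1-s)D^2F(sw)\,ds,
\]
so the left-hand side is a sum of terms $a\,w_k w_l$ with $a=\mathcal G_{kl}(w)$ smooth in $w$. Equivalently, set $H(w):=\int_0^1(DF(sw)-DF(0))\,ds$; then $F(w)-F(0)-DF(0)w=H(w)\,w$ with $H(0)=0$. Proposition~\ref{Composition} applied to $H$ gives $\|H(w)\|_{\dot\B^{\frac d2}_{2,1}}\le C_w\|w\|_{\dot\B^{\frac d2}_{2,1}}$ and $\|H(w)\|_{L^\infty}\le C_w\|w\|_{L^\infty}$, with constants depending on $F''$ and $\|w\|_{L^\infty}$. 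Each estimate then reduces to a low/high-localized bound on the product $H(w)\,w$, with $a=H(w)$ measured in $\dot\B^{\frac d2}_{2,1}$ and $b=w$ split as $w^\ell+w^h$ at the threshold $J$.

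For \eqref{q1}, split $H(w)w=H(w)w^\ell+H(w)w^h$.
\begin{itemize}
\item For the first piece, use \eqref{eq:prod2} when $-\frac d2<s\le\frac d2$. It gives $\|H(w)w^\ell\|_{\dot\B^s_{2,1}}\lesssim\|H(w)\|_{\dot\B^{\frac d2}_{2,1}}\|w\|^\ell_{\dot\B^s_{2,1}}$.
\item For $s>\frac d2$, use \eqref{eq:prod1} together with $\|H(w)\|_{\dot\B^s_{2,1}}\le C_w\|w\|_{\dot\B^s_{2,1}}$ from Proposition~\ref{Composition}.
\item For the second piece, only its low-frequency part is needed. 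Restricted to $j\le J$, we have $2^{js}\le 2^{J(s-\sigma)}2^{j\sigma}$ when $s\ge\sigma$. So it suffices to bound $\|H(w)w^h\|^{\ell}_{\dot\B^\sigma_{2,1}}$ by $\|H(w)\|_{\dot\B^{\frac d2}_{2,1}}\|w\|^h_{\dot\B^\sigma_{2,1}}$, again by \eqref{eq:prod2}.
\item When $s<\sigma$, I would instead use that $w^h$ has frequencies $\gtrsim 2^J$. Then $\|w^h\|_{\dot\B^s_{2,1}}\le 2^{J(s-\sigma)}\|w\|^h_{\dot\B^\sigma_{2,1}}$, and \eqref{eq:prod2} applies directly at regularity $s$.
\end{itemize}
The endpoint $\sigma=-\frac d2$ is excluded from \eqref{eq:prod2}. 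There I would estimate via Bony's decomposition, noting that on low frequencies the remainder $R(H(w),w^h)$ only loses through $\dot\B^{-\frac d2}_{2,\infty}$, which embeds in the needed $\ell^1$ norm after the factor $2^{J(s+\frac d2)}$ with $s>-\frac d2$.

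For \eqref{q2}, the argument is symmetric. Write $H(w)w=H(w)w^h+H(w)w^\ell$.
\begin{itemize}
\item For the first piece, use \eqref{eq:prod4} (Proposition~\ref{prohigh}) or \eqref{eq:prod2} to get $\|H(w)\|_{\dot\B^{\frac d2}_{2,1}}\|w\|^h_{\dot\B^s_{2,1}}$. The high-frequency part of $H(w)$ itself, which appears in \eqref{eq:prod4}, is handled by Proposition~\ref{Composition} and absorbed into the same terms.
\item For the second piece, restricted to $j\ge J+1$, bound $2^{js}\le 2^{J(s-\sigma)}2^{j\sigma}$ when $\sigma\ge s$. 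This is exactly the mechanism built into \eqref{eq:prod4} with the roles $\sigma\ge s>0$.
\item When $\sigma<s$, low-frequency Bernstein gives $\|w^\ell\|_{\dot\B^s}\le 2^{J(s-\sigma)}\|w^\ell\|_{\dot\B^\sigma}$ directly.
\end{itemize}
Estimate \eqref{q5} follows the \eqref{q1} argument with $r=\infty$, using \eqref{eq:prod3} in place of \eqref{eq:prod2} (valid for $-\frac d2\le s<\frac d2$). The high-frequency piece uses $\sigma=\frac d2+1$, which requires $s\le\frac d2+1$ to get the factor $2^{J(s-\frac d2-1)}$; larger $s$ is handled as before via \eqref{eq:prod1}.

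The main obstacle is the bookkeeping of endpoint and ordering cases. These are $s$ versus $\sigma$; $s>\frac d2$, where \eqref{eq:prod2} fails and one must use \eqref{eq:prod1} with composition bounds; and $\sigma=-\frac d2$, where the product law is borderline. I expect the latter to be the only place needing the paraproduct argument explicitly; it can be modelled on the proof of Proposition~\ref{prohigh}, tracking which frequency interactions $T_{H(w)}w^h$, $T'_{w^h}H(w)$ and $R$ can feed low frequencies.
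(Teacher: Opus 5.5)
\textbf{How your route compares with the paper.} Your approach is the paper's. Your factor $H(w)=\int_0^1\big(DF(sw)-DF(0)\big)\,ds$ satisfies $H(w)_{ki}=\sum_j w_j\int_0^1(1-\theta)\,\partial_{ij}F_k(\theta w)\,d\theta=r^k_i(w)$, which is exactly the paper's factor. The paper likewise combines Proposition~\ref{Composition} in $\dot{\B}^{\frac d2}_{2,1}$ with the splitting $w=w^\ell+w^h$, the laws \eqref{eq:prod1}--\eqref{eq:prod3}, and the factor $2^{J(s-\sigma)}$. For $-\frac d2\le\sigma<s\le\frac d2$, including the endpoint, it writes $\|H(w)w^h\|^\ell_{\dot{\B}^{s}_{2,1}}\lesssim 2^{J(s-\sigma)}\|H(w)w^h\|_{\dot{\B}^{\sigma}_{2,\infty}}$ and then applies \eqref{eq:prod3}; the first inequality sums $2^{j(s-\sigma)}$ over $j\le J$ and needs $s>\sigma$. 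This is your remainder argument in packaged form.

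\textbf{The step that fails.} It is the use of \eqref{eq:prod1} at a regularity $r>\frac d2$ together with the global bound $\|H(w)\|_{\dot{\B}^{r}_{2,1}}\le C_w\|w\|_{\dot{\B}^{r}_{2,1}}$ on the piece $H(w)w^\ell$. Write $R(w):=F(w)-F(0)-DF(0)w$.
\begin{itemize}
\item In \eqref{q1} with $s>\frac d2$ and $\sigma<s$, your bound produces $\|w^\ell\|_{L^\infty}\|w\|^h_{\dot{\B}^{s}_{2,1}}$. This is not controlled by $2^{J(s-\sigma)}\|w\|^h_{\dot{\B}^{\sigma}_{2,1}}$.
\item In \eqref{q2} with $\sigma>\max(s,\frac d2)$, estimating $H(w)w^\ell$ at regularity $\sigma$ produces $2^{J(s-\sigma)}\|w^\ell\|_{L^\infty}\|w\|^h_{\dot{\B}^{\sigma}_{2,1}}$. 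This is not controlled by $\|w\|^h_{\dot{\B}^{s}_{2,1}}$.
\end{itemize}
The loss is genuine. In both cases the quantity you bound contains $w^\ell w^h$ with $w^h$ measured at a higher regularity than the right-hand side allows, and $\|w^\ell\|_{L^\infty}$ carries no power of $2^J$ to compensate. Invoking \eqref{eq:prod4} does not help: it needs $\|H(w)\|^h_{\dot{\B}^{s}_{2,1}}$ and $\|H(w)\|^\ell_{\dot{\B}^{\sigma}_{2,1}}$, which are frequency-split composition bounds of the very kind being proved, and Proposition~\ref{Composition} does not supply them.

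These are precisely the cases the paper uses: \eqref{N3} takes $s=\frac d2+2$, $\sigma=\frac d2+1$, and Step~3 of the proof of Lemma~\ref{eslq:high} takes $s=\frac d2+1$, $\sigma=\frac d2+2$. The paper's own remark that \eqref{eq:prod1} simply replaces \eqref{eq:prod2} meets the same obstruction, so you will not find the missing step there.

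\textbf{How to repair it.} Split before factoring: $R(w)=R(w^\ell)+K(w^\ell,w^h)\,w^h$ with $K(a,b):=\int_0^1\big(DF(a+\theta b)-DF(0)\big)\,d\theta$.
\begin{itemize}
\item Since $R(w^\ell)=H(w^\ell)w^\ell$ involves only $w^\ell$, \eqref{eq:prod1} and Proposition~\ref{Composition} give $\|R(w^\ell)\|_{\dot{\B}^{r}_{2,1}}\le C_w\|w^\ell\|_{L^\infty}\|w^\ell\|_{\dot{\B}^{r}_{2,1}}$ for every $r>0$. Take $r=s$ for \eqref{q1}. For \eqref{q2}, take $r=\sigma$ together with $2^{js}\le 2^{J(s-\sigma)}2^{j\sigma}$ on $j\ge J+1$.
\item In $K(w^\ell,w^h)\,w^h$ the high-frequency factor is explicit. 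The only new cross term is $\|w^\ell\|_{\dot{\B}^{r}_{2,1}}\|w^h\|_{L^\infty}$ with $r>\frac d2$. Trade powers of the threshold: it is bounded by $2^{J(r-\frac d2)}\|w\|^\ell_{\dot{\B}^{\frac d2}_{2,1}}\cdot 2^{J(\frac d2-r)}\|w\|^h_{\dot{\B}^{r}_{2,1}}$.
\end{itemize}
The same trade is what actually justifies your claim that the extra terms in the $H(w)w^h$ piece of \eqref{q2} are "absorbed". The same splitting also repairs \eqref{q5} for $s>\frac d2+1$, which you left to "\eqref{eq:prod1} as before".
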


\begin{proof}
Set $R(w):=F(w)-F(0)-DF(0)w $. Since $F:\mathbb R^n\to\mathbb R^n$ is smooth, Taylor's formula gives, for $1\leq k\leq n$,
\[
R_k(w)=\sum_{i=1}^n w_i r_i^k(w)\quad\text{
where}\quad 
r_i^k(w)
:=
\sum_{j=1}^n
w_j
\int_0^1(1-\theta)\,
\partial_{ij}F_k(\theta w)\,d\theta .
\]
For each $i,k$, $r_i^k$ is a smooth scalar function of $w$ and satisfies
$r_i^k(0)=0$. Hence, by Proposition \ref{Composition}, $\|r_i^k(w)-r_i^k(0)\|_{\dot{\B}^{\frac d2}_{2,1}}
\leq C_w \|w\|_{\dot{\B}^{\frac d2}_{2,1}}.$
If $-\frac{d}{2}<s\leq \frac{d}{2}$, then the classical product estimate \eqref{eq:prod2} yields
\begin{align*} 
\|w_ir_i^k(w)\|_{\dot{\B}^{s}_{2,1}}^\ell 
&\lesssim \|r_i^k(w)\|_{\dot{\B}^{\frac d2}_{2,1}} \|w_i\|_{\dot{\B}^{s}_{2,1}} \leq C_w \|w\|_{\dot{\B}^{\frac d2}_{2,1}} \left( \|w_i\|_{\dot{\B}^{s}_{2,1}}^\ell + 2^{J(s-\sigma)} \|w_i\|_{\dot{\B}^{\sigma}_{2,1}}^h \right), \end{align*} 
where we used $\sigma\geq s$ in the last line. If $\sigma>\frac{d}{2}$, then one can replace \eqref{eq:prod2} by \eqref{eq:prod1} and using the embedding $\dot{\B}^{\frac{d}{2}}_{2,1}\hookrightarrow L^\infty$.  If $-\frac{d}{2}\leq  \sigma<s\leq \frac{d}{2}$, then we decompose $w_i=w_i^\ell+w_i^h$ and use \eqref{eq:prod2}--\eqref{eq:prod3} to get
 \begin{align*}
\|w_ir_i^k(w)\|_{\dot{\B}^{s}_{2,1}}^\ell &\leq \|w_i^\ell r_i^k(w)\|_{\dot{\B}^{s}_{2,1}}^\ell + 2^{J(s-\sigma)}\|w_i^h r_i^k(w)\|_{\dot{\B}^{\sigma}_{2,\infty}}^\ell \\ 
&\leq C_w \|w\|_{\dot{\B}^{\frac d2}_{2,1}} \left( \|w_i\|_{\dot{\B}^{s}_{2,1}}^\ell + 2^{J(s-\sigma)} \|w_i\|_{\dot{\B}^{\sigma}_{2,1}}^h \right),
 \end{align*}
If $-\frac{d}{2}<\sigma$ and $\sigma>\frac{d}{2}$ holds, then replacing \eqref{eq:prod2} by \eqref{eq:prod1}leads to the same estimate. Consequently, we obtain \eqref{q1}.

For the high-frequency estimate, we decompose $w_i=w_i^\ell+w_i^h$. Similarly, for $-\frac{d}{2}<s\leq \frac{d}{2}$ and $s\geq \sigma$,
 \begin{align*} \|w_ir_i^k(w)\|_{\dot{\B}^{s}_{2,1}}^h &\lesssim 2^{J(s-\sigma)}\|r_i^k(w)\|_{\dot{\B}^{\frac d2}_{2,1}} \|w_i\|_{\dot{\B}^{s}_{2,1}} \\ &\lesssim \|w\|_{\dot{\B}^{\frac d2}_{2,1}} \left( 2^{J(s-\sigma)} \|w_i\|_{\dot{\B}^{\sigma}_{2,1}}^\ell + \|w_i\|_{\dot{\B}^{s}_{2,1}}^h \right), \end{align*}
and for $-\frac{d}{2}<s< \sigma\leq \frac{d}{2}$,
 \begin{align*} \|w_ir_i^k(w)\|_{\dot{\B}^{s}_{2,1}}^h &\lesssim 2^{J(s-\sigma)}\|w_i^\ell r_i^k(w)\|_{\dot{\B}^{\sigma}_{2,1}}^h + \|w_i^h r_i^k(w)\|_{\dot{\B}^{s}_{2,1}}^h \\ &\lesssim 2^{J(s-\sigma)} \|r_i^k(w)\|_{\dot{\B}^{\frac d2}_{2,1}} \|w_i\|_{\dot{\B}^{\sigma}_{2,1}}^\ell + \|r_i^k(w)\|_{\dot{\B}^{\frac d2}_{2,1}} \|w_i\|_{\dot{\B}^{s}_{2,1}}^h \\ &\lesssim \|r_i^k(w)\|_{\dot{\B}^{\frac d2}_{2,1}} \left( 2^{J(s-\sigma)} \|w_i\|_{\dot{\B}^{\sigma}_{2,1}}^\ell + \|w_i\|_{\dot{\B}^{s}_{2,1}}^h \right). \end{align*}
 The higher-order indices can be addressed using \eqref{eq:prod1} instead of \eqref{eq:prod2}. The third estimate \eqref{q5} is similar to \eqref{q1}, but one uses \eqref{eq:prod3} carefully. This finishes the proof of Lemma \ref{compositionlp}. 
\end{proof}

\section*{Acknowledgments}

T. Crin-Barat is supported by the project ANR-24-CE40-3260 – Hyperbolic Equations, Approximations $\&$ Dynamics (HEAD) and the project ANR-25-CE40-5565 (Cookie). L.-Y. Shou is supported by the National Natural Science
Foundation of China under Grant No. 12671258. Q. Zhu would like to thank his Ph.D. supervisor Rapha\"el Danchin for some helpful discussions.

 % is currently a PhD student,

\section*{Data availability statement}

Data sharing is not applicable to this article, as no datasets were generated or analyzed during the current study.

\section*{Conflict of interest statement}

The authors declare that they have no conflict of interest.

\printbibliography
\vfill

\bigbreak
Timothée Crin-Barat \hfill\break\indent
{\sc Université de Toulouse, Institut de Mathématiques de Toulouse, Route de Narbonne 118, 31062 CEDEX 9 Toulouse, France, \hfill\break\indent
{\it Email address}: {\tt timothee.crin-barat@math.univ-toulouse.fr}}

\bigbreak
Ling-Yun Shou\hfill\break\indent
{\sc School of Mathematical Sciences, Ministry of Education Key Laboratory of NSLSCS, and Key Laboratory of Jiangsu Provincial Universities of FDMTA, Nanjing Normal University, Nanjing 210023, China\hfill\break\indent
{\it Email address}: {\tt shoulingyun11@gmail.com}}

\bigbreak
Qimeng Zhu \hfill\break\indent
{\sc Laboratoire d'Analyse et Math\'ematiques Appliqu\'ees (LAMA UMR8050), \hfill\break\indent
Université Paris-Est Cr\'eteil, Cr\'eteil 94010,
France.\hfill\break\indent
{\it Email address}: {\tt qimeng.zhu@u-pec.fr }}

\end{document}